\newcommand{\bs}{\blacksquare}
\renewcommand{\S}{{\mathcal S}}
\newcommand{\T}{{\mathcal T}}
\newcommand{\uu}{{\mathcal U}}
\newcommand{\bQ}{{\mathbf Q}}
\newcommand{\bF}{{\mathbf F}}
\newcommand{\bM}{{\mathbf M}}
\newcommand{\bfd}{{\mathbf d}}
\newcommand{\Aa} {{\mathcal  A}}
\newcommand{\Ar}{{\Aa_{\mathrm{rel}}^\Z}}
\newcommand{\Ag}{{\Aa_\ipFun}}
\newcommand{\Cell}{{\mathcal C}ell}
\newcommand{\codim}{{\mathrm{codim}}}
\newcommand{\supp}{\mathrm{supp}}
\newcommand{\cl}{\mathrm{cl}}
\newcommand{\id}{{\mathrm{id}}}
\newcommand{\ad}{{\mathrm{ad}}}
\newcommand{\pre}{{\mathrm{pre}}}
\newcommand{\Sig}{{\mathrm{Sig}}}
\newcommand{\sig}{{\mathrm{sig}}}
\newcommand{\Sigr}{{\mathrm{sig}_{\mathrm{rel}}}}
\newcommand{\STop}{{\mathrm{STop}}}
\newcommand{\STopFun}{{\mathrm{STopFun}}}
\newcommand{\sch}{\mathrm{sch}}
\newcommand{\Rel}{{\mathrm{Rel},\mathrm{sch}}}
\newcommand{\Relz}{{\mathrm{Rel},\mathrm{sch},\geq 0}}
\newcommand{\Kan}{\mathrm{Kan}}
\DeclareMathOperator*{\colim}{colim}
\newcommand{\ip} {{\operatorname{IP}}}
\newcommand{\ipFun} {{\operatorname{IPFun}}}
\newcommand{\Fun} {{\operatorname{Fun}}}
\newcommand{\CC} {{\mathcal C}}
\newcommand{\pt} {{\operatorname{pt}}}
\newcommand{\DD} {{\mathcal  D}}
\newcommand{\M} {{\mathcal  M}}
\newcommand{\N} {{\mathcal  N}}
\renewcommand{\P} {{\mathcal  P}}
\newcommand {\syml} {\operatorname{L}^\bullet}
\newcommand {\intg}{{\mathbb Z}}
\newcommand{\SO} {{\operatorname{SO}}}
\newcommand {\R}{{\mathbb R}}
\newcommand {\RR}{{\mathcal R}}
\newcommand {\ZZ}{{\mathcal Z}}
\newcommand {\rat}{{\mathbb Q}}
\newcommand {\real}{{\mathbb R}}
\newcommand {\cplx}{{\mathbb C}}
\newcommand {\smlhf} {\ensuremath{\mbox{$\frac{1}{2}$}}}
\newcommand{\td}[1]{\tilde{#1}}
\newcommand{\St}{{\mathrm{St}}}
\newcommand {\BL}{{\mathbb L}} 
\newcommand{\si} {{\operatorname{Si}}}
\newcommand {\Z}{{\mathbb Z}}
\newcommand{\ra}{\rightarrow}                   
\newcommand{\lra}{\longrightarrow}              
\DeclareMathOperator*{\hocolim}{hocolim}
\newcommand{\cpnt}{\mathbb{CP}^{n/2}}
\newfont{\german}       {eufm10 at 12pt}
\DeclareMathOperator{\Hom}{Hom} 
\numberwithin{equation}{section}
\newtheorem{thm}{Theorem}[section]
\newcounter{numerierer}
\newcounter{leer}
\newtheorem{defn}[thm]{Definition}
\newtheorem{prop}[thm]{Proposition}
\newtheorem{lemma}[thm]{Lemma}
\theoremstyle{definition}  
\newenvironment{definition}{\begin{defn}\rm}{\end{defn}}
\newtheorem{example}[thm]{Example}
\newtheorem{convention}[thm]{Convention}
\newtheorem{notation}[thm]{Notation}
\newtheorem{remark}[thm]{Remark}
\subjclass[2010]{55N33, 57R67, 57R20, 57N80, 19G24}
\keywords{Intersection homology, stratified spaces, pseudomanifolds, signature, 
 characteristic classes, bordism, L-theory, Novikov conjecture}
\begin{document}

\title[L-Fundamental Class for IP-Spaces and the Novikov Conjecture]{The L-Homology Fundamental Class for IP-Spaces and the Stratified Novikov Conjecture}
\author{Markus Banagl} 
\address{Mathematisches Institut, Universit\"at Heidelberg, INF 288, D-69120
Heidelberg, Germany}
\email{banagl@mathi.uni-heidelberg.de}
\author{Gerd Laures}
\address{Fakult\"at f\"ur Mathematik,  Ruhr-Universit\"at Bochum, NA1/66,
  D-44780 Bochum, Germany}
\email{gerd@laures.de}
\author{James E.\ McClure}
\address{Department of Mathematics, Purdue University, 150 N.\ 
University Street, West Lafayette, IN 47907-2067, USA}
\email{mcclure@math.purdue.edu} 
\thanks{
The first author was supported in part by a research grant of the
Deutsche Forschungsgemeinschaft.
The third author  was partially supported by a grant
from the Simons Foundation (\#279092 to James McClure).
He thanks the Lord for making his work possible.}

\begin{abstract}
An IP-space is a pseudomanifold whose defining local properties 
imply that its middle perversity global intersection homology groups satisfy 
Poincar\'e duality integrally.
We show that the symmetric signature induces a map of 
Quinn spectra from IP bordism to the symmetric $L$-spectrum
of $\Z$, which is, up to weak equivalence, an $E_\infty$ ring
map.
Using this map, we construct a fundamental $L$-homology class for IP-spaces, and as 
a consequence we prove the stratified Novikov conjecture for IP-spaces whose fundamental
group satisfies the Novikov conjecture.
\end{abstract}

\maketitle
\section{Introduction}

An intersection homology Poincar\'e space, or
IP-space, is a piecewise linear pseudomanifold such that the middle dimensional,
lower middle perversity integral intersection homology of even-dimensional links
vanishes and the lower middle dimensional, lower middle perversity intersection homology
of odd-dimensional links is torsion free.
This class of spaces was introduced by Goresky and Siegel in
\cite{gorsie} as a natural solution, assuming the IP-space to be compact and oriented, 
to the question: For which class of spaces does
intersection homology (with middle perversity) satisfy Poincar\'e duality over the integers?

If $X$ is a compact oriented IP-space whose dimension $n$ is a multiple of $4$,
then the signature $\sigma (X)$ of $X$ is the signature of the intersection form
\[ IH_{n/2} (X;\intg)/\operatorname{Tors} \times
  IH_{n/2} (X;\intg)/\operatorname{Tors} \longrightarrow \intg, \]
where $IH_*$ denotes intersection homology with the lower middle perversity,
\cite{gm1}, \cite{gm2}.
This signature is a bordism invariant for bordisms of IP-spaces.
The IP-space bordism groups have been investigated by Pardon in
\cite{pardon}, where it is shown that the signature (when $n=4k$) together with the
de Rham invariant (when $n=4k+1$) form a complete system of invariants.\\

Next we recall the theory of the $L$-homology fundamental class for manifolds.
Let $M^n$ be a closed oriented $n$-dimensional manifold.
The symmetric signature $\sigma^* (M)$ of \cite{mishchenko},\cite[p. 47]{ranicki} is an element of
the symmetric $L$-group $L^n (\intg [G]),$ where 
$G$ denotes the fundamental group $\pi_1M$.
It is a non-simply-connected generalization of the
signature $\sigma (M)$, since for $n=4k$ the canonical homomorphism
$L^n (\intg [G])\to L^n (\intg)=\intg$ maps $\sigma^* (M)$ to $\sigma (M)$.
Moreover, $\sigma^*$ is homotopy invariant and bordism invariant for bordisms
over the classifying space $BG$.
Let $\BL^\bullet = \BL^\bullet \langle 0 \rangle (\intg)$ denote the symmetric $L$-spectrum
with homotopy groups $\pi_n (\BL^\bullet)=L^n (\intg)$
and let $\syml_* (-)$ denote the homology theory determined by $\BL^\bullet$.
For an $n$-dimensional Poincar\'e space $M$ which is either a topological
manifold or a combinatorial homology manifold (i.e. a polyhedron
whose links of simplices are homology spheres), Ranicki defines
a canonical $\syml$-homology fundamental class $[M]_{\mathbb{L}} \in
\syml_n (M),$ see \cite[Prop. 16.16]{ranicki}. Its image under the assembly map
\[ \syml_n (M) \stackrel{\alpha}{\longrightarrow} L^n (\intg [G]) \]
is the symmetric signature $\sigma^\ast (M)$.
The class $[M]_{\mathbb{L}}$ is a topological
invariant, but, unlike the symmetric signature, not a homotopy invariant in general.
The geometric meaning of the $\syml$-homology fundamental class
is that its existence for a geometric Poincar\'e complex $X^n,$ 
$n\geq 5,$ assembling to the symmetric signature (which in fact any
Poincar\'e complex possesses), implies up to 2-torsion that $X$
is homotopy equivalent to a compact topological manifold.
(More precisely, $X$ is homotopy equivalent to a compact manifold
if it has an $\syml$-homology fundamental class, which assembles to the so-called
visible symmetric signature of $X$.)
Smooth manifolds $M$ possess a Hirzebruch $L$-class in $H^* (M;\rat)$,
whose Poincar\'e dual we denote by $L(M)\in H_* (M;\rat)$.
Rationally, $[M]_{\mathbb{L}}$ is then given by $L(M)$,
\[ [M]_{\mathbb{L}} \otimes 1 = L (M) \in \syml_n (M)\otimes \rat
  \cong \bigoplus_{j\geq 0} H_{n-4j} (M;\rat). \]
Thus, we may view $[M]_{\mathbb{L}}$ as an integral refinement of the
$L$-class of $M$. The identity $\alpha [M]_{\mathbb{L}} = \sigma^\ast (M)$
may then be interpreted as a non-simply connected generalization of the
Hirzebruch signature formula. These facts show that the 
$\syml$-homology fundamental class is much more powerful than $\sigma^* (M)$.
For example, 
there exist infinitely many manifolds $M_i, i=1,2,\ldots,$ in the
homotopy type of $S^2 \times S^4,$ distinguished by
the first Pontrjagin class of their tangent bundle
$p_1 (TM_i)\in H^4 (S^2 \times S^4)\cong \intg,$ namely
$p_1 (TM_i) = Ki,$ $K$ a fixed nonzero integer.
On the other hand, $\sigma^* (M_i) = \sigma^* (S^2 \times S^4)=0
\in L^6 (\intg [\pi_1 (S^2 \times S^4)])=L^6 (\intg)=0$.\\

We return to singular spaces. A Witt space 
is a piecewise linear pseudomanifold such that the middle dimensional,
lower middle perversity rational intersection homology of even-dimensional links
vanishes, \cite{siegel}. The symmetric signature $\sigma^* (X)\in L^n (\rat [G])$ 
and the $\syml$-homology fundamental class $[X]_{\mathbb{L}} \in (\syml (\rat))_n (X)$
of an oriented Witt space $X^n$ 
appeared first in the work of Cappell, Shaneson
and Weinberger, see \cite{csw} and \cite{Wein}, though a detailed construction
is not provided there. Regarding the relation between $\syml (\intg)$ and $\syml (\rat)$,
the functor $\operatorname{ad}_{\operatorname{sym}},$ described in \cite{LM},
from the category of rings with involution to the category of ad theories associates
to the localization map $\intg \to \rat$ a morphism $\ad^\intg \to \ad^\rat$ of
the associated symmetric Poincar\'e ad theories, which in turn induces a map of
Quinn spectra $\syml (\intg)\to \syml (\rat)$. On homotopy groups, this fits into
Ranicki's localization sequence
\[ \cdots \longrightarrow L^n (\intg) \longrightarrow L^n (\rat) \longrightarrow
  L^n (\intg, \intg - \{ 0 \}) \longrightarrow L^{n-1} (\intg) \longrightarrow \cdots, \]
where
\[ L^n (\rat) \cong \begin{cases}
 \intg \oplus (\intg/_2)^\infty \oplus (\intg/_4)^\infty,& n\equiv 0 (4) \\
   0,& n\not\equiv 0 (4).
\end{cases} \]

In \cite{banagl-msri}, the first author outlined a construction of $[X]_{\mathbb{L}}$
for IP-spaces $X$
based on ideas of Eppelmann \cite{eppelmann}, and pointed out that 
the existence of this class
implies in particular a definition of a symmetric signature $\sigma^* (X)$
as the image of $[X]_{\mathbb{L}}$ under assembly.
In \cite{almp}, it is shown that this symmetric signature, adapted to Witt spaces
and pushed into $K_* (C^*_r G)$ via
\[ L^* (\rat [G])\longrightarrow L^* (C^*_r G)\longrightarrow K_* (C^*_r G), \]
agrees rationally with the Albin-Leichtnam-Mazzeo-Piazza signature index class.
The first fully detailed construction of $\sigma^* (X)$ for Witt spaces $X$ has been
provided in \cite{friedmanmcclure}. That construction is closely parallel to the
original construction of Mi\v{s}\v{c}enko, but using singular intersection chains
on the universal cover instead of ordinary chains.
The methods of \cite{friedmanmcclure} carry over to
IP-spaces and yield a symmetric signature over $\intg$ for such spaces, as we show
in Section \ref{symsig}.\\

In the present paper, we give the first detailed construction of
an $\syml$-homology fundamental class $[X]_{\mathbb{L}} \in \syml_n (X)$ for 
IP-spaces $X$.
While Eppelmann used complexes of sheaves,
we are able to use the, for our purposes, more precise and geometric methods of 
\cite{friedmanmcclure}.
The main issue is to construct a map (at least in the derived category) on the spectrum level from IP bordism
to $\BL^\bullet$, for then $[X]_{\mathbb{L}}$ can readily be defined as the image
of the identity map $[\id_X]\in (\Omega_\ip)_n (X)$ under
$(\Omega_\ip)_n (X)\to \syml_n (X)$, see Definition \ref{def.xipxl}.
To obtain this map of spectra, we rely heavily on the technology
of ad theories and their associated Quinn spectra as developed by
the second and third author in \cite{LM}, \cite{LM2}.
Roughly, we construct first an ad theory of IP spaces, which automatically
gives an associated Quinn spectrum $\bQ_\ip$, whose homotopy groups
are Pardon's IP bordism groups.
Using the symmetric signature,
we define a morphism of ad theories from the IP ad theory to the
ad theory of symmetric algebraic Poincar\'e complexes over $\intg$.
The spectrum of the latter ad theory is the symmetric
$L$-spectrum $\BL^\bullet$. The morphism of ad theories then induces the
desired map of spectra.
We prove that our $\syml$-homology fundamental class has all the expected properties
(Theorem \ref{t3}): It is an oriented PL homeomorphism invariant,
its image under assembly is the symmetric signature and it agrees
with Ranicki's $\syml$-homology fundamental class when $X$ is a PL
manifold.\footnote{If $R$ is a PID then our methods would also give a
fundamental class in $ (\syml (R))_n(X)$
with analogous properties.}\\

As an application of our $\syml$-homology fundamental class, we discuss the
stratified homotopy invariance of the higher signatures of IP-spaces.
Let $X$ be an $n$-dimensional compact oriented IP-space, whose fundamental
group 
$G$ satisfies the strong Novikov conjecture, that is, the
assembly map
\[ \syml_n (BG) \longrightarrow L^n (\intg [G]) \]
is rationally injective. Then we prove that the stratified Novikov conjecture holds for $X$, 
i.e. the higher signatures
\[ \langle a, r_* L(X) \rangle,~ a\in H^* (BG;\rat), \]
where $r:X\to BG$ is a classifying map for the universal cover of $X$
and $L(X)\in H_* (X;\rat)$ is the Goresky-MacPherson $L$-class of $X$,
are stratified homotopy invariants, see Theorem \ref{thm.novikov}. 
The stratified Novikov conjecture has been treated from the analytic
viewpoint in \cite{ALMPnovikov}. \\

Here is an outline of the paper.  Sections \ref{sec.ipspacessymmcplx} and
\ref{s3} review the basic facts about IP-spaces and ad theories.  Section
\ref{s4} constructs an ad theory associated to IP-spaces.  Section \ref{s5}
reviews two (equivalent) ad theories associated to symmetric Poincar\'e
complexes.  Section \ref{s6} uses the symmetric signature (ignoring the
fundamental group)
to construct a map Sig of Quinn spectra from IP bordism to the symmetric 
$L$-spectrum of $\Z$.  Section \ref{ringoids} recalls the definition of
$L^n({\mathbb{Z}}[\pi_1 X])$, where $\pi_1 X$ denotes the fundamental groupoid
of $X$. Section \ref{symsig} constructs the symmetric signature 
of an IP-space $X$ as an element of 
 $L^n({\mathbb{Z}}[\pi_1 X])$.  
Section
\ref{s8} constructs the $\syml$-theory fundamental class of an IP-space, and
Section \ref{gs6} shows 
that it
assembles to the symmetric signature constructed in Section \ref{symsig}.
Section \ref{s9} uses the results of Sections \ref{s8} and \ref{gs6} to prove the stratified 
Novikov conjecture
for IP-spaces.  Section \ref{mc} shows that the map Sig constructed in Section
\ref{s6} is, up to weak equivalence, an $E_\infty$ ring map; this is applied in
Section \ref{sfm} to prove that $[X\times Y]_{\mathbb{L}}
=
[X]_{\mathbb{L}}
\times
[Y]_{\mathbb{L}}$.
Section \ref{pt2}
proves a result needed for Section \ref{s8}, namely the fact that
the assembly map for IP bordism is a weak equivalence.  There are six
appendices.  Appendix \ref{a1} 
reviews the basic facts about the intrinsic filtration of a PL space.
Appendix \ref{add} gives background about
modules over additive categories which is needed for Sections
\ref{symsig}--\ref{gs6}.  Appendices \ref{subdiv},  
\ref{tech} and \ref{univ} give generalizations of some technical results from 
\cite{friedmanmcclure} which are
needed in Sections \ref{symsig} and \ref{gs6}.
Appendix \ref{am} proves a mutliplicative
property of the assembly map which may be of independent interest.  Appendix
\ref{aa1} corrects some signs in \cite{LM}.
\\

\begin{remark}
\label{not}
Here is a comparison between our notation and that in \cite{ranicki}.
The spectrum $\mathbb L^\bullet(\Z[\pi_1(X,x)])$ of  \cite[Definition 
14.1]{ranicki} is identical with the spectrum we denote by
$\mathbf{Q}^{\Z[\pi_1(X,x)]}$ (which was defined in \cite[Sections 9 and
15]{LM}).  The spectrum $\mathbb L^\bullet\langle 0\rangle(\Z[\pi_1(X,x)])$ 
of \cite[pages 151-152]{ranicki} is canonically weakly equivalent to the
spectrum we denote by $\mathbf{Q}^{\Z[\pi_1(X,x)]}_{\geq 0}$ (see Section
\ref{conn} below).  We remind the reader that in \cite{ranicki} the spectrum
$\mathbb L^\bullet\langle 0\rangle(\Z)$ is denoted by $\mathbb L$ (\cite[page
173]{ranicki}) and $L^n(\Z[\pi_1(X,x)])$ is defined to be $\pi_n$ of the
connective spectrum
$\mathbb L^\bullet\langle 0\rangle(\Z[\pi_1(X,x)])$ 
(\cite[page 60]{ranicki}).
\end{remark}

\paragraph{Acknowledgements.}
We would like to thank Matthias Kreck, Wolfgang L\"uck, Shmuel Weinberger and 
(especially) Greg Friedman for their help. We are also grateful to the referee for a very careful revision.

\section{Review of IP bordism}
\label{sec.ipspacessymmcplx}

We use the term {\it polyhedron} as defined in \cite[Definition 
1.1]{ro-sa.pltop}. 

\begin{definition}
\label{pseud}
An {\it $n$-dimensional PL pseudomanifold}
is a polyhedron $X$ for which some
(and hence every) triangulation has the following
properties.

\quad (a) Every simplex is contained in an $n$-simplex.

\quad (b) Every $(n-1)$-simplex is a face of exactly two $n$-simplices.
\end{definition}

\begin{definition}
\label{bpseud}
An {\it $n$-dimensional PL $\partial$-pseudomanifold} is a 
polyhedron $X$ with 
the property that some (and hence every) triangulation has the following
properties.

\quad (a) Every simplex is contained in an $n$-simplex.

\quad (b) Every $(n-1)$-simplex is a face of either one or two $n$-simplices;
the union of the $(n-1)$-simplices which are faces of one $n$-simplex is called
the {\it boundary} of $X$ and denoted $\partial X$.

\quad (c) The boundary $\partial X$ is an $(n-1)$-dimensional pseudomanifold.

\quad (d) The boundary is collared, that is, there is a PL embedding $\partial
X\times [0,1)\to X$ with open image which is the identity on $\partial X$.
\end{definition}

\begin{remark}
(i) The subspace $\partial X$ is independent of the triangulation.

(ii) The collaring condition is needed in order for Lefschetz duality to hold in
intersection homology (see \cite[Section 7.3]{FM}).
\end{remark}

\begin{definition}
An {\it orientation} of an $n$-dimensional PL pseudomanifold or PL 
$\partial$-pseudomanifold is a set of orientations of the
$n$-simplices of some triangulation such that the sum of the $n$-simplices with
these orientations
is a cycle (a relative cycle in the case of a $\partial$-pseudomanifold).
\end{definition}

For some purposes we need a stratification.
For a polyhedron $Y$, let
$c^\circ Y$ denote the open cone $([0,1)\times Y)/(0\times Y)$.  We recall the
inductive definition of stratified pseudomanifold:

\begin{definition}
\label{d1}
A {\it $0$-dimensional stratified PL pseudomanifold} $X$ is a  discrete set of 
points with the trivial filtration $X=X^0\supseteq X^{-1}=\emptyset$.
An {\it $n$-dimensional stratified  PL pseudomanifold}
$X$ is a polyhedron together
with a filtration by closed polyhedra
\begin{equation*}
X=X^n\supseteq X^{n-1} = X^{n-2}\supseteq \cdots \supseteq X^0\supseteq
X^{-1}=\emptyset
\end{equation*}
such that

\quad (a) $X-X^{n-1}$ is dense in $X$, and

\quad (b) for each point $x\in X^i-X^{i-1}$, there exists a neighborhood
$U$ of $x$ for which there is a  compact $n-i-1$ dimensional
stratified  PL pseudomanifold  $L$ and a PL homeomorphism
\begin{equation*}
\phi: \R^i\times c^\circ L\to U
\end{equation*}
that takes $\R^i\times c^\circ (L^{j-1})$ onto $X^{i+j}\cap U$. 
\end{definition}

The space $L$ in part (b) is determined up to PL homeomorphism by $x$ and the
stratification
(\cite[Lemma 2.5.18]{Fr}); it is called the {\it link} of $X$ at $x$ and 
denoted $L_x$.
Since the cone on the empty set $L^{-1}$ is a point, taking $j=0$ in (b)
shows that $X^i - X^{i-1}$ is a manifold for every $i$.
A PL pseudomanifold always possesses a stratification in the sense of
Definition \ref{d1}, by Proposition \ref{p1}(iv).
Conversely, if $X$ is an $n$-dimensional stratified PL pseudomanifold, then for any triangulation,
every simplex is contained in an $n$-simplex and
every $(n-1)$-simplex is a face of exactly two $n$-simplices, that is,
the underlying polyhedron of $X$ is indeed a PL pseudomanifold in the sense
of Definition \ref{pseud}.

\begin{definition} 
\label{d2}
An $n$-dimensional \emph{stratified PL $\partial$-pseudomanifold} is a 
PL $\partial$-pseudomanifold $X$
together with a
filtration by closed polyhedra such that

\quad (a) $X-\partial X$, with the induced filtration, is an
$n$-dimensional stratified
PL pseudomanifold,

\quad (b) $\partial X$, with the induced filtration, is an $n-1$ dimensional 
stratified PL pseudomanifold, and

\quad (c) there is a neighborhood $N$ of $\partial X$
with a homeomorphism of filtered spaces $N\to
\partial X\times [0,1)$ (where $[0,1)$ is given the trivial filtration) which
is the identity on $\partial X$.

\end{definition}

A PL $\partial$-pseudomanifold always possesses a stratification in the sense of
Definition \ref{d2}, by Proposition \ref{p2}.
Next recall the definition of intersection homology (\cite{gm1}, \cite{gm2}, 
\cite{borel}, \cite{kirwanwoolf}, \cite{banagl-tiss}).  We will denote the 
lower middle perversity, as usual, by $\bar{m}$.

\begin{definition} \label{def.ipspace}
(\cite{gorsie,pardon})
An $n$-dimensional \emph{IP-space} is an $n$-dimensional PL pseudomanifold
$X$ for which
some stratification has the following properties.

\quad (a) $IH^{\bar{m}}_l (L_x;\Z)=0$ for all $x\in X^{n-2l-1} - X^{n-2l-2}$, 
and

\quad (b) $IH^{\bar{m}}_{l-1} (L_x;\Z)$ is torsion free for all $x\in 
X^{n-2l} - X^{n-2l-1}$.
\end{definition}

\begin{remark}
\label{r6}
(i) IP stands for ``intersection homology Poincar\'e''.

(ii) Note that the stratification is not considered as part of the structure of
an IP-space.

(iii) If conditions (a) and (b) hold for some stratification then they hold for
every stratification (by the Proposition in \cite[Section 2.4]{gm2}).
\end{remark}

\begin{definition} 
(\cite{pardon})
\label{rev1}
An $n$-dimensional  \emph{$\partial$-IP-space} is an $n$-dimensional 
PL $\partial$-pseudomanifold $X$ for which
$X-\partial X$ is an IP-space.
\end{definition}

\begin{prop}
\label{p3}
If $X$ is a $\partial$-IP-space then $\partial X$ is an IP-space.
\end{prop}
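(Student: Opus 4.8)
The plan is to reduce the statement for $\partial X$ to the corresponding conditions on $X - \partial X$, using the collar. The key point is that the IP conditions are conditions on links, and the links of points of $\partial X$ inside $\partial X$ are the same as certain links of $X$ near the boundary, shifted by one dimension because of the collar.

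First I would recall that, by Definition \ref{d2}(c), there is a neighborhood $N$ of $\partial X$ and a filtered homeomorphism $N \to \partial X \times [0,1)$ which is the identity on $\partial X$; here $[0,1)$ carries the trivial filtration. This means that for a point $x \in \partial X$ lying in the stratum $(\partial X)^i - (\partial X)^{i-1}$ of $\partial X$, the point $x$, viewed in $X - \partial X$ via a point $(x,t)$ with $t > 0$, lies in the stratum $X^{i+1} - X^i$ (the collar direction adds one to the stratum index but, since $[0,1)$ is unfiltered, contributes no new intermediate stratum — this is exactly why the filtration of a $\partial$-pseudomanifold has $X^{n-1} = X^{n-2}$ and why the collar is filtration-trivial). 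The link $L_{(x,t)}$ of $(x,t)$ in $X - \partial X$ is PL homeomorphic, as a stratified space, to $L_x^{\partial X}$, the link of $x$ in $\partial X$: locally near $(x,t)$ the model is $\R^i \times (0,1) \times c^\circ L$ where $\R^i \times c^\circ L$ is the local model for $x$ in $\partial X$, so the link is unchanged. (Alternatively, and perhaps more cleanly, the link of $x\in\partial X$ in $\partial X$ is PL homeomorphic to the link of $x$ in $X$; this is the statement that "taking links commutes with taking boundary" and follows from the collar together with Remark after Definition \ref{d1} that $L$ is determined up to PL homeomorphism by $x$ and the stratification.)

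Granting this identification of links, the argument is a dimension bookkeeping. Suppose $x \in (\partial X)^{(n-1) - 2l - 1} - (\partial X)^{(n-1)-2l-2}$; I must show $IH^{\bar m}_l(L^{\partial X}_x;\Z) = 0$. Under the collar, $x$ sits in the stratum $X^{(n-1)-2l-1+1} - X^{(n-1)-2l-2+1} = X^{n - 2l - 1} - X^{n-2l-2}$ of $X - \partial X$ (here I use that $X-\partial X$ is an $n$-dimensional stratified PL pseudomanifold, Definition \ref{d2}(a)), and the link $L^{X-\partial X}_x$ is PL homeomorphic to $L^{\partial X}_x$. Since $X - \partial X$ is an IP-space (definition of $\partial$-IP-space), condition (a) of Definition \ref{def.ipspace} applies with the same value of $l$ and gives $IH^{\bar m}_l(L^{X - \partial X}_x;\Z) = 0$, hence $IH^{\bar m}_l(L^{\partial X}_x;\Z) = 0$. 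The torsion-free condition (b) is handled identically: a point $x \in (\partial X)^{(n-1)-2l} - (\partial X)^{(n-1)-2l-1}$ maps to the stratum $X^{n-2l} - X^{n-2l-1}$ of $X - \partial X$, and condition (b) of Definition \ref{def.ipspace} for $X - \partial X$ gives that $IH^{\bar m}_{l-1}(L^{X-\partial X}_x;\Z) \cong IH^{\bar m}_{l-1}(L^{\partial X}_x;\Z)$ is torsion free. Finally, $\partial X$ is an $(n-1)$-dimensional PL pseudomanifold by Definition \ref{bpseud}(c), so it qualifies as a candidate IP-space, and the stratification inherited from $X$ is a stratification in the sense of Definition \ref{d1} by Definition \ref{d2}(b); thus $\partial X$ is an $(n-1)$-dimensional IP-space.

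The main obstacle is the first step: making precise and correct the claim that the link of $x \in \partial X$ taken in $\partial X$ agrees (as a stratified PL pseudomanifold, up to PL homeomorphism) with the link of $x$ taken in $X$, together with the correct shift of stratum indices. This requires being careful about the convention $X^{n-1} = X^{n-2}$ for $\partial$-pseudomanifolds and about how the filtration-trivial collar $\partial X \times [0,1)$ sits inside the local cone models; once the bookkeeping $i \mapsto i+1$ on stratum indices and $(n-1) \mapsto n$ on ambient dimension is pinned down, the intersection homology conditions transfer verbatim because they are conditions purely on the PL homeomorphism type of the links. I would expect the cleanest write-up to isolate the link identification as a short lemma (or cite it from \cite{Fr} or Appendix \ref{a1}), and then the proposition is a two-line consequence.
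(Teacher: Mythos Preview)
Your proposal is correct and follows essentially the same approach as the paper: use the collar (Definition~\ref{d2}(c)) to identify the link of a boundary point $x$ in $\partial X$ with the link of a nearby interior point in $X-\partial X$, and then transfer the IP conditions. The paper's proof is a terse three-sentence version of exactly this argument, citing Proposition~\ref{p2} and Remark~\ref{r6}(iii) for the stratification-independence, whereas you spell out the dimension bookkeeping explicitly; both are fine.
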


\begin{proof}
Give $X$ the stratification of Proposition \ref{p2}.
By Remark \ref{r6}(iii),
the restriction of this stratification to $X-\partial X$ has properties (a) and
(b) of Definition \ref{def.ipspace}.  
Part (c) of Definition \ref{d2}
implies that the links of $\partial X$
are also links of $X-\partial X$, so $\partial X$ satisfies
Definition \ref{def.ipspace}.
\end{proof}

Next we consider IP bordism groups.  There are two ways to define them:
\begin{enumerate}
\item
The objects and
bordisms are the compact oriented IP-spaces and $\partial$-IP-spaces. 
\item
An object is a compact oriented IP-space with a given stratification, and
similarly for the bordisms.
\end{enumerate}
Pardon \cite{pardon} does not make it clear which definition he is using, but
fortunately the two definitions give the same bordism groups by \cite{f}.
We will use the first definition.

\section{Review of $\ad$ theories}
\label{s3}

We recall some definitions from \cite[Sections 2 and 
3]{LM}.  
A \emph{category with involution} is a category together with an endofunctor
$i$ which satisfies $i^2 = 1$.
The set of integers $\intg$ is a poset and therefore a category. We give
it the trivial involution.
A \emph{$\intg$-graded category} is a category $\Aa$ with involution together with
involution-preserving functors $d: \Aa \to \intg$ (called the \emph{dimension function}) and
$\varnothing: \intg \to \Aa$ such that $d\varnothing$ is equal to the identity functor. A \emph{$k$-morphism} between
$\intg$-graded categories is a functor which decreases the dimensions of objects by $k$ and
strictly commutes with $\varnothing$ and $i$.
We will write $\varnothing_n$ for $\varnothing (n)$.
Note that the existence of $d$ implies that when $d(A) > d(B)$, there are no morphisms
$A \to B$.

Let $K$ be a finite collection of PL balls in some $\real^n$, and write
$|K|$ for the union $\bigcup_{\sigma \in K} \sigma$. We say that $K$ is a \emph{ball complex}
if the interiors of the balls of $K$ are disjoint and the boundary of each ball of $K$ is a union of balls of
$K$ (thus the interiors of the balls of $K$ give $|K|$ the structure of a regular CW
complex). The balls of $K$ will also be called \emph{closed cells} of $K$.
A \emph{subcomplex} of a ball complex $K$ is a subset of $K$ which is a ball complex.
A \emph{morphism} of ball complexes is the composite of an isomorphism with an
inclusion of a subcomplex.
For a ball
complex $K$ and a
subcomplex $L$ we define
$\CC ell(K,L)$ to be the category in which the objects are the oriented 
closed cells 
of $K$ which are not in $L$, together with an empty cell $\emptyset_n$ for 
each dimension $n$, and the non-identity morphisms are 
given by inclusions of cells (with no requirement on the orientations).
The  category $\CC ell(K,L)$ is $\Z$-graded (\cite[Definition 3.3]{LM}), that 
is, it comes with an involution $i$ (which reverses the orientation), a 
dimension functor $d$ into the poset $\Z$ and a section 
functor given by $\emptyset_n$. 

Given a $\Z$-graded category $\Aa$, a {\it pre $(K,L)$-ad of degree $k$} is a
functor
$\CC ell(K,L)\to \Aa$ of $\intg$-graded categories which decreases dimensions by $k$.  The set of these is
denoted $\pre^k(K,L)$.
Note that a pre $(K,L)$-ad defines a pre $K=(K,\emptyset )$-ad by precomposition with the functor from $\CC ell(K)$ to $\CC ell(K,L)$ which sends the $n$-dimensional cells of $L$  to the empty cell $\emptyset_n$.

An {\em ad theory} with values in $\Aa$ (called the
{\it target category} of the ad theory) consists of a subset
$\ad^k(K,L)\subset \pre^k(K,L)$ for each $(K,L)$ and each $k$, satisfying 
certain axioms
(\cite[Definition 3.10]{LM}).  One of the axioms says that an element of
$\pre^k(K,L)$ is in $\ad^k(K,L)$ if and only if its image in $\pre^k(K)$ is in
$\ad^k(K)$, so to describe an ad theory it suffices to specify the sets
$\ad^k(K)$.

An ad theory gives rise to bordism groups $\Omega_*$ (\cite[Section 4]{LM}),
a spectrum $\mathbf Q$ (\cite{quinn}, \cite[Section
15]{LM}), and a weakly equivalent symmetric spectrum $\mathbf M$
(\cite[Section 17]{LM}). 

A {\it morphism} of ad theories
is a functor of target categories which takes ads to ads.
A morphism $\ad_1 \to \ad_2$ of ad theories induces maps
$\mathbf Q_1 \to \mathbf Q_2$ and $\mathbf M_1 \to \mathbf M_2$ 
of the associated spectra.

\begin{remark}
\label{aa41}
Later we will need to know that there is a canonical isomorphism
$
\pi_j \bQ\cong \Omega_j
$
for all $j$.
This is a consequence of
\cite[Proposition 16.4(i), Remark 14.2(i), and Definitions 4.1 and
4.2]{LM}; for the convenience of the reader we describe the isomorphism more
explicitly.  By definition, $\pi_j \bQ$ is $\colim_n \pi_{n+j} Q_n$, where the
$Q_n$ are the spaces of the spectrum, and the maps $\pi_{n+j} Q_n\to 
\pi_{n+1+j} Q_{n+1}$ are induced by the suspension maps (\cite[page 44]{LM}
and Appendix \ref{aa1})
and are isomorphisms because $\bQ$ is an $\Omega$ spectrum (\cite[Proposition
15.9]{LM}).  
It therefore suffices to give 
compatible maps
\begin{equation}
\label{bo}
\pi_{n+j} Q_{n}\to \Omega_j
\end{equation}
for all $n$.  Next recall that $Q_n$ is the geometric realization 
of a based semisimplicial set $P_n$ (\cite[Definition 15.4]{LM}).  If $s$ is an
$(n+j)$ simplex of $P_n$ which has all its faces at the basepoint then the
induced map $|s/\partial s|\to Q_n$ represents an element $\bar{s}$ of 
$\pi_{n+j} Q_n$, and since $P_n$ is a Kan complex (\cite[Lemma 15.12]{LM}) 
all elements of $\pi_{n+j} Q_n$ are obtained in this way (\cite[Remark 
6.5]{RS}).  Also, by \cite[Definitions 15.4 and 3.10(e)]{LM}, there is an 
element $s_0\in \ad^{-j}(*)$ which corresponds to $s$ under the map induced by the $(n+j)$-isomorphism from $\CC ell(\Delta^{n+j}, \partial \Delta^{n+j})$ to $\CC ell(*)$.  Now we define the map
\eqref{bo} by letting $\bar{s}$ go to the bordism class of $s_0$ 
(\cite[Definitions 4.1 and 4.2]{LM}.
\cite[Proposition 16.4(i), Remark
14.2(i) and page 44]{LM} show that this is a well-defined isomorphism which is
compatible with the suspension maps as $n$ varies.
\end{remark}

\subsection{The ad theory of oriented topological manifolds}
As motivation for the ad theory of IP-spaces, we briefly recall the ad theory
$\ad_\STop$ (\cite[Example 3.5 and Section 6]{LM}; also see \cite[Section
2]{LM2}).
The target category $\Aa_{\STop}$ has as objects the compact oriented
topological manifolds with boundary which are subsets of some $\R^n$. The morphisms between objects of the same
dimension are the orientation-preserving homeomorphisms, and the other 
morphisms are the inclusions with image in the boundary.

To describe the set $\ad^k_\STop(K)$ we need to recall two definitions from 
\cite[Section 5]{LM}.  A $\Z$-graded category $\Aa$  is called  
{\it balanced} if it comes with a natural  involutive bijection for all objects $A,B$ of different dimensions
$$\eta :\Aa(A,B)\ra \Aa(A,i(B))$$
which commutes with the involution  $i$; examples are $\CC ell(K)$ and
$\Aa_{\STop}$.  Functors between balanced 
categories are called {\it balanced} if they 
commute with $\eta$.

Let $\CC ell^\flat (K)$ be the category whose objects are the (unoriented) 
cells of $K$ and whose morphisms are the inclusions.  Let $\Aa^\flat_{\STop}$
be the category whose objects are compact orientable topological manifolds,
whose morphisms between objects of the same dimension are homeomorphisms, and
whose other morphisms are  
the inclusions with image in the boundary.
A balanced functor
$$F: \CC ell(K) \ra \Aa_{\STop}$$ 
induces a functor
$$F^\flat:  \CC ell^\flat (K)\lra \Aa^\flat_{\STop}.$$ 
We define $\ad_\STop^k(K)\subset \pre_\STop^k(K)$ to be the set of 
functors $F$ with the following properties.

\quad (a)
$F$ is balanced.

\quad (b)
If $(\sigma',o')$ and $(\sigma,o)$ are oriented cells with
$\dim\sigma'=\dim\sigma-1$, and if
the incidence number $[o,o']$ is equal
to $(-1)^k$, 
then the map
\[
F(\sigma',o')\to\partial F(\sigma,o)
\]
is orientation preserving.

\quad (c)
For each $\sigma$, $\partial F^\flat(\sigma)$ is the colimit in Top of
$ F^\flat|_{\Cell^\flat(\partial \sigma)}.$

\medskip
It is shown in \cite[Appendix B]{LM} and \cite[Section 11]{LM2} that
the spectrum ${\mathbf Q}_\STop$ (resp., the symmetric spectrum ${\mathbf 
M}_\STop)$ obtained from this ad theory is weakly equivalent to the usual 
Thom spectrum MSTop (resp., as a symmetric spectrum). 

\begin{remark} 
There are many variations of classical bordism spectra which can be represented by Quinn spectra as well. For example, one can consult  \cite{baas-laures} for the case of bordism with singularities of Baas-Sullivan type.
\end{remark}

\section{The $\ad$ theory of IP-Spaces} 
\label{s4}

Recall Proposition \ref{p2}.

\begin{definition}
\label{d3}
Let $X$ and $X'$ be PL $\partial$-pseudomanifolds of dimensions $n,n'$. 
A {\it strong embedding} $f:X\to X'$ is a PL embedding for which
$X[n-i]=f^{-1}((X')[n'-i])$ for $0\leq
i\leq n$.
\end{definition}

Let $\Aa_\ip$ be the $\Z$-graded category whose objects are compact 
oriented $\partial$-IP-spaces which are subsets of some $\R^n$, whose morphisms between objects of the same dimension are
the orientation-preserving PL homeomorphisms and whose other morphisms
are the strong embeddings with image in the boundary. 
The involution $i$ reverses the orientation.  Then $\Aa_\ip$ is a balanced
$\Z$-graded category.
(The requirement that the morphisms between objects of different dimensions 
are strong embeddings will not actually be used until the proof of Lemma
\ref{l1}(ii)).
Before defining $\ad^k_\ip(K)$, we need a fact about PL topology which will be
proved at the end of this section.

\begin{lemma} \label{lem.pl2}
Let $\mathcal{P}$ be the category whose objects are compact polyhedra and 
whose morphisms are PL embeddings. Let $K$ be a ball complex and
$G: \CC ell^\flat (K)\to \mathcal{P}$ 
a covariant functor such that, for every $\sigma$, the map
\[
\colim_{\tau\in\partial \sigma} G(\tau)\to G(\sigma)
\]  
is a monomorphism.
Then for every subcomplex $L$ of $K$

{\rm (i)} the space
$\colim_{\sigma \in L} G (\sigma)$ has a PL structure for which 
the maps $G (\sigma)\to \colim_{\sigma \in L} G (\sigma)$ for $\sigma \in L$ 
are PL embeddings, and

{\rm (ii)} the map
$$\colim_{\sigma \in L} G (\sigma)\to \colim_{\sigma \in K} G (\sigma)$$ 
is a PL embedding.

{\rm (iii)}
Suppose that $\upsilon,\upsilon'$ are cells of $K$, that $x$ is a point of 
$G(\upsilon)$, that $y$ is a point of $G(\upsilon')$ which is not in the 
image of $\colim_{\tau\in\partial \upsilon'} G(\tau)\to G(\upsilon')$, and that
$x$ and $y$ map to the same point of $\colim_{\sigma \in K} G (\sigma)$. Then 
$\upsilon'\subset \upsilon$.  
\end{lemma}

Now let $\Aa^\flat_\ip$ be the category whose objects are compact orientable
IP-spaces with boundary, whose morphisms between objects of the same dimension
are PL homeomorphisms, and whose other morphisms are the strong embeddings with 
image
in the boundary.
A balanced functor
\[
F: \CC ell (K) \longrightarrow \Aa_\ip
\]
induces a functor
\[ F^\flat: \CC ell^\flat (K) \longrightarrow \Aa^\flat_\ip. \] 

\begin{definition} \label{def.ipad}
Let $K$ be a ball complex.
Define $\ad_\ip^k(K)\subset \pre_\ip^k(K)$ to be the set of functors $F$ with 
the following properties:

\quad (a) $F$ is balanced.

\quad (b) If $(\sigma', o')$ and $(\sigma, o)$ are oriented cells with $\dim 
\sigma' = \dim \sigma -1$, and if the incidence number $[o, o']$ is equal to 
$(-1)^k$, then the map
\[  F(\sigma', o')\longrightarrow \partial F(\sigma, o) \]
is orientation preserving. 

\quad (c) For each $\sigma \in K$, the map
\[
\colim_{\tau \in \partial \sigma} F^\flat (\tau)
\to
\partial F^\flat(\sigma)
\]
is a bijection.
\end{definition}

\begin{thm}
\label{t1}
$\ad_\ip$ is an ad theory.
\end{thm}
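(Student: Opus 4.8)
The plan is to verify, one by one, the axioms for an ad theory in the sense of \cite[Definition 3.10]{LM}, with the target category $\Aa_\ip$. Most of these axioms are ``formal'' and are checked by the same bookkeeping that appears in the treatment of $\ad_\STop$ in \cite[Section 6]{LM} and of $\ad_{\mathrm{Wh}}$ in \cite{LM2}: for instance, the axiom characterizing $\ad^k(K,L)$ in terms of $\ad^k(K)$ holds by construction, since conditions (a)--(c) of Definition \ref{def.ipad} only refer to cells not in $L$; the functoriality in $(K,L)$ along ball complex maps is immediate because balancedness, the incidence-number condition, and the colimit condition are all preserved under pullback of cells; and the ``gluing'' axiom is handled by the usual argument that ads on $K_1$ and $K_2$ agreeing on $K_1\cap K_2$ glue to an ad on $K_1\cup K_2$. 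The part that genuinely uses the geometry of IP-spaces is the \emph{existence/extension} axiom: given an ad on a subcomplex $L\subseteq K$, one must extend it over $K$, and this is where the boundary object $\partial F^\flat(\sigma)$ has to be produced as an honest compact oriented $\partial$-IP-space.

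First I would set up the key construction underlying condition (c): for a cell $\sigma$, the candidate boundary $\partial F(\sigma)$ is the polyhedron $\colim_{\tau\in\partial\sigma} F^\flat(\tau)$. By Lemma \ref{lem.pl2} (applied with $G=F^\flat$, whose monomorphism hypothesis is exactly what an ad on $\partial\sigma$ supplies), this colimit carries a PL structure into which each $F^\flat(\tau)$ embeds. I would then show this colimit is a compact oriented PL $(\dim\sigma-1-k)$-dimensional $\partial$-pseudomanifold: compactness is clear, the pseudomanifold conditions (a),(b) of Definition \ref{pseud}/\ref{bpseud} are local and follow from the corresponding conditions on the pieces $F(\tau)$ together with the combinatorics of $\partial\sigma$, and the orientations are matched up precisely by condition (b) of Definition \ref{def.ipad} (the incidence-number condition), so their sum is a relative cycle.

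The main obstacle, as I see it, is verifying that this glued space actually lies in $\Aa_\ip$, i.e.\ that it is an IP-space (away from its own boundary). This reduces to a \emph{local} statement about links: at a point $x$ of the glued space, the link $L_x$ is assembled from links of the constituent $F(\tau)$'s along a subposet of $\partial\sigma$, and one must check that conditions (a),(b) of Definition \ref{def.ipspace} are inherited. The crucial input here is Remark \ref{r6}(iii) --- the IP conditions are independent of the stratification --- together with the fact (used in the proof of Proposition \ref{p3}) that in a collared $\partial$-pseudomanifold the links along the boundary coincide with links of the interior; this is also why strong embeddings, rather than arbitrary PL embeddings, are used as the boundary morphisms, so that the intrinsic filtration is respected under gluing. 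Once links are under control, the extension step is finished by an induction over skeleta of $K$: having extended the ad over the $(j-1)$-skeleton together with $L$, for each $j$-cell $\sigma\notin L$ one chooses $F(\sigma)$ to be the cone on the already-constructed $\partial F^\flat(\sigma)$ --- the open cone on an $(m-1)$-dimensional $\partial$-IP-space is an $m$-dimensional $\partial$-IP-space (compactify to a closed cone to get a compact representative), and coning preserves orientations appropriately --- thereby realizing conditions (a)--(c) on the enlarged subcomplex. I would close by remarking that the remaining axioms of \cite[Definition 3.10]{LM} (the ones concerning the empty objects $\emptyset_n$, products with intervals for the bordism relation, etc.) are verified verbatim as in \cite{LM,LM2}, since $\Aa_\ip$ is closed under disjoint union and under taking products with compact PL manifolds with boundary.
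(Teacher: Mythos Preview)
Your proposal misidentifies which axioms carry the content. The gluing axiom (f) of \cite[Definition 3.10]{LM} is \emph{not} the statement that ads on $K_1$ and $K_2$ agreeing on $K_1\cap K_2$ patch together; rather, it says that if $K'$ is a \emph{subdivision} of $K$ and $F$ is a $K'$-ad, then there is a $K$-ad $E$ agreeing with $F$ on each residual subcomplex. This is the genuinely geometric step: one must amalgamate the pieces $F^\flat(\sigma)$ over all cells $\sigma$ of $K'$ lying in a single top cell $\tau$ of $K$ into a single compact oriented $\partial$-IP-space $E(\tau,o)$. The paper carries this out by taking $X=\colim_{\sigma\in K'}F^\flat(\sigma)$, using Lemma \ref{lem.pl2} for the PL structure, verifying the $\partial$-pseudomanifold axioms from the combinatorics of $K'$, and then showing that the links of $X-\partial X$ in its intrinsic stratification coincide with links already appearing in the individual $F^\flat(\sigma)$, whence the IP conditions transfer. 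Checking that the inclusions $F^\flat(\sigma)\hookrightarrow X$ are strong embeddings requires a further argument with intrinsic filtrations. The cylinder axiom (g), which you fold into a closing remark, also needs to be addressed explicitly: one must check that $F(\sigma,o)\times I$ is a $\partial$-IP-space and that the end inclusions are strong embeddings.

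More seriously, the ``existence/extension axiom'' you spend most of your effort on --- extending an ad from a subcomplex $L$ to all of $K$ --- is \emph{not} an axiom of ad theories. Indeed it cannot be: if every $L$-ad extended to a $K$-ad, then in particular every $*$-ad would extend to a $\Delta^1$-ad, so every closed IP-space would bound and the bordism groups would vanish. Your proposed cone construction is also wrong on its own terms: the cone on a closed $(m-1)$-dimensional IP-space $Y$ has $Y$ itself as the link of the cone point, and the IP conditions at that point would force $IH^{\bar m}_{\lfloor (m-1)/2\rfloor}(Y;\Z)$ to vanish or be torsion-free --- a constraint that a general IP-space certainly does not satisfy. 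So the heart of your argument is aimed at a non-existent axiom and uses a construction that does not stay inside $\Aa_\ip$.
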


\begin{remark}
\label{r2}
By 
\cite[Lemma 2.11.7]{Fr},
the Cartesian product of $\partial$-IP-spaces is a $\partial$-IP-space, and the
product of an element of $\ad_\ip^k(K)$ with an element of $\ad_\ip^l(L)$ is an
element of $\ad_\ip^{k+l}(K\times L)$.  Thus $\ad_\ip$ is a multiplicative ad
theory (\cite[Definition 18.4]{LM}) and the associated symmetric spectrum
$\bM_\ip$ is a symmetric ring spectrum (\cite[Theorem 18.5]{LM}).

Moreover, $\ad_\ip$ is a commutative ad theory (\cite[Definition 3.3]{LM2}), so
by Theorem 1.1 of
\cite{LM2} there is a commutative symmetric ring spectrum
$\bM^{\mathrm{comm}}_\ip$
which is weakly equivalent as a symmetric ring spectrum to $\bM_\ip$.
Specifically, there is a symmetric ring spectrum $\mathbf A$ and ring maps
\[
\bM_\ip \leftarrow {\mathbf A} \rightarrow \bM^{\mathrm{comm}}_\ip
\]
which are weak equivalences.
\end{remark}

\begin{proof}[Proof of Theorem \ref{t1}]
The only parts of \cite[Definition 3.10]{LM} which are not obvious are (f) (the
gluing axiom) and (g) (the cylinder axiom).  

For part (g),
let $F$ be a $K$-ad; we need to define $J(F): \Cell(K\times I)\to
\Aa_\ip$.  First note that the statement of part (g) specifies what $J(F)$ has
to be on the subcategories $\Cell(K\times 0)$ and $\Cell(K\times 1)$.  The
remaining objects have the form
$(\sigma \times I,o\times o')$ and we define $J(F)$ for such an object to be
$F(\sigma,o)\times (I,o')$, where $(I,o')$ denotes the PL $\partial$-manifold
$I$ with orientation $o'$. $F(\sigma,o)\times I$ is a
$\partial$-IP-space because the link
at a point $(x,t)$ is the link in $F(\sigma,o)$ at $x$. The
inclusions of $F(\sigma,o)\times \{0\}$ and $F(\sigma,o)\times \{1\}$ in 
$F(\sigma,o)\times I$ are
strong embeddings (see Definition \ref{d3}) by the definition of the
stratification in Proposition \ref{p2}.

For part (f), let $K$ be a ball complex and $K'$ a subdivision of $K$.  Let $F$
be a $K'$-ad. We need
to show that there is a $K$-ad $E$ which agrees with $F$ on each residual 
subcomplex of $K$. We may assume (by induction over the lowest dimensional
cell of $K$ that is not a cell of $K'$) that $|K|$ is a PL
$n$-ball, that $K$ has exactly one $n$-cell, and
that $K'$ is a subdivision of $K$ which agrees with $K$ on the boundary of
$|K|$.   
Let $L$ be the subcomplex of $K'$ consisting of cells in the boundary
of $|K|$.  

Now let $\tau$ denote the $n$-cell of $K$, and choose an orientation $o$ of
$\tau$.
To specify the $K$-ad $E$, we only need to define $E(\tau,o)$. 

Let $X$ denote $\colim_{\sigma \in K'}
F^\flat (\sigma)$ and give $X$ the PL structure provided by Lemma 
\ref{lem.pl2}(i). 
By Lemma \ref{nn1} below, 
$X$ is a PL $\partial$-pseudomanifold. 

We will write $[F^\flat(\sigma)]$ for the image of $F^\flat(\sigma)$ in $X$; by
Lemma \ref{lem.pl2}(i) this is a PL $\partial$-pseudomanifold.

We claim that $X$ is a $\partial$-IP-space. By Definition \ref{rev1},
we need to show that $X-\partial X$ is an IP-space.  
We give $X-\partial 
X$ the intrinsic stratification (see Proposition \ref{p1}(iv)).
Let $x\in
X-\partial X$.  By Lemma \ref{lem.pl2}(iii) there is a unique $\sigma\in 
K'-L$ for which $x$ is in the
interior of $[F^\flat(\sigma)]$; give $[F^\flat(\sigma)]$ the intrinsic
stratification and let
$U$ be a distinguished neighborhood of $x$
in $[F^\flat(\sigma)]$.  The proof of 
\cite[Proposition 6.6]{LM} shows
that $x$ has a neighborhood $V$ in $X$ such that
there is a PL homeomorphism
\[
f:V\to U\times A,
\]
where $A$ is a Euclidean space.  The
filtration of $V$ inherited from $X$ is the same as the intrinsic
stratification of $V$ by Proposition \ref{p1}(i), and (by Proposition
\ref{p1}(ii) and (iii)) $f$ 
takes this 
filtration to the Cartesian product of the intrinsic stratification of $U$ with
the trivial stratification of $A$.  This implies that the link of $x$ in $X$ is
the same as the link of $x$ in $[F^\flat(\sigma)]$, and so conditions (a) and 
(b)
of Definition \ref{def.ipspace} are satisfied.

Now give the $n$-cells $\sigma$ of
$K'$ the orientations $o_\sigma$ which agree with $o$.  Then $X$ has an
orientation which agrees with the orientations of the $F(\sigma,o_\sigma)$, and
we define $E(\tau,o)$ to be $X$ with this orientation.  We claim that $E$ is a
pre $K$-ad with values in $\Aa_\ip$.  For this it only remains to show
that $[F^\flat(\sigma)]\to X$ satisfies Definition \ref{d3}
when $\sigma$ is a cell of $L$.
We denote the stratification on a PL pseudomanifold
(resp., PL $\partial$-pseudomanifold) $Y$ provided by Proposition \ref{p1}(iv)
(resp., Proposition \ref{p2}) by $Y^*$ (resp., $Y[*]$).
By its definition, the
filtration $X[*]$ agrees (up to a dimension shift) 
with $(\partial X)^*$, so it suffices to show that $(\partial X)^*$ agrees with
$[F^\flat(\sigma)][*]$. Next we observe that (by the proof of \cite[Proposition
6.6]{LM}) each point of $\partial [F^\flat(\sigma)]$ has a neighborhood $U$ in 
$\partial [F^\flat(\sigma)]$ and a neighborhood $V$ in $\partial X$ 
with 
$V\approx U\times A$, where $A$ is a Euclidean space.  Then $(\partial X)^*$ agrees with $V^*$ by
Proposition \ref{p1}(i), and (by Proposition \ref{p1}(ii) and (iii)) the latter agrees 
with $U^*\times A$ (where $A$ is given the trivial filtration). 
This implies that 
$(\partial X)^*$ agrees with the restriction of 
$[F^\flat(\sigma)][*]$ to $\partial [F^\flat(\sigma)]$. Moreover, $(\partial 
X)^*$ also agrees with the restriction of $[F^\flat(\sigma)][*]$ to 
$[F^\flat(\sigma)]-\partial [F^\flat(\sigma)]$ by 
Proposition \ref{p1}(i), so the two filtrations agree on all of
$[F^\flat(\sigma)]$.

Finally, we need to check that $E$ satisfies Definition \ref{def.ipad}. Parts 
(a)
and (b) are obvious from the way $E$ was constructed, and part (c) is 
given by Lemma \ref{nn1}.
\end{proof}

\begin{lemma}
\label{nn1}
With the PL structure given by Lemma \ref{lem.pl2}(i),

{\rm{(i)}}
$X$ is a PL $\partial$-pseudomanifold, and 

{\rm{(ii)}}
the map 
\[
\colim_{\sigma \in L}
F^\flat (\sigma)
\to
\colim_{\sigma \in K'}
F^\flat (\sigma)
= X
\]
is a PL embedding with image $\partial X$.
\end{lemma}

\begin{proof}[Proof of Lemma \ref{nn1}]

It follows from Lemma \ref{lem.pl2}(i) that
there is a triangulation of $X$ for which each 
$[F^\flat (\sigma)]$ is a subcomplex.  The top-dimensional simplices have 
dimension $n-k$, where $k$ is the degree of $F$.

The proof of \cite[II.6.2]{wh} shows that $K'$ has the 
following properties (where ``cell'' means closed cell):

\begin{enumerate}
\item Every cell is contained in an $n$-cell.

\item Every $(n-1)$-cell in $L$ is contained in exactly one $n$-cell of $K'$.

\item Every $(n-1)$-cell not in $L$ is contained in exactly two $n$-cells.


\end{enumerate}

To see that $X$ satisfies part (a) of Definition \ref{bpseud}, let $s$ be any 
simplex of $X$.  Then $s$ is contained in $[F^\flat (\sigma)]$ 
for some $\sigma$, and (1) implies that 
$\sigma\subset \sigma'$ for some $n$-dimensional 
$\sigma'$.  Since $[F^\flat(\sigma')]$ 
is a PL $\partial$-pseudomanifold of dimension $n-k$, it has an 
$(n-k)$-dimensional simplex which contains $s$ as required.


For part (b), let $s$ be an $n-k-1$ simplex of $X$.  

First suppose that $s$ is
not contained in any $[F^\flat(\sigma)]$ with dim$(\sigma)=n-1$.  
Then by (1) and Lemma \ref{lem.pl2}(iii), there is a unique $\sigma$ of
dimension $n$ for which $s$ is contained in $[F^\flat(\sigma)]$, and
since $[F^\flat(\sigma)]$ is a PL $\partial$-pseudomanifold of dimension $n-k$
it has exactly two $(n-k)$-dimensional simplices which contain $s$. 

Next suppose that 
$s\subset [F^\flat(\sigma)]$ with dim$(\sigma)=n-1$.  By Lemma
\ref{lem.pl2}(iii) there can only be one such $\sigma$.  If $\sigma\in L$, then
by (2) there is exactly one $n$ cell $\sigma'$ containing $\sigma$.
$[F^\flat(\sigma')]$ has exactly one $(n-k)$ simplex containing $s$, and 
Lemma \ref{lem.pl2}(iii) implies that this is the only $(n-k)$ simplex of $X$
which contains $s$.  If $\sigma\notin L$, then by (3) there are exactly two $n$
cells $\sigma'$ and $\sigma''$ which contain $s$.  Each of $[F^\flat(\sigma']$
and $[F^\flat(\sigma'')]$ has exactly one $(n-k)$ simplex which contains $s$,
and Lemma \ref{lem.pl2}(iii) implies that these are the only $(n-k)$ 
simplices of $X$ which contain $s$.

For part (c), we first observe that the proof of part (b) shows that 
\begin{equation}
\label{nn2}
\partial X=\bigcup_{\sigma\in L} [F^\flat(\sigma)].
\end{equation}
The rest of the proof for part (c) is similar to that for parts (a) and (b), 
but using \cite[II.6.2]{wh} instead of (1), (2) and (3).

For part (d), we first observe that the proof of
\cite[Proposition 6.6]{LM} shows that $\partial X$ is locally collared (in the
sense of \cite[page 24]{ro-sa.pltop}); now \cite[Theorem 2.25]{ro-sa.pltop}
shows that $\partial X$ is collared.

Finally part (ii) of the lemma follows from Lemma \ref{lem.pl2}(ii) and 
Equation \eqref{nn2}. 
\end{proof}

It remains to prove Lemma \ref{lem.pl2}.  The main ingredient is the following,
which is Exercise 2.27(2) in \cite{ro-sa.pltop}.

\begin{lemma} \label{lem.pl1}
Let $P,Q$ and $R$ be polyhedra and let
$f:R\hookrightarrow P,$ $g:R\hookrightarrow Q$ be PL embeddings. Then the 
pushout of
\[ P \hookleftarrow R \hookrightarrow Q, \]
formed in the category of topological spaces, has a PL structure for which
the inclusions of $P$ and $Q$ are PL embeddings.
\qed
\end{lemma}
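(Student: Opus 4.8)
The plan is to deduce Lemma \ref{lem.pl2} from Lemma \ref{lem.pl1} by induction on the number of cells of $L$, using the fact that a colimit over a ball complex can be built up one cell at a time as an iterated pushout along cell boundaries. I will first treat (i) and then derive (ii) from (i) applied to both $L$ and $K$.

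For (i), order the cells $\sigma_1,\sigma_2,\dots,\sigma_r$ of $L$ so that each $\sigma_j$ has all its proper faces among $\sigma_1,\dots,\sigma_{j-1}$ (a shelling-type order by dimension). Write $L_j$ for the subcomplex spanned by $\sigma_1,\dots,\sigma_j$, and set $X_j = \colim_{\sigma\in L_j} G(\sigma)$; the claim is that $X_j$ carries a PL structure making each $G(\sigma)\to X_j$, $\sigma\in L_j$, a PL embedding, and I prove this by induction on $j$. The base case $j=1$ is trivial since $\sigma_1$ is a vertex and $X_1 = G(\sigma_1)$. For the inductive step, observe that $X_j$ is the pushout of
\[
X_{j-1} \hookleftarrow \colim_{\tau\in\partial\sigma_j} G(\tau) \hookrightarrow G(\sigma_j),
\]
where the left-hand map is an embedding because $\partial\sigma_j$ is a subcomplex of $L_{j-1}$ and the maps $G(\tau)\to X_{j-1}$ are PL embeddings by inductive hypothesis (so the colimit over $\partial\sigma_j$ is a subpolyhedron of $X_{j-1}$, itself carrying a PL structure by applying the inductive hypothesis to the subcomplex $\partial\sigma_j\subseteq L_{j-1}$), and the right-hand map is an embedding by the monomorphism hypothesis on $G$. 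Lemma \ref{lem.pl1} then endows $X_j$ with a PL structure in which the inclusions of $X_{j-1}$ and $G(\sigma_j)$ are PL embeddings; composing with the embeddings $G(\sigma)\hookrightarrow X_{j-1}$ for $\sigma\in L_{j-1}$ gives the required PL embeddings for all of $L_j$. One must check that the PL structure on $X_j$ restricts to the previously constructed one on $X_{j-1}$ (immediate from Lemma \ref{lem.pl1}) and is independent of the chosen cell order; the latter follows because any two admissible orders are connected by transpositions of adjacent cells of equal dimension whose closures meet only in lower cells, and for such a transposition the two pushout constructions visibly agree. Taking $j=r$ gives (i).

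For (ii), apply (i) to the subcomplex $L$ and to $K$ itself, and extend the cell ordering of $L$ to one of $K$ (possible since $L$ is a subcomplex). Then $\colim_{\sigma\in K}G(\sigma)$ is built from $\colim_{\sigma\in L}G(\sigma)$ by the same iterated-pushout procedure, adjoining the cells of $K\setminus L$ in order; each pushout step is along Lemma \ref{lem.pl1} and keeps the previous subpolyhedron PL embedded, so in particular the inclusion $\colim_{\sigma\in L}G(\sigma)\hookrightarrow\colim_{\sigma\in K}G(\sigma)$ is a PL embedding.

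\textbf{Main obstacle.} The routine-looking point that actually requires care is the well-definedness of the PL structure on $X_j$ independent of the cell order, and the compatibility of the inductively constructed structures — i.e. verifying that when $\partial\sigma_j$ is itself a union of several cells, the "subpolyhedron" $\colim_{\tau\in\partial\sigma_j}G(\tau)\subseteq X_{j-1}$ really does inherit the correct PL structure and that Lemma \ref{lem.pl1}, which is stated for a single pushout of polyhedra, applies verbatim. This is where the monomorphism hypothesis on $G$ is essential: it guarantees that at every stage the relevant maps are genuine inclusions of subpolyhedra, so that the colimit over $\partial\sigma_j$ is computed correctly inside $X_{j-1}$ and Lemma \ref{lem.pl1} is applicable. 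I expect this bookkeeping — rather than any new PL-topological input — to be the bulk of the work.
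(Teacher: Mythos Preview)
You have proved the wrong lemma. The statement you were asked to prove is Lemma~\ref{lem.pl1}, which asserts that a \emph{single} pushout $P\hookleftarrow R\hookrightarrow Q$ of polyhedra along PL embeddings carries a PL structure making the inclusions of $P$ and $Q$ PL. Your proposal takes this lemma as a black box and uses it to deduce Lemma~\ref{lem.pl2} (the iterated version for colimits over ball complexes). That is exactly backwards relative to the task.

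The paper itself does not give a proof of Lemma~\ref{lem.pl1}: it is stated with a terminal \qed\ and attributed to Exercise~2.27(2) of Rourke--Sanderson. The actual content is a triangulation argument --- choose triangulations of $P$ and $Q$ in which the images $f(R)$ and $g(R)$ are subcomplexes, arrange (by passing to common subdivisions) that the induced triangulations on $R$ agree, and then the identification space inherits a simplicial (hence PL) structure in which both $P$ and $Q$ sit as subcomplexes. None of this appears in your write-up.

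As a separate remark: what you \emph{did} write is essentially the paper's own proof of Lemma~\ref{lem.pl2}, with a minor organisational difference. The paper inducts on the total number of cells of $K$, removing a top-dimensional cell $\sigma$ and invoking the inductive hypothesis both for $K_0=K\setminus\{\sigma\}$ and for $\partial\sigma$ (which has strictly fewer cells), then applying Lemma~\ref{lem.pl1} once. You instead order the cells of $L$ and build up one cell at a time. These are equivalent, though your version incurs the extra bookkeeping about order-independence that you flagged; the paper's induction on $|K|$ sidesteps this by never committing to a specific order. Your discussion of the ``main obstacle'' is therefore an artefact of your chosen induction scheme rather than a genuine difficulty in the mathematics.
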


\begin{proof}[Proof of Lemma \ref{lem.pl2}]
First we prove (i) and (ii).
Assume inductively that (i) and (ii) hold for any ball complex with at most $k$ 
cells.  Let $K$ be a ball complex with $k+1$ cells and let $\sigma \in K$ 
be a top-dimensional cell. Let $K_0=K - \{ \sigma \}$. 
Let $P = \colim_{\tau \in K_0} G (\tau)$ and 
$R=\colim_{\tau\subset\partial\sigma}G(\tau)$; the inductive hypothesis implies
that $P$ and $R$ have PL structures 
for which all maps $G(\tau)\to P$ and $G(\tau)\to R$ are PL embeddings, and it
also implies that $R\to P$ is a PL embedding.
We are given that the map $R\to G(\sigma)$ is a monomorphism, and it is PL
since its restriction to each $G(\tau)$ is PL.
Now let $S$ denote $\colim_{\tau\in K} G(\tau)$.  Then $S$ is 
the pushout of 
\[
P \hookleftarrow R \hookrightarrow G(\sigma),
\]
so by 
Lemma \ref{lem.pl1} it has a PL structure for which $P\to S$ and $G(\sigma)\to
S$ are PL maps; it follows that $G(\tau)\to S$ is a PL map for every $\tau$.
It remains to check that part (ii) of Lemma \ref{lem.pl2} holds, so
let $L$ be a subcomplex of $K$.  The map 
\[
i:\colim_{\tau\in L} G(\tau)\to S
\]
is PL, since its restriction to each $G(\tau)$ is PL, so we only need to check 
that $i$ is a monomorphism.  If $\sigma\notin L$ this follows from the
inductive hypothesis and the fact that $P\to S$ is a monomorphism.
If $\sigma\in L$ then $\colim_{\tau\in L} G(\tau)$ is the pushout
of
\[
\colim_{\tau\in L-\{\sigma\}}G(\tau)
\hookleftarrow
R
\hookrightarrow
G(\sigma),
\]
and this pushout maps by a monomorphism to the pushout of 
\[
P \hookleftarrow R
\hookrightarrow G(\sigma)
\]
which is $S$.

It remains to prove (iii).  Suppose that $\upsilon'\not\subset\upsilon$ and let
$L$ be the minimal subcomplex of $K$ that contains $\upsilon$ and $\upsilon'$.
By (ii), $x$ and $y$ map to the same point of $\colim_{\sigma\in L} G(\sigma)$.
But this implies that $x$ and $y$ are related by a sequence of elementary
relations of the form $a\sim G(i_{\sigma,\sigma'})(b)$, where $\sigma\subset 
\sigma'\in L$ and $i_{\sigma,\sigma'}$ is the inclusion map.  This is a 
contradiction, since $y$ cannot be part of such an elementary relation.
\end{proof}

\section{$\ad$ theories of symmetric Poincar\'e complexes}
\label{s5}

The ad theory constructed in the previous section gives a spectrum $\bQ_\ip$
and a symmetric spectrum $\bM_\ip$.  Our next goal is to use the symmetric
signature to construct maps (in the derived category of spectra and the derived
category of symmetric spectra)
from $\bQ_\ip$ and $\bM_\ip$ to suitable versions
of the symmetric L-theory spectrum of $\Z$.  In order to do this we need an ad
theory for symmetric Poincar\'e complexes over $\Z$.  In \cite[Section 9]{LM}
we gave an ad theory (denoted $\ad^\Z$) which was suggested by definitions from
\cite{ww} and \cite{ranicki}; in particular this leads to a spectrum $\bQ^\Z$ 
which is identical to Ranicki's spectrum ${\mathbb L}^\bullet (\Z)$.  But 
this turns out not to be well-adapted to questions of commutativity (see the 
beginning of \cite[Section 12]{LM2}) or to intersection homology (see the 
introduction to \cite{friedmanmcclure}), so in \cite[Section 12]{LM2} the 
second and third authors introduced a modification $\ad_\mathrm{rel}^\Z$ 
(rel stands for ``relaxed'') which 
gives a spectrum $\bQ_\mathrm{rel}^\Z$ weakly equivalent to Ranicki's 
${\mathbb L}^\bullet (\Z)$.  In this section we review this material; we 
should mention that everything extends from $\Z$ to an arbitrary 
ring-with-involution $R$ and that \cite{LM} and \cite{LM2} develop the theory 
in this generality.

\subsection{The $\ad$ theory $\ad^\Z$}
\label{ss3}

As motivation we begin with the theory $\ad^\Z$.
A chain complex over $\Z$ is called {\it finite} if it is free abelian and 
finitely generated in each degree and nonzero in only finitely many degrees; 
it is called {\it homotopy finite} if it is free abelian in each degree and
chain homotopic to a finite complex.
Let $\DD$ be the category of homotopy finite chain complexes and chain maps.
Let $W$ be the 
standard free resolution of $\Z$ by $\Z[\Z/2]$ modules. 
The $n$-dimensional objects of the target category $\Aa^\Z$ 
are pairs
$(C,\varphi)$, where $C$ is an object of $\DD$ and 
$$ \varphi : W \ra C \otimes C$$
is a $\Z/2$-equivariant chain map which raises degrees by $n$.  Here, the $\Z/2$-action on $C\otimes C$ switches the factors. The 
morphisms $(C,\varphi )\ra (C',\varphi')$ are the chain maps $f:C\to C'$, 
with the
additional requirement that $(f\otimes f)\circ \varphi=\varphi'$ when the
dimensions are equal.
The involution reverses the sign of $\varphi$. 
Next, $(\ad^\Z)^k(K)\subset (\pre^\Z)^k(K)$ is defined to be the set of 
functors $F$ with the following properties:

\quad (a) $F$ is balanced. This allows us to write
$F(\sigma,o)$ as $(C_\sigma,\varphi_{\sigma,o})$.

\quad (b)
$F$ is well-behaved, that is,  each map $C_\tau\to C_\sigma$ is a split
monomorphism
in each dimension, and (writing $C_{\partial \sigma}$ for
$\colim_{\tau\subset\partial\sigma}C_\tau$) each map
\[
C_{\partial \sigma}\to C_\sigma
\]
is a split monomorphism in each dimension.

\quad (c) $F$ is closed, that is, for each cell $\sigma$ of $K$ the graded homomorphism from 
the cellular chain complex 
$\mbox{cl}(\sigma )$ to $\Hom(W,C_\sigma \otimes C_\sigma ) $
which takes  $(\tau ,o)$ to the composite
$$
W\stackrel{\varphi_{(\tau,o)}}{\lra}C_\tau\otimes C_\tau\ra C_\sigma \otimes C_\sigma
$$
is a chain map.  This implies that $\varphi_{\sigma,o}$ represents a class
$[\varphi_{\sigma,o}]$ in $H_n(\Hom(W, (C_\sigma/C_{\partial\sigma})\otimes 
C_\sigma))$; the augmentation $\varepsilon:W\to \Z$ is a quasi-isomorphism and
hence induces an isomorphism
\[
\varepsilon^*:H_n((C_\sigma/C_{\partial\sigma})\otimes 
C_\sigma)\to H_n(\Hom(W, (C_\sigma/C_{\partial\sigma})\otimes 
C_\sigma));
\]
we denote 
$(\varepsilon^*)^{-1}[\varphi_{\sigma,o}]$ by $\mathbf 
c$.\footnote{\label{ff}Since
different notation was used in \cite{LM}, we should explain how ${\mathbf
c}$ is
related to the element denoted by $\bar{\phi}_*({\mathbf i})$ in 
\cite[Section 9]{LM}. 
First note that, if $\iota$ is the map $\Z\to W$ which 
takes 1 to 1, then ${\mathbf i}$ can be taken to be $\iota_*(1)$.  Now
the isomorphism
$
H_n((C_\sigma/C_{\partial\sigma})\otimes 
C_\sigma)\cong H_n(\Hom(\Z, (C_\sigma/C_{\partial\sigma})\otimes 
C_\sigma))
\xrightarrow{\varepsilon^*}
H_n(\Hom(W, (C_\sigma/C_{\partial\sigma})\otimes 
C_\sigma))
$
takes $\bar{\phi}_*({\mathbf i})$ to the class represented by 
$\bar{\phi}\circ\iota\circ\varepsilon$, which is the same as the class
represented by $\bar{\phi}$ (since $\iota\circ\varepsilon$ is chain homotopic to
the identity $W\to W$), and this is the same as the class we have denoted by
$[\varphi_{\sigma,o}]$.  Thus $\bar{\phi}_*({\mathbf
i})=(\varepsilon^*)^{-1}[\varphi_{\sigma,o}]=\mathbf c$.}

\quad (d)
$F$ is nondegenerate, that is, for each $\sigma$ the slant product with
$\mathbf c$ gives an isomorphism
$$
H^*(\Hom (C_\sigma,\Z))
\to
H_{\dim\sigma-k-*}(C_\sigma/ C_{\partial \sigma}).
$$

\medskip

\begin{remark}
\label{r1}
In \cite{LM}, the second and third authors 
give a construction of
a symmetric signature map
$\bQ_\STop\to \bQ^\Z$ in the derived category of spectra, using ideas from
\cite{ranicki}
(see \cite[Section 10 and the end of Section 8]{LM}).
The starting point
for this construction is the 
observation that, if $M$ is a compact oriented $\partial$-manifold and $\xi 
\in S_n(M),$ where $S_* (-)$ denotes singular chains, 
represents the fundamental class of $M$ then the composite
$$ W\cong  W \otimes \Z \stackrel{1\otimes \xi }{\lra} W \otimes 
S_*M\stackrel{EAW}{\lra} S_*M\otimes S_*M$$
(where $EAW$ is the 
extended Alexander-Whitney map, which can be constructed using acyclic 
models) is an object of $\Aa^\Z$. 
The Alexander-Whitney map (and {\it a fortiori} the extended Alexander-Whitney
map) does not exist for intersection chains, which is one
reason we need the modification of $\Aa^\Z$ given in the next subsection.
\end{remark}

\begin{remark}
The ad theory $\ad^\Z$ is multiplicative (\cite[Definitions 18.1 and 
9.12]{LM}) but not commutative (see the beginning of Section 12 of 
\cite{LM2}).
\end{remark}

\subsection{The $\ad$ theory $\ad^\Z_{\mathrm{rel}}$}
\label{adrel}

An object of the category $\Aa^\Z_\mathrm{rel}$ is a quadruple
$(C,D,\beta,\varphi)$, where $C$ is an object of $\DD$, $D$ is a chain complex
with a $\Z/2$-action, $\beta$ is a quasi-isomorphism
$C\otimes C\to D$ which is also a $\Z/2$-equivariant map, and $\varphi$
is an element of the fixed point set $D_n^{\Z/2}$.  A morphism $(C,D,\beta,\varphi)\to
(C',D',\beta',\varphi')$ is a
pair $(f:C\to C',g:D\to D')$, where $f$ and $g$ are chain
maps, $g$ is $\Z/2$-equivariant,
$g\beta=\beta'(f\otimes f)$,
and (if the dimensions are equal)
$g(\varphi)=\varphi'$.

\begin{example}
\label{e1}
If $(C,\varphi)$ is an object of $\Aa^\Z$
then the quadruple $(C,(C\otimes C)^W,\beta,\varphi)$ is a
relaxed quasi-symmetric complex, where
$$(C\otimes C) ^W=\Hom(W,C\otimes C)$$
is the $\Z/2$-chain complex with fixed points the equivariant chain maps and  $\beta:C\otimes C\to
(C\otimes C)^W$
is induced by the augmentation $W\to \Z$.  This construction gives a functor
$\Aa^\Z\to \Aa^\Z_\mathrm{rel}$.
\end{example}

\begin{example}
\label{e2}
In the situation of Remark \ref{r1}, we obtain an object of
$\Aa^\Z_\mathrm{rel}$ by letting $C$ be $S_*M$, $D$ be $S_*(M\times M)$, $\beta$
be the cross product, and $\varphi$ be the image of $\xi \in S_n (M)$ under the diagonal
map.
\end{example}

Now $(\ad^\Z_\mathrm{rel})^k(K)\subset (\pre^\Z_\mathrm{rel})^k(K)$ is defined to 
be the set of functors $F$ with the following properties:

\quad (a) $F$ is balanced. This allows us to write
$F(\sigma,o)$ as $(C_\sigma,D_\sigma, \beta_\sigma, \varphi_{\sigma,o})$.

\quad (b) $F$ is well-behaved, that is, all maps $C_\tau\to C_\sigma$,
$D_\tau\to D_\sigma$, $C_{\partial \sigma}\to C_\sigma$ and $D_{\partial\sigma}
\to D_\sigma$ are split monomorphisms in each dimension.  This implies that the
map $\beta_*:
H_*(C_\sigma\otimes C_\sigma,(C\otimes C)_{\partial\sigma})
\to
H_*(D_\sigma,D_{\partial\sigma})$ is an isomorphism (\cite[Lemma
12.7(ii)]{LM2}).

\quad (c) $F$ is closed, that is, for each $\sigma$ the map
\[
\cl(\sigma)\to D_\sigma
\]
which takes $\langle \tau,o\rangle$ to $\varphi_{\tau,o}$
is a chain map.  This implies that $\varphi_{\sigma,o}$ represents a class
$[\varphi_{\sigma,o}]$ in $H_n(D_\sigma,D_{\partial\sigma})$.

\quad (d) $F$ is nondegenerate, that is, for each $\sigma$ the
slant product with
$(\beta_*)^{-1}([\varphi_{\sigma,o}])$
is an isomorphism
\[
H^*(\Hom(C_\sigma,\Z))
\to
H_{\dim \sigma-k-*}(C_\sigma/C_{\partial\sigma}).
\]

\begin{remark}
\label{r3}
(i)
The functor $\Aa^\Z\to \Aa^\Z_\mathrm{rel}$ in Example \ref{e1} gives a map of
spectra $\bQ^\Z\to \bQ^\Z_\mathrm{rel}$ which is a weak equivalence
(\cite[Section 13]{LM2}).

(ii) The ad theory $\ad^\Z_\mathrm{rel}$ is commutative 
(\cite[Definition 3.3 and Remark 12.13]{LM2}) so Theorem 1.1 of \cite{LM2}
shows that there is a commutative symmetric ring spectrum
$(\bM^\Z_\mathrm{rel})^{\mathrm{comm}}$ which is weakly equivalent as a symmetric 
ring 
spectrum to $\bM^\Z_\mathrm{rel}$.
\end{remark}

\subsection{Connective versions.}
\label{conn}
In the sequel, we will mostly deal with connective versions of $L$-theory rather than with  periodic ones. There is a general procedure which takes an ad theory to another ad theory and which makes the associated Quinn spectrum connective: define the sub functor $\ad_{\geq 0}$ of an ad theory by
$$ \ad_{\geq 0}^k(K,L)=\ad^k(K,L\cup K^{(k-1)}),$$
where $K^{(n)}$ denotes the $n$-skeleton of $K$. 
We leave it to the reader to check the properties of an ad theory for  
$\ad_{\geq 0}$. 

In order to see that this has the desired effect on homotopy groups,
we recall from \cite[Theorem 16.1]{LM} that the homotopy group $\pi_k \bQ$ of a 
Quinn spectrum coincides with the $k$-th bordism group. The bordism group is  
obtained from the set of  $(-k)$-dimensional $*$-ads by identifying two 
elements if there is an $I$-ad which restricts to the given ones at the ends. 
For $K=*$ or $I$ we have  $\ad_{\geq 0}^{-k}(K)=\ad_{}^{-k}(K)$ for 
$k\geq 
0$ and $\ad_{\geq 0}^{-k}(*)=\{ \emptyset_{-k} \}$ for $k<0$ and the claim 
follows.

If the nonempty objects of the category $\Aa$ are concentrated in nonnegative 
dimensions then all ad theories with values in $\Aa$ are connective; this is 
the case for $\Aa_{STop}$ and $\Aa_{IP}$. 
For the theory $\ad^\Z$, there is a map from the Quinn 
spectrum $\bQ^\Z_{\geq 
0}$ to the connective spectrum $\mathbb L=\mathbf L^\bullet\langle 
0\rangle(\Z)$ which is a weak equivalence
(
the two spectra are not
identical because 
in \cite[Definition 15.2]{ranicki} the restriction to the $(k-1)$
skeleton is required to be acyclic but not zero).

A morphism of ad theories induces a morphism of their connective versions. 
Any morphism from a 
connective ad theory to another ad theory takes values in the 
connective version.

If 
the theory ad is multiplicative
(resp. commutative) then so is $\ad_{\geq 0}$.  There is a map of Quinn spectra
$\bQ_{\geq 0} \ra \bQ$ and a map of symmetric spectra $\mathbf{M}_{\geq 0} \to 
\mathbf{M}$ which is a ring map if ad is multiplicative and weakly equivalent
to an $E_\infty$ map if ad is commutative.

\section{The symmetric signature as a map of spectra}
\label{s6}

In this section we construct symmetric signature maps 
\[
\Sig:\bQ_\ip \to
\bQ^\Z_{\mathrm{rel},\geq 0}
\]
(in the derived 
category of spectra) and
\[
\Sig:\bM_\ip \to \bM^\Z_{\mathrm{rel},\geq 0}
\]
(in the derived category of symmetric spectra). 
The first step is to give a variant of the ad theory $\ad_\ip$.
As we have seen in Remark \ref{r1} and Example \ref{e2}, in order for a 
compact oriented $\partial$-manifold to give rise to an object of 
$\Aa^\Z_{ \mathrm{rel}}$ we must choose a chain representative for the 
fundamental class.  The same is true for $\partial$-IP-spaces, so in Subsection
\ref{ss1} we
construct a suitable ad theory $\ad_\ipFun$ and we show that the forgetful maps
$\bQ_\ipFun\to \bQ_\ip$ and $\bM_\ipFun\to \bM_\ip$ are weak equivalences.
Next, in Subsection \ref{ss2} we
construct a functor
\[
\sig:\Aa_\ipFun\to\Aa^\Z_{\mathrm{rel}}
\]
which induces a natural transformation
$$\sig:\ad_\ipFun(K)\to\ad^\Z_{\mathrm{rel},\geq 0}(K)$$  
for strict ball complexes $K$ (see \cite[third paragraph of Section 14 and 
Remark 14.1]{LM2} for the meaning and
significance of strictness).
These results allow us to make the following definition.
\begin{definition} 
\label{d4}
The symmetric signature map
\[
\Sig:\bQ_\ip \to
\bQ^\Z_{\mathrm{rel},\geq 0}
\]
is the composite
\[
\bQ_\ip \xleftarrow{\simeq}
\bQ_\ipFun
\xrightarrow{\sig}
\bQ^\Z_{\mathrm{rel},\geq 0}.
\]
The symmetric signature map 
\[
\Sig:\bM_\ip \to \bM^\Z_{\mathrm{rel},\geq 0}
\]
is the composite
\[
\bM_\ip \xleftarrow{\simeq}
\bM_\ipFun
\xrightarrow{\sig}
\bM^\Z_{\mathrm{rel},\geq 0}.
\]
\end{definition} 

\subsection{The $\ad$ theory $\ad_\ipFun$}
\label{ss1}

We denote singular intersection chains with perversity $\bar{p}$ by 
$IS^{\bar{p}}_*$.  By \cite[Proposition 7.7]{FM}, 
an orientation of a compact $n$-dimensional $\partial$-IP-space $X$ 
determines a fundamental class $\Gamma_X\in IH^{\bar{0}}_n(X,\partial 
X;\Z)$, where $\bar{0}$ denotes the 0 perversity.

We define a category 
$\Aa_\ipFun$ as follows. The objects are pairs $(X,\xi)$, where $X$
is a compact oriented $\partial$-IP-space and 
$\xi \in IS^{\bar{0}}_n(X;\Z)$ is a chain whose image in
$IS^{\bar{0}}_n(X,\partial X;\Z)$ it represents
the fundamental class $\Gamma_X$; there is also an empty object of
dimension $n$ for each $n$.  The morphisms $(X,\xi)\to(X',\xi')$ 
between objects of the same dimension are PL homeomorphisms which take $\xi$
to $\xi'$, and the other morphisms are strong embeddings with image in the
boundary.
There is a forgetful functor $\Aa_\ipFun\to \Aa_\ip$, and we define 
$\ad_\ipFun^k(K)\subset \pre_\ipFun^k(K)$ to be the set of functors $F$
such that 

\quad (a) the composite of $F$ with the forgetful functor is an element of
$\ad_\ip^k(K)$, and

\quad (b) for each oriented cell $(\sigma,o)$ of $K$, the equation 
\[
\partial \xi_{\sigma,o}
=
\sum \xi_{\sigma',o'}
\]
holds, where $\sigma'$ runs through the cells of $\partial \sigma$ and 
$o'$ is the orientation for which the incidence number $[o,o']$ is $(-1)^k$.

\begin{prop}
\label{fp1}
$\ad_\ipFun$ is a connective ad theory.
\end{prop}
\begin{proof}
We only need to check parts (f) and (g) of \cite[Definition
3.10]{LM}.  
For the proof of (f) we use the gluing construction in the proof 
of Theorem \ref{t1} (and the notation there) and we define $\xi_{\tau,o}$ to 
be $\sum \xi_{\sigma,o_\sigma}$, where
$\sigma$ runs throught the $n$-cells of $K'$; then $\xi$ maps to a
representative of $\Gamma_X$ by \cite[Corollary 5.16]{FM}.

For (g), let $F$ be a $K$-ad, and write 
$F(\sigma,o)=(X_\sigma,\xi_{\sigma,o})$.
As in the proof of Theorem \ref{t1}, we only need to specify $J(F)$ on
oriented cells of the form $(\sigma\times I,o\times o')$, and moreover we can
assume that $o'$ is the standard orientation.  Let 
$
s:\Delta^1\to I
$
be the standard oriented homeomorphism, and define
$
J(F)(\sigma\times I,o\times o')=
(X_\sigma\times I,\xi\times s).
$
\end{proof}
The forgetful functor $\Aa_\ipFun\to\Aa_\ip$ gives rise to a morphism
$\ad_\ipFun \to \ad_\ip$ of ad theories.
\begin{prop}
\label{fp2}
The maps 
\[
\bQ_\ipFun
\to
\bQ_\ip
\]
and
\[
\bM_\ipFun
\to
\bM_\ip
\]
induced by the forgetful functor $\Aa_\ipFun\to\Aa_\ip$
are weak equivalences.
\end{prop}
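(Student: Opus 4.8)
The plan is to exhibit, for each ball complex $K$, an explicit cofinal way of rectifying an $\ad_\ip^k(K)$-ad to an $\ad_\ipFun^k(K)$-ad, and to use the general machinery of \cite{LM} that says a morphism of ad theories inducing such a ``surjectivity up to subdivision'' statement is a weak equivalence on Quinn spectra. Concretely, the key point is that the forgetful functor $\Aa_\ipFun \to \Aa_\ip$ has the property that for every $\partial$-IP-space $X$ the fibre (the set of chain representatives $\xi \in IS^{\bar 0}_n(X,\partial X;\Z)$ of $\Gamma_X$) is nonempty and ``contractible'' in the relevant combinatorial sense: any two choices are connected by a chain homotopy, and choices can be made compatibly over a ball complex. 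So the first step is to recall from \cite[Section 15 or the relevant uniqueness/comparison result]{LM} the precise criterion: a morphism of ad theories $\ad_1 \to \ad_2$ induces a weak equivalence $\bQ_1 \to \bQ_2$ provided it induces an isomorphism on bordism groups $\Omega_*$ (equivalently, on the coefficient groups $T^{-*}(\pt)$), since both spectra are built from the same cellular machinery and a morphism of ad theories is automatically compatible with the spectrum-level structure maps.

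Second, I would reduce to showing the induced map $(\Omega_\ipFun)_* \to (\Omega_\ip)_*$ is an isomorphism. Surjectivity: given a closed oriented $\partial$-IP-space representing a class, simply choose any chain representative $\xi$ of $\Gamma_X$ (which exists by \cite[Proposition 7.7]{FM}); on a bordism $W$ one first picks a representative of $\Gamma_W$ restricting correctly on $\partial W = X \sqcup -X'$, which is possible because the fundamental class is natural and the restriction map on these $IH^{\bar 0}$ groups is surjective on the chain level after possibly enlarging the representative — here one uses that $IS^{\bar 0}_*$ is a functor admitting the relevant subdivision and excision properties. Injectivity: if $(X,\xi)$ bounds in $\ad_\ip$ via $W$, one needs a chain representative of $\Gamma_W$ that restricts to $\xi$ on $X$; the obstruction to extending $\xi$ over $W$ lives in a relative intersection homology group which vanishes because $\Gamma_W$ itself is a bona fide class in $IH^{\bar 0}_{n+1}(W,\partial W)$ mapping to $\Gamma_{\partial W}$, so a chain-level lift exists, and then one patches the given $\xi$ to it using a chain homotopy (two representatives of the same class in $IS^{\bar 0}$ differ by a boundary plus a cycle, and here the cycle is zero since both represent $\Gamma_X$).

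Alternatively — and this is probably the cleaner route to write — I would prove the statement directly at the level of ball complexes by showing $\ad_\ipFun^k(K) \to \ad_\ip^k(K)$ is surjective and that its ``fibres'' are connected through bordism, then invoke the standard fact (as used repeatedly in \cite{LM}, \cite{LM2}, e.g.\ for the analogous forgetful maps) that this forces a weak equivalence of Quinn spectra. The inductive construction over the skeleta of $K$ is the technical heart: given an $\ad_\ip^k(K)$-ad $F$ and a compatible choice of $\xi_{\sigma,o}$ already made on the $(m-1)$-skeleton, one extends to an $m$-cell $\sigma$ by first choosing any representative $\eta$ of $\Gamma_{F(\sigma,o)}$ and then correcting it by a chain whose boundary is $\bigl(\sum_{\sigma'} \xi_{\sigma',o'}\bigr) - \partial\eta$; this correction chain exists precisely because the right-hand side is a cycle in $IS^{\bar 0}_{m-1}(\partial F(\sigma,o))$ representing $0$ in $IH^{\bar 0}_{m-1}(\partial F(\sigma,o))$ — it represents $\Gamma_{\partial F(\sigma,o)} - \Gamma_{\partial F(\sigma,o)} = 0$ by the naturality of fundamental classes under the gluing of Definition \ref{def.ipad}(c) and the boundary-compatibility already arranged on lower skeleta. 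The main obstacle is this last naturality bookkeeping: one must check that the combinatorial ``colimit = boundary'' condition (c) in $\ad_\ip$ genuinely propagates fundamental classes in intersection homology, i.e.\ that $\Gamma_{\partial F^\flat(\sigma)}$ is the image under $\bigoplus \to \colim$ of the classes $\Gamma_{F(\sigma',o')}$ with the right signs, which is where one invokes the intrinsic-stratification results of Appendix \ref{a1} together with the excision/Mayer–Vietoris properties of $IH^{\bar 0}_*$ exactly as in \cite[Section 7]{FM}.
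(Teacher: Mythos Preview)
Your proposal is correct and follows essentially the same route as the paper: reduce to showing that the induced map on bordism groups $(\Omega_\ipFun)_*\to(\Omega_\ip)_*$ is an isomorphism, observe surjectivity is immediate by choosing any chain representative of $\Gamma_X$, and prove injectivity by extending a given $\xi$ over a null-bordism $W$ using that any two representatives of $\Gamma_{\partial W}$ differ by a boundary in $IS^{\bar 0}_*$. The paper is simply terser: it cites \cite[Proposition 16.4(i), Remark 14.2(i), Proposition 17.7]{LM} for the reduction to bordism groups and \cite[Lemma 8.2]{LM} (the analogous $\STopFun\to\STop$ argument) for injectivity, whereas you spell out the chain-level correction; your alternative cell-by-cell lifting over a general ball complex $K$ is more than is needed here, since the bordism-group criterion already suffices.
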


\begin{proof}
Recall the definition of the bordism groups of an ad theory (\cite[Definitions
4.1 and 4.2]{LM}).
By Remark \ref{aa41} and \cite[Proposition 17.7]{LM}, it
suffices to show that the map of bordism groups
\[
(\Omega_\ipFun)_*\to (\Omega_\ip)_*
\]
is an isomorphism.  This map is obviously onto, and it is a monomorphism by the
proof of \cite[Lemma 8.2]{LM}.
\end{proof}

\begin{remark}
\label{r7}
$\ad_\ipFun$ is a commutative ad theory, so
by Theorem 1.1 of
\cite{LM2} there is a commutative symmetric ring spectrum
$\bM^{\mathrm{comm}}_\ipFun$
which is weakly equivalent as a symmetric ring spectrum to $\bM_\ipFun$.
Moreover, the forgetful map $\ad_\ipFun\to \ad_\ip$ is strictly multiplicative,
so the proof of \cite[Theorem 1.1]{LM2} gives a commutative diagram
\[
\xymatrix{
\bM_\ip
&
{\mathbf A}
\ar[l]_-\simeq
\ar[r]^-\simeq
&
\bM^{\mathrm{comm}}_\ip
\\
\bM_\ipFun
\ar[u]_\simeq
&
{\mathbf B}
\ar[l]_-\simeq
\ar[r]^-\simeq
\ar[u]_\simeq
&
\bM^{\mathrm{comm}}_\ipFun
\ar[u]_\simeq
}
\]
in which $\mathbf A$ and $\mathbf B$ are symmetric ring spectra and all
arrows are ring maps.
\end{remark}

\subsection{Background}

Before proceeding we need to recall some information about generalized 
perversities.  
For a stratified $n$-dimensional $\partial$-pseudomanifold $Y$ the 
components of $Y^i-Y^{i-1}$ are called {\it $i$-dimensional strata}; the
$n$-dimensional strata are called {\it regular} and the others {\it singular}.
Recall (\cite[Definition 3.1.1]{Fr}) that a {\it generalized
perversity}\footnote{These are simply called perversities in \cite{Fr}.}
on $Y$  is a function $\bar{p}$ from the set of strata of $Y$ to $\Z$ which is 
0 on the regular strata;
an ordinary perversity $\bar{q}$ can be thought of as a generalized perversity
taking a stratum $S$ to $\bar{q}(\mathrm{codim}(S))$.
We use the definition of intersection homology for general perversities given
in \cite[Definition 6.2.2]{Fr}.
Let $\bar{n}$ be the upper middle perversity.

Let $X$ be a $\partial$-IP-space, and 
give $X$ the stratification of Proposition \ref{p2}.  Give $X\times X$ the
product stratification. Define
a generalized perversity
$Q_{\bar{n},\bar{n}}$ on $X\times X$ as follows.  
\[
Q_{\bar n,\bar n}(S_1\times S_2)=\begin{cases}
 \bar n(S_1)+\bar n(S_2)+2, &\text{$S_1,S_2$ both singular strata,}\\
\bar n(S_1), &\text{$S_2$ a regular stratum and $S_1$ singular,}\\
\bar n(S_2), &\text{$S_1$ a regular stratum and $S_2$ singular,}\\
0,&\text{$S_1,S_2$ both regular strata.}
 \end{cases}
\]
By \cite[Subsection 4.1]{FM}, the diagonal map induces a chain map
\begin{equation}
\label{eq3}
d:
IS^{\bar{0}}_*(X;\Z)
\to 
IS^{Q_{\bar{n},\bar{n}}}_*(X\times X;\Z)
\end{equation}
(this is the reason we need generalized perversities).
By \cite[Theorem 6.4.6 and Remark 6.4.7]{Fr}, the cross product induces an 
equivalence
\[
IS^{\bar{n}}_*(X;\Z)
\otimes
IS^{\bar{n}}_*(X;\Z)
\to
IS^{Q_{\bar{n},\bar{n}}}_*(X\times X;\Z).
\]

\subsection{The functor $\sig:\Aa_\ipFun\to\Aa^\intg_{\mathrm{rel}}$}
\label{ss2}

\begin{lemma}
\label{l1}
{\rm (i)}
Let $(X,\xi)$ be an object of $\Aa_\ipFun$.  Give $X$ the stratification of
Proposition \ref{p2}
and give $X\times X$ the product stratification.  Then 
$(C,D,\beta,\varphi)$ is an object of $\Aa^\Z_\mathrm{rel}$, 
where
\begin{align*}
&C=IS_*^{\bar{n}}(X;\Z), \\
&D=IS^{Q_{\bar{n},\bar{n}}}_*(X\times X;\Z), \\
& \beta\ \text{is the cross product, and}\\
& \varphi\ \text{is the image of
$\xi$ under the diagonal map \eqref{eq3}.}
\end{align*}

{\rm (ii)} Let $f:(X,\xi)\to (X',\xi')$ be a morphism in $\Aa_\ipFun$ and let
$(C,D,\beta,\varphi)$ and $(C',D',\beta',\varphi')$ be the objects of
$\Aa^\Z_\mathrm{rel}$ corresponding to $(X,\xi)$ and $(X',\xi')$.  Then $f$
induces a morphism $(C,D,\beta,\varphi)\to (C',D',\beta',\varphi')$.
\end{lemma}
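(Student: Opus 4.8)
The plan is to check the conditions defining an object of $\Aa^\Z_{\mathrm{rel}}$ (and, for part (ii), a morphism of such objects) one at a time, in each case reducing to a fact recorded in the Background subsection or to a standard property of the cross product and the diagonal map. Throughout, $X$ (and $X\times X$) carry the stratification of Proposition \ref{p2} (resp.\ the product stratification), and the $\Z/2$-action on intersection chains of $X\times X$ is the one induced by the swap homeomorphism $\tau\colon X\times X\to X\times X$; this is legitimate because the generalized perversity $Q_{\bar n,\bar n}$ is manifestly symmetric in its two arguments, so $\tau$ induces an involution $\tau_\#$ of $IS^{Q_{\bar n,\bar n}}_*(X\times X;\Z)$.

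\textbf{Part (i).} First, $C=IS^{\bar n}_*(X;\Z)$ is an object of $\DD$: singular intersection chains are free abelian in each degree, and since $X$ is compact PL the complex $C$ is chain homotopy equivalent to the finite simplicial intersection chain complex of a triangulation, hence homotopy finite (by the relevant comparison result in \cite{Fr}, \cite{FM}). Next, $\beta$ is the cross product, which is a quasi-isomorphism by the equivalence recalled at the end of the Background subsection (\cite[Lemma 6.43 and Remark 6.46]{Fr}); it is $\Z/2$-equivariant because the cross product satisfies $\tau_\#(a\times b)=(-1)^{|a|\,|b|}(b\times a)$, which is precisely the statement that $\beta$ intertwines the Koszul-sign interchange on $C\otimes C$ with $\tau_\#$ on $D$. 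Finally $\varphi=d(\xi)$ lies in $D_n$ since $\xi\in IS^{\bar 0}_n(X;\Z)$ and $d$ preserves degree, and $\varphi$ is $\Z/2$-fixed because $\tau\circ\Delta=\Delta$ as maps $X\to X\times X$, so $\tau_\#\circ d=d$. Hence $(C,D,\beta,\varphi)$ is an object of $\Aa^\Z_{\mathrm{rel}}$.

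\textbf{Part (ii).} Since $f$ is either a PL homeomorphism or a strong embedding, in both cases it is compatible with the stratifications of Proposition \ref{p2} (by the invariance of the intrinsic stratification in the first case, and by Definition \ref{d3} in the second), and therefore so is $f\times f$ with the product stratifications, respecting $Q_{\bar n,\bar n}$. Consequently $f$ and $f\times f$ induce chain maps $g=f_\#\colon C\to C'$ and $h=(f\times f)_\#\colon D\to D'$ on the relevant intersection chain complexes (functoriality of intersection chains, \cite{Fr}). The map $h$ is $\Z/2$-equivariant because $(f\times f)\circ\tau=\tau\circ(f\times f)$; the identity $h\beta=\beta'(g\otimes g)$ is the naturality of the cross product; and when $\dim X=\dim X'$ we get $h(\varphi)=(f\times f)_\# d(\xi)=d'(f_\#\xi)=d'(\xi')=\varphi'$, using $(f\times f)\circ\Delta_X=\Delta_{X'}\circ f$ together with the fact that a morphism of $\Aa_\ipFun$ takes $\xi$ to $\xi'$. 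Thus $(g,h)$ is a morphism $(C,D,\beta,\varphi)\to(C',D',\beta',\varphi')$.

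\textbf{Main obstacle.} The naturality and equivariance identities are routine; the points requiring genuine input are (a) that $IS^{\bar n}_*(X;\Z)$ is homotopy finite, and (b) the functoriality of \emph{generalized}-perversity intersection chains under strong embeddings. Neither is formal for arbitrary stratified maps, so both must be extracted from the machinery of \cite{Fr} (and \cite{FM}), using essentially the precise hypothesis of Definition \ref{d3}; I expect (b) to be the more delicate, since it is exactly where the choice of ``strong embedding'' rather than ``PL embedding'' as the non-isomorphism morphisms in $\Aa_\ip$ is used (cf.\ the parenthetical remark preceding Lemma \ref{lem.pl2}).
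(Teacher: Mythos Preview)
Your argument is correct and follows essentially the same route as the paper. The paper's proof is terser---it declares that the only non-obvious point in (i) is the homotopy finiteness of $IS^{\bar n}_*(X;\Z)$ and proves exactly that via \cite[Corollary 5.49]{Fr}, while for (ii) it splits into the same two cases (PL homeomorphism vs.\ strong embedding) and invokes Propositions \ref{p1}(iii) and \ref{p2} to see that $f$ and $f\times f$ preserve the filtrations; your version simply spells out more of the routine verifications (equivariance of $\beta$, fixedness of $\varphi$, naturality of the cross product) that the paper leaves implicit.
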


\begin{proof}
(i) 
We only need to show that $IS_*^{\bar{n}}(X;\Z)$ is
homotopy finite. The complex
$IS_*^{\bar{n}}(X;\Z)$ is free because
it is a subcomplex of the singular chain complex
$S_*(X;\Z)$.  Next let $T$ be a triangulation of $X$ 
which is compatible with the stratification of $X$.  
Let
$IC_*^{T,\bar{n}}(X;\Z)$ denote the complex of PL intersection
chains which are simplicial with respect to $T$.  
This is free, finitely generated in each degree, and nonzero in only finitely
many degrees.
By \cite[Corollary 5.4.6]{Fr}, 
the inclusion 
\[
IC_*^{T,\bar{n}}(X;\Z)
\to
IS_*^{\bar{n}}(X;\Z)
\]
is a quasi-isomorphism.
Since the domain and range are free, it is a chain
homotopy equivalence, and thus $IS_*^{\bar{n}}(X;\Z)$ is
homotopy finite.

(ii) If the dimensions are not equal then the definition of $\Aa_\ipFun$ shows
that $f$, and hence also $f\times f$, is a strong embedding,
so they induce maps of intersection chains and 
the result follows.  If the dimensions are equal then $f$ is
a PL homeomorphism so, by Propositions \ref{p1}(iii) and \ref{p2}, $f$ and 
$f\times f$ preserve the filtrations and therefore induce maps of 
intersection chains.
\end{proof}
Lemma \ref{l1} gives a functor
\[
\sig:\Aa_\ipFun\to\Aa^\Z_{{ \mathrm{rel}}}.
\]

\begin{prop}
\label{fp3}
If $K$ is strict and $F\in\ad_\ipFun^k(K)$ then $\sig\circ F\in 
(\ad^\Z_{\mathrm{rel},\geq 0})^k(K)$.
\end{prop}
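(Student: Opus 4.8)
The plan is to verify, for the functor $\sig\circ F$, the four defining conditions (a)--(d) of $(\ad^\Z_\mathrm{rel})^k(K)$ together with the triviality on the $(-k-1)$-skeleton that cuts out the connective sub-theory $\ad^\Z_{\geq 0,\mathrm{rel}}$. Write $F(\sigma,o)=(X_\sigma,\xi_{\sigma,o})$, so that, by Lemma \ref{l1}, $\sig$ is a functor $\Aa_\ipFun\to\Aa^\Z_\mathrm{rel}$ with $(\sig\circ F)(\sigma,o)=(C_\sigma,D_\sigma,\beta_\sigma,\varphi_{\sigma,o})$, where $C_\sigma=IS^{\bar n}_*(X_\sigma;\Z)$, $D_\sigma=IS^{Q_{\bar n,\bar n}}_*(X_\sigma\times X_\sigma;\Z)$, $\beta_\sigma$ is the cross product, and $\varphi_{\sigma,o}$ is the image of $\xi_{\sigma,o}$ under the diagonal map \eqref{eq3}. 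Condition (a) is immediate, since $F$ is balanced and $\sig$ is a balanced functor (reversing the orientation of $(X,\xi)$ corresponds under $\sig$ to negating $\varphi=d\xi$, the diagonal map being $\Z$-linear).

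For condition (c), the diagonal map \eqref{eq3} is a chain map, so $\partial\varphi_{\tau,o}=d(\partial\xi_{\tau,o})$ for every cell $(\tau,o)$; by condition (c) of $\ad_\ipFun$ one has $\partial\xi_{\tau,o}=\sum\xi_{\tau',o'}$, the sum over the codimension-one faces $\tau'$ of $\tau$ with incidence number $[o,o']=(-1)^k$, and applying the diagonal gives $\partial\varphi_{\tau,o}=\sum\varphi_{\tau',o'}$. This says exactly that $\langle\tau,o\rangle\mapsto\varphi_{\tau,o}$ commutes with the cellular differential, i.e.\ that $\cl(\sigma)\to D_\sigma$ is a chain map; so $\sig\circ F$ is closed, and $\varphi_{\sigma,o}$ determines a class $[\varphi_{\sigma,o}]\in H_n(D_\sigma,D_{\partial\sigma})$, where $n:=\dim\sigma-k=\dim X_\sigma$.

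Condition (b), well-behavedness, is the technical heart. Since $F$ is an $\ad_\ip$-ad, each structure map $F^\flat(\tau)\to F^\flat(\sigma)$ is a strong embedding (Definition \ref{d3}) and $\colim_{\tau\in\partial\sigma}F^\flat(\tau)\to\partial X_\sigma$ is a bijection; the analogous statements hold for the squares $F^\flat(\tau)\times F^\flat(\tau)\to X_\sigma\times X_\sigma$, taken with the product stratifications. The key input is that a strong embedding of compact $\partial$-IP-spaces induces a split monomorphism in each degree on $IS^{\bar n}_*$---and likewise on $IS^{Q_{\bar n,\bar n}}_*$ of the squares---and that these functors carry the relevant colimits of spaces to the corresponding colimits of chain complexes; one verifies this with the singular intersection chain technology of \cite{friedmanmcclure}, as extended in Appendix \ref{tech} (the generalized perversity $Q_{\bar n,\bar n}$ is chosen precisely so that the cross product is a quasi-isomorphism, as recalled before Subsection \ref{ss2}; compare \cite[Lemma 11.6]{LM2}). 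Granting this, the maps $C_\tau\to C_\sigma$, $C_{\partial\sigma}\to C_\sigma$, $D_\tau\to D_\sigma$ and $D_{\partial\sigma}\to D_\sigma$ are split monomorphisms in each degree, so $\sig\circ F$ is well-behaved. Matching the intrinsic filtrations across strong embeddings and products and identifying the various colimits of intersection chains is the step I expect to be the main obstacle.

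Condition (d), nondegeneracy, carries the conceptual content: it is integral intersection-homology Poincar\'e--Lefschetz duality. By construction, slant product with $\beta_*^{-1}([\varphi_{\sigma,o}])$ is cap product with the fundamental class $\Gamma_{X_\sigma}\in IH^{\bar 0}_n(X_\sigma,\partial X_\sigma;\Z)$; here the generalized perversity $Q_{\bar n,\bar n}$ enters through the diagonal \eqref{eq3} and the cross-product equivalence $IS^{\bar n}_*(X_\sigma)\otimes IS^{\bar n}_*(X_\sigma)\to IS^{Q_{\bar n,\bar n}}_*(X_\sigma\times X_\sigma)$, which is what makes $\beta_*$ invertible (condition (b)). The statement that this cap product is an isomorphism $H^*(\Hom(C_\sigma,\Z))\to H_{n-*}(C_\sigma/C_{\partial\sigma})$ is exactly Poincar\'e--Lefschetz duality over $\Z$ for the compact oriented $\partial$-IP-space $X_\sigma$; its validity (using the collaring of $\partial X_\sigma$, and the fact that over $\Z$ the IP conditions force the natural map $IH^{\bar m}_*(X_\sigma,\partial X_\sigma;\Z)\to IH^{\bar n}_*(X_\sigma,\partial X_\sigma;\Z)$, $\bar m+\bar n=\bar t$, to be an isomorphism) is the defining feature of IP-spaces and is proved in \cite{gorsie} and \cite[Section 7.3]{FM}. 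Finally, $\sig$ preserves the zero object, so $\sig\circ F$ inherits from $F$ the triviality on $K^{(-k-1)}$; hence $\sig\circ F\in(\ad^\Z_{\geq 0,\mathrm{rel}})^k(K)$.
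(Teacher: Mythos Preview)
Your outline is correct and your treatment of conditions (a), (c), and the connective triviality is fine (and more explicit than the paper, which dismisses these as obvious). Your treatment of (d) is essentially the paper's: both reduce to Lefschetz duality with $\bar m$-perversity target (Theorem~\ref{T: univ lef}) together with the isomorphism $IH^{\bar m}_*\to IH^{\bar n}_*$ forced by the IP hypotheses. One small imprecision: the slant product with $\beta_*^{-1}([\varphi_{\sigma,o}])$ does not literally \emph{equal} the cap product with $\Gamma_{X_\sigma}$; the cap product lands in $\bar m$-perversity intersection homology, and one must then compose with the $\bar m\to\bar n$ comparison map. The paper makes this explicit via a commutative triangle and checks the $\bar m\to\bar n$ isomorphism separately on $\partial X_\sigma$ and on $X_\sigma$ (the latter by removing a collar).

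The genuine gap is in (b). You correctly state what must be shown---that the inclusions $C_\tau\to C_\sigma$, $C_{\partial\sigma}\to C_\sigma$, $D_\tau\to D_\sigma$, $D_{\partial\sigma}\to D_\sigma$ are degreewise split monomorphisms---but you do not prove it; you invoke unspecified ``singular intersection chain technology'' and even flag this as ``the main obstacle.'' In fact the argument is elementary and has nothing to do with strong embeddings, stratifications, or colimits of intersection chains. The paper's proof: to split $IS^{\bar n}_i(X_\tau;\Z)\hookrightarrow IS^{\bar n}_i(X_\sigma;\Z)$ it suffices that the cokernel be free abelian, and this cokernel is a subgroup of the free abelian group $S_i(X_\sigma;\Z)/S_i(X_\tau;\Z)$ (ordinary singular chains). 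The same trick handles $C_{\partial\sigma}\to C_\sigma$ and the $D$-complexes. This replaces your hardest step with a one-line observation about subgroups of free abelian groups.
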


The proposition gives the maps
\[
\sig:\bQ_\ipFun
\to
\bQ^\Z_{\mathrm{rel},\geq 0}
\]
and
\[
\sig:\bM_\ipFun
\to
\bM^\Z_{\mathrm{rel},\geq 0}
\]
which are needed for Definition \ref{d4}.

\begin{proof}[Proof of Proposition \ref{fp3}]
Since $\sig \circ F$ is closed by property (b) of the definition in Subsection 
\ref{ss1},
we only need to check that $\sig\circ F$ is well-behaved and nondegenerate.
Write $F(\sigma,o)=(X_{\sigma,o},\xi_{\sigma,o})$.  Let $Y_\sigma$ be the
underlying $\partial$-IP-space of $X_{\sigma,o}$ (forgetting the orientation).

To show that $\sig\circ F$ is well-behaved, we need to show that the functors
$IS_i^{\bar{n}}(X_\sigma;\Z)$ and $IS^{Q_{\bar{n},\bar{n}}}_*(X_\sigma\times
X_\sigma;\Z)$ are well-behaved.  We give the proof for the first of these 
functors; the proof for the second is similar.

First we need to know that for cells $\sigma$, $\tau$ of $K$ with
$\sigma\subset\tau$ the monomorphism
\[
IS_i^{\bar{n}}(Y_\sigma;\Z)
\to
IS_i^{\bar{n}}(Y_\tau;\Z)
\]
is split for each $i$.  For this it suffices to show that the quotient
$IS_i^{\bar{n}}(Y_\tau;\Z)/IS_i^{\bar{n}}(Y_\sigma;\Z)$ is free, and this in
turn follows from the fact that the map from this quotient to the free 
abelian group $S_i(Y_\tau;\Z)/S_i(Y_\sigma;\Z)$ is a monomorphism.

Next we need to know that for each cell $\sigma$ the map
\[
\colim_{\sigma\subset\partial\tau} IS_i^{\bar{n}}(Y_\sigma;\Z)
\to
IS_i^{\bar n}(Y_\tau ; \Z)
\]
is a split monomorphism.  It is a monomorphism by Lemma \ref{aa11}(iii).  To
see that it is split it suffices to show that the quotient
\[
IS_i^{\bar{n}}(Y_\tau;\Z)/\colim_{\sigma\subset\partial\tau}
IS_i^{\bar{n}}(Y_\sigma;\Z)
\]
is free.
By Lemma \ref{aa11}(iii) this is the same as the quotient
\[
IS_i^{\bar{n}}(Y_\tau;\Z)/\sum_{\sigma\subset\partial\tau}
IS_i^{\bar{n}}(Y_\sigma;\Z),
\]
and by Lemma \ref{aa11}(ii) the map from this 
quotient to the free abelian group
$S_i(Y_\tau;\Z)/\sum_{\sigma\subset\partial\tau}
S_i(Y_\sigma;\Z)$
is a monomorphism.
This concludes the proof that $IS_i^{\bar{n}}(Y_\sigma;\Z)$ is well-behaved.

For nondegeneracy, we first observe that, by Lemma \ref{aa11} below, the map
\[
\colim_{\tau\in \partial\sigma} IS_*^{\bar{n}}(Y_\tau;\Z)
\to
IS_*^{\bar{n}}(\partial Y_\sigma;\Z)
\]
is a quasi-isomorphism for every simplex $\sigma$ of $K$.  

Now it suffices to show that the horizontal map in the following
diagram is an isomorphism for each oriented simplex $(\sigma,o)$. 
\[
\xymatrix{
H_*(\Hom(IS_*^{\bar{n}}(X_{\sigma,o};\Z),\Z))
\ar[rr]^-{\backslash (\beta_*)^{-1}([\varphi_{\sigma,o}])}
\ar[rrd]_-{\smallfrown \Gamma_{X_{\sigma,o}}}
&&
IH_{\dim\sigma-k-*}^{\bar{n}}(X_{\sigma,o},\partial X_{\sigma,o};\Z)
\\
&&
IH_{\dim\sigma-k-*}^{\bar{m}}(X_{\sigma,o},\partial X_{\sigma,o};\Z)
\ar[u]
}
\]
The construction of the cap product is given in Appendix \ref{tech}, and the
fundamental class $\Gamma_{X_{\sigma,o}}$ is given by \cite[Proposition 
7.7]{FM}.
Inspection of the definitions shows that the diagram commutes, and the slanted
arrow is an isomorphism by Theorem \ref{T: univ lef}, so we only need to show 
that the vertical arrow is an isomorphism.  
For this it suffices to show that the maps
\[
IH_*^{\bar m}(\partial X_{\sigma,o};\Z)\to
IH_*^{\bar n}(\partial X_{\sigma,o};\Z)
\]
and 
\[
IH_*^{\bar m}(X_{\sigma,o};\Z)\to
IH_*^{\bar n}(X_{\sigma,o};\Z)
\]
are isomorphisms.  The first of these is an isomorphism by Proposition 
\ref{p3} and the argument in \cite[Subsection 5.6.1]{gm2}.  To see that the
second map is an isomorphism we oberve that if $W$ denotes $X_{\sigma,o}$ 
with a 
collar of the boundary removed then the maps $W\to X_{\sigma,o}$ and $W\to
X_{\sigma,o}-\partial 
X_{\sigma,o}$ are stratified homotopy equivalences and therefore induce 
isomorphisms of 
intersection homology (see \cite[Appendix A]{FM}, so it suffices to observe
that the map
\[
IH_*^{\bar m}(X_{\sigma,o}-\partial X_{\sigma,o};\Z)\to
IH_*^{\bar n}(X_{\sigma,o}-\partial X_{\sigma,o};\Z)
\]
is an isomorphism by the argument in \cite[Subsection 5.6.1]{gm2}.
\end{proof}

Before stating the lemma let us recall that intersection
homology and the notion of stratified homotopy can be defined for any 
filtered space, and that a stratified 
homotopy equivalence induces an isomorphism of intersection
homology (\cite[Section 2]{fr}).  Also recall that if $X$ is a functor from 
$\Cell(K)$ to PL spaces and $L$ is a subcomplex of $K$ then we write $X_L$
for $\colim_{\sigma\in L} X_\sigma$.

Note: part (i) of the following lemma is stated in more generality than is
needed in this section (the case $U=X_{L'}$ would suffice for that); the extra
generality is needed for Subsection \ref{aa2}.

\begin{lemma}
\label{aa11}
Let $F\in\ad_\ipFun^k(K)$, and write $F(\sigma,o)=(X_\sigma,\xi_{\sigma,o})$.
For every subcomplex $L$ of $K$, give $X_L$ the
filtration which restricts to the filtration  of
Proposition \ref{p2}
on each $X_\sigma$.  Then

{\rm{(i)}} for every pair of subcomplexes $L'\subset L$, and for every open set
$U$ of $L'$, there is a 
neighborhood
$V$ of $U$ in $X_L$
such that $V\cap X_{L'}=U$ and the inclusion $U\to V$ is a
stratified deformation retract where the retraction $r:V\to U$ has the
property that $r(x)\in X_\sigma$ whenever $x\in X_\sigma$, and

{\rm{(ii)}} for every subcomplex $L$ and every $i\in \Z$, the intersection 
of $IS^{\bar n}_i (X_K;\Z)$ and
$\sum_{\sigma\in L} S_i(X_\sigma;\Z)$ (considered as subgroups of
$S_i(X_K;\Z)$)
is
$\sum_{\sigma\in L} IS^{\bar n}_i(X_\sigma)$,
and

{\rm{(iii)}} if $K$ is strict then for every subcomplex $L$ the map
\[
\colim_{\sigma\in L} IS^{\bar n}_*(X_\sigma;\Z)
\to
\sum_{\sigma\in L} IS^{\bar n}_*(X_\sigma;\Z)
\]
is an isomorphism, and

{\rm{(iv)}} for every subcomplex $L$ the map
\[
\colim_{\sigma\in L} IS_*^{\bar{n}}(X_\sigma;\Z)
\to
IS_*^{\bar{n}}(X_L)
\]
is a quasi-isomorphism.
\end{lemma}

\begin{remark}
\label{aa10}
For the proof of part (iv)
we will use the following fact: given a commutative diagram
\[
\xymatrix{
A
\ar[d]
&
B
\ar[d]
\ar[l]
\ar[r]
&
C
\ar[d]
\\
D
&
E
\ar[l]
\ar[r]
&
F
}
\]
in the category of chain complexes, where all horizontal maps are monomorphisms
and all vertical maps are quasi-isomorphisms, then the induced map from the
pushout of the top row to the pushout of the bottom row is a quasi-isomorphism.
\end{remark}

\begin{proof}[Proof of Lemma \ref{aa11}]
Give the set of subcomplexes $L$ a total ordering such that if $L'\subset L$
then $L'<L$.  We will prove each part by induction over this
total order.  So let $L$ be a subcomplex and suppose that all parts have
been proved for all subcomplexes $<L$.

For (i), let $L'$ be a subcomplex of $L$ and let $U$ be an open set of $L'$.  
Let $\tau$ be a cell of $L$ of
maximal dimension which
is not in $L'$, and let $M$ be the subcomplex of $L$ consisting of all cells
except $\tau$.  By inductive hypothesis, 
$U$ has a neighborhood $W$ in $X_M$ with the properties given in the statement
of part (i).
By Definition \ref{d2}(c), $W \cap X_{\partial\tau}$ has a collar neighborhood
$W_1$ for which the inclusion $W \cap X_{\partial\tau}\to W_1$ is a stratified
deformation retract.  Now let $V=W\cup W_1$; then $V$ is the desired
neighborhood of $U$ in $X_L$.

For parts (ii), (iii) and (iv), let $\tau$ be a cell of $L$ of maximal
dimension and let $L'$ be
the subcomplex of $L$
consisting of all cells except $\tau$.

For (ii), let $\xi\in IS^{\bar n}_i (X_K;\Z)\cap
\sum_{\sigma\in L} S_i(X_\sigma;\Z)$.  Write 
\[
\xi=\sum a_j s_j,
\]
where $a_j\in \Z$ and the $s_j$ are singular simplices.  Let $U$ be the neighborhood of
$X_{\tau}$ in $X_L$ given by part (i) and let $r:U\to 
X_{\tau}$ be the stratified retraction.  
Applying Proposition \ref{aa17} with the open set $U$ gives an intersection
chain
\[
\bar{\xi}=\sum a_j \bar{s_j}.
\]
If we write 
\[
\eta=\sum_{\supp(s_j)\subset X_\tau}a_j  s_j
\]
then $\xi-\eta$, $\bar{\xi}-\eta$ and $r_*(\bar{\xi}-\eta)$ are all in 
in $\sum_{\sigma\in L'} S_*(X_\sigma;\Z)$. 
Now 
\[
\xi=(\eta+r_*(\bar{\xi}-\eta))+ (\xi-\eta-r_*(\bar{\xi}-\eta)).
\]
The first summand is in $IS_*^{\bar{n}}(X_\tau;\Z)$ (because it is equal to
$r_*(\bar{\xi})$), and the second is in
$IS_*^{\bar{n}}(X_K;\Z)$ (because $\xi$ and the first summand are) and in
$\sum_{\sigma\in L'} S_*(X_\sigma;\Z)$, so by the inductive hypothesis $\xi$
is in $\sum_{\sigma\in L} IS^{\bar n}_i(X_\sigma, \Z)$ as required.

For (iii) and (iv), we use the fact that 
$\colim_{\sigma\in L}IS_*^{\bar{n}}(X_\sigma;\Z)$ is
the pushout of the diagram
\begin{equation}
\label{aa20}
\colim_{\sigma\in L'}IS_*^{\bar{n}}(X_\sigma;\Z)
\leftarrow
\colim_{\sigma\in \partial \tau}IS_*^{\bar{n}}(X_\sigma;\Z)
\rightarrow
IS_*^{\bar{n}}(X_\tau;\Z).
\end{equation}

For (iii), the map in question is obviously an epimorphism, so we only need 
to show that the map from the pushout of \eqref{aa20} to 
$IS^{\bar n}_*(X_L;\Z)$ is a monomorphism.  Using the inductive hypothesis,
the pushout of \eqref{aa20} is the same as the pushout of 
\[
\sum_{\sigma\in L'}IS_*^{\bar{n}}(X_\sigma;\Z)
\leftarrow
\sum_{\sigma\in \partial \tau}IS_*^{\bar{n}}(X_\sigma;\Z)
\rightarrow
IS_*^{\bar{n}}(X_\tau;\Z).
\]
Suppose that $\xi\in
\sum_{\sigma\in L'}IS_*^{\bar{n}}(X_\sigma;\Z)$ and $\eta\in
IS_*^{\bar{n}}(X_\tau;\Z)$ with 
\begin{equation}
\label{aa21}
\xi+\eta=0; 
\end{equation}
we need to show that $\xi\in
\sum_{\sigma\in \partial \tau}IS_*^{\bar{n}}(X_\sigma;\Z)$; by part (ii) it
suffices to show that 
$\xi\in \sum_{\sigma\in \partial\tau}S_*(X_\sigma;\Z)$.
Equation \eqref{aa21} implies that each simplex $s$ of $\xi$ has its support 
in both $\tau$ and a cell of $L'$; since $K$ is strict this implies that it has
its support in a cell of $\partial \tau$ as required.

Proof of (iv).
By the inductive hypothesis and Remark \ref{aa10}, the map from the  pushout 
of \eqref{aa20}
to the pushout of
\begin{equation}
\label{aa12}
IS_*^{\bar{n}}(X_{L'};\Z)
\leftarrow
IS_*^{\bar{n}}(X_{\partial\tau};\Z)
\to
IS_*^{\bar{n}}(X_\tau;\Z)
\end{equation}
is a quasi-isomorphism.
By part (i) there is a neighborhood
$U$ of $X_{L'}$ in $X_L$ and a neighborhood $V$ of $X_\tau$ in $X_L$ such 
that
the inclusions $X_{L'}\to U$ and $X_\tau\to V$ are stratified deformation
retractions. Then $X_{\partial\tau}\to U\cap V$ is a stratified deformation 
retraction, and hence the map from the pushout of \eqref{aa12} to the pushout
of
\begin{equation}
\label{aa13}
IS_*^{\bar{n}}(U;\Z)
\leftarrow
IS_*^{\bar{n}}(U\cap V;\Z)
\to
IS_*^{\bar{n}}(V;\Z)
\end{equation}
is a quasi-isomorphism.  Finally, the map from the pushout of \eqref{aa13} to
$IS_*^{\bar{n}}(X_L;\Z)$ is a quasi-isomorphism by \cite[Proposition
6.3]{friedmanmcclure}
and \cite[Proposition 
2.9]{FTrans}.
\end{proof}


\section{$L$-theory of the fundamental groupoid.}
\label{ringoids}

Let $Z$ be a path-connected space.

For our further work, we need to use the fundamental groupoid $\pi_1 Z$ rather
than the fundamental group $\pi_1(Z,z_0)$.  The main reason is that we will be
using the results of \cite{wwa}, and these require a functor defined on
unbased spaces; cf.\  lines 9--12 of \cite[Subsection 2.1]{wwa}. An additional
benefit is that we won't need to choose basepoints. 

In this section we define $L$-spectra and $L$-groups over $\pi_1 Z$. We will
make use of the definitions in Appendix \ref{add}.

First we define an additive category
${\mathbb{Z}}[\pi_1 Z]$ by letting the objects be the points of $Z$  and letting the
abelian group of morphisms from $z$ to $z'$ be the free abelian group
generated by the morphisms from $z$ to $z'$ in the groupoid. The composition of morphisms is similar to the multiplication in a group ring.

For a module $\M$ we write 
$\M_z$ for  the restriction of $\M$ to
the full subcategory ${\mathbb{Z}}[\pi_1 (Z,z)]$ with only one object $z$; this is a left module (in the
usual sense) over the ring ${\mathbb{Z}}[\pi_1 (Z,z)]$, and $\M$ is 
determined up to unique isomorphism by $\M_z$.

\begin{remark}
\label{aa5}
The evident map $\M_z\otimes_{\Z[\pi_1(Z,z)]} \N_z\to 
\M\otimes_{\Z[\pi_1 Z]} \N$ is an isomorphism for any $z$ (because there is a
map in the other direction which is inverse to it).
\end{remark}

For left modules $\M$ and $\M'$, we define $\Hom_{{\mathbb{Z}}[\pi_1
Z]}(\M,\M')$ to be the abelian group of natural transformations.  

\begin{remark}
\label{aa6}
The
restriction map
\[
\Hom_{{\mathbb{Z}}[\pi_1 Z]}(\M,\M')
\to
\Hom_{{\mathbb{Z}}[\pi_1(Z,z)]}(\M_z,\M'_z)
\]
is an isomorphism for all $z$ (because there is a map in the other direction
which is inverse to it).
\end{remark}

$\M$ is {\it free} if $\M_z$ is a free 
${\mathbb{Z}}[\pi_1 (Z,z)]$-module for
some (and hence for all) $z$.  

The category of left ${\mathbb{Z}}[\pi_1 Z]$ modules is an abelian category, so
the concepts of chain complex, chain homotopy, and quasi-isomorphism can be
defined in the usual way.  

\begin{remark}
\label{nn19} 
Note that two chain complexes are chain homotopy equivalent
(resp., quasi-isomorphic) over ${\mathbb{Z}}[\pi_1 Z]$ if and only if their
restrictions to ${\mathbb{Z}}[\pi_1(Z,z)]$ are chain homotopy equivalent (resp.,
quasi-isomorphic) over ${\mathbb{Z}}[\pi_1(Z,z)]$ for some $z$.
\end{remark}

The {\it involution} of ${\mathbb{Z}}[\pi_1 Z]$ is the additive functor
\[
{\mathbb{Z}}[\pi_1 Z]\to {\mathbb{Z}}[\pi_1 Z]^{\mathrm{op}}
\]
which takes a morphism $g:z\to z'$ in the groupoid to $g^{-1}:z'\to z$.  This 
restricts to the usual involution of ${\mathbb{Z}}[\pi_1(Z,z)]$.

With these definitions, it is straightforward to generalize
the definitions and results of \cite[Section 9]{LM} and \cite[Sections
12--14]{LM2} with the ring $R$ replaced by ${\mathbb{Z}}[\pi_1 Z]$, with these
modifications:

\begin{itemize}
\item
For the
analogue of \cite[Definition 9.9(b)]{LM}, the left $R$-module $R$ should be 
replaced by any left ${\mathbb{Z}}[\pi_1 Z]$ module of the form
$\Hom(z,-)$.
\item
For the proof of the analogue of \cite[Theorem 9.11]{LM}, note that 
a pre $K$-ad $G$ with values in $\Aa^{\Z[\pi_1 Z]}$ determines, for each $z\in
Z$, a pre $K$-ad $G_z$ with values in $\Aa^{\Z[\pi_1(Z,z)]}$, and that (using
Remarks \ref{aa5} and \ref{aa6})
$G$ is
an ad if and only if each $G_z$ is.
\end{itemize}

In 
particular, we obtain an ad theory $\ad^{{\mathbb{Z}}[\pi_1 Z]}$, bordism
groups $\Omega_*^{{\mathbb{Z}}[\pi_1 Z]}$, and spectra $\bQ^{{\mathbb{Z}}[\pi_1 
Z]}$ and
$\bQ^{{\mathbb{Z}}[\pi_1 Z]}_{\geq 0}$.  In analogy with \cite[page 
60]{ranicki} we define 
$L^n({\mathbb{Z}}[\pi_1 Z])$ to be 
$\Omega_n^{{\mathbb{Z}}[\pi_1
Z]}$ for $n\geq 0$ and 0 for $n<0$; by Remark \ref{aa41},
$L^n({\mathbb{Z}}[\pi_1 Z])\cong
\pi_n \bQ^{{\mathbb{Z}}[\pi_1 Z]}_{\geq 0}$
(cf.\ Remark \ref{not}).

\begin{remark}
\label{rev4}
Let $z\in Z$.
From what has been said it is clear that the forgetful functor from 
${\mathbb{Z}}[\pi_1 Z]$ modules to ${\mathbb{Z}}[\pi_1(Z,z)]$ modules
induces an isomorphism
\[
L^n({\mathbb{Z}}[\pi_1 Z])
\to
L^n({\mathbb{Z}}[\pi_1(Z,z)]).
\]
\end{remark}

\begin{remark}
\label{tens}
For use in the next section we observe that if $\M$ and $\M'$ are left 
${\mathbb{Z}}[\pi_1 Z]$
modules
we can define a tensor product $\M\otimes \M'$ to be the left 
${\mathbb{Z}}[\pi_1 Z]$
module which 
takes $z$ to $\M_z\otimes \M'_z$, with the evident action of the morphisms.
If $\ZZ$ is the constant right module with value $\Z$ then the isomorphism
of left $Z[\pi_1 Z]$ modules
\[
\ZZ^t\otimes\M\otimes \M' \cong \M\otimes \M'
\]
induces an isomorphism 
\[
\ZZ\otimes_{{\mathbb{Z}}[\pi_1 Z]}(\M\otimes \M') \cong
\M^t\otimes_{{\mathbb{Z}}[\pi_1 Z]} \M'.
\]
\end{remark}

\begin{remark}
\label{nn12}
For use in Section \ref{gs6}, we give a 
variant of $\ad^{\Z[\pi_1 Z]}$.
Let $\Aa^{\Z[\pi_1 Z]}_\mathrm{fin}$ be the full subcategory of
$\Aa^{\Z[\pi_1 Z]}$ consisting of objects $(C,\varphi)$ for which
$C$ is finite (not just homotopy finite).
For a ball complex $K$, let $\ad^{\Z[\pi_1 Z]}_\mathrm{fin}(K)$ be the 
set of pre $K$-ads with values in $\Aa^{\Z[\pi_1 Z]}_\mathrm{fin}$
for which the composite with the forgetful functor to $\Aa^{\Z[\pi_1 Z]}$
is an ad.  The proof of \cite[Theorem 9.11]{LM} 
generalizes to show that  $\ad^{\Z[\pi_1 Z]}_\mathrm{fin}$ is an ad theory, and the 
proof of \cite[Proposition B.17]{LM2} (specifically,
the proof that $\Omega^c_*$ is an isomorphism) shows that the map 
$\ad^{\Z[\pi_1 Z]}_\mathrm{fin}\to \ad^{\Z[\pi_1 Z]}$ induces an isomorphism of 
bordism groups.
\end{remark}

\section{The symmetric signature of an IP-space}
\label{symsig}

For a compact oriented $n$-manifold $M$ (and more generally for a 
Poincar\'e duality space) the symmetric signature $\sigma^*(M)$ is an element 
of the symmetric $L$-group $L^n(\Z[\pi_1 M] )$.
The symmetric signature was introduced by
Mi\v{s}\v{c}enko\footnote{Mi\v{s}\v{c}enko's construction gives an element of
$L^n({\mathbb{Z}}[\pi_1(X,x)])$ for each $x$, and these are consistent as $x$
varies, so by Remark \ref{rev4} one obtains a well-defined element of
$L^n({\mathbb{Z}}[\pi_1 X])$.}
as a tool for studying the Novikov conjecture, and since then it has become an
important part of surgery theory (see \cite{ranicki}, for example).  The
symmetric signature has many useful properties, such as homotopy invariance,
bordism invariance, and a product formula.

The paper \cite{csw} has a brief description of a construction (using
controlled topology) which assigns to a compact oriented Witt space $X$ and a
point $x\in X$ a symmetric signature 
in $L^n({\mathbb{Q}}[\pi_1(X,x)])$, with properties analogous to those of 
Mi\v{s}\v{c}enko's symmetric signature (further information about this 
construction is given in \cite[pages 209-210]{Wein}).  A simpler construction 
with the same properties 
was given in \cite[Subsection 5.4]{friedmanmcclure}. The two constructions are 
known to agree rationally by an argument due to Weinberger (cf.\ 
\cite[Proposition 11.1]{almp}) and, independently, Banagl-Cappell-Shaneson
\cite[Proposition 2]{bc1}.

In this section we show that when $X$ is a connected compact oriented IP-space 
of dimension $n$
the method used in 
\cite{friedmanmcclure} gives a symmetric signature 
\[
\sigma^*_\ip(X)\in L^n({\mathbb{Z}}[\pi_1 X])
\]
with the usual properties.

\begin{remark}
\label{nn32}
Recall (\cite[Section 12]{LM2}) that the relaxed symmetric Poincar\'e 
ad theory $\ad^\Z_{\mathrm{rel}}$ described in Subsection \ref{adrel} has an 
analog  $\ad^R_{\mathrm{rel}}$ when $R$ is any ring with involution.  The same
construction gives an ad theory $\ad^{{\mathbb{Z}}[\pi_1 X]}_{\mathrm{rel}}$.%
\footnote{In \cite[Section 9]{LM} it was assumed that for any
object $(C,\varphi)$ the chain
complex $C$ is free over $R$ (this assumption was built into the definition 
of the category $\mathcal D$ \cite[Definition 9.2(v)]{LM}).  This was for two
reasons: 
to ensure that the $L$ groups would 
be the same as in \cite{ranicki}, but it is not needed for this, see 
\cite[Remark B.18]{LM2};
and for functoriality
(\cite[Section 13]{LM}), but it is not needed for this, see
\cite[Subsection B.7]{LM2}.  
The same category $\mathcal D$ was used in \cite[Definition 12.1]{LM2}, but
freeness is not needed there either.
We shall therefore assume that for objects $(C,D,\beta,\varphi)$ in
$\Aa^{{\mathbb{Z}}[\pi_1 X]}_{\mathrm{rel}}$ the chain complex $C$ is 
homotopy finite but not necessarily free.}
The construction in \cite[Example 12.2(i)]{LM} generalizes to give a map
\[
\ad^{{\mathbb{Z}}[\pi_1 X]}
\to
\ad^{{\mathbb{Z}}[\pi_1 X]}_{\mathrm{rel}}
\]
and 
the proof of \cite[Proposition 13.3]{LM2} generalizes to show that this induces
an isomorphism of bordism groups.  Thus we have an isomorphism
\begin{equation}
\label{e10}
L^n({\mathbb{Z}}[\pi_1 X])
=
(\Omega^{{\mathbb{Z}}[\pi_1 X]})_n
\xrightarrow{\cong}
(\Omega^{{\mathbb{Z}}[\pi_1 X]}_{\mathrm{rel}})_n
\end{equation}
for $n\geq 0$,
\end{remark}

Because of the isomorphism \eqref{e10}, 
we can 
construct $\sigma^*_\ip(X)$ by giving a suitable  element of 
$(\Omega^{{\mathbb{Z}}[\pi_1 X]}_{\mathrm{rel}})_n$.
For this in turn it suffices (by \cite[Definition 4.2]{LM}) to construct an 
element of 
$\ad^{{\mathbb{Z}}[\pi_1
X]}_\mathrm{rel}(*)$, which we do as follows.
For each $x\in X$
let $\tilde{X}_x$ be the universal cover constructed in the usual way as
equivalence classes of paths starting at $x$. Let $A_X$ be the
chain complex of 
left $\Z[\pi_1 X]$ modules with 
\[
(A_X)_x=IS^{\bar
0}_*(\tilde{X}_x;\Z), 
\]
and let $\ZZ$ be the constant left $\Z[\pi_1 X]$ module with value $\Z$.
Choose a cycle
$\xi\in \ZZ\otimes_{\Z[\pi_1 X]} A_X$
which maps to a representative for the fundamental class 
$\Gamma_X\in IH^{\bar 0}_n(X;\Z)$; this is always possible by
\cite[Proposition 6.1.3]{friedmanmcclure}.

\begin{definition}
\label{fd1}
Let $(C_X,D_X,\beta_X,\varphi)$ be defined as follows.

$\bullet$ $C_X$ is the chain complex of $\Z[\pi_1 X]$ modules with 
$(C_X)_x=IS^{\bar{n}}_*(\tilde{X}_x;\Z)$.

$\bullet$
Let $E_X$ be the chain complex of left $\Z[\pi_1 X]$ modules with 
\[
(E_X)_x=IS^{Q_{\bar n,\bar 
n}}_*(\tilde X_x\times \tilde X_x;\Z),
\]
where $\Z[\pi_1(X,x)]$ acts diagonally;
then 
$D_X$ is the chain complex of abelian groups 
\[
\ZZ\otimes_{\Z[\pi_1 X]} E_X,
\]
with the evident $\Z/2$ 
action.

$\bullet$
$\beta_X$ is the map
\[
(C_X)^t\otimes_{{\mathbb{Z}}[\pi_1 X]}  C_X\to D_X
\]
determined by the composites
\begin{multline*}
IS^{\bar{n}}_*(\tilde{X}_x;\Z)^t
\otimes_{{\mathbb{Z}}[\pi_1(X,x)]}
IS^{\bar{n}}_*(\tilde{X}_x;\Z)
\cong
\Z\otimes_{{\mathbb{Z}}[\pi_1(X,x)]}
(IS^{\bar{n}}_*(\tilde{X}_x;\Z)
\otimes
IS^{\bar{n}}_*(\tilde{X}_x;\Z))
\\
\xrightarrow{1\otimes\times}
\Z\otimes_{\mathbb{Z}[\pi_1(X,x)]}IS^{Q_{\bar n,\bar n}}_*(\tilde X_x\times 
\tilde 
X_x);\Z),
\end{multline*}
where the first isomorphism is given by Remark \ref{tens}.

$\bullet$
$\varphi\in (D_X)^{\Z/2}$ is the image of $\xi$ under the map 
$\ZZ\otimes_{\Z[\pi_1 X]}
A_X\to \ZZ\otimes_{\Z[\pi_1 X]} E_X$ induced by the diagonal maps
\[
IS^{\bar
0}_n(\tilde{X}_x;\Z)
\to
IS^{Q_{\bar n,\bar n}}_n(\tilde 
X_x\times \tilde X_x;\Z).
\]
\end{definition}

\begin{lemma}
\label{fl1}
$(C_X,D_X,\beta_X,\varphi)$ is an element of
$\ad^{{\mathbb{Z}}[\pi_1 X]}_\mathrm{rel}(*)$.
\end{lemma}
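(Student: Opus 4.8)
The plan is to verify conditions (a)--(d) of the definition of $\ad^{\Z[\pi_1 X]}_{\text{rel}}$ (Subsection \ref{adrel}) for the ball complex $*$, which has a single cell $\sigma$, of dimension $0$, with $\partial\sigma=\emptyset$; the relevant degree is $k=-n$. By the Remark on basepoints above we may fix $x_0\in X$, write $G=\pi_1(X,x_0)$ and $\tilde X=\tilde X_{x_0}$, and work throughout over the ring-with-involution $\Z[G]$, so that $C=IS^{\bar n}_*(\tilde X;\Z)$ is a complex of $\Z[G]$-modules; we already know that $(C,D,\beta,\varphi)$ is an object of $\Aa^{\Z[G]}_{\text{rel}}$. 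Conditions (a)--(c) are then formal, because $\partial\sigma=\emptyset$, so $C_{\partial\sigma}=0=D_{\partial\sigma}$. Condition (a) is immediate from the definition of $F$: we put $F(\sigma,o)=(C,D,\beta,\varphi)$ and $F(\sigma,\bar o)=i(C,D,\beta,\varphi)=(C,D,\beta,-\varphi)$, and $C$, $D$, $\beta$ do not involve the orientation of $\sigma$. Condition (b) holds because the only structure maps involved are $\id_C$, $\id_D$, $0\to C$ and $0\to D$. For (c), the map $\cl(\sigma)\to D$ takes the generator $\langle\sigma,o\rangle$ to $\varphi$, so it is a chain map exactly when $\varphi$ is a cycle; and $\varphi$ is a cycle because it is the image, under a chain map induced by the diagonal, of the cycle $b$ representing $\Gamma_X$. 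Hence $\varphi$ represents a class $[\varphi]\in H_n(D)=H_n(D,D_{\partial\sigma})$.

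The only substantive point is nondegeneracy (d): slant product with $\beta_*^{-1}([\varphi])\in H_n(C^t\otimes_{\Z[G]}C)$ must induce an isomorphism $H^*(\Hom_{\Z[G]}(C,\Z[G]))\to H_{n-*}(C)$. This is proved just as in the last Proposition of Section \ref{s6}, with $\Z[G]$-coefficients in place of $\Z$-coefficients, using the versions over a ring-with-involution of the chain-level constructions of \cite{friedmanmcclure} recorded in Appendix \ref{tech}. Inspecting the definitions, $\beta_*^{-1}([\varphi])$ is the diagonal class of $\tilde X$ and slant product with it factors as
\[
H^*(\Hom_{\Z[G]}(C,\Z[G]))
\xrightarrow{\;\smallfrown\,\Gamma_{\tilde X}\;}
IH^{\bar m}_{n-*}(\tilde X;\Z)
\longrightarrow
IH^{\bar n}_{n-*}(\tilde X;\Z)=H_{n-*}(C),
\]
where the first arrow is the $\Z[G]$-equivariant cap product with the fundamental class $\Gamma_{\tilde X}$ (constructed in Appendix \ref{tech}) and the second is the comparison map from lower to upper middle perversity. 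The first arrow is an isomorphism by Theorem \ref{T: univ lef}, since $X$, hence $\tilde X$, is closed. The second arrow is an isomorphism because $\tilde X$ is again an IP-space -- a covering projection preserves links -- so the conditions of Definition \ref{def.ipspace} hold for $\tilde X$ and the argument of \cite[Subsection 5.6.1]{gm2} applies.

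The step I expect to be the main obstacle is the identification in the previous paragraph: matching $\beta_*^{-1}([\varphi])$ with the equivariant diagonal class of $\tilde X$ and checking that slant product with it coincides with the displayed cap product. This needs the equivariant refinements of the acyclic-models constructions of \cite{friedmanmcclure} provided in Appendix \ref{tech}, together with careful bookkeeping of the perversities $\bar 0$, $\bar n$ and $Q_{\bar n,\bar n}$ and of the fact that every intersection-chain complex, diagonal map and cross product occurring is the equivariant one on $\tilde X$. Granting these identifications, the argument is formally identical to the simply-connected case treated in Section \ref{s6}.
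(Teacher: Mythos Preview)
Your approach is correct and is essentially the paper's own: the paper's proof is one sentence, deferring to \cite[Proposition 5.17]{friedmanmcclure} with Theorem~\ref{T: universal duality} (Universal Poincar\'e Duality) substituted for \cite[Theorem~4.1]{friedmanmcclure}, and your write-up unpacks exactly that argument.

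One point to clean up: in your displayed factorization the cap product should be with $\Gamma_X$ (equivalently, with the chosen lift $b\in\ZZ\otimes_{\Z[\pi]}\M$), not with a class ``$\Gamma_{\tilde X}$''. The universal cover $\tilde X$ is generally noncompact and has no fundamental class in $IH^{\bar 0}_n(\tilde X;\Z)$; the duality theorems in Appendix~\ref{tech} are formulated precisely so that capping with $\Gamma_X$ yields the $\Z[G]$-equivariant isomorphism on the (co)homology of $\tilde X$. Relatedly, since $X$ is closed your citation of Theorem~\ref{T: univ lef} reduces to Theorem~\ref{T: universal duality}, which is the reference the paper uses.
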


\begin{proof}
First we need to show that $C_X$ is homotopy finite.
Fix $x\in X$. 
Proposition \ref{finite} gives a quasi-isomorphism over
${\mathbb{Z}}[\pi_1(X,x)]$ from $(C_X)_x$ to a finite chain complex $A$ over
${\mathbb{Z}}[\pi_1(X,x)]$.
$(C_X)_x$ is chain homotopy equivalent to a free complex over
${\mathbb{Z}}[\pi_1(X,x)]$ by Proposition \ref{f1}(i), and 
\cite[Exercise IV.4.2]{HS} says that a quasi-isomorphism between free complexes
is a chain homotopy equivalence, so the quasi-isomorphism $(C_X)_x\to A$
is a chain homotopy equivalence over ${\mathbb{Z}}[\pi_1(X,x)]$.  Now $A$ 
extends to a finite chain complex $B$ over $\Z[\pi_1 X]$, 
and the chain homotopy equivalence $(C_X)_x\to A$ extends to a
chain homotopy equivalence $C_X\to B$ over $\Z[\pi_1 X]$.

$\beta_X$ is a quasi-isomorphism by Proposition \ref{P: tensor flat} and Remark
\ref{x2}.


It remains to show that the slant product with
$(\beta_*)^{-1}([\varphi])$ is an isomorphism.
For this we observe that the proof of \cite[Proposition 5.17]{friedmanmcclure}
goes through with $\Z$ coefficients instead of $F$ coefficients if we use
Theorem \ref{T: universal duality}
in place of \cite[Theorem 4.1]{friedmanmcclure}.
Inspection of the definitions in \cite[Subsection 5.4]{friedmanmcclure} (using the
fact that the map denoted $\Upsilon$ there is the same as $\beta_*$)
shows that (after replacing $F$ coefficients by $\Z$ coefficients)
the element denoted in the proof of \cite[Proposition 5.17]{friedmanmcclure} 
by $\psi_*(\iota)$ is the same as $(\beta_*)^{-1}([\varphi])$, so Lemma 
\ref{fl1} follows from the $\Z$-version of \cite[Proposition 
5.17]{friedmanmcclure}.
\end{proof}

Next observe that
the bordism class of $(C_X,D_X,\beta_X,\varphi)$ is
independent of the choice of $\xi$ by \cite[Remark 
5.6]{friedmanmcclure}.

\begin{definition}
\label{d5}
Let $X$ be a compact oriented IP-space of dimension $n$.  Then
$\sigma^*_\ip(X)\in L^n({\mathbb{Z}}[\pi_1 X])$ is the element which maps to 
the bordism class of 
$(C_X,D_X,\beta_X,\varphi)$ under the isomorphism \eqref{e10}.
\end{definition}

\begin{remark} \label{rem.ipsymsiganalogwittsymsig}
The properties of the symmetric signature given in \cite[Subsection
5.5]{friedmanmcclure} remain valid (with $\Z$ coefficients instead of $F$
coefficients) for $\sigma^*_\ip$, with the same proofs,
provided that Proposition \ref{f1}(i), Theorem \ref{T: universal duality}, and
Theorem \ref{T: univ lef} are used instead of \cite[Proposition
5.15, Theorem 4.1, Theorem 4.5]{friedmanmcclure}.
For example, Prop. 5.20 of \cite{friedmanmcclure} asserts that if $n$ is
divisible by $4$, then the composition
\[ L^n (\rat [G]) \longrightarrow L^n (\rat)\stackrel{\cong}{\longrightarrow} W(\rat), \]
where $W(\rat)$ is the Witt group of $\rat$, takes the symmetric signature
of a Witt space $X$ with $\pi_1 (X)=G$ to the Witt class of the intersection
form on the middle dimensional, middle perversity intersection homology of $X$.
For IP-spaces $X,$ the analogous statement is:  
If $n$ is divisible by $4$, then the composition
\[ L^n (\intg [G]) \longrightarrow L^n (\intg) \stackrel{\cong}{\longrightarrow} \intg \]
takes $\sigma^*_\ip (X)$
to the ordinary signature of the intersection
form of $X$.
\end{remark}

\begin{remark} \label{rem.htpyinvsymsig}
We have only described the \emph{universal} symmetric signature of IP-spaces.
More generally, one can similarly construct the symmetric signature
$\sigma^*_\ip (r)\in L^n (\intg [G])$ of any reference map $r:X\to BG$ with $X$ IP and $G$ any discrete group.
Instead of using the intersection chains of the universal cover of $X$, one uses
the intersection chains of the cover of $X$ induced by $r$. This is done in
\cite{friedmanmcclure} for global Witt maps $r$. For such maps, 
\cite[Thm. 5.23]{friedmanmcclure} asserts that $\sigma^*_{\operatorname{Witt}} (r\circ f)=
\sigma^*_{\operatorname{Witt}} (r)$ for orientation preserving stratified homotopy equivalences $f:X'\to X$ between
Witt spaces. Similarly for orientation preserving stratified homotopy equivalences $f$ between IP-spaces,
\begin{equation} \label{equ.htpyinvipsymsig}
\sigma^*_{\ip} (r\circ f)=
\sigma^*_{\ip} (r). 
\end{equation}
\end{remark}

\section{The $L$-theory fundamental class}
\label{s8}

For an $n$-dimensional compact oriented topological
manifold $M$, Ranicki constructs an $L$-theory
fundamental class $[M]_{\mathbb{L}} \in
\syml_n (M)$ (\cite[Section 16]{ranicki}) which plays an important role in
surgery theory. It is an oriented homeomorphism invariant
whose image under the assembly map
\[ \syml_n (M) \rightarrow L^n (\intg [\pi_1 M]) \]
is the symmetric signature $\sigma^\ast (M)$.
$[M]_{\mathbb{L}}$ can be constructed in the following way.
There is an equivalence 
\[
M\STop\to \bQ_\STop 
\]
in the stable category, where $M\STop$ is the Thom spectrum and $\bQ_\STop$ is
the Quinn spectrum; see \cite[Appendix B]{LM} for details.
There is
an ad theory $\ad_\STopFun$ which is related to $\ad_\STop$ in the same way
that $\ad_\ipFun$ is related to $\ad_\ip$ (see the end of \cite[Section
8]{LM}), and the map given by forgetting the chain representative 
is an equivalence
\[
\bQ_\STopFun\xrightarrow{\simeq}\bQ_\STop.
\]
The symmetric signature gives a map\footnote{ \label{aa46}
Specifically, the end of \cite[Section 8]{LM} gives a morphism
$
\Aa_\mathrm{StopFun}\to \Aa_{e,*,1},
$
\cite[Section 10]{LM} gives a morphism
$
\sig: \Aa_{e,*,1}\to \Aa^\Z,
$
(which was denoted $\Sig$ in \cite{LM})
and the map \eqref{aa45} is, by definition, induced by the 
the composite of these two morphisms.} 
\begin{equation}
\label{aa45}
\sig:\bQ_\STopFun\to
\bQ^\Z_{\geq 0}={\mathbb{L}}^\bullet(\Z),
\end{equation}
so we have a map in the stable category
\[
\Sig:\bQ_\STop\to {\mathbb{L}}^\bullet(\Z).
\]
Now if $M$ is an  $n$-dimensional compact oriented topological
manifold then the identity map $M\to M$ represents an element 
in 
$(\Omega_\STop)_n(M)$.
Let $[M]_\STop$ denote the image of this element under the isomorphism
$(\Omega_\STop)_n(M)
\cong
M\STop_n(M)$.
We have

\begin{lemma}
\label{aa47}
The image of $[M]_\STop$ under the composite
\begin{equation}
\label{e12}
M\STop_n(M)\to (\bQ_\STop)_n(M)
\xrightarrow{\Sig} {\mathbb{L}}^\bullet(\Z)_n(M)
=\syml_n(M)
\end{equation}
is $[M]_{\mathbb{L}}$.
\end{lemma}

\begin{proof}
The construction of $[M]_{\mathbb{L}}$ is given in \cite[Propositions 16.15 and
16.16(ii)]{ranicki}.   The class $[M]_{\mathbb{L}}$ is denoted by
$\mathrm{sign}_M^{\textbf{L}^\bullet}(M)$ in \cite[Definition 8.11]{kmm}, and 
\cite[Proposition 13.3]{kmm} says that it is $S$-dual to the orientation 
class $u_{\textbf{L}^\bullet}(\nu)\in {\mathbb{L}}^\bullet(\Z)^k(T(\nu))$ 
(where $T(\nu)$ is the Thom complex of the normal bundle and $k$ is the 
dimension of $\nu$). 
By the proof of \cite[Proposition 13.2]{kmm}, $u_{\textbf{L}^\bullet}(\nu)$
is obtained by applying the composite
\[
M\STop\to\bQ_\STop\xrightarrow{\Sig} {\mathbb{L}}^\bullet(\Z)
\]
to the orientation class  $u_{M\STop}(\nu)\in M\STop^k(T(\nu))$ 
represented by the map
\[
T(\nu)\to \STop_k. 
\]
Thus it suffices to show that the $S$-dual of $u_{M\STop}(\nu)$ is 
the element we have called $[M]_\STop$.
The $S$-dual of $u_{M\STop}(\nu)$ is represented by the
composite
\[
S^{n+k}\xrightarrow{p} T(\nu)\xrightarrow{\delta} T(\nu)\wedge M_+
\to \STop_k\wedge M_+,
\]
where $p$ is the Pontrjagin-Thom collapse and $\delta$ is the Thom diagonal
(see for example \cite[Proposition 2.2]{rezk},  with the maps $\eta$ and 
$\phi$ given on page 7 of \cite{rezk})
and by \cite[page 19 and Example 6 on page 43]{stong} this composite 
represents 
$[M]_\STop$.
\end{proof}

\begin{remark}
The composite
\[
M\STop\to\bQ_\STop\xrightarrow{\Sig} {\mathbb{L}}^\bullet(\Z)
\]
is the {\it Sullivan-Ranicki orientation} (compare \cite[Remark 1.4]{LM2}).
\end{remark}

Our goal in this section is to prove

\begin{thm}
\label{t3}
For an $n$-dimensional compact oriented IP-space $X$ there is 
a fundamental class $[X]_{\mathbb{L}} \in
\syml_n (X)$ with the following properties:

{\rm (i)} $[X]_{\mathbb{L}}$ is an oriented PL homeomorphism invariant.

{\rm (ii)} If $X$ is a PL manifold then $[X]_{\mathbb{L}}$ is the same as the
fundamental class constructed by Ranicki.
\end{thm}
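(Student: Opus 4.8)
The plan is to mimic, for IP-spaces, the construction of $[M]_{\mathbb{L}}$ for topological manifolds recalled just above, replacing $\ad_\STop$ by $\ad_\ip$ and using the machinery assembled in Sections \ref{s4}--\ref{symsig}. First I would fix notation: let $\alpha\colon \syml_n(X)\to L^n(\Z[\pi_1 X])$ denote the assembly map, and recall that the identity map $\id_X\colon X\to X$ represents a class $[\id_X]\in(\Omega_\ip)_n(X)$. Using the isomorphism between ad-theory bordism and the homology theory represented by the associated Quinn spectrum (\cite[Proposition 16.4(i)]{LM}), together with the symmetric signature map $\Sig\colon\bQ_\ip\to\bQ^\Z_{\geq 0,\mathrm{rel}}$ of Definition \ref{d4} and the weak equivalence $\bQ^\Z_{\geq 0,\mathrm{rel}}\simeq\syml$ of Remark \ref{r3}(i) together with the connective-periodic comparison of the last subsection of Section \ref{s5}, one gets a composite
\[
(\Omega_\ip)_n(X)\cong(\bQ_\ip)_n(X)\xrightarrow{\Sig}(\bQ^\Z_{\geq 0,\mathrm{rel}})_n(X)\cong\syml_n(X).
\]
I would then \emph{define} $[X]_{\mathbb{L}}$ to be the image of $[\id_X]$ under this composite. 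This is the content of Definition \ref{def.xipxl} referenced in the introduction, so the substance of the theorem is verifying properties (i)--(iii).

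For (i), oriented PL homeomorphism invariance is essentially formal: an oriented PL homeomorphism $h\colon X\to X'$ induces an isomorphism on $(\Omega_\ip)_*(-)$ carrying $[\id_X]$ to $[\id_{X'}]$ (since $\Aa_\ip$ has orientation-preserving PL homeomorphisms as its morphisms between objects of equal dimension), and all the maps in the composite above are natural, so $h_*[X]_{\mathbb{L}}=[X']_{\mathbb{L}}$. For (ii), the point is to compare two descriptions of $\sigma^*_\ip(X)$: on one hand, Definition \ref{d5} builds it directly from the quadruple $(C,D,\beta,\varphi)$ assembled from intersection chains on the universal cover; on the other, $\alpha[X]_{\mathbb{L}}$ is obtained by pushing $[\id_X]$ around. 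I would factor the assembly map through the $\ipFun$-level and use the morphism of ad theories $\sig\colon\ad_\ipFun\to\ad^\Z_{\geq 0,\mathrm{rel}}$ of Subsection \ref{ss2}; naturality of assembly for morphisms of ad theories (together with the fact that assembly for $\ad_\ipFun$ is the forgetful-composed-with-assembly for $\ad_\ip$, using the weak equivalence $\bQ_\ipFun\simeq\bQ_\ip$) reduces the claim to the statement that the assembly of the $\ipFun$-ad given by $(X,\xi)$ with $\xi$ a chain representative of $\Gamma_X$, mapped via $\sig$, is exactly the quadruple of Definition \ref{d5}. That in turn follows by tracing through the definitions of $\sig$ (Lemma \ref{l1}) and of the assembly map on ad theories (\cite[Section 13 or 14]{LM}), noting that the assembly map replaces chains on $X$ by chains on the universal cover and $\otimes$ by $\otimes_{\Z[\pi]}$, which is precisely how $(C,D,\beta,\varphi)$ in Section \ref{symsig} was constructed; the choice $b$ there corresponds to the choice of chain representative $\xi$.

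For (iii), suppose $X$ is a PL manifold. Then $X$ is in particular an IP-space, and a PL manifold is a combinatorial homology manifold, so its intersection homology (any perversity) agrees with ordinary homology; I would use this to produce a morphism of ad theories $\ad_\STopFun^{\mathrm{PL}}\to\ad_\ipFun$ (or, more carefully, compare both $\ad_\ipFun$ and the PL-manifold ad theory to $\ad_\STopFun$ after applying the forgetful maps and the equivalences $\bQ^{\mathrm{PL}}\simeq\bQ_\STop$), compatible with the two symmetric-signature maps $\sig$, so that the composites defining $[X]_{\mathbb{L}}$ via $\ad_\ipFun$ and via $\ad_\STopFun$ agree. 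Concretely, both fundamental classes are images of $[\id_X]$ under maps to $\syml_n(X)$ built from the symmetric signature, and the symmetric signature of $X$ built from intersection chains coincides with the one built from ordinary singular chains because the inclusion of intersection chains into singular chains is a quasi-isomorphism in the manifold case (and respects the diagonal and cross-product structures up to the homotopies provided by acyclic models, as in Remark \ref{r1}); Ranicki's class is by definition (see the discussion preceding the theorem and \cite[Section 16]{ranicki}) the image of $[\id_X]$ under exactly the $\ad_\STop$ composite \eqref{e12}.

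The main obstacle I expect is part (ii): making the identification $\alpha[X]_{\mathbb{L}}=\sigma^*_\ip(X)$ genuinely precise requires carefully matching the definition of the assembly map for ad theories (in the Quinn-spectrum / Lurie-Milnor sense used in \cite{LM,LM2}) with the hands-on universal-cover construction of Section \ref{symsig}, including the passage from the connective relaxed spectrum back to $\syml$ and the ringoid-with-involution bookkeeping for $\Z[\pi_1 X]$; getting all the naturality squares to commute on the nose (rather than just up to homotopy) is where the real work lies, and it is likely carried out via the results cited in Appendix \ref{tech} and \cite[Proposition 12.3]{LM2}.
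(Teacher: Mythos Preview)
Your overall strategy matches the paper's, but there is a genuine gap in the very first step: the isomorphism
\[
(\Omega_\ip)_n(X)\cong(\bQ_\ip)_n(X)
\]
does \emph{not} follow from \cite[Proposition 16.4(i)]{LM}. The left-hand side is $\pi_n\bQ_{\ip,X}$, the homotopy groups of the Quinn spectrum of the ad theory $\ad_{\ip,X}$ of IP-spaces over $X$; the right-hand side is $\pi_n(X_+\wedge\bQ_\ip)$, the spectrum homology. Proposition 16.4(i) only identifies $\pi_*\bQ_{\ip,X}$ with the bordism groups of the ad theory $\ad_{\ip,X}$; it says nothing about comparing this to $X_+\wedge\bQ_\ip$. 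The comparison map between the two is precisely the Weiss--Williams assembly map
\[
\alpha\colon X_+\wedge\Phi(*)\to\Phi(X),\qquad \Phi(Z)=\bQ_{\ip,Z},
\]
and showing $\alpha$ is a weak equivalence is the content of Theorem \ref{t2}, proved separately in Section \ref{pt2}. Without this you cannot push $[\id_X]\in(\Omega_\ip)_n(X)$ back to $(\bQ_\ip)_n(X)$ in order to apply $\Sig$, so the definition of $[X]_{\mathbb L}$ does not get off the ground.

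For (ii), your sketch is in the right spirit but the paper does not argue by unwinding an ``assembly map on ad theories'' from \cite{LM}. Instead it works entirely in the Weiss--Williams framework: one introduces a second homotopy functor $\Psi(Z)=\bQ^{\Z[\pi_1 Z]}_{\geq 0,\mathrm{rel}}$, observes that the symmetric signature gives a natural transformation $\nu\colon\Phi\to\Psi$, and then invokes naturality of assembly \cite[Theorem 1.1]{wwa} to get a commuting square relating $\Sig$ applied after $\alpha^{-1}$ to $\nu$ followed by the assembly for $\Psi$. The identification of $\nu([X]_\ip)$ with $\sigma^*_\ip(X)$ is then direct from the definitions; no delicate on-the-nose commutativity is needed beyond this.

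For (iii), you correctly identify the ingredients (PL analogs, compatibility of the two $\sig$'s), but you are missing the substantive step: Ranicki's class is defined via the Thom spectrum equivalence $M\STop\simeq\bQ_\STop$ and transversality, not via the Weiss--Williams assembly map, so one must prove (Lemma \ref{l3}) that these two routes from $(\Omega_\STop)_n(M)$ to $\syml_n(M)$ agree. This requires a further assembly-naturality argument involving a transversal singular complex $(M\STop\wedge Z_+)^\pitchfork$.
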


(We will show in the next section that $[X]_{\mathbb{L}}$ assembles to the
symmetric signature $\sigma_\ip^*(X)$ given by Definition \ref{d5}.)

\begin{remark}
For Witt spaces, a different method for constructing a fundamental class is
described in \cite{csw}.
\end{remark}

The rest of the section is devoted to the proof of Theorem \ref{t3}.  
We 
begin with the construction of $[X]_{\mathbb{L}}$. 

Define a category $\Aa_{\ip,Z}$ as follows.
An object of $\Aa_{\ip,Z}$ is an object $X$ of $\Aa_\ip$ together with a map of
topological spaces $X\to Z$.  A morphism from $X\to Z$ to $X'\to Z$ is a
morphism $X\to X'$ in $\Aa_\ip$ for which the diagram
\[
\xymatrix{
X
\ar[rr]
\ar[rd]
&
&
X'
\ar[ld]
\\
&Z&
}
\]
commutes.
There is a forgetful functor
\[
\Upsilon: \Aa_{\ip,Z}\to \Aa_\ip,
\]
and we define
$\ad_{\ip,Z}^k(K)\subset \pre_{\ip,Z}^k(K)$ to be the set of functors $F$ for
which the composite $\Upsilon\circ F$ is in $\ad_\ip^k(K)$.  The proof of
Theorem \ref{t1} shows that this is an ad theory.

Now we have a
functor $\Phi$ from spaces to spectra with 
\[
\Phi(Z)=
\bQ_{\ip,Z}.
\]
By Remark \ref{aa41},
the homotopy groups of $\bQ_{\ip,Z}$ are the same as the IP bordism
groups $(\Omega_\ip)_*(Z)$ defined by Pardon (\cite[Section 5]{pardon}).  
Pardon
proves that $(\Omega_\ip)_*$ is a homology theory, and in particular this shows
that $\Phi$ is homotopy invariant in the
sense of \cite[Section 1]{wwa}.  We therefore have an assembly map
\[
\alpha:Z_+\wedge \Phi(*)
\to
\Phi(Z)
\]
by \cite[Theorem 1.1 and Observation 1.2]{wwa} for all spaces $Z$ which have
the homotopy type of a CW complex.

\begin{thm}
\label{t2}
$\alpha$ is a weak equivalence.
\end{thm}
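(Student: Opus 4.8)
The plan is to reduce the statement to the analogous, already-known fact for topological bordism by comparing the IP ad theory with the $\STop$ ad theory, and then to exploit the fact that the signature map and the structure of $L$-theory let us detect everything rationally while 2-torsion information is carried along. However, the cleaner route — and the one I would actually pursue — is to use the general machinery of \cite{wwa}: an assembly map $Z_+\wedge\Phi(*)\to\Phi(Z)$ is a weak equivalence for all $Z$ precisely when the functor $\Phi$ is \emph{excisive}, i.e.\ takes homotopy pushouts to homotopy pullbacks and commutes with filtered homotopy colimits, equivalently when the homology theory $Z\mapsto\pi_*\Phi(Z)$ it represents on the nose agrees with the homology theory represented by the spectrum $\Phi(*)$. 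Since $\pi_*\bQ_{\ip,Z}=(\Omega_\ip)_*(Z)$ by the cited results of \cite{LM}, and Pardon proves $(\Omega_\ip)_*$ is a homology theory, the point is to show this homology theory is \emph{represented} by the spectrum $\Phi(*)=\bQ_\ip$ via the assembly map — i.e.\ that $\alpha$ induces an isomorphism on all homotopy groups.

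First I would check the statement for $Z$ a point, where $\alpha$ is (up to the obvious identifications) the identity, so there is nothing to prove. Next, both source and target of $\alpha$, as functors of $Z$, are homology theories: the source because smashing a fixed spectrum with $Z_+$ always is, and the target because $(\Omega_\ip)_*(-)$ is a homology theory by Pardon. The assembly map $\alpha$ is a natural transformation of homology theories which is an isomorphism on $\pi_*$ when $Z=*$ (here one uses that $\Phi$ is homotopy invariant, as noted in the excerpt, and the general nonsense of \cite[Theorem 1.1 and Observation 1.2]{wwa}, which produces $\alpha$ precisely as the map that is the identity on $\Phi(*)$). A natural transformation of homology theories which is an isomorphism on coefficients (the value at $S^0$, i.e.\ at a point) is an isomorphism on all CW complexes, by the usual induction over skeleta using the five lemma together with the fact that both theories send wedges to products (here, coproducts of spectra) and are compatible with mapping cones; and it extends to all spaces by CW approximation. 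This is exactly the argument used for the corresponding statements about $\bQ_\STop$ in \cite[Appendix B]{LM}, so I would simply cite that template.

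The main obstacle — the part that is not pure formalism — is verifying that the functor $\Phi(Z)=\bQ_{\ip,Z}$ genuinely gives a homology theory, i.e.\ that the homotopy groups of $\bQ_{\ip,Z}$ really do compute Pardon's $(\Omega_\ip)_*(Z)$ and that Pardon's groups really do satisfy excision. The identification of $\pi_*\bQ_{\ip,Z}$ with bordism of IP-spaces-over-$Z$ is the content of \cite[Proposition 16.4(i), Remark 14.2(i), Definitions 4.1 and 4.2]{LM} applied to the ad theory $\ad_{\ip,Z}$, whose ad-theory axioms are checked exactly as in the proof of Theorem \ref{t1}; the only subtlety is that the gluing and cylinder constructions there must be carried out compatibly with the reference maps to $Z$, which they visibly are since those constructions only manipulate the underlying $\partial$-IP-spaces and a map out of a colimit is determined by maps out of the pieces. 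That $(\Omega_\ip)_*$ is a homology theory — in particular the exactness and excision axioms — is Pardon's theorem \cite[Section 5]{pardon}, which I would invoke directly rather than reprove. With those two inputs in hand, the weak equivalence of $\alpha$ follows from the five-lemma induction sketched above, and the proof is complete.
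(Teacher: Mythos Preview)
Your overall strategy---compare two homology theories that agree on a point and invoke the five lemma---is the right shape, and it is in fact what the paper does. But you have glossed over the one step that actually requires work: verifying that $\alpha_*$ is a natural transformation \emph{of homology theories}, i.e.\ that it commutes with the connecting homomorphisms on the two sides.

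The difficulty is that the two boundary maps have different origins. On the source side $Z\mapsto \pi_*(Z_+\wedge\bQ_\ip)$, the connecting map in a Mayer--Vietoris sequence comes from the cofiber-sequence structure of smashing with a fixed spectrum. On the target side $Z\mapsto\pi_*\Phi(Z)=(\Omega_\ip)_*(Z)$, the connecting map is Pardon's geometric boundary, defined bordism-theoretically. There is no formal reason these agree: $\Phi(A)\to\Phi(X)\to\Phi(X/A)$ is \emph{not} known to be a cofiber sequence of spectra until you have already proved excision, so you cannot simply read off compatibility from the fact that $\alpha$ is a map of spectra. Your appeal to ``the usual induction over skeleta using the five lemma'' presupposes exactly this compatibility.

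The paper confronts this directly. After reducing (via \cite[Observation 1.3]{wwa}) to showing $\Phi$ preserves homotopy cocartesian squares, it sets up the five-lemma diagram you have in mind, but with an explicit comparison of suspension maps (Lemma~\ref{l2}): it constructs the spectrum-level suspension $s:\pi_i\bar\Phi(W)\to\pi_{i+1}S(W)$ at the level of simplices in the Quinn spectrum, and then exhibits, by writing down explicit bordisms, that the composite through $s$ agrees with Pardon's geometric suspension $s'$. This is the substantive content of Section~\ref{pt2}, and it is what your proposal is missing. Your citation of \cite[Appendix~B]{LM} as a template does not help here: that appendix identifies $\bQ_\STop$ with $M\STop$ by a transversality argument, which is a different kind of statement.
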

The proof is deferred to Section \ref{pt2}.
Now we can define $[X]_{\mathbb{L}}$.

\begin{definition} \label{def.xipxl}
Let $X$ be an $n$-dimensional compact oriented IP-space.

(i) Let $[X]_\ip\in \pi_n(\bQ_{\ip,X})$ be the image of
the class of the identity map $X\to X$ in $(\Omega_\ip)_n(X)$ under the
isomorphism
$
(\Omega_\ip)_n(X)
\cong
\pi_n(\bQ_{\ip,X}).
$

(ii) Let
$[X]_{\mathbb{L}}$ be the image of $[X]_\ip$ under the composite
\begin{equation}
\label{e13}
\pi_n(\bQ_{\ip,X})
\xleftarrow[\cong]{\alpha}
(\bQ_\ip)_n(X)
\xrightarrow{\Sig}
(\bQ^\Z_{\mathrm{rel},\geq 0})_n(X)
\xleftarrow{\cong}
\syml_n(X),
\end{equation}
where the last map is the isomorphism \eqref{e10}.
\end{definition}

It remains to prove parts (i) and (ii) of Theorem \ref{t3}.
For part (i) it suffices to show that if $f:X\to X'$ is an oriented PL
homeomorphism then $f_*([X]_\ip)=[X']_\ip$, and this in turn follows from the
fact that the map
\[
(I\times X)\cup_{1\times X}\, X'
\to
X',
\]
which is the identity on $X'$ and takes $(t,x)$ to $f(x)$, is a bordism between
$f$ and the identity map of $X'$.

For part (ii), we need to compare the composites \eqref{e12} and \eqref{e13} 
for $X$ a PL manifold $M$.  \cite[Section 14]{LM2} gives a morphism
\[
\Sigr:\Aa_{e,*,1}\to\Aa^\Z_{\mathrm{rel}}
\]
and as in Footnote \ref{aa46} (with $\Aa^\Z_{\mathrm{rel}}$ instead of $\Aa^\Z$)  this gives a map 
\[
\sig:\bQ_\STopFun\to \bQ^\Z_{\mathrm{rel},\geq 0}
\]
which in turn gives
\[
\Sig: \bQ_\STop\to \bQ^\Z_{\mathrm{rel},\geq 0}.
\]
By \cite[Proposition 14.4]{LM2}, 
\eqref{e12} is equal to the composite 
\begin{equation}
\label{e14}
M\STop_n(M)\to (\bQ_\STop)_n(M)
\xrightarrow{\Sig}
(\bQ^\Z_{\mathrm{rel},\geq 0})_n(M)
\xleftarrow{\cong}
\syml_n(M).
\end{equation}
Next we observe that for each space $Z$ there is an ad theory $\ad_{\STop,Z}$
defined analogously to $\ad_{\ip,Z}$.  We get a functor $\Xi$ from spaces to
spectra by letting
\[
\Xi(Z)=\bQ_{\STop,Z},
\]
and we have 
\[
\pi_*\Xi(Z)\cong(\Omega_\STop)_*(Z).
\]
$\Xi$ is homotopy invariant, because $\pi_*\Xi(Z)$ is a homology theory by
\cite[Chapter 4]{cf},
and the proof of Theorem \ref{t2} shows that
the assembly map for $\Xi$ is a weak equivalence. 

\begin{lemma}
\label{l3}
The composite \eqref{e14} is equal to the composite
\[
\pi_n\bQ_{\STop,M}
\xleftarrow{\alpha}
(\bQ_\STop)_n(M)
\xrightarrow{\Sig}
(\bQ^\Z_{\mathrm{rel},\geq 0})_n(M)
\xleftarrow{\cong}
\syml_n(M)
\]
where $\alpha$ is the assembly map for the functor $\Xi$.
\end{lemma}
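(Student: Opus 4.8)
The plan is to compare the two displayed composites by identifying the map $M\STop_n(M)\to(\bQ_\STop)_n(M)$ in \eqref{e14} with the inverse of the assembly map $\alpha$ for the functor $\Xi$. The first step is to recall that, by \cite[Appendix B]{LM}, the equivalence $M\STop\to\bQ_\STop$ in the stable category is a map of spectra, so that for any space $Z$ one gets an induced natural isomorphism $M\STop_*(Z)\cong(\bQ_\STop)_*(Z)$; in particular the left-hand map of \eqref{e14} is a natural isomorphism of homology theories. On the other side, $\Xi$ is a homotopy functor with $\pi_*\Xi(Z)=(\Omega_\STop)_*(Z)$, so by \cite[Theorem 1.1 and Observation 1.2]{wwa} it has an assembly map $\alpha:Z_+\wedge\bQ_\STop\to\bQ_{\STop,Z}$, and the argument deferred to Section \ref{pt2} (the analogue of Theorem \ref{t2} for $\ad_\STop$ in place of $\ad_\ip$, which holds by exactly the same proof since $(\Omega_\STop)_*$ is a homology theory) shows that this $\alpha$ is a weak equivalence. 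Thus on $\pi_n(-\wedge M_+)$ both maps are isomorphisms $(\bQ_\STop)_n(M)\xrightarrow{\cong}(\Omega_\STop)_n(M)$, and it remains to check they are the same isomorphism.

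The key step is therefore to show that the standard identification $M\STop_*(M)\cong(\bQ_\STop)_*(M)$ agrees with the (inverse of the) assembly map $\alpha$ for $\Xi$. This is a formal uniqueness statement: both are morphisms of homology theories in $M$ which are natural, are compatible with the suspension/smash structure, and reduce to the canonical identification $\pi_*\bQ_\STop\cong(\Omega_\STop)_*=\Omega_*^{\STop}$ on a point (indeed, for $M=\pt$ the map $M\STop_n(\pt)\to(\bQ_\STop)_n(\pt)$ in \eqref{e14} is the identity after the identification $M\STop\simeq\bQ_\STop$, and $\alpha$ for $\Xi$ restricted to a point is the identity by \cite[Observation 1.2]{wwa}). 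By the standard argument that the assembly map is the unique natural transformation $Z_+\wedge\Phi(\pt)\to\Phi(Z)$ extending the identity on $\pt$ (see \cite[Theorem 1.1]{wwa}), applied to $\Phi=\Xi$, any two such maps coincide; hence $\alpha^{-1}$ equals the natural isomorphism $M\STop_*(M)\cong(\bQ_\STop)_*(M)$. Feeding this equality into the two composites — which otherwise share the maps $\Sig$ and the isomorphism \eqref{e10} — gives the asserted equality.

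I expect the main obstacle to be the bookkeeping in the second step: making precise in what sense $M\STop\to\bQ_\STop$ and the assembly map $\alpha$ for $\Xi$ are "the same" at the level of natural transformations of homology theories on spaces (as opposed to merely inducing the same isomorphism on $\pi_*$ of a fixed space), and invoking the correct uniqueness statement for the assembly map from \cite{wwa}. Once that identification is in place, the rest is the routine verification that the remaining arrows in \eqref{e14} and in the target composite literally coincide, which is immediate from the constructions in \cite[Section 13]{LM2}.
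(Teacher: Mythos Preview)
Your strategy rests on the claim that the Weiss--Williams uniqueness of the assembly map forces the Pontryagin--Thom isomorphism $i:M\STop_*(Z)\to(\Omega_\STop)_*(Z)$ and the assembly $\alpha$ (composed with $j$) to agree because they are both natural transformations of homology theories which coincide on a point. But \cite[Theorem~1.1]{wwa} characterises the assembly map as the universal excisive approximation among natural transformations of \emph{spectrum-valued functors} $Z\mapsto Z_+\wedge\bF(*)\to\bF(Z)$, not among natural transformations of the associated homology theories. To apply that uniqueness you would need to know in advance that the classical Pontryagin--Thom construction $i$ is induced by a natural map of spectra $Z_+\wedge M\STop\to \bQ_{\STop,Z}=\Xi(Z)$, and this is exactly the point at issue: $i$ is defined element by element via transversality, and promoting it to a natural transformation of spectrum-valued functors is the substantive content, not mere bookkeeping. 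Without that lift, ``natural transformation of homology theories agreeing on a point'' does not pin down the map.

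The paper supplies precisely this missing construction. It builds a functorial spectrum $(M\STop\wedge Z_+)^\pitchfork$ from singular simplices in $T\STop_k\wedge Z_+$ which are transverse to the $0$-section on every face, observes that the inclusion into $M\STop\wedge Z_+$ is a weak equivalence, and defines an honest natural transformation $J:(M\STop\wedge Z_+)^\pitchfork\to\bQ_{\STop,Z}$ by reading off the preimage manifolds. This gives the spectrum-level lift of $i$ you need; once it is in hand, the comparison with $\alpha$ follows from the naturality of assembly applied to the natural transformation $J$ (not from an abstract uniqueness statement). Your proposal correctly identifies where the difficulty lies, but underestimates it: resolving it is the whole proof.
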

 
We defer the proof for a moment.
Next we observe that everything we have said about topological manifolds and 
STop is also valid for PL manifolds and
SPL.
We write $\Xi'$ for the PL analog of $\Xi$. Now consider the
following diagram.
\[
\xymatrix{
\Xi(M)
&
\bQ_\STop\wedge M_+
\ar[rd]^{\Sig\wedge 1}
\ar[l]_-\alpha
&
\\
\Xi'(M)
\ar[u]
\ar[d]
&
\bQ_{\mathrm{SPL}}\wedge M_+
\ar[l]_-\alpha
\ar[r]^-{\Sig\wedge 1}
\ar[u]
\ar[d]
&
\bQ^\Z_{\mathrm{rel},\geq 0}\wedge M_+
\\
\Phi(M)
&
\bQ_\ip\wedge M_+
\ar[l]_-\alpha
\ar[ru]^{\Sig\wedge 1}
&
}
\]
The squares commute up to homotopy by the naturality of the assembly map, and 
the triangles commute by the definition of the maps $\Sig$.
The upper left-hand vertical map takes $[M]_\mathrm{PL}$ to $[M]_\STop$, and 
(by Lemma \ref{l3}) the upper composite takes $[M]_\STop$ to the fundamental 
class constructed by Ranicki.  The lower left-hand vertical map takes
$[M]_\mathrm{PL}$ to $[M]_\ip$, and the lower composite takes $[M]_\STop$ to
the fundamental class given by Definition \ref{def.xipxl}(ii).
\qed

\begin{proof}[Proof of Lemma \ref{l3}]
It suffices to show that the diagram
\begin{equation}
\label{e15}
\xymatrix{
(\Omega_\STop)_n(Z)
\ar[d]_\cong
&
M\STop_n(Z)
\ar[l]_i^\cong
\ar[d]_\cong^j
\\
\pi_n \bQ_{\STop,Z}
&
(\bQ_\STop)_n(Z)
\ar[l]^-\alpha
}
\end{equation}
commutes, where $Z$ is a space, $i$ is the standard isomorphism and $j$ is 
given by \cite[Appendix B]{LM}.\footnote{
The referee has asked us to remind the reader that \cite[Appendix B]{LM}
uses the folk theorem that that there is an isomorphism
$
\Omega_*(\STop)\to \pi_*(M\STop)
$
whose construction is similar to that on pages 19--20 of \cite{stong}.
}

First we recall the definition of $i$ (cf.\ \cite[pages 224--5]{dk}).  
The $k$-th space of the spectrum
$M\STop\wedge Z_+$ is $T\STop_k\wedge Z_+$, where $T\STop_k$ is the Thom
space.  The inclusion of the 0-section gives an embedding
\[
B\STop_k \to T\STop_k.
\]
Given a map
$f:S^{n+k}\to T\STop_k\wedge Z_+$, there is a homotopic map $f'$ 
for which the composite
\[
S^{n+k}\xrightarrow{f'} T\STop_k\wedge Z_+\xrightarrow{p_1} T\STop_k
\]
(where $p_1$ is the projection)
is transverse to the 0-section. Then the oriented topological manifold 
$(p_1\circ f')^{-1}(B\STop_k)$ is equal to
$(f')^{-1}(B\STop_k\times Z)$, and
$i$ takes the homotopy class of $f$ to the bordism class of the composite
\[
(f')^{-1}(B\STop_k\times Z)
\xrightarrow{f'}
B\STop_k\times Z
\xrightarrow{p_2}
Z.
\]

Next we observe that $i$ can be described using maps of spectra.
Let $S_\bullet(T\STop_k\wedge Z_+)$ be the singular complex, and let
$S_\bullet^\pitchfork(T\STop_k\wedge Z_+)$ be the sub-semisimplicial set
consisting of maps $g:\Delta^n\to T\STop_k\wedge Z_+$ for which the restriction
of $p_1\circ g$ to each face is transverse to the 0-section (cf.\
\cite[Appendix B]{LM}).  Let 
\[
(M\STop\wedge Z_+)^\pitchfork
\]
be the spectrum whose $k$-th space is the geometric realization 
$|S_\bullet^\pitchfork(T\STop_k\wedge Z_+)|$.  
Given a simplex $g$ of $S_\bullet^\pitchfork(T\STop_k\wedge Z_+)$, we obtain 
an element of $\ad_{\STop,Z}^k(\Delta^n)$ by taking each oriented simplex
$(\sigma,o)$ to $(g|_\sigma)^{-1}(B\STop\times Z)$;
this gives a natural transformation
\[
I: (M\STop\wedge Z_+)^\pitchfork
\to
\bQ_{\STop,Z}.
\]
Next, transversality implies that the
map 
\[
|S_\bullet^\pitchfork(T\STop_k\wedge Z_+)|
\to
|S_\bullet(T\STop_k\wedge Z_+)|
\]
is a weak equivalence (because by \cite[Lemma 16.3, Definition 3.6, Example
1.5 and Lemma 1.5]{may} each element of $\pi_n$ of the target is 
represented by an $n$-simplex whose faces are at the basepoint, and such a 
simplex can be deformed to one that is transverse, and similarly for 
homotopies), and 
so the map
\[
(M\STop\wedge Z_+)^\pitchfork
\to
M\STop\wedge Z_+
\]
is a weak equivalence.  The isomorphism
$i$ is induced by the composite
\[
M\STop\wedge Z_+
\xleftarrow{\simeq}
(M\STop\wedge Z_+)^\pitchfork
\xrightarrow{I}
\bQ_{\STop,Z}
\]
together with the isomorphism $\pi_n\bQ_{\STop,Z}\cong (\Omega_\STop)_n(Z)$.

Now
consider the diagram
\[
\xymatrix{
M\STop\wedge Z_+
&
M\STop\wedge Z_+
\ar[l]_-=
\\
(M\STop\wedge Z_+)^\pitchfork
\ar[u]^\simeq
\ar[d]_I
&
M\STop^\pitchfork\wedge Z_+
\ar[u]^\simeq
\ar[l]_-\alpha
\ar[d]_I
\\
\bQ_{\STop,Z}
&
\bQ_\STop\wedge Z_+.
\ar[l]_-\alpha
}
\]
This homotopy commutes by naturality of the assembly map (since the assembly 
map for the functor $M\STop\wedge Z_+$ is the identity map).  On passage to 
homotopy groups, 
the right-hand vertical 
composite induces the map 
$j$ of diagram \eqref{e15}, and this shows that diagram \eqref{e15} commutes 
as required. 
\end{proof}

In \cite{pardon}, Pardon computes the IP bordism groups of a point to be
\begin{equation} \label{equ.pardoncompip}
\Omega^\ip_n (\pt) \cong
\begin{cases} \intg, & n\equiv 0(4) \\
 \intg/_2, & n\equiv 1(4), n>1 \\
 0, & \text{ otherwise.}
\end{cases} 
\end{equation}
The isomorphisms are given by the
signature (when $n\equiv 0(4)$) and the
de Rham invariant (when $n\equiv 1(4)$). These groups are very close to
\[
L^n (\intg) \cong
\begin{cases} \intg, & n\equiv 0(4) \\
 \intg/_2, & n\equiv 1(4) \\
 0, & \text{ otherwise.}
\end{cases} 
\]

\begin{thm}
The map $\Omega^\ip_n (\pt)\to \syml_n (\pt) = L^n (\intg)$, given by
(\ref{e13}) on a point, is an isomorphism for all
$n\not= 1$.
\end{thm}
\begin{proof}
If $n\equiv 2,3 (4)$, then both $\Omega^\ip_n (\pt)$ and $L^n (\intg)$ vanish and the claim holds.

Suppose that $n\equiv 0(4)$. Then by (\ref{equ.pardoncompip}), the signature is an isomorphism
$\Omega^\ip_n (\pt)\cong \intg$. The signature of a symmetric Poincar\'e chain complex is an
isomorphism $L^n (\intg)\cong \intg$. We shall show that our map
$\Omega^\ip_n (\pt)\to L^n (\intg)$ sends a generator to a generator. The complex projective
space $\cpnt$ represents a generator $[\cpnt]\in \Omega^\ip_n (\pt)$ since it is an IP-space
and has signature $1$.
Let $f$ be the map $f:\cpnt \to \pt$. The diagram
\begin{equation} \label{equ.ipcpnt}
\xymatrix{
\Omega^\ip_n (\cpnt) \ar[d]_{f_*} \ar[r] & \syml_n (\cpnt) \ar[d]^{f_*} \\
\Omega^\ip_n (\pt) \ar[r] & \syml_n (\pt) 
} 
\end{equation}
commutes, since $\Omega^\ip_n (-)\to \syml_n (-)$ is a natural transformation of 
homology theories (being induced by a spectrum level map).
Since the assembly map $\alpha$ is a natural transformation of functors and
$\cpnt$ is simply connected, the diagram
\[ \xymatrix{
\syml_n (\cpnt) \ar[d]_{f_*} \ar[r]^{\alpha} & L^n (\intg) \ar@{=}[d]^{f_* = \id} \\
\syml_n (\pt) \ar[r]_{\cong}^{\alpha} & L^n (\intg)
} \]
commutes as well.
By construction, the top horizontal arrow of diagram (\ref{equ.ipcpnt}) maps
$[\id_{\cpnt}]\in \Omega^\ip_n (\cpnt)$ to the fundamental class
$[\cpnt]_{\mathbb{L}} \in \syml_n (\cpnt)$. By our Theorem \ref{t3}(iii),
this class agrees with Ranicki's fundamental $\syml$-homology class for manifolds.
The latter class is known to assemble to the Mischenko-Ranicki symmetric
signature $\sigma^* (\cpnt)$, which is a generator of $L^n (\intg)$, since
the ordinary signature of $\cpnt$ is $1$.
Now $f_* [\id_{\cpnt}] = [\cpnt]\in \Omega^\ip_n (\pt)$ and thus by commutativity,
the bottom arrows of the two diagrams must also map $[\cpnt]$ to a generator and the
claim is proved. 

Suppose that $n\equiv 1(4)$ and $n>1$. Then according to (\ref{equ.pardoncompip}), 
the de Rham invariant is an isomorphism
$\Omega^\ip_n (\pt)\cong \intg/_2$. The de Rham invariant of a symmetric Poincar\'e chain complex is an
isomorphism $L^n (\intg)\cong \intg/_2$. To show that our map
$\Omega^\ip_n (\pt)\to L^n (\intg)$ sends the generator to the generator, we can use the same argument
as in the above case, replacing the signature by the de Rham invariant and replacing the complex projective
spaces by simply connected smooth manifolds $M^n$ with nontrivial de Rham invariant. In dimension $5$,
such a manifold is given by $M^5 = \operatorname{SU}(3)/\SO(3)$. If $A$ is a manifold of dimension
congruent $1$ mod $4$ and $B$ a manifold of dimension congruent $0$ mod $4$, then the de Rham invariant
of the product $A\times B$ is the de Rham invariant of $A$ multiplied by the signature of $B$.
Thus, the manifolds $M^n = M^5 \times \mathbb{CP}^{(n-5)/2},$ all have de Rham invariant $1$ and are
simply connected. The Mischenko-Ranicki symmetric signature $\sigma^* (M^n)$ is then the generator
of $L^n (\intg)$, since $M^n$ has nontrivial de Rham invariant.
\end{proof}

\begin{remark}
In degree $n=1$, there is a discrepancy: 
$\Omega^\ip_1 (\pt)=0,$ while $L^1 (\intg)\cong \intg/_2$.
\end{remark}

\section{The assembly of the fundamental class}
\label{gs6}

The goal of this section is to prove that 
$[X]_{\mathbb{L}}$ assembles to the symmetric signature
$\sigma_\ip^*(X)$.

Before giving the precise statement (Theorem \ref{nn13}) we need some 
preliminary work. 
In order to construct the relevant assembly map, we first need a functorial 
model for $\ad^{\Z[\pi_1 Z]}$;  $\ad^{\Z[\pi_1 Z]}$ is not a functor of $Z$
because the category of modules over 
$\Z[\pi_1 Z]$ is not a functor of $Z$ (see 
\cite[Section 13]{LM} for an explanation of this issue in the context of $R$
modules).\footnote{Briefly, the point is that the obvious way of trying
to make the category of $R$ modules a functor of $R$ 
doesn't give a functor because $R_3\otimes_{R_2}
(R_2\otimes_{R_1} M)$ is canonically isomorphic to, but not the same module
as, $R_3\otimes_{R_1} M$.
}  
Accordingly, in Subsection \ref{nn4} we
give a functorial model for the category of modules over
$\Z[\pi_1 Z]$, and in Subsection \ref{mm} we use this to construct an ad theory
$\ad^{\Z[\pi_1 Z]}_\mathbb F$ which is a functor of $Z$ and has a canonical map
to $\ad^{\Z[\pi_1 Z]}$ which is an isomorphism on bordism groups.
Subsection \ref{nn10} gives the statement of the main theorem and an outline of
the proof; the details of the proof are given in Subsections
\ref{ss5}--\ref{nn3}.

\begin{convention}
Throughout this section, $Z$ will denote a path-connected space.
\end{convention}

\subsection{A functorial model for the category of modules over 
$\Z[\pi_1 Z]$}
\label{nn4}

We use the terminology and notation of 
\cite[Subsection B.5]{LM2} (which the reader should consult before
continuing)\footnote{For the benefit of the reader who doesn't want to consult
\cite[Subsection B.5]{LM2}, here are the key definitions:

We define the category of {\it schematic free $R$ modules} as follows.  An
object is
a set $\mathbb M$.  This should be thought of as representing the free $R$
module generated by $\mathbb M$, which we denote by $R\langle \mathbb
M\rangle$.  We define a map $\mathbb M\to\mathbb M'$ to be a map of $R$-modules
$ R\langle \mathbb M\rangle \to R\langle \mathbb M'\rangle$.

We define the category of {\it schematic $R$ modules} as follows.
An object of this category is
a triple $(\mathbb M,\mathbb N,T)$, where $\mathbb M$ and
$\mathbb N$ are schematic free $R$ modules and $T$ is a map
$\mathbb N\to \mathbb M$.
Such a triple should be thought of as representing the quotient of
$R\langle \mathbb M \rangle$ by the image of $T$; we
write $R\langle(\mathbb M,\mathbb N,T)\rangle$ for this quotient.
}
together with:

\begin{notation}
When possible we denote a schematic $R$ {module} by a single letter
$\mathbb 
A$ rather than by a triple $({\mathbb M}, {\mathbb N}, T)$. (Then $R\langle 
\mathbb A \rangle$ denotes the quotient of the free $R$ module $R\langle 
\mathbb M \rangle$ by the image of the map $T:R\langle 
\mathbb N \rangle \to R\langle 
\mathbb M \rangle$.)
\end{notation}

To lighten the notation, we will denote $\Z[\pi_1 Z]$ by $\RR(Z)$ and
$\Z[\pi_1(Z,z)]$ by $\RR(Z)_z$ throughout 
this subsection.

Let us define a {\it schematic $\RR(Z)$ module}
to be a pair $(z,{\mathbb A})$ where $z$ is an element of $Z$ and
${\mathbb A}$ is a schematic $\RR(Z)_z$ {module}.  If we think of $\RR(Z)_z$ 
as
a subcategory of $\RR(Z)$ and of 
$\RR(Z)_z\langle {\mathbb A}\rangle$ as a module over this subcategory, then
the Kan extension\footnote{See Appendix \ref{add}.} 
of $\RR(Z)_z\langle {\mathbb A}\rangle$ is an $\RR(Z)$ 
module which will be
denoted $\RR(Z)\langle z,{\mathbb A}\rangle$.

Here is an explicit description of 
$\RR(Z)\langle z,{\mathbb A}\rangle$.
If we denote the set of path homotopy classes of paths from $z$ to $z'$ by 
${\mathfrak p}_{z,z'}$, 
then $\RR(Z)\langle z,{\mathbb A}\rangle$ is defined on objects by letting 
$\RR(Z)\langle z,{\mathbb A}\rangle_{z'}$ be the quotient of 
\[
\mathrm{Mor}_{\RR(Z)}(z,z')\otimes
\RR(Z)_z\langle {\mathbb A}\rangle
\cong
\bigoplus_{\delta\in {\mathfrak p}_{z,z'}}
\RR(Z)_z\langle {\mathbb A}\rangle
\]
by the following equivalence relation: if $(\delta,a)$ denotes the copy of an
element $a\in \RR(Z)_z\langle {\mathbb A}\rangle$
in the $\delta$ summand and $\gamma$ is the class of a loop at $z$ then 
$(\delta\gamma,a)\sim (\delta,\gamma a)$;
\footnote{Note that we denote composition of path homotopy classes by letting 
$\delta\gamma$ be ``first $\gamma$, then $\delta$'', analogously to 
composition of functions.}
it follows that for any $\delta\in {\mathfrak p}_{z,z'}$ the map which takes
$a$ to $(\delta,a)$ is an isomorphism
$\RR(Z)_z\langle {\mathbb A}\rangle
\to
\RR(Z)\langle z,{\mathbb A}\rangle_{z'}$.
The action of
the morphisms is given as follows: if $\epsilon\in{\mathfrak p}_{z',z''}$ is
thought of as a morphism from $z'$ to $z''$ then $\epsilon$ takes $(\delta,a)$
to $(\epsilon\delta,a)$.  

We define a {map} from $(z,{\mathbb A})$ 
to $(z',{\mathbb A}')$
to be a homomorphism of $\RR(Z)$ modules $\RR(Z)\langle z,{\mathbb 
A}\rangle\to \RR(Z)\langle z',{\mathbb A'}\rangle$.

\begin{lemma}
\label{rrrr}
The functor from 
schematic $\RR(Z)$ {modules} to $\RR(Z)$ modules which takes
$(z,{\mathbb A})$ 
to $\RR(Z)\langle z,{\mathbb A}\rangle$ is an equivalence of categories.
\end{lemma}

\begin{proof}
The functor is the identity on morphism sets, so it's only necessary to show
that every $\RR(Z)$ module $\M$ is isomorphic to one of the form $\RR(Z)\langle
z,{\mathbb A}\rangle$.  Fix $z\in Z$.  By \cite[Lemma B.19]{LM2}, there is an 
$\mathbb A$ with an isomorphism $\RR(Z)_z\langle \mathbb A\rangle\to \M_z$, and
the explicit description of $\RR(Z)\langle
z,{\mathbb A}\rangle$ given above shows that this isomorphism induces an
isomorphism $\RR(Z)\langle
z,{\mathbb A}\rangle\to \M$.
\end{proof}


A {\it schematic chain complex} $\mathbb C$ over $\Z[\pi_1 Z]$ is a sequence of
schematic $\Z[\pi_1 Z]$ modules and maps. We write $\Z[\pi_1 Z]\langle \mathbb
C\rangle$ for the corresponding sequence of $\Z[\pi_1 Z]$ modules and maps.  A
map $\mathbb C\to \mathbb C'$ of schematic chain complexes is a map of
$\Z[\pi_1 Z]$ chain complexes $\Z[\pi_1 Z]\langle \mathbb
C\rangle \to \Z[\pi_1 Z]\langle \mathbb
C'\rangle$.

Now let $g:Z_1\to Z_2$ be a continuous function. 

\begin{notation}
\label{nn6}
For each $z\in Z$ let
\[
{g}_z:\RR(Z_1)_z\to \RR(Z_2)_{g(z)}
\]
be the induced homomorphism.
Let 
\[
g_*:\RR(Z_1)\to \RR(Z_2)
\]
be the induced functor.  
\end{notation}

We want to show that $g$ induces a functor
$g_\sch$ from
the category of schematic $\RR(Z_1)$ modules to the category of schematic
$\RR(Z_2)$ modules.  First we define $g_\sch$ on objects:
for a schematic $\RR(Z_1)$ {module} $(z,\mathbb A)$ we define 
$g_\sch(z,\mathbb A)$ to be
the schematic $\RR(Z_2)$ {module} $(g(z),({g}_z)_\sch{\mathbb A})$.  
To define ${g}_\sch$ on maps we need a lemma.

\begin{lemma}
\label{aa35}
There is a canonical isomorphism
\[
\RR(Z_2)\langle {g}_\sch(z,\mathbb A)\rangle
\cong
\mathrm{Kan}_{g_*} \RR(Z_1)\langle z,\mathbb A\rangle
\]
\end{lemma}

\begin{proof}
Let $i:\RR(Z_1)_z\to \RR(Z_1)$ and $j:\RR(Z_2)_{g(z)}\to \RR(Z_2)$ be the
inclusions.  We have
\begin{multline*}
\RR(Z_2)\langle {g}_\sch(z,\mathbb A)\rangle
=
\RR(Z_2)\langle g(z), ({g}_z)_\sch\mathbb A\rangle
=
\mathrm{Kan}_j(\RR(Z_2)_{g(z)}\langle ({g}_z)_\sch\mathbb A)\rangle)
\\
\cong
\mathrm{Kan}_j(\mathrm{Kan}_{ g_z} (\RR(Z_1)_z\langle \mathbb A \rangle))
\cong
\mathrm{Kan}_{j\circ  g_z}(\RR(Z_1)_z\langle \mathbb A \rangle)
\\
=
\mathrm{Kan}_{ g_*\circ i}(\RR(Z_1)_z\langle \mathbb A\rangle)
\cong
\mathrm{Kan}_{ g_*}(\mathrm{Kan}_i(\RR(Z_1)_z\langle \mathbb A \rangle))
\\
=
\mathrm{Kan}_{ g_*} \RR(Z_1)\langle z,\mathbb A\rangle,
\end{multline*}
where the equalities in the first line are definitions,
the first isomorphism in the second line is 
\cite[Equation (B.2) in Subsection B.5]{LM2}
and
Example \ref{aa29} and 
the second is Equation \eqref{aa28},
the first equality in the third line is obvious and the isomorphism is
Equation \eqref{aa28}, and the last equality follows from the definition of
$\RR(Z_1)\langle z,\mathbb A\rangle$.
\end{proof}

Now for a {map}  
\[
t:
(z,{\mathbb A})
\to
(z',{\mathbb A}')
\]
we define 
\[
{g}_\sch t:
{g}_\sch(z,{\mathbb A})
\to
{g}_\sch(z',{\mathbb A}')
\]
to be the {map} determined by the diagram
\[
\xymatrix{
\RR(Z_2)\langle {g}_\sch(z,{\mathbb A})\rangle
\ar[r]^-{{g}_\sch t}
\ar[d]_\cong
&
\RR(Z_2)\langle{g}_\sch(z',{\mathbb A}')\rangle
\ar[d]_\cong
\\
\mathrm{Kan}_{g_*} \RR(Z_1)\langle z,\mathbb A\rangle
\ar[r]^-{\mathrm{Kan}_{g_*}(t)}
&
\mathrm{Kan}_{g_*} \RR(Z_1)\langle z',{\mathbb A}'\rangle
}
\]
where the bottom arrow is induced by $t$.

With these definitions,  ${g}_\sch$ is a functor from schematic $\RR(Z_1)$ {modules} to 
schematic $\RR(Z_2)$ {modules}.
If $h$ is a continuous function $Z_2\to Z_3$ we have $(h\circ 
g)_\sch={h}_\sch\circ
{g}_\sch$, so the category of schematic $\RR(Z)$ {modules} is a functor of $Z$.

\subsection{A functorial model for $\ad^{\Z[\pi_1 Z]}$}
\label{mm}

\begin{notation}
\label{nn22}
As usual, let $W$ be the standard resolution of $\Z$ by $\Z[\Z/2]$ modules.
\end{notation}

\begin{definition}
(i) A {\it schematic quasi-symmetric complex of dimension $n$ over $\Z[\pi_1 
Z]$} is
a pair $(\mathbb C,\varphi)$, where $\mathbb C$ is a schematic chain complex 
over $\Z[\pi_1 Z]$ for which $\Z[\pi_1 Z]\langle \mathbb C\rangle$ is 
homotopy finite and $\varphi$ is a $\Z/2$-equivariant map
\[
W\to (\Z[\pi_1 Z]\langle \mathbb C\rangle)^t\otimes_{\Z[\pi_1 Z]}
\Z[\pi_1 Z]\langle \mathbb C\rangle
\]
of graded abelian groups which raises degrees by $n$.

(ii) We define a category $\Aa^{\Z[\pi_1 Z]}_\sch$ as follows.
The
objects are the schematic quasi-symmetric complexes over $\Z[\pi_1 Z]$.
A morphism $(\mathbb C,\varphi)\to (\mathbb C',\varphi')$ is a map of 
$\Z[\pi_1 Z]$
chain complexes $f:\Z[\pi_1 Z]\langle \mathbb C\rangle\to
\Z[\pi_1 Z]\langle \mathbb C'\rangle$ such that if $\dim
\varphi=\dim\varphi'$ then $(f^t\otimes f)\circ \varphi=\varphi'$.
\end{definition}


There is a morphism
\[
\Theta:\Aa^{\Z[\pi_1 Z]}_\sch
\to 
\Aa^{\Z[\pi_1 Z]}
\]
of $\Z$-graded categories which takes $(\mathbb C,\varphi)$ to $(\Z[\pi_1
Z]\langle \mathbb C\rangle,\varphi)$; this is an equivalence of categories.

\begin{definition}
\label{nn7}
A $K$-ad with values in $\Aa^{\Z[\pi_1 Z]}_\sch$ is a pre $K$ ad 
$F$ for which $\Theta\circ F$ is a $K$-ad.
\end{definition}

We write $\ad^{\Z[\pi_1 Z]}_\sch(K)$ for the set of $K$-ads with values
in $\Aa^{\Z[\pi_1 Z]}_\sch$.

\begin{prop}
\label{nn11}
{\rm{(i)}} $\ad^{\Z[\pi_1 Z]}_\sch$ is an ad theory.

{\rm{(ii)}} $\Theta$ induces a morphism of ad theories which is an isomorphism
of bordism groups.
\end{prop}

This is an easy consequence of the fact that $\ad^{\Z[\pi_1 Z]}$ is an ad
theory and \cite[Lemma B.23]{LM2}.

Next we consider functoriality.
Now let $g:Z\to Z'$ be a continuous function.  Define a functor
\[
g^\mathrm{sym}_\sch:
\Aa^{\Z[\pi_1 Z]}_\sch
\to
\Aa^{\Z[\pi_1 Z']}_\sch
\]
by
\[
g^\mathrm{sym}_\sch(\mathbb C,\varphi)
=
(g_\sch(\mathbb C),\psi),
\]
where $\psi$ is the composite
\begin{multline*}
W\xrightarrow{\varphi} (\Z[\pi_1 Z]\langle \mathbb C\rangle)^t\otimes_{\Z[\pi_1 Z]}
\Z[\pi_1 Z]\langle \mathbb C\rangle
\xrightarrow{\iota\otimes\iota}
\\
(\mathrm{Kan}_{g_*}(\Z[\pi_1 Z]\langle \mathbb C\rangle))^t
\otimes_{\Z[\pi_1 Z']}
\mathrm{Kan}_{g_*}(\Z[\pi_1 Z]\langle \mathbb C\rangle)
\cong
\\
(\Z[\pi_1 Z']\langle g_\sch(\mathbb C)\rangle)^t
\otimes_{\Z[\pi_1 Z']}
\Z[\pi_1 Z']\langle g_\sch(\mathbb C)\rangle,
\end{multline*}
where $\iota$ is given in Appendix \ref{add} 
and the isomorphism is given by Lemma \ref{aa35}.

\begin{prop}
\label{nn8}
Let $g:Z\to Z'$ and 
$g':Z'\to Z''$ be continuous functions.
Then
\[
(g'g)^\mathrm{sym}_\sch
=
(g')^\mathrm{sym}_\sch
g^\mathrm{sym}_\sch.
\]
\end{prop}

\begin{proof}
First we show that $(g'g)^\mathrm{sym}_\sch
=
(g')^\mathrm{sym}_\sch g^\mathrm{sym}_\sch$ on objects.
Let $(\mathbb C,\varphi)$ be an object of $\Aa^{\Z[\pi_1 Z]}_\sch$.  Let 
$(g'g)^\mathrm{sym}_\sch(\mathbb C,\varphi)=((g'g)_\sch(\mathbb C),\psi)$
and let $(g')^\mathrm{sym}_\sch
g^\mathrm{sym}_\sch(\mathbb C,\varphi)=(g'_\sch g_\sch(\mathbb C),\psi')$.
We have already seen that $(g'g)_\sch(\mathbb C)=g'_\sch g_\sch(\mathbb C)$.
For the proof that $\psi=\psi'$, first let $(z,\mathbb A)$ be a schematic
$\Z[\pi_1 Z]$ module and consider the diagram
\[
\xymatrix{
\Z[\pi_1 Z]\langle(z,\mathbb A)\rangle
\ar[r]^-\iota
\ar[dd]_\iota
&
\mathrm{Kan}_{g_*}(\Z[\pi_1 Z]\langle(z,\mathbb A)\rangle)
\ar[r]^-\cong
&
\Z[\pi_1 Z']\langle g_\sch (z,\mathbb A)\rangle
\ar[d]_\iota
\\
&
&
\mathrm{Kan}_{g'_*} \Z[\pi_1 Z']\langle g_\sch (z,\mathbb A)\rangle
\ar[d]_\cong
\\
\mathrm{Kan}_{(g'g)_*}\Z[\pi_1 Z]\langle(z,\mathbb A)\rangle
\ar[rr]^-\cong
&
&
\Z[\pi_1 Z'']\langle g'_\sch g_\sch (z,\mathbb A)\rangle
}
\]
By definition, the upper left corner is a Kan extension, and by the universal 
property of the Kan extension (in Appendix \ref{add}) it suffices to show that
the clockwise and counterclockwise composites in the diagram agree when
restricted to $\Z[\pi_1(Z,z)]\langle \mathbb A\rangle$, and this is a routine
verification.  The proof that $\psi=\psi'$ is a straightforward consequence
of commutativity of the diagram.

Next we show that $(g'g)^\mathrm{sym}_\sch
=
(g')^\mathrm{sym}_\sch g^\mathrm{sym}_\sch$ on maps.  Let 
$f:(\mathbb C,\varphi)\to (\mathbb C',\varphi')$ be a map, which simply means
that $f$ is a map $\Z[\pi_1 Z]\langle\mathbb C\rangle\to \Z[\pi_1
Z]\langle\mathbb C'\rangle$ (with $(f^t\otimes f)\circ \varphi=\varphi'$ if
$\dim \varphi=\dim\varphi'$).
First we observe that, for any 
schematic
$\Z[\pi_1 Z]$ module
$(z,\mathbb A)$, 
the diagram
\[
\xymatrix{
\mathrm{Kan}_{(g'g)_*}\Z[\pi_1 Z]\langle(z,\mathbb A)\rangle
\ar[r]^-\cong
\ar[dd]_\cong
&
\mathrm{Kan}_{g'_*}\mathrm{Kan}_{g_*}\Z[\pi_1 Z]\langle(z,\mathbb A)\rangle
\ar[d]_\cong
\\
&
\mathrm{Kan}_{g'_*}\Z[\pi_1 Z']\langle g_\sch (z,\mathbb A)\rangle
\ar[d]_\cong
\\
\Z[\pi_1 Z'']\langle (g'g)_\sch (z,\mathbb A)\rangle
\ar[r]^-=
&
\Z[\pi_1 Z'']\langle g'_\sch g_\sch (z,\mathbb A)\rangle
}
\]
commutes, as the reader can check; 
the fact that $(g'g)^\mathrm{sym}_\sch(f)
=
(g')^\mathrm{sym}_\sch g^\mathrm{sym}_\sch(f)$ is an easy consequence of this
and the definitions.
\end{proof}

\begin{prop}
\label{nn9}
$g^\mathrm{sym}_\sch$ takes ads to ads.
\end{prop}

\begin{proof}
Let $F\in \ad^{\Z[\pi_1 Z]}_\sch(K)$, and write
\[
F(\sigma,o)=(\mathbb C_\sigma,\varphi_{\sigma,o})
\]
Next let $z\in Z$ and observe that $\Theta\circ F$ determines a pre $K$-ad
$(\Theta\circ F)_z$ with values in $\Aa^{\Z[\pi_1(Z,z)]}$, with
\[
(\Theta\circ F)_z(\sigma,o)=(\Z[\pi_1
Z]\langle \mathbb C_\sigma\rangle_z, (\varphi_{\sigma,o})_z)
\]
where $(\varphi_{\sigma,o})_z$ is the composite
\begin{multline*}
W\xrightarrow{\varphi_{\sigma,o}}
(\Z[\pi_1 Z]\langle \mathbb C_\sigma\rangle)^t\otimes_{\Z[\pi_1 Z]}
\Z[\pi_1 Z]\langle \mathbb C_\sigma\rangle
\\
\cong
(\Z[\pi_1 Z]\langle \mathbb C_\sigma\rangle)_z)^t 
\otimes_{\Z[\pi_1(Z,z)]}
\Z[\pi_1 Z]\langle \mathbb C_\sigma\rangle)_z.
\end{multline*}
Using Definition \ref{nn7} and Remarks \ref{aa5} and \ref{aa6}, we see that 
$(\Theta\circ F)_z$ is an ad.

Now let $G=\Theta\circ g^\mathrm{sym}_\sch\circ F$ and write
\[
G(\sigma,o)=(\Z[\pi_1 Z']\langle g_\sch(\mathbb C_\sigma)\rangle,\psi_{\sigma,o});
\] 
we need to show that $G\in \ad^{\Z[\pi_1 Z']}(K)$.  First we observe that, by
Lemma \ref{aa35} and the definition of $g^\mathrm{sym}_\sch$, $G$ is
isomorphic to the pre $K$-ad $G'$ with
\[
G'(\sigma,o)=(\mathrm{Kan}_{g_*}(\Z[\pi_1 Z]\langle \mathbb C_\sigma\rangle,
\psi'_{\sigma,o}),
\]
where $\psi'_{\sigma,o}$ is the composite
\begin{multline*}
W\xrightarrow{\varphi_{\sigma,o}}
(\Z[\pi_1 Z]\langle \mathbb C_\sigma\rangle)^t\otimes_{\Z[\pi_1 Z]}
\Z[\pi_1 Z]\langle \mathbb C_\sigma\rangle
\xrightarrow{\iota\otimes\iota}
\\
(\mathrm{Kan}_{g_*}(\Z[\pi_1 Z]\langle \mathbb C_\sigma\rangle))^t
\otimes_{\Z[\pi_1 Z']}
\mathrm{Kan}_{g_*}(\Z[\pi_1 Z]\langle \mathbb C_\sigma\rangle),
\end{multline*}
so it suffices to show $G'$ is an ad.  Now (recalling that we chose $z\in Z$),
$G'$ determines a pre $K$-ad $G'_{g(z)}$ with
\[
G'_{g(z)}(\sigma,o)
=
(\mathrm{Kan}_{g_*}(\Z[\pi_1 Z]\langle \mathbb C_\sigma\rangle)_{g(z)},
(\psi'_{\sigma,o})_{g(z)}),
\]
where $(\psi'_{\sigma,o})_{g(z)}$ is the composite
\begin{multline*}
W\xrightarrow{\psi'_{\sigma,o}}
(\mathrm{Kan}_{g_*}(\Z[\pi_1 Z]\langle \mathbb C_\sigma\rangle))^t
\otimes_{\Z[\pi_1 Z']}
\mathrm{Kan}_{g_*}(\Z[\pi_1 Z]\langle \mathbb C_\sigma\rangle)
\\
\cong
(\mathrm{Kan}_{g_*}(\Z[\pi_1 Z]\langle \mathbb C_\sigma\rangle))_{g(z)}^t
\otimes_{\Z[\pi_1(Z',g(z))]}
\mathrm{Kan}_{g_*}(\Z[\pi_1 Z]\langle \mathbb C_\sigma\rangle)_{g(z)}
\end{multline*}
and it suffices (using Remarks \ref{aa5} and \ref{aa6}) to show that
$G'_{g(z)}$ is an ad.

Finally, we observe that, by
Remark \ref{aa33}, $G'_{g(z)}$ is isomorphic to the pre $K$-ad $H$ with
\[
H(\sigma,o)=
(\Z[\pi_1(Z',g(z)]\otimes_{\Z[\pi_1(Z,z)]}
(\Z[\pi_1 Z]\langle \mathbb C_\sigma\rangle)_z,
\chi_{\sigma,o})
\]
where $\chi_{\sigma,o}$ is the composite 
\begin{multline*}
W\xrightarrow{(\varphi_{\sigma,o})_z}
(\Z[\pi_1 Z]\langle \mathbb C_\sigma\rangle)_z)^t 
\otimes_{\Z[\pi_1(Z,z)]}
\Z[\pi_1 Z]\langle \mathbb C_\sigma\rangle)_z
\\
\to
(\Z[\pi_1(Z',g(z)]\otimes_{\Z[\pi_1(Z,z)]}
(\Z[\pi_1 Z]\langle \mathbb C_\sigma\rangle)_z)^t
\\
\otimes_{\Z[\pi_1(Z',g(z)]}
\Z[\pi_1(Z',g(z)]\otimes_{\Z[\pi_1(Z,z)]}
(\Z[\pi_1 Z]\langle \mathbb C_\sigma\rangle)_z,
\end{multline*}
and $H$ is an ad by \cite[Lemma B.15]{LM2}, using the fact that $(\Theta\circ
F)_z$ is an ad.
\end{proof}

Combining Propositions \ref{nn8} and \ref{nn9} we see that that 
$\ad^{\Z[\pi_1 Z]}_\sch$ is a functor of $Z$. 

\subsection{Statement of the main theorem of this section, and outline of the 
proof}
\label{nn10}

Since $\ad^{\Z[\pi_1 Z]}_{\sch,\geq 0}$ is a homotopy invariant functor of $Z$, there 
is an assembly map
\[
\alpha: Z_+\wedge \bQ^{\Z}_{\sch,\geq 0}
\to
\bQ^{\Z[\pi_1 Z]}_{\sch,\geq 0}.
\]

Our main theorem in this section is

\begin{thm}
\label{nn13}
The image of $[X]_{\mathbb{L}}$ under the composite
\[
X_+\wedge \bQ_{\geq 0}^{\Z}
\xleftarrow{\simeq}
X_+\wedge \bQ_{\sch,\geq 0}^{\Z}
\xrightarrow{\alpha}
\bQ^{\Z[\pi_1 Z]}_{\sch,\geq 0}
\xrightarrow{\simeq}
\bQ^{\Z[\pi_1 Z]}_{\geq 0}
\]
is the symmetric signature $\sigma_\ip^*(X)$ given by Definition \ref{d5}.
\end{thm}

For the proof, we will use the following functors and natural transformations:
\[
\Phi
\xleftarrow{\simeq}
\Phi'
\xrightarrow{\sig}
\Psi''
\xleftarrow{\simeq}
\Psi'
\to
\Psi
\]
Here 
\begin{itemize}
\item
$\Phi$ is the functor 
\[
\Phi(Z)=\bQ_{\ip,Z}
\]
defined in Section \ref{s8}, 
\item
$\Phi'$, which will be defined in Subsection \ref{aa16}, is a version of $\Phi$ which
incorporates a choice of fundamental class (cf.\ the ad theory 
$\ad_\ipFun$ defined in Subsection \ref{ss1}),
\item
$\Psi$ is the functor 
\[
\Psi(Z)=\bQ^{\Z[\pi_1 Z]}_{\sch,\geq 0}
\]
\item
$\Psi'$ will be defined in Subsection \ref{nn21}; it is a functorial version of
the ad theory $\ad^{\Z[\pi_1 Z]}_\mathrm{fin}$ defined in Remark \ref{nn12}.
It is needed because there is no reasonable map 
from $\Psi$ to $\Psi''$ or from $\Psi''$ to $\Psi$,
\item
the natural transformation $\Psi'\to\Psi$ will be defined in Subsection
\ref{nn21}, and the natural transformation $\Psi'\to\Psi''$ will be defined in
Subsection \ref{nn25},
\item
$\Psi''$, which will be defined in Subsection \ref{nn17}, is a functorial 
version of $\bQ^{\Z[\pi_1 Z]}_{\mathrm{rel},\geq 0}$ (which was defined in  
Section 
\ref{symsig}),
\item
the natural transformation $\sig$, which is a version of the symmectric
signature, will be defined in Subsection \ref{aa3}.
\end{itemize}

Now consider the diagram
\[
\label{nn14}
\scriptsize{
\xymatrix{
X_+\wedge \bQ_\ip
\ar[d]_\alpha^\simeq
&
X_+\wedge \Phi'(*)
\ar[l]_-\simeq
\ar[r]^-{1\wedge\sig}
\ar[d]^\alpha
&
X_+\wedge \Psi''(*)
\ar[d]_\alpha
&
X_+\wedge \Psi'(*)
\ar[l]_-\simeq
\ar[r]
\ar[d]_\alpha
&
X_+\wedge \bQ^\Z_{\sch,\geq 0}
\ar[d]_\alpha
\\
\bQ_{\ip,X}
&
\Phi'(X)
\ar[l]_-\simeq
\ar[r]^-\sig
&
\Psi''(X)
&
\Psi'(X)
\ar[l]_-\simeq
\ar[r]
&
\bQ_{\sch,\geq 0}^{\Z[\pi_1 X]}
}
}
\]

This diagram commutes up to homotopy by the naturality of the assembly map 
(\cite[Theorem 1.1]{wwa}).  Now Theorem \ref{nn13} is immediate from the
following two lemmas, which will be proved in Subsections \ref{nn29} and
\ref{nn3} respectively.

\begin{lemma}
\label{nn15}
The image of $\alpha^{-1}[X]_\ip$ along the top row of the diagram above
followed by the map 
$X_+\wedge \bQ^\Z_{\sch,\geq 0}
\to
X_+\wedge \bQ^\Z_{\geq 0}$
is 
$[X]_\mathbb{L}$. 
\end{lemma}

\begin{lemma}
\label{nn16}
The image of $[X]_\ip$ along
the bottom row followed by the map $\bQ_{\sch,\geq 0}^{\Z[\pi_1 X]}
\to
\bQ_{\geq 0}^{\Z[\pi_1 X]}$
 is $\sigma_\ip^*(X)$.
\end{lemma}

\subsection{Background for the functor $\Psi''$}
\label{ss5}

In this subsection and the next our goal is to give a functorial version of
$\ad^{\Z[\pi_1 Z]}_\mathrm{rel}$.  The second and third authors have done this
for rings with involution in \cite[Appendix B]{LM2} and we only need to
adapt that to the present context.

We need to make one important change in the framework of \cite[Appendix
B]{LM2}. Expressions of the form $R\otimes_{R^\mathrm{op}\otimes
R} N$ frequently occur there, where $N$ is an $R^\mathrm{op}\otimes
R$ module.  In corresponding situations in the present paper we will have a
$\Z[\pi_1 Z]^\mathrm{op}\otimes \Z[\pi_1 Z]$ module $\N$ and we will use the
expression
\[
\ZZ\otimes_{\Z[\pi_1 Z]}\N,
\]
where $\ZZ$ is the constant right $\Z[\pi_1 Z]$ module with value $\Z$ and we
think of $\N$ as a left $\Z[\pi_1 Z]$ module via the map
\[
\Z[\pi_1 Z]
\to
\Z[\pi_1 Z]^\mathrm{op}\otimes \Z[\pi_1 Z]
\]
which takes an object $z$ to $(z,z)$ and a path homotopy class $\delta$ to
$(\delta^{-1},\delta)$.
(We could give a precise analogue of $R\otimes_{R^\mathrm{op}\otimes
R}N$, but it would be isomorphic to $\ZZ\otimes_{\Z[\pi_1 Z]}
\N$ and the latter is easier to work with.)

The first issue that must be addressed in creating $\Psi''$ is that,
unfortunately, $\ad^{\Z[\pi_1
Z]}_\mathrm{rel}$ isn't even ``approximately'' functorial, that is, given a map
$g:Z\to Z'$ and an object $(C,D,\beta,\phi)$ of $\Aa^{\Z[\pi_1
Z]}_\mathrm{rel}$, there doesn't seem to be a reasonable way to create an
object $(C',D',\beta',\phi')$ of $\Aa^{\Z[\pi_1 Z']}_\mathrm{rel}$ (we could
let $C'=\mathrm{Kan}_{g_*} C$, but there's no reasonable candidate for
$D'$).  
\cite[Subsections B.1--B.4]{LM2} construct an ad theory $\ad^R_\mathrm{Rel}$
which is approximately functorial (that is, functorial up to isomorphism) and
has a map to $\ad^R_\mathrm{rel}$ which induces an isomorphism of bordism
groups; in this subsection we generalize this.

%
%
%
%

\cite[Definition B.3 and Lemma B.4]{LM2} generalize to our context in a 
straighforward way.  The analogue of \cite[Definition B.5]{LM2} is

\begin{definition} 
A {\it Relaxed quasi-symmetric complex of dimension $n$ over $\Z[\pi_1 Z]$} 
is a quadruple  $(C,E,\gamma,\phi)$, where $C$ is a homotopy finite chain
complex over $\Z[\pi_1 Z]$, $E$ is a homotopy finite chain complex over
$\Z[\pi_1 Z]^{\mathrm{op}}\otimes \Z[\pi_1 Z]$ with a $\Z/2$ action for which
the generator acts quasi-linearly, $\gamma$ is a $\Z/2$ equivariant $\Z[\pi_1
Z]^{\mathrm{op}}\otimes \Z[\pi_1 Z]$ linear quasi-isomorphism $C^t\otimes C\to
E$, and $\phi$ is an $n$-dimensional element of $(\ZZ\otimes_{\Z[\pi_1 Z]} 
E)^{\Z/2}$.
\end{definition} 

\begin{remark}
\label{nn18}
\cite[Definition B.7]{LM2} generalizes
to give a category $\Aa^{\Z[\pi_1 
Z]}_\mathrm{Rel}$.
Analogously to \cite[Remark B.8]{LM2} we have a map
\[
\Lambda:\Aa^{\Z[\pi_1 
Z]}_\mathrm{Rel}\to
\Aa^{\Z[\pi_1 
Z]}_\mathrm{rel}
\]
which takes $(C,E,\gamma,\phi)$ to $(C,\ZZ\otimes_{\Z[\pi_1 Z]} E,\beta,\phi)$,
where $\beta$ is the following composite (in which the isomorphism is
Remark \ref{tens})
\[
C^t\otimes_{\Z[\pi_1 Z]} C
\cong
\ZZ\otimes_{\Z[\pi_1 Z]} (C^t\otimes C)
\xrightarrow{1\otimes \gamma}
\ZZ\otimes_{\Z[\pi_1 Z]} E. 
\]
\cite[Definitions B.10 and B.11]{LM2} generalize to give a set $\ad^{\Z[\pi_1 
Z]}_\mathrm{Rel}(K)$ for each $K$; note that (as in \cite[Definition 
B.11(ii)]{LM2}) every ad is isomorphic to a
balanced ad.  \cite[Theorem B.12]{LM2} generalizes to
the statement that this is an ad theory.
The proof of \cite[Proposition B.17]{LM2} generalizes to show that the map
\[
\Lambda:\ad^{\Z[\pi_1 
Z]}_\mathrm{Rel}\to
\ad^{\Z[\pi_1 
Z]}_\mathrm{rel}
\]
induces an isomorphism of bordism groups.
\end{remark}

Next we show that $\ad^{\Z[\pi_1 Z]}_\mathrm{Rel}$ is approximately functorial.

\begin{definition}
\label{nn20}
Let $g:Z\to Z'$ be a continuous function.  Define a functor 
\[
g_\mathrm{Rel}:\Aa^{\Z[\pi_1 Z]}_\mathrm{Rel} \to
\Aa^{\Z[\pi_1 Z']}_\mathrm{Rel}
\]
as follows. For an object $(C,E,\gamma,\phi)$ of $\Aa^{\Z[\pi_1
Z]}_\mathrm{Rel}$,  let
\[
g_\mathrm{Rel}((C,E,\gamma,\phi)
=
(\Kan_{g_*}(C),\Kan_{g_*\otimes g_*}(E),\gamma',\phi'),
\]
where $\gamma'$ is the composite
\[
(\Kan_{g_*}(C))^t
\otimes
\Kan_{g_*}(C)
\cong
\Kan_{g_*\otimes g_*} (C^t\otimes C)
\xrightarrow{\Kan_{g_*\otimes g_*} \gamma}
\Kan_{g_*\otimes g_*} (E)
\]
(which is a quasi-isomorphism by Remarks \ref{nn19} and \ref{aa33} and the
K\"unneth spectral sequence \cite[Theorem 5.6.4]{Weib}) 
and  $\phi'$ is the image of $\phi$ under the map
\[
(\ZZ\otimes_{\Z[\pi_1 Z]}
 E)^{\Z/2}
\to
(\ZZ\otimes_{\Z[\pi_1 Z']}
\Kan_{g_*\otimes g_*}(E))^{\Z/2}
\]
(where $\ZZ$ denotes the $\Z[\pi_1 Z]$ module in the first expression and the
$\Z[\pi_1 Z']$ module in the second) which takes the class of $1\otimes e$
(where $e\in E_{(z,z)}$) to the class of $1\otimes \id_{(z,z)}\otimes e$.
\end{definition}

The proof of \cite[Proposition B.15]{LM2} generalizes (using Remarks \ref{aa5},
\ref{aa6}, \ref{nn19}, and \ref{aa33}) to show that
$g_\mathrm{Rel}$ takes ads to ads.

\subsection{The functor $\Psi''$}
\label{nn17}

In this section we give a schematic version of $\ad^{\Z[\pi_1
Z]}_\mathrm{Rel}$.  First we need to define the concept of schematic 
$\Z[\pi_1 Z]^\mathrm{op}\otimes \Z[\pi_1 Z]$ module.

There is an isomorphism
\[
\kappa:\Z[\pi_1 Z]^\mathrm{op}\otimes
\Z[\pi_1 Z]\cong
\Z[\pi_1(Z\times Z)]
\]
which takes $\delta\otimes\delta'$ to $\delta^{-1}\times \delta'$, where 
$\delta$
and $\delta'$ are path homotopy classes in $Z$.

\begin{definition}
\label{aa31}
(i)
A {\it schematic {module} over $\Z[\pi_1 Z]^\mathrm{op}\otimes
\Z[\pi_1 Z]$} is a schematic {module}, in the sense already defined, over
$\Z[\pi_1(Z\times Z)]$.  

(iii) For a schematic $\Z[\pi_1 Z]^\mathrm{op}\otimes
\Z[\pi_1 Z]$ {module} $((z_1,z_2),\mathbb A)$, define
\[
(\Z[\pi_1 Z]^\mathrm{op}\otimes
\Z[\pi_1 Z])\langle (z_1,z_2),\mathbb A\rangle
\] 
to be the module induced from 
$\Z[\pi_1(Z\times Z)]\langle (z_1,z_2),\mathbb A\rangle$
by the isomorphism $\kappa$.
\end{definition}

\begin{definition}
(i)
A {\it schematic Relaxed quasi-symmetric complex of dimension $n$}
is a quadruple $(\mathbb C,\mathbb E,\gamma,\phi)$, where $\mathbb C$ is a
schematic $\Z[\pi_1 Z]$ chain complex, $\mathbb E$ is a schematic
$\Z[\pi_1 Z]^\mathrm{op}\otimes
\Z[\pi_1 Z]$
chain complex, $\Z[\pi_1 Z]\langle \mathbb
C\rangle$ is homotopy
finite,
$(\Z[\pi_1 Z]^\mathrm{op}\otimes
\Z[\pi_1 Z])\langle  \mathbb E\rangle$ is a
homotopy finite chain complex over $\Z[\pi_1 Z]^\mathrm{op}\otimes
\Z[\pi_1 Z]$ with a
$\Z/2$ action for which the generator acts quasi-linearly, $\gamma$ is a
$\Z/2$ equivariant
quasi-isomorphism $(\Z[\pi_1 Z]\langle \mathbb
C\rangle)^t\otimes \Z[\pi_1 Z]\langle \mathbb
C\rangle \to (\Z[\pi_1 Z]^\mathrm{op}\otimes
\Z[\pi_1 Z])\langle  \mathbb E\rangle$ of
$\Z[\pi_1 Z]^\mathrm{op}\otimes
\Z[\pi_1 Z]$ chain complexes, and $\phi$
is an $n$-dimensional
element of
$(\ZZ\otimes_{\Z[\pi_1 Z]}
(\Z[\pi_1 Z]^\mathrm{op}\otimes
\Z[\pi_1 Z])\langle
\mathbb E\rangle)^{\Z/2}$. 

(ii) We define a category $\Aa_\Rel^{\Z[\pi_1 Z]}$ as
follows.  The objects of $\Aa_\Rel^{\Z[\pi_1 Z]}$ are the schematic Relaxed 
quasi-symmetric complexes.
A morphism 
$(\mathbb C,\mathbb E,\gamma,\phi)\to (\mathbb
C',\mathbb E',\gamma',\phi')$
is a pair $(f_1:\Z[\pi_1 Z]\langle\mathbb C\rangle\to 
\Z[\pi_1 Z]\langle\mathbb C'\rangle,
f_2:(\Z[\pi_1 Z]^\mathrm{op}\otimes
\Z[\pi_1 Z])\langle \mathbb E\rangle
\to
(\Z[\pi_1 Z]^\mathrm{op}\otimes
\Z[\pi_1 Z])\langle \mathbb E'\rangle)$, where $f_1$
is a map of $\Z[\pi_1 Z]$ chain complexes
and $f_2$
is a
$\Z/2$ equivariant map of $\Z[\pi_1 Z]^\mathrm{op}\otimes
\Z[\pi_1 Z]$ chain complexes, such that
$f_2\gamma=\gamma'(f_1\otimes f_1)$,
and (if $\dim \phi=\dim\phi'$)
$(1\otimes f_2)_*(\phi)=\phi'$.
\end{definition}

There is a functor
\[
\Theta_\mathrm{Rel}:\Aa_\Rel^{\Z[\pi_1 Z]}
\to
\Aa_\mathrm{Rel}^{\Z[\pi_1 Z]}
\]
which takes $(\mathbb C,\mathbb E,\gamma,\phi)$ to
$(\Z[\pi_1 Z]\langle \mathbb
C\rangle, (\Z[\pi_1 Z]^\mathrm{op}\otimes
\Z[\pi_1 Z])\langle  \mathbb E\rangle,\gamma,\phi)$; this is an equivalence of
categories.  

\begin{definition}
\label{nn27}
A $K$-ad with values in $\Aa_\Rel^{\Z[\pi_1 Z]}$ is a
pre $K$-ad $F$ for which $\Theta_\mathrm{Rel}\circ F$ is an ad.  
\end{definition}

The proof of
\cite[Proposition B.22]{LM2} shows that $\ad_\Rel^{\Z[\pi_1 Z]}$ is an ad
theory and that $\Theta_\mathrm{Rel}$ induces an isomorphism of bordism groups.

Next we consider functoriality.  Let $g:Z\to Z'$ be a continuous function.

\begin{definition}
Define a functor
\[
g_\Rel:\Aa_\Rel^{\Z[\pi_1 Z]}\to \Aa_\Rel^{\Z[\pi_1 Z']}
\]
as follows.  For an object $(\mathbb C,\mathbb E,\gamma,\phi)$ of
$\Aa_\Rel^{\Z[\pi_1 Z]}$, let
\[
g_\Rel(\mathbb C,\mathbb E,\gamma,\phi)
=
(g_\sch\mathbb C, (g\times g)_\sch\mathbb E,\delta,\psi),
\]
where $g_\sch$ and $(g\times g)_\sch$ are defined in Subsection \ref{nn4},
and (letting $C=\Z[\pi_1 Z]\langle {\mathbb C}\rangle$ and $E=
(\Z[\pi_1 Z]^\mathrm{op}\otimes
\Z[\pi_1 Z])\langle {\mathbb E}\rangle$, and
using the notation of Definitions \ref{nn20} and \ref{aa31}(ii) and the 
isomorphism of Lemma \ref{aa35})
$\delta$ is defined by the diagram
\[
\xymatrix{
\Z[\pi_1 Z']\langle g_\sch{\mathbb C}\rangle^t\otimes 
\Z[\pi_1 Z']\langle g_\sch{\mathbb C}\rangle
\ar[r]^-\delta
\ar[d]_\cong
&
(\Z[\pi_1 Z']^{\mathrm{op}}\otimes \Z[\pi_1 Z'])\langle(g\times 
g)_\sch{\mathbb E}\rangle
\ar[d]_\cong
\\
(\Kan_{g_*} (C))^t\otimes (\Kan_{g_*} (C))
\ar[r]^-{\gamma'}
&
\Kan_{g_*\otimes g_*}(E)
}
\]
and $\psi$ is the image of $\phi'$ under the isomorphism
\begin{multline*}
(\ZZ\otimes_{\Z[\pi_1 Z']}
\Kan_{g_*\otimes g_*}E)^{\Z/2}
\\
\cong
(\ZZ\otimes_{\Z[\pi_1 Z']}
(\Z[\pi_1 Z']^{\mathrm{op}}\otimes \Z[\pi_1 Z'])
\langle(g\times g)_\sch {\mathbb E}\rangle)^{\Z/2}.
\end{multline*}
\end{definition}

The proof of \cite[Proposition B.25]{LM2} shows that $g_\Rel$ takes ads to ads.
The reader can check that if $g':Z'\to Z''$ is another continuous function then
$(g'g)_\Rel=g'_\Rel g_\Rel$, so $\ad_\Rel^{\Z[\pi_1 Z]}$ is a functor of $Z$ as
required. 

We can now define $\Psi''$ by 
\[
\Psi''(Z)=\bQ_\Relz^{\Z[\pi_1 Z]}.
\]

\subsection{Background for the map $\Psi'\rightarrow\Psi''$}
\label{nn24}

Recall Remark \ref{nn12}.  In the next subsection we will define $\Psi'$ to 
be a certain schematic version of $\bQ^{\Z[\pi_1 Z]}_\mathrm{fin}$.  In this 
subsection, as preparation for the construction of  the map 
$\Psi'\to\Psi''$, we give the nonschematic version of this map.

To do this, we will give a map
\[
\Pi:\Aa^{\Z[\pi_1 Z]}_\mathrm{fin}\to\Aa^{\Z[\pi_1 Z]}_\mathrm{Rel}
\]
(following \cite[Remarks B.6 and B.9]{LM2}). 

Recall Notation \ref{nn22}. For 
a chain complex $C$ over $\Z[\pi_1 Z]$, let $(C^t\otimes C)^W$ be the chain 
complex over $\Z[\pi_1 Z]^\mathrm{op}\otimes \Z[\pi_1 Z]$ with 
\[
((C^t\otimes C)^W)_{(z_1,z_2)}=
(C_{z_1}^t\otimes C_{z_2})^W
\]
When $C$ is finite, $(C^t\otimes C)^W$ is (additively) a direct sum of copies
of $C^t\otimes C$, and hence the natural map
\[
\ZZ\otimes_{\Z[\pi_1 Z]} ((C^t\otimes C)^W)
\to
(\ZZ\otimes_{\Z[\pi_1 Z]} (C^t\otimes C))^W
\]
is an isomorphism because tensor product preserves direct sums.

Now define $\Pi$ by
\[
\Pi(C,\varphi)
=
(C,(C^t\otimes C)^W,\gamma,\phi),
\]
where $\gamma:C^t\otimes C\to (C^t\otimes C)^W$ is induced by the augmentation
$W\to \Z$ and $\phi$ is the image of $\varphi$ under the composite
\begin{multline*}
((C^t\otimes C)^W)^{\Z/2}
\cong
((\ZZ\otimes_{\Z[\pi_1 Z]}
(C^t\otimes C))^W)^{\Z/2}
\\
\cong
(\ZZ\otimes_{\Z[\pi_1 Z]} ((C^t\otimes
C)^W))^{\Z/2}
\end{multline*}
(where the first isomorphism is Remark \ref{tens}).

$\Pi$ takes ads to ads, 
and the proof of \cite[Proposition B.17]{LM2}
shows that it induces an isomorphism of bordism groups.

\begin{remark}
\label{nn33}
Consider the following diagram, where the left vertical arrow was given in
Remark \ref{nn12}, the map $\Lambda$ in Remark \ref{nn18}, and the lower
horizontal map in Remark \ref{nn32}:
\[
\xymatrix{
\Aa^{\Z[\pi_1 Z]}_\mathrm{fin}
\ar[r]^\Pi
\ar[d]
&
\Aa^{\Z[\pi_1 Z]}_\mathrm{Rel}
\ar[d]_\Lambda
\\
\Aa^{\Z[\pi_1 Z]}
\ar[r]
&
\Aa^{\Z[\pi_1 Z]}_\mathrm{rel}.
}
\]
The diagram commutes up to natural isomorphism, so the following diagram
homotopy commutes, by 
the analog of \cite[Proposition 14.5]{LM2}.
\[
\xymatrix{
\bQ^{\Z[\pi_1 Z]}_\mathrm{fin}
\ar[r]^\Pi
\ar[d]
&
\bQ^{\Z[\pi_1 Z]}_\mathrm{Rel}
\ar[d]_\Lambda
\\
\bQ^{\Z[\pi_1 Z]}
\ar[r]
&
\bQ^{\Z[\pi_1 Z]}_\mathrm{rel}.
}
\]
\end{remark}

\subsection{The functor $\Psi'$ and the map
$\Psi'\rightarrow\Psi$}
\label{nn21}

Our first task is to give a suitable schematic version $\Aa^{\Z[\pi_1
Z]}_\mathrm{fin,\sch}$ of $\Aa^{\Z[\pi_1
Z]}_\mathrm{fin}$.  The obvious choice would be the subcategory of
$\Aa^{\Z[\pi_1 Z]}_\sch$ consisting of objects $(\mathbb C,\varphi)$ for which
$\Z[\pi_1 Z]\langle\mathbb C\rangle$ is finite, but there is no reasonable way
to give a map from this to $\Aa^{\Z[\pi_1 Z]}_\Rel$; the difficulty is that 
given an object $(\mathbb C,\varphi)$ there is no reasonable choice for
$\mathbb E$.  We will therefore build a choice for $\mathbb E$ into the
definition of the category $\Aa^{\Z[\pi_1 Z]}_\mathrm{fin,\sch}$.

\begin{definition}
\label{nn23}
Let $C$ be a chain complex over $\Z[\pi_1 Z]$ (resp., $\Z[\pi_1
Z]^\mathrm{op}\otimes \Z[\pi_1 Z]$).  A {\it schematic model} for $C$ is a pair
$(\mathbb C, \lambda)$, where 
$\mathbb C$ is a schematic chain complex over $\Z[\pi_1 Z]$ (resp., $\Z[\pi_1
Z]^\mathrm{op}\otimes \Z[\pi_1 Z]$) and $\lambda$  is an isomorphism
$ \Z[\pi_1 Z]\langle\mathbb C\rangle
\to C$ (resp., $(\Z[\pi_1
Z]^\mathrm{op}\otimes \Z[\pi_1 Z])\langle\mathbb C\rangle
\to C$).
\end{definition}

\begin{definition}
Define a category $\Aa^{\Z[\pi_1 Z]}_{\mathrm{fin},\sch}$ as follows.  
An object is a quadruple
\[
(\mathbb C,\varphi,\mathbb E,\lambda)
\]
where $(\mathbb C,\varphi)$ is an object of $\Aa^{\Z[\pi_1 
Z]}_\sch$ with $\Z[\pi_1 Z]\langle \mathbb C\rangle$ finite and 
$(\mathbb E,\lambda)$ is a schematic model for $((\Z[\pi_1 Z]\langle \mathbb 
C\rangle)^t\otimes \Z[\pi_1 Z]\langle \mathbb C\rangle)^W$.
The morphisms $(\mathbb C,\varphi,\mathbb E,\lambda)\to (\mathbb
C',\varphi',\mathbb E',\lambda')$ are the morphisms $(\mathbb C,\varphi)\to
(\mathbb C',\varphi')$ in $\Aa^{\Z[\pi_1 Z]}_\sch$.  
\end{definition}

Note that the definition of morphism does not involve the schematic model 
$(\mathbb
E,\lambda)$.  Because of this, the functor
\[
\Theta_\mathrm{fin}:
\Aa^{\Z[\pi_1 Z]}_{\mathrm{fin},\sch}
\to
\Aa^{\Z[\pi_1 Z]}_{\mathrm{fin}}
\]
which takes $(\mathbb C,\varphi,\mathbb E,\lambda)$ to $(\Z[\pi_1 Z]\langle
\mathbb C\rangle,\varphi)$
is an equivalence of categories.  

\begin{definition}
\label{nn28}
An ad with values in $\Aa^{\Z[\pi_1
Z]}_{\mathrm{fin},\sch}$ is a pread whose composite with 
$\Theta_\mathrm{fin}$ is an ad.  
\end{definition}

The proof of Proposition \ref{nn11} shows that this gives an ad theory and
that the map $\ad^{\Z[\pi_1 Z]}_{\mathrm{fin},\sch} 
\to \ad^{\Z[\pi_1 Z]}_{\mathrm{fin}}$ induces an isomorphism of bordism groups.

Next we show that $\ad^{\Z[\pi_1 Z]}_{\mathrm{fin},\sch}$ is a functor of $Z$.
Let $g:Z\to Z'$ be a continuous map, and define a functor
\[
g_{\mathrm{fin},\sch}:
\Aa^{\Z[\pi_1 Z]}_{\mathrm{fin},\sch}
\to
\Aa^{\Z[\pi_1 Z']}_{\mathrm{fin},\sch}
\]
by
\[
g_{\mathrm{fin},\sch}(\mathbb C,\varphi,\mathbb E,\lambda)
=
(g^\mathrm{sym}_\sch(\mathbb C,\varphi),(g\times g)_\sch \mathbb E, \lambda')
\]
where $g^\mathrm{sym}_\sch$ was defined in Subsection \ref{mm} and $\lambda'$ 
is the following composite
(where the first isomorphism is given by Lemma \ref{aa35})
\begin{multline*}
(\Z[\pi_1 Z']^\mathrm{op}\otimes \Z[\pi_1 Z'])\langle(g\times g)_\sch 
\mathbb E\rangle
\cong
\Kan_{g_*\otimes g_*}((\Z[\pi_1 Z]^\mathrm{op}\otimes \Z[\pi_1 Z])
\langle\mathbb E\rangle)
\\
\xrightarrow{\Kan_{g_*\otimes g_*}(\lambda)}
\Kan_{g_*\otimes g_*}(((\Z[\pi_1 Z]\langle \mathbb 
C\rangle)^t\otimes \Z[\pi_1 Z]\langle \mathbb C\rangle)^W)
\\
\cong
(\Kan_{g_*\otimes g_*}((\Z[\pi_1 Z]\langle \mathbb 
C\rangle)^t\otimes \Z[\pi_1 Z]\langle \mathbb C\rangle))^W
\\
\cong
(
(\Z[\pi_1 Z']\langle g_\sch\mathbb 
C\rangle)^t\otimes \Z[\pi_1 Z']\langle g_\sch\mathbb C\rangle
)^W.
\end{multline*}

$g_{\mathrm{fin},\sch}$ takes ads to ads by Proposition \ref{nn9}.  We leave it
to the reader to check that if $g':Z'\to Z''$ then $(g'g)_{\mathrm{fin},\sch}
=g'_{\mathrm{fin},\sch}g_{\mathrm{fin},\sch}$.

Now we can define
\[
\Psi'(Z)=\bQ^{\Z[\pi_1 Z]}_{\mathrm{fin},\sch,\geq 0}.
\]

The functor
\[
\Psi'\to\Psi
\]
is induced by the map
\[
\Aa^{\Z[\pi_1 Z]}_{\mathrm{fin},\sch}
\to
\Aa^{\Z[\pi_1 Z]}_\sch
\]
which takes $(\mathbb C,\varphi,\mathbb E,\lambda)$
to
$(\mathbb C,\varphi)$.

\subsection{The map $\Psi'\to \Psi''$}
\label{nn25}

In order to define the map $\Psi'\to\Psi''$, we first define
\[
\Pi_\sch:\Aa^{\Z[\pi_1 Z]}_{\mathrm{fin},\sch}
\to
\Aa^{\Z[\pi_1 Z]}_\Rel
\]
as follows.  

On objects, $\Pi_\sch$ is defined by
\[
\Pi_\sch(\mathbb C,\varphi,\mathbb E,\lambda)
=
(\mathbb C,\mathbb E,\gamma,\phi);
\]
here $\gamma$ is the composite
\begin{multline*}
(\Z[\pi_1 Z]\langle \mathbb C\rangle)^t\otimes \Z[\pi_1 Z]\langle 
\mathbb C\rangle
\to
((\Z[\pi_1 Z]\langle \mathbb C\rangle)^t\otimes \Z[\pi_1 Z]\langle 
\mathbb C\rangle)^W
\\
\xrightarrow{\lambda^{-1}}
(Z[\pi_1 Z]^\mathrm{op}\otimes \Z[\pi_1 Z])\langle \mathbb E\rangle,
\end{multline*}
(where the first map is induced by the augmentation $W\to \Z$),
and (writing $C$ for $\Z[\pi_1 Z]\langle \mathbb C\rangle$ and $E$ for $(Z[\pi_1
Z]^\mathrm{op}\otimes \Z[\pi_1 Z])\langle \mathbb E\rangle$) $\phi$ is the
image of $\varphi$ under the composite 
\begin{multline*}
((C^t\otimes C)^W)^{\Z/2}
\cong
((\ZZ\otimes_{\Z[\pi_1 Z]}
(C^t\otimes C))^W)^{\Z/2}
\\
\cong
(\ZZ\otimes_{\Z[\pi_1 Z]} ((C^t\otimes
C)^W))^{\Z/2}
\xrightarrow{1\otimes \lambda^{-1}}
(\ZZ\otimes_{\Z[\pi_1 Z]} E)^{\Z/2}
\end{multline*}
(where the first isomorphism is Remark \ref{tens}).  

To define $\Pi_\sch$ on morphisms, recall that a morphism $(\mathbb
C,\varphi,\mathbb E,\lambda)\to (\mathbb
C',\varphi',\mathbb E',\lambda')$ is just a map of chain complexes $f:\Z[\pi_1
Z]\langle \mathbb C\rangle\to \Z[\pi_1
Z]\langle \mathbb C'\rangle$ (such that $(f\otimes f)\circ
\varphi=\varphi'$ if $\dim \varphi=\dim\varphi'$).  We define $\Pi_\sch$ of
such a morphism to be the morphism $(f,f_1)$, where 
$f_1$ is the composite
\begin{multline*}
(Z[\pi_1 Z]^\mathrm{op}\otimes \Z[\pi_1 Z])\langle \mathbb E\rangle
\xrightarrow{\lambda}
((Z[\pi_1 Z]\langle \mathbb C\rangle)^t
\otimes
Z[\pi_1 Z]\langle \mathbb C\rangle)^W
\\
\xrightarrow{(f\otimes f)^W}
((Z[\pi_1 Z]\langle \mathbb C'\rangle)^t
\otimes
Z[\pi_1 Z]\langle \mathbb C'\rangle)^W
\\
\xrightarrow{(\lambda')^{-1}}
(Z[\pi_1 Z]^\mathrm{op}\otimes \Z[\pi_1 Z])\langle \mathbb E'\rangle.
\end{multline*}

To see that $\Pi_\sch$ takes ads to ads, consider the diagram
\[
\xymatrix{
\Aa^{\Z[\pi_1 Z]}_{\mathrm{fin},\sch}
\ar[r]^-{\Pi_\sch}
\ar[d]_{\Theta_\mathrm{fin}}
&
\Aa^{\Z[\pi_1 Z]}_\Rel
\ar[d]_{\Theta_\mathrm{Rel}}
\\
\Aa^{\Z[\pi_1 Z]}_{\mathrm{fin}}
\ar[r]^-\Pi
&
\Aa^{\Z[\pi_1 Z]}_\mathrm{Rel},
}
\]
where $\Theta_\mathrm{fin}$ was defined in Subsection \ref{nn21},
$\Theta_\mathrm{Rel}$ in Subsection \ref{nn17}, and
$\Pi$ in Subsection \ref{nn24}.
The diagram commutes up to natural isomorphism.
We need to show that
if $F$ is an ad with values in $\Aa^{\Z[\pi_1 Z]}_{\mathrm{fin},\sch}$
then $\Pi_\sch\circ F$ is an ad, and for this it suffices by Definition
\ref{nn27} 
to show that
$\Theta_\mathrm{Rel}\circ\Pi_\sch\circ F$ is an ad, and for this in turn it
suffices to show that $\Pi\circ\Theta_\mathrm{fin}\circ F$ is an ad.  But
$\Theta_\mathrm{fin}\circ F$ is an ad by Definition \ref{nn28}, and $\Pi$ takes
ads to ads.

Now we can define the map
\[
\Psi'\to\Psi''
\]
to be the map 
\[
\bQ^{\Z[\pi_1 Z]}_{\mathrm{fin},\sch,\geq 0}
\to
\bQ^{\Z[\pi_1 Z]}_{\Rel,\geq 0}
\]
induced by $\Pi_\sch$.

To see that this map is an equivalence, we only need to observe that in the
diagram above the two vertical maps and the lower horizontal map all induce
isomorphisms of bordism groups.

\subsection{Background for $\Phi'$}
\label{aa2}

Recall that $\Phi(Z)=\bQ_{\ip,Z}$.
We need to create a functor $\Phi'$ which is equivalent 
to $\Phi$ and has a
symmetric signature map to $\Psi''$. For this we need a suitable ad 
theory for each $Z$, and for this in turn we need a category $\Aa$ for
each $Z$.   We have seen in Subsection 
\ref{ss1} that $\bQ_\ipFun$ is equivalent to $\bQ_\ip$ and has a symmetric 
signature map to $\bQ^\Z_{\mathrm{rel},\geq 0}$, which suggests that we could 
let the objects of $\Aa$ be the pairs $(f:X\to Z,\xi)$, where $\xi$ is a 
representative for the fundamental class.  However, this does not map to 
$\Aa^{\Z[\pi_1 Z]}_{\Rel}$, because an object of $\Aa^{\Z[\pi_1 Z]}_{\Rel}$ 
has the form $(\mathbb C,\mathbb E,\gamma,\phi)$ and a pair $(f:X\to Z,\xi)$ 
does not give a canonical choice for $\mathbb C$ and $\mathbb E$.  This 
suggests that, as in Subsection \ref{nn21}, we should include schematic 
models $(\mathbb C,\lambda_1)$ and $(\mathbb E,\lambda_2)$ as part of the 
structure of an object of $\Aa$ (we'll say later what these are models of).  
Unfortunately, this still isn't functorial (cf.\ the proof of Lemma
\ref{aa36}(i)).
For functoriality, we need to add 
another ingredient, namely the intersection chains that are compatible with a 
certain open cover of $X$.

In this subsection we work out the theory of this additional ingredient, and in
the next subsection we combine this with the other ingredients to create
$\Phi'$.

First we need some terminology and notation.

Let $\uu$ be an open covering of a space $X$.  
For each perversity $\bar p$
we write
$IS_*^{\bar p,\uu} ({X};\Z)$
for the subcomplex of $IS_*^{\bar p} ( X;\Z)$ 
generated by the 
subcomplexes
$IS_*^{\bar p} (U;\Z)$ 
with $U\in \uu$ (this is somewhat different from the 
notation in 
\cite[Section 6]{friedmanmcclure}).  

\begin{remark}
\label{aa30}
\cite[Proposition
2.9]{FTrans} shows that the inclusion
\[
IS_*^{\bar p,\uu} ({X};\Z)\to IS_*^{\bar p} ( X;\Z)
\]
is a quasi-isomorphism.
\end{remark}

\begin{definition}
\label{aa22}
Let $f:X\to Z$ be an object of $\Aa_{\ip,Z}$.
The open cover $\uu_X$ of $X$ consists of the contractible open sets. 
\end{definition}

Define a category
$\Aa_{\ip,Z,\Fun}$ as follows.  The objects are pairs $(f:X\to Z,\xi)$, 
where $f$ is an object of $\Aa_{\ip,Z}$ 
and 
$\xi$ is an element of $IS_*^{\bar 0,\uu_X} ({X};\Z)$ whose image in 
$IS^{\bar 0}_* (X,\partial X;\Z)$
represents the fundamental class
$\Gamma_X\in IH^{\bar 0}_n(X,\partial X;\Z)$; there is also an empty object 
of dimension $n$ for each $n$. 
The 
morphisms $(f:X\to Z,\xi)\to(f':X'\to 
Z,\xi')$ are morphisms $g:X\to X'$ in $\Aa_{\ip,Z}$ which, if the dimensions are
equal, have the property that 
$g_*\xi=\xi'$.

\begin{definition}
\label{aa38}
Define 
$\ad_{\ip,Z,\Fun}(K)\subset \pre_{\ip,Z,\Fun}^k(K)$ to be the set of functors 
$F$ such that

\quad (a) $F$ is balanced,

\quad (b) the composite of $F$ with the forgetful functor 
$\Aa_{\ip,Z,\Fun}\to \Aa_\ip$ is an element of
$\ad_\ip^k(K)$, and

\quad (c) for each oriented cell $(\sigma,o)$ of $K$, the equation
\[
\partial \xi_{\sigma,o}
=
\sum \xi_{\sigma',o'}
\]
holds, where $\sigma'$ runs through the cells of $\partial \sigma$ and
$o'$ is the orientation for which the incidence number $[o,o']$ is $(-1)^k$.
\end{definition}

\begin{prop}
\label{aa23}
$\ad_{\ip,Z,Fun}$ is a connective ad theory.
\end{prop}
\begin{proof}
We only need to check parts (f) and (g) of \cite[Definition
3.10]{LM}.

For the proof of (f), we may assume that $K$ and $K'$ are as in the
corresponding part of the proof of
Theorem \ref{t1}.  Let $F$ be a $K'$-ad and write
\[
F(\sigma,o)=(f_\sigma:X_{\sigma,o}\to Z,\xi_{(\sigma,o)}).
\]
Let $Y_\sigma$ be the
underlying $\partial$-IP-space of $X_{\sigma,o}$ (forgetting the orientation).
Let
\[
Y_\tau=\colim_{\sigma\in K'} Y_\sigma.
\]
The proof of Theorem \ref{t1} shows that $Y_\tau$ is a $\partial$-IP-space and
that an orientation $o$ of $\tau$ determines an orientation of $Y_\tau$; we let
$X_{\tau,o}$ be $Y_\tau$ with this orientation.
Let $f_\tau:X_{\tau,o}\to Z$ be the map induced by the $f_\sigma$.
As in the proof of Proposition \ref{fp1}, let
\[
\xi_{\tau,o}=\sum_{\sigma\in K'} \xi_{\sigma,o_\sigma}.
\]
$\xi_{\tau,o}$ is in $IS^{\bar 0,\uu_{X_{\tau,o}}}_*(X_{\tau,o};\Z)$, because
by Lemma \ref{aa11} every contractible open set of $X_{\sigma,o_\sigma}$ is
contained in a contractible open subset of $X_{\tau,o}$.
The desired result now follows from the proofs of Theorem \ref{t1} and 
Proposition \ref{fp1}. 

For the proof of (g), let $F$ be a $K$-ad, and write
\[
F(\sigma,o)=(f_\sigma:X_{\sigma,o}\to Z, \xi_{\sigma,o}).
\]
As in the proof of Proposition \ref{fp1}, we only need to specify $J(F)$ for
oriented cells of the form $(\sigma\times I,o\times o')$ where $o'$ is the
standard orientation.  
Let 
\[
g_\sigma:X_{\sigma,o}\times I\to Z
\]
be the composite of $f_\sigma$ with the
projection.
Let $s:\Delta^1\to I$ be the standard oriented
homeomorphism. Recall the relative barycentric subdivision
construction from \cite[Section 16]{munkres}.  For each $\sigma$ we can 
apply iterated barycentric
subdivision to $\xi_{\sigma,o}\times s$, while holding fixed any faces that
land in $X_{\sigma,o}\times \{0,1\}$, to obtain an element
$\eta_{\sigma,o}\in IS^{\bar 0,\uu_{X_{\sigma,o}}}(X_{\sigma,o}\times I,\Z)$.  
Now let
\[
J(F)(\sigma\times I,o\times o')=
(g_\sigma:X_{\sigma,o}\times I\to Z, \eta_{\sigma,o}).
\]
\end{proof}

The forgetful functor 
\[
\Upsilon:\Aa_{\ip,Z,\Fun}\to \Aa_{\ip,Z}
\]
gives a morphism
$\ad_{\ip,Z,\Fun}\to \ad_{\ip,Z}$ of ad theories.

\begin{prop}
The map 
\[
\bQ_\Upsilon:\bQ_{\ip,Z,\Fun}\to \bQ_{\ip,Z}
\]
induced by $\Upsilon$ is a weak equivalence.
\end{prop}

\begin{proof}
We need to show that the map of bordism groups
\[
\Omega_\Upsilon:(\Omega_{\ip,Z,\Fun})_*\to (\Omega_{\ip,Z})_*
\]
is an isomorphism.

To see that it is onto, let $f:X\to Z$ be an element of $\ad_{\ip,Z}(*)$.  
Let $\xi\in IS^{\bar 0}(X;\Z)$ represent the
fundamental class, and apply iterated barycentric subdivision to 
$\xi$ to get an element $\eta\in IS^{\bar 0,\uu_X}_*(X;\Z)$.  Then $f:X\to Z$ 
is equal to $\Upsilon(f:X\to Z,\eta)$.

Next we claim that if $(f:X\to Z,\xi_1)$ and
$(f:X\to Z,\xi_2)$ are elements of $\ad_{\ip,Z,\Fun}(*)$ with the same
$f$ then they represent the same element of $(\Omega_{\ip,Z,\Fun})_*$.  
Let $g:X\times I\to 
Z$ be the composite of $f$ with the projection.
The proof of \cite[Lemma 8.2]{LM} gives an element 
$\eta\in IS^{\bar 0}(X\times I;\Z)$ with $\partial \eta=\xi_2-\xi_1$.  
Applying iterated barycentric subdivision, holding fixed any faces that land 
in $X\times \{0,1\}$, gives an element $\eta'\in IS^{\bar 0,\uu_{X\times 
I}}(X\times I;\Z)$ with $\partial \eta'=\xi_2-\xi_1$.  
Now let $F$ be the $I$-ad 
which takes the cells 0, 1 and $I$ (with their standard orientations) 
respectively to $(f,\xi_1)$, $(f,\xi_2)$, and $(g,\eta')$; then $F$ is the 
desired bordism.

Now to see that $\Omega_\Upsilon$ is 1-1, let $(f_1:X_1\to Z,\xi_1)$ and 
$(f_2:X_2\to Z,\xi_2)$ be elements of $\ad_{\ip,Z,\Fun}(*)$, and let
$F\in \ad_{\ip,Z}(I)$ be an ad which takes 0 and 1 (with their standard
orientations) to $f_1$ and $f_2$.  Write $g:Y\to Z$ for the image of $I$ (with
its standard orientation) under $F$. 
Let $\eta\in IS^{\bar 0}(Y;\Z)$ be a representative of the fundamental class 
of $Y$, and apply iterated barycentric subdivision to get an element 
$\eta'\in IS^{\bar 0,\uu_Y}(Y;\Z)$.  Then we can write
\[
\partial \eta'=\theta_1-\theta_2,
\]
where $\theta_i\in IS^{\bar 0,\uu_{X_i}}(X_i;\Z)$ is a representative for the
fundamental class of $X_i$.  Now we have an ad $G$ in $\ad_{\ip,Z,\Fun}(I)$
which takes 0, 1 and $I$ (with their standard orientations) respectively to 
$(f_1:X_1\to Z,\theta_1)$, 
$(f_2:X_2\to Z,\theta_2)$, and $(f:Y\to Z,\eta')$. This shows
that $(f_1:X_1\to Z,\theta_1)$ and
$(f_2:X_2\to Z,\theta_2)$ represent the same element of
$(\Omega_{\ip,Z,\Fun})_*$, and by the previous paragraph we see that
$(f_1:X_1\to Z,\xi_1)$ and
$(f_2:X_2\to Z,\xi_2)$ represent the same element of
$(\Omega_{\ip,Z,\Fun})_*$.
\end{proof}

\begin{remark}
\label{aa37}
We make 
$\Aa_{\ip,Z,\mathrm{Fun}}$ a functor of $Z$ by letting $g:Z\to W$ take
$(f:X\to Z,\xi)$ to $(g\circ f:X\to W,\xi)$; this takes
ads to ads by Definition \ref{aa38}.
\end{remark}

\subsection{The functor $\Phi'$}
\label{aa16}

We need a preliminary definition.

\begin{definition}
\label{aa25}
Let $f:X\to Z$ be an object of $\Aa_{\ip,Z}$. 

(i) For each $z\in Z$, let $\tilde Z_z\to 
Z$ be the universal cover
consisting of path homotopy classes of paths starting at $z$.


(ii) Let
$C_{f:X\to Z}$ be the chain complex of left $\Z[\pi_1 Z]$ modules with 
\[
(C_{f:X\to Z})_z=IS^{\bar n,\uu_{f^*(\tilde{Z}_z)}}_*(f^*(\tilde{Z}_z);\Z).
\]

(iii)
Let $E_{f:X\to Z}$ be the chain complex of left $\Z[\pi_1
Z]^\mathrm{op}\otimes \Z[\pi_1 Z]$ modules 
with 
\[
(E_{f:X\to Z})_{(z,z')}=IS^{Q_{\bar n,\bar n},
\uu_{f^*(\tilde{Z}_z)\times f^*(\tilde{Z}_{z'})}}_*(f^*(\tilde{Z}_z)\times 
f^*(\tilde{Z}_{z'});\Z).
\]
\end{definition}

Now define a category $\Aa_{\ip,Z,\Fun,\sch}$ as follows.
The objects are 
sextuples
\[
(f:X\to Z,\xi, 
{\mathbb C},\lambda_1, \mathbb E,\lambda_2), 
\]
where 
$(f:X\to Z,\xi)$ is an
object of $\Aa_{\ip,Z,\Fun}$, 
$({\mathbb C},\lambda_1)$ is a schematic model for
$C_{f:X\to Z}$, 
and $(\mathbb E,\lambda_2)$ is a schematic model for
$E_{f:X\to Z}$.
A morphism 
$(f:X\to Z,\xi, 
{\mathbb C},\kappa, \mathbb E,\lambda)
\to 
(f':X'\to Z,\xi', 
{\mathbb C}',\kappa', {\mathbb E}',\lambda')$
is a morphism $(f:X\to Z,\xi)\to (f':X'\to Z,\xi')$ 
in $\Aa_{\ip,Z,\Fun}$.

There is a forgetful functor $\Aa_{\ip,Z,\Fun,\sch}\to
\Aa_{\ip,Z,\Fun}$, and we define 
$\ad_{\ip,Z,\Fun,\sch}(K)\subset 
\pre_{\ip,Z,\Fun,\sch}(K)$
to be the set of functors $F$ such that
the composite of $F$ with the forgetful functor is an element of
$\ad_{\ip,Z,\Fun}^k(K)$.
Lemma \ref{rrrr} shows that the forgetful functor is an equivalence of
categories, and then 
the proof of Proposition \ref{nn11} shows that
$\ad_{\ip,Z,\Fun,\sch}$ is an ad theory and that
the forgetful functor induces an isomorphism of bordism groups
$(\Omega_{\ip,Z,\Fun,\sch})_*\to (\Omega_{\ip,Z,\Fun})_*$.

In order to make 
$\ad_{\ip,Z,\Fun,\sch}$ a functor of $Z$ we need a lemma.  Recall
Notation \ref{nn6}.

\begin{lemma}
\label{aa36}
Let $g:Z\to W$. 

{\rm{(i)}} For each $z\in Z$ there is a canonical isomorphism
\[
\Z[\pi_1(W,g(z))]\otimes_{\Z[\pi_1(Z,z)]}
(C_{f:X\to Z})_z
\cong
(C_{g\circ f:X\to W})_{g(z)},
\]
and for each pair $(z,z')$ there is a canonical isomorphism
\begin{multline*}
\Z[\pi_1(W\times W,(g(z),g(z')))]\otimes_{\Z[\pi_1(Z\times Z,(z,z'))]}
(E_{f:X\to Z})_{(z,z')}
\\
\cong
(E_{g\circ f:X\to W})_{(g(z),g(z'))}.
\end{multline*}

{\rm{(ii)}} There are canonical isomorphisms
\[
\mathrm{Kan}_{g_*} C_{f:X\to Z}
\cong
C_{g\circ f:X\to W}
\]
and 
\[
\mathrm{Kan}_{g_*\otimes g_*} E_{f:X\to Z}
\cong
E_{g\circ f:X\to W}
\]
\end{lemma}

\begin{proof}
We give the proofs for $C_{f:X\to Z}$; the proofs for $E_{f:X\to Z}$
are similar.

Proof of (i).
The left action of $\pi_1(Z,z)$ on $\tilde Z_z$ is given by
\[
\delta\cdot\epsilon
=
\epsilon\delta^{-1}.
\]
There is a 
homeomorphism
\[
\pi_1(W,g(z))\times_{\pi_1(Z,z)} \tilde Z_z
\to
g^*\tilde W_{g(z)}
\]
which takes $(\alpha,\epsilon)$ to $(\epsilon(1), (g\circ\epsilon)\alpha^{-1})$.
Pulling back to $X$ gives a homeomorphism
\begin{multline*}
\pi_1(W,g(z))\times_{\pi_1(Z,z)} f^*\tilde{Z}_z
\approx
f^*(\pi_1(W,g(z))\times_{\pi_1(Z,z)} \tilde Z_z)
\xrightarrow{\approx}
f^*g^*\tilde W_{g(z)}
\\
\approx
(g\circ f)^*\tilde W_{g(z)}.
\end{multline*}
Since (by Definition \ref{aa22}) the restriction of $f^*\tilde{Z}_z$ to 
each $U\in \uu_{f^*\tilde{Z}_z}$ is trivial, the canonical map
\[
\Z[\pi_1(W,g(z))]\otimes_{\Z[\pi_1(Z,z)]}
IS^{\bar n}_* (T; \Z)
\to
IS^{\bar n}_*(\pi_1(W,g(z))\times_{\pi_1(Z,z)} T;\Z)
\]
is an isomorphism for each $T\in \uu_{f^*\tilde{Z}_z}$.  Now the following 
composite (where the first and fourth isomorphisms are given by 
\cite[Proposition 6.3]{friedmanmcclure}) is the required isomorphism.
\begin{multline*}
\Z[\pi_1(W,g(z))]\otimes_{\Z[\pi_1(Z,z)]}
(C_{f:X\to Z})_z
\\
=
\Z[\pi_1(W,g(z))]\otimes_{\Z[\pi_1(Z,z)]}
IS_*^{\bar n,\uu_{f^*\tilde{Z}_z}}(f^*\tilde{Z}_z;\Z)
\\
\cong
\Z[\pi_1(W,g(z))]\otimes_{\Z[\pi_1(Z,z)]}
\colim IS_*^{\bar n}(T; \Z)
\\
\cong
\colim(
\Z[\pi_1(W,g(z))]\otimes_{\Z[\pi_1(Z,z)]}
IS_*^{\bar n}(T; \Z))
\\
\cong
\colim IS_*^{\bar n}(\pi_1(W,g(z))\times_{\pi_1(Z,z)} T;\Z)
\\
\cong
IS_*^{\bar n,\uu_{(g\circ f)^*\tilde W_{g(z)}}}((g\circ f)^*\tilde W_{g(z)};\Z)
=(C_{g\circ f:X\to W})_{g(z)}
\end{multline*}

Proof of (ii).
As $z$ varies, the following composite (where the second map is the 
isomorphism from part (i)) gives a map of $\Z[\pi_1 Z]$ modules:
\[
(C_{f:X\to Z})_z
\to
\Z[\pi_1(W,g(z))]\otimes_{\Z[\pi_1(Z,z)]}
(C_{f:X\to Z})_z
\xrightarrow{\cong}
(C_{g\circ f:X\to W})_{g(z)},
\]
and this induces a map of $\Z[\pi_1 W]$ modules
\[
h:\mathrm{Kan}_{g_*} C_{f:X\to Z}
\to
C_{g\circ f:X\to W}.
\]
It suffices to show that $h_{g(z)}$ is an isomorphism for any $z$.
By Remark \ref{aa33}, it suffices to show that the composite
\begin{multline*}
\Z[\pi_1(W,g(z)]\otimes_{\Z[\pi_1(Z,z)]} (C_{f:X\to Z})_z
\cong
(\mathrm{Kan}_{g_*} (C_{f:X\to Z}))_{g(z)}
\\
\xrightarrow{h_{g(z)}}
(C_{g\circ f:X\to W})_{g(z)}
\end{multline*}
is an isomorphism.  But this composite is the same as the map in part (i),
since both are $\Z[\pi_1(W,g(z)]$ linear and they agree on $(C_{f:X\to
Z})_z$.
\end{proof}

Now for a map $g:Z\to W$ we define a functor
\[
g_{\Fun,\sch}:
\Aa_{\ip,Z,\Fun,\sch}
\to
\Aa_{\ip,W,\Fun,\sch}
\]
by
\begin{multline*}
g_{\Fun,\sch}(f:X\to Z,\xi, {\mathbb C},\lambda_1, \mathbb E,\lambda_2)
\\
=
(g\circ f:X\to W,\xi, g_\sch{\mathbb C},
\lambda_1',(g\times g) _\sch\mathbb E,\lambda_2'),
\end{multline*}
where $\lambda_1'$ is the composite
\begin{multline*}
\Z[\pi_1 W]\langle g_\sch\mathbb C\rangle
\cong
\mathrm{Kan}_{g_*}\Z[\pi_1 Z]\langle \mathbb C\rangle
\xrightarrow{\Kan_{g_*}\lambda_1}
\mathrm{Kan}_{g_*}C_{f:X\to Z}
\\
\cong
C_{g\circ f:X\to W}
\end{multline*}
(where the first isomorphism is Lemma
\ref{aa35} and the second is Lemma \ref{aa36})
and $\lambda_2$ is the composite 
\begin{multline*}
(\Z[\pi_1 W]^\mathrm{op}\otimes \Z[\pi_1 W])\langle
(g\times g)_\sch\mathbb E\rangle
\\
\cong
\mathrm{Kan}_{g_*\otimes g_*}(\Z[\pi_1 Z]^\mathrm{op}\otimes \Z[\pi_1
Z])\langle \mathbb E\rangle
\\
\xrightarrow{\mathrm{Kan}_{g_*\otimes g_*}\lambda_2}
\mathrm{Kan}_{g_*\otimes g_*}E_{f:X\to Z}
\cong
E_{g\circ f:X\to W}
\end{multline*}

$g_{\Fun,\sch}$ takes ads to ads by Remark \ref{aa37}, and thus
$\ad_{\ip,Z,\Fun,\sch}$ is a functor of $Z$ as required.

We now define 
\[
\Phi'(Z)=\bQ_{\ip,Z,\Fun,\sch}
\]

\subsection{Symmetric signature}
\label{aa3}

First we give the analog of a definition from Section \ref{symsig}.

\begin{definition}
\label{nn34}
For an object $f:X\to Z$ of $\Aa_{\ip,Z}$,
define $A_{f:X\to Z}$ 
to be the chain complex of 
left $\Z[\pi_1 Z]$ modules with
\[
(A_{f:X\to Z})_z=
IS_*^{\bar 0,\uu_{f^*Z_z}} (f^*Z_z;\Z).
\]
\end{definition}

For each $z\in Z$, the canonical map
\[
\Z\otimes_{\Z[\pi_1(Z,z)]} (A_{f:X\to Z})_z
=
\Z\otimes_{\Z[\pi_1(Z,z)]} IS_*^{\bar 0,\uu_{f^*Z_z}} (f^*Z_z;\Z)
\to
IS_*^{\bar 0,\uu_X} (X;\Z)
\]
is an isomorphism, because it is the same as the following composite
(where the first and fourth isomorphisms are given by \cite[Proposition
6.3]{friedmanmcclure}):
\begin{multline*}
\Z\otimes_{\Z[\pi_1(Z,z)]} IS_*^{\bar 0,\uu_{f^*Z_z}} (f^*Z_z;\Z)
\cong
\Z\otimes_{\Z[\pi_1(Z,z)]} \colim IS_*^{\bar 0} (V;\Z)
\\
\cong
\colim (\Z\otimes_{\Z[\pi_1(Z,z)]} IS_*^{\bar 0} (V;\Z))
\cong
\colim IS_*^{\bar 0} (U;\Z)
\cong
IS_*^{\bar 0,\uu_X} (X;\Z).
\end{multline*}
Recall that we write $\ZZ$ for the 
constant $\Z[\pi_1 Z]$ module with value $\Z$.
Using Remark \ref{aa5}, we obtain an isomorphism
\[
\ZZ\otimes_{\Z[\pi_1 Z]} A_{f:X\to Z}
\cong
\Z\otimes_{\Z[\pi_1(Z,z)]} (A_{f:X\to Z})_z
\cong
IS_*^{\bar 0,\uu_X} (X;\Z)
\]
which the reader can check is independent of the choice of $z$.

\begin{definition}
\label{aa9}
Define a functor 
\[
\sig:\Aa_{\ip,Z,\Fun,\sch}\to\Aa^{\Z[\pi_1 Z]}_{\Rel}
\]
by 
\[
\sig(f:X\to Z,\xi, {\mathbb C},\lambda_1, \mathbb E,\lambda_2)
=({\mathbb C}, \mathbb E,\gamma,\phi),
\]
where $\gamma$ is the composite
\begin{multline*}
(\Z[\pi_1 Z]\langle {\mathbb C}\rangle)^t
\otimes
\Z[\pi_1 Z]\langle {\mathbb C}\rangle
\xrightarrow{\lambda_1\otimes \lambda_1}
(C_{f:X\to Z})^t\otimes C_{f:X\to Z}
\xrightarrow{\times}
E_{f:X\to Z}
\\
\xrightarrow{\lambda_2^{-1}}
(\Z[\pi_1 Z]^{\mathrm{op}}\otimes \Z[\pi_1 Z])\langle {\mathbb
E}\rangle
\end{multline*}
and $\phi$ is the image
of $\xi$ under the following composite (where the arrow is induced by the
diagonal map):
\begin{multline*}
IS_*^{\bar 0,\uu_X} (X;\Z)
\cong
\ZZ\otimes_{\Z[\pi_1 Z]} A_{f:X\to Z}
\to \ZZ\otimes_{\Z[\pi_1 Z]} E_{f:X\to Z}.
\\
\xrightarrow{1\otimes \lambda_2^{-1}}
\ZZ\otimes_{\Z[\pi_1 Z]} (\Z[\pi_1 Z]^{\mathrm{op}}\otimes \Z[\pi_1 Z])\langle
{\mathbb
E}\rangle
\end{multline*}
\end{definition}

The definitions show that
$\sig$ is a natural transformation as $Z$ varies. 

\begin{prop}
\label{aa14}
If $K$ is strict and $F\in\ad_{\ip,Z,\Fun,\sch}^k(K)$ then 
$\sig\circ F\in (\ad^{\Z[\pi_1 Z]}_{\Rel,\geq 0})^k(K)$.
\end{prop}

\begin{proof}
Let $F\in \ad_{\ip,Z,\Fun,\sch}(K)$.  
By Definition \ref{nn27}
we need to show that $\Theta_\mathrm{Rel}\circ \sig\circ F$ is an ad, and for
this
we need to show (see Remark \ref{nn18}) that $\Theta_\mathrm{Rel}\circ
\sig\circ F$ is 
closed and well-behaved and that its composite with the map 
\[
\Lambda:\Aa^{\Z[\pi_1 Z]}_{\mathrm{Rel}}
\to
\Aa^{\Z[\pi_1 Z]}_{\mathrm{rel}}
\] 
satisfies the analog of \cite[part (i)(b) of Definition 12.10]{LM2}.

$\Theta_\mathrm{Rel}\circ \sig\circ F$ is closed by property (c) of 
Definition \ref{aa38}.

For the proof that $\Theta_\mathrm{Rel}\circ \sig\circ F$
is well-behaved, first note that
by Definition \ref{aa38}(a), $F$ is balanced, so we can write
\[
F(\sigma,o)=(f_\sigma:X_{\sigma,o}\to Z,\xi_{\sigma,o},\mathbb
C_\sigma, (\lambda_1)_\sigma,\mathbb E_\sigma,(\lambda_2)_\sigma).
\]
Also write $Y_\sigma$ for the underlying $\partial$-IP-space of 
$X_{\sigma,o}$ (forgetting the orientation).
It suffices to show that the 
functors
$C_{f_\sigma:Y_\sigma\to Z}$ and 
$E_{f_\sigma:Y_\sigma\to Z}$ are well-behaved.  We will prove this
for the first, the proof for the second is similar.

Choose $z\in Z$. 
It suffices to show that the functor 
$IS_i^{\bar{n},\uu_{f_\sigma^*(\tilde 
Z_z)}}(f_\sigma^*(\tilde 
Z_z);\Z)$ is well-behaved.  
Let $\sigma\in K$, and
for each $\tau\subset \sigma$ let 
$\mathcal W_\tau$ be
the collection of all intersections of sets in $\uu_{Y_\tau}$, and for each
$U\in \mathcal W_\tau$ let $\tilde U$ be the inverse image of $U$ in 
$f_\tau^*(\tilde Z_z)$.
The map
\[
\colim_{U\in \mathcal W_\tau}
IS_i^{\bar{n}}(\tilde U;\Z)
\to
IS_i^{\bar{n},\uu_{f_\tau^*(\tilde Z_z)}}(f_\tau^*(\tilde Z_z);\Z)
\]
is an isomorphism for each $\tau$ by \cite[Proposition
6.3]{friedmanmcclure}, so 
the well-behavedness will be an immediate
consequence of Lemma \ref{aa15}(iii), with 
$
\mathcal S=\cup_{\tau\subset \sigma} \mathcal W_\tau
$
and 
$
\mathcal S'=\cup_{\tau\in\partial\sigma} \mathcal W_\tau,
$
once we verify the hypotheses of that Lemma.
Hypothesis (a) follows from Definition \ref{aa22}, and hypothesis (b) is
part (iii) of Lemma \ref{aa39} below.  For hypothesis (c), it suffices to show
that for each $U\in \S$ the map
\[
IS_i^{\bar{n}}(U;\Z)/\colim_{V\in \S \text{ with } V\subsetneq U}
IS_i^{\bar{n}}(V;\Z)
\to
S_i(U;\Z)/\colim_{V\in \S \text{ with } V\subsetneq U}
S_i(V;\Z)
\]
is a monomorphism (because the target of this map is free over $\Z$) and this
follows from parts (iii) and (ii) of Lemma \ref{aa39}.
This concludes the proof that $\sig\circ F$ is well-behaved.

To see that $\Lambda\circ \Theta_\mathrm{Rel}\circ \sig\circ F$ satisfies the
analog of \cite[part (i)(b) of Definition 12.10]{LM2},
we first observe that the map
\[
IS_*^{\bar{n},\uu_{f_\sigma^*(\tilde Z_z)}}(f_\sigma^*(\tilde Z_z);\Z)
\to
IS_*^{\bar{n}}(f_\sigma^*(\tilde Z_z);\Z)
\]
is a quasi-isomorphism by Remark \ref{aa30},
and that, if $\S'$ is as above, the composite
\[
\colim_{\tau\subset\partial\sigma}
IS_*^{\bar{n},\uu{f_\tau^*(\tilde Z_z)}}(f_\tau^*(\tilde Z_z);\Z)
\cong
\colim_{U\in\S'}
IS_*^{\bar{n}}(\tilde U;\Z)
\to
IS_*^{\bar{n}}(\partial f_\sigma^*(\tilde Z_z);\Z)
\]
is a  quasi-isomorphism by Lemma \ref{aa39}(iv). 
It therefore suffices to show that the map
\begin{multline*}
H_*(\Hom_{\Z[\pi_1(Z,z)]}(IS_*^{\bar{n}}(f_\sigma^*(\tilde 
Z_z);\Z),\Z[\pi_1(Z,z)])
\\
\xrightarrow{\backslash ((1\otimes\gamma_\sigma)_*)^{-1}([\phi_\sigma])}
IH_{\dim\sigma-k-*}^{\bar{n}}(f_\sigma^*(\tilde Z_z),\partial 
f_\sigma^*(\tilde Z_z);\Z)
\end{multline*}
is an isomorphism for each $\sigma$.  The rest of the proof is the same as the
corresponding part of the proof of Proposition \ref{fp3}, except that 
$Y_\sigma$ is replaced by $f_\sigma^*(\tilde Z_z)$.
\end{proof}

With notation as in the proof of well-behavedness in Proposition \ref{aa14}, 
say that a subset
$\T$ of $\S$ is {\em saturated} if for every $U\in\T$ we have $\{V\,|\,V\in 
\S \text{ and } 
V\subset U\}\subset \T$.  
For every saturated $\T$, 
write $X_\T$ for $\colim_{U\in \T} U$, and give this the filtration which
restricts to the filtration  of
Proposition \ref{p2}
on each $U\in \T$ (note that an open subset of a $\partial$-pseudomanifold is
a $\partial$-pseudomanifold by \cite[Lemma 2.7.7]{Fr}).  Write
$\tilde X_\T$ for $\colim_{U\in\T} \tilde U$.

\begin{lemma}
\label{aa39}

{\rm{(i)}} For every pair of saturated sets $\T'\subset \T$, there is a
neighborhood
$W$ of $X_{\T'}$ in $X_\T$
such that the inclusion $X_{\T'}\to W$ is a
stratified deformation retract, where the retraction $r:W\to X_{\T'}$ has the
property that $r(x)\in U$ whenever $x\in U\in \T$.

{\rm{(ii)}} For every saturated $\T$ and every $i\in \Z$, the intersection
of $IS^{\bar n}_i (X_K;\Z)$ and
$\sum_{U\in \T} S_i(U;\Z)$ (considered as subgroups of
$S_i(X_K;\Z)$)
is
$\sum_{U\in \T} IS^{\bar n}_i(U)$.

{\rm{(iii)}} If $K$ is strict then for every saturated $\T$ the map
\[
\colim_{U\in \T} IS^{\bar n}_*(U;\Z)
\to
\sum_{U\in \T} IS^{\bar n}_*(U;\Z)
\]
is an isomorphism.

{\rm{(iv)}} For every saturated $\T$ the map
\[
\colim_{U\in \T} IS_*^{\bar{n}}(\tilde U;\Z)
\to
IS_*^{\bar{n}}(\tilde X_\T)
\]
is a quasi-isomorphism.
\end{lemma}

The proof is the same as the proof of Lemma \ref{aa11}.

\subsection{Proof of Lemma \ref{nn15}}
\label{nn29}

We need to show that $\alpha^{-1}[X]_\ip$ is taken to $[X]_\mathbb{L}$ by the
composite
\begin{multline*}
X_+\wedge \bQ_\ip
\xleftarrow{\simeq}
X_+\wedge \bQ_{\ip,*,\Fun,\sch}
\xrightarrow{1\wedge\sig}
X_+\wedge \bQ_\Relz^{\Z}
\xleftarrow{\simeq}
X_+\wedge \bQ^\Z_{\mathrm{fin},\sch,\geq 0}
\\
\to
X_+\wedge \bQ^\Z_{\sch,\geq 0}
\to
X_+\wedge \bQ^\Z_{\geq 0}.
\end{multline*}
This is the same as the composite
\begin{multline*}
X_+\wedge \bQ_\ip
\xleftarrow{\simeq}
X_+\wedge \bQ_{\ip,*,\Fun,\sch}
\xrightarrow{1\wedge\sig}
X_+\wedge \bQ_\Relz^{\Z}
\to
X_+\wedge \bQ_{\mathrm{Rel},\geq 0}^\Z
\\
\xleftarrow{\simeq}
X_+\wedge \bQ^\Z_{\mathrm{fin},\geq 0}
\to
X_+\wedge \bQ^\Z_{\geq 0},
\end{multline*}
which by Remark \ref{nn33} is homotopic to the composite
\begin{multline*}
\label{nn30}
X_+\wedge \bQ_\ip
\xleftarrow{\simeq}
X_+\wedge \bQ_{\ip,*,\Fun,\sch}
\xrightarrow{1\wedge\sig}
X_+\wedge \bQ_\Relz^{\Z}
\to
X_+\wedge \bQ_{\mathrm{Rel},\geq 0}^\Z
\\
\xrightarrow{1\wedge \Lambda}
X_+\wedge \bQ_{\mathrm{rel},\geq 0}^\Z
\xleftarrow{\simeq}
X_+\wedge \bQ^\Z_{\geq 0}.
\end{multline*}

Now Lemma \ref{nn15} follows from Definition \ref{def.xipxl}(ii) and

\begin{lemma}
\label{aa43}
The diagram
\[
\xymatrix{
\bQ_{\ip,*,\Fun,\sch}
\ar[r]^-{\sig}
\ar[dd]
&
\bQ^{\Z}_{\mathrm{Rel},\sch,\geq 0}
\ar[d]
\\
&
\bQ^{\Z}_{\mathrm{Rel},\geq 0}
\ar[d]_\Lambda
\\
\bQ_{\ipFun}
\ar[r]^-\sig
&
\bQ^{\Z}_{\mathrm{rel},\geq 0}
}
\]
commutes up to homotopy, where the upper map was constructed in Subsection
\ref{aa3}, the lower map in Subsection \ref{ss2}, and the left map is induced by
the forgetful functor.
\end{lemma}

\begin{proof}
By the analog of \cite[Proposition 14.5]{LM2}, it suffices to give a natural 
quasi-isomorphism
$\nu$ from the composite
\[
\Aa_{\ip,*,\Fun,\sch}\xrightarrow{\sig}
\Aa_{\mathrm{Rel},\sch}^\Z
\to
\Aa_{\mathrm{Rel}}^\Z
\xrightarrow{\Lambda}
\Aa_{\mathrm{rel}}^\Z
\]
to the composite
\[
\Aa_{\ip,*,\Fun,\sch}
\to
\Aa_{\ipFun}
\xrightarrow{\sig}
\Aa_{\mathrm{rel}}^\Z.
\]
By Definition \ref{aa25}, $C_{f:X\to *}=IS_*^{\bar n,\uu_X}(X;\Z)$ and 
$E_{f:X\to *}=IS_*^{Q_{\bar n,\bar n},\uu_{X\times X}}(X\times X;\Z)$, and by
Definition \ref{nn34} we have $A_{f:X\to *}=IS_*^{\bar 0,\uu_X}(X;\Z)$.
Thus the first composite takes the object
\[
(X\to *,\xi,\mathbb C,\lambda_1,\mathbb E,\lambda_2)
\]
to
\[
(\Z\langle \mathbb C\rangle,\Z\langle \mathbb E\rangle,\beta,\varphi),
\]
where $\beta$ is the map
\begin{multline*}
(\Z\langle \mathbb C\rangle)^t\otimes \Z\langle \mathbb C\rangle
\xrightarrow{\lambda_1\otimes\lambda_1}
(IS_*^{\bar n,\uu_X}(X;\Z))^t\otimes IS_*^{\bar n,\uu_X}(X;\Z)
\xrightarrow{\times}
IS_*^{Q_{\bar n,\bar n},\uu_{X\times X}}(X\times X;\Z)
\\
\xrightarrow{\lambda_2^{-1}}
\Z\langle \mathbb E\rangle
\end{multline*}
and $\varphi$ is the image of $\xi$ under the diagonal map
\[
IS_*^{\bar 0,\uu_X}(X;\Z)
\to
IS_*^{Q_{\bar n,\bar n},\uu_{X\times X}}(X\times X;\Z).
\]
The second composite takes $(X\to *,\xi,\mathbb C,\lambda_1,\mathbb
E,\lambda_2)$ to
\[
(IS^{\bar n}_*(X;\Z), IS^{Q_{\bar{n},\bar{n}}}_*(X\times
X;\Z),\beta',\varphi'),
\]
where $\beta'$ is the map
\[
(IS_*^{\bar n}(X;\Z))^t\otimes IS_*^{\bar n}(X;\Z)
\xrightarrow{\times}
IS_*^{Q_{\bar n,\bar n}}(X\times X;\Z)
\]
and $\varphi'$ is the image of $\xi$ under the map
\[
IS_*^{\bar 0,\uu_X}(X;\Z)
\to
IS_*^{\bar 0}(X;\Z)
\to
IS_*^{Q_{\bar n,\bar n}}(X\times X;\Z).
\]

The natural quasi-isomorphism is given by the maps
\[
\Z\langle \mathbb C\rangle
\xrightarrow{\lambda_1}
IS^{\bar n,\uu_X}_*(X;\Z)
\to
IS^{\bar n}_*(X;\Z)
\]
and 
\[
\Z\langle \mathbb E\rangle
\xrightarrow{\lambda_2}
IS^{Q_{\bar{n},\bar{n}},\uu_{X\times X}}_*(X\times X;\Z)
\to
IS^{Q_{\bar{n},\bar{n}}}_*(X\times
X;\Z).
\]
\end{proof}

\subsection{Proof of Lemma \ref{nn16}}
\label{nn3}


Let $X$ be a compact connected oriented IP-space of dimension $n$. Recall
Definition \ref{def.xipxl}, and let $[X\xrightarrow{\mathrm{id}} X]$ denote 
the class of the identity map
in $(\Omega_\ip)_n(X)$.  We need to show that the image of
$[X\xrightarrow{\mathrm{id}} X]$ under the
following composite is $\sigma_\ip^*(X)$. 
\begin{multline*}
(\Omega_{\ip,X})_n
\xleftarrow{\simeq}
(\Omega_{\ip,X,\Fun,\sch})_n
\xrightarrow{\sig}
(\Omega_\Relz^{\Z[\pi_1 X]})_n
\xleftarrow{\simeq}
(\Omega_{\mathrm{fin},\sch,\geq 0}^{\Z[\pi_1 X]})_n
\to
(\Omega_{\sch,\geq 0}^{\Z[\pi_1 X]})_n
\\
\to
(\Omega_{\geq 0}^{\Z[\pi_1 X]})_n
=
L^n(\Z[\pi_1 X])
\end{multline*}

Using Remark \ref{nn33} we see that this is the same as the composite
\begin{multline*}
\label{nn35}
(\Omega_{\ip,X})_n
\xleftarrow{\simeq}
(\Omega_{\ip,X,\Fun,\sch})_n
\xrightarrow{\sig}
(\Omega_\Relz^{\Z[\pi_1 X]})_n
\to
(\Omega_{\mathrm{Rel},\geq 0}^{\Z[\pi_1 X]})_n
\xrightarrow{\Lambda}
(\Omega_{\mathrm{rel},\geq 0}^{\Z[\pi_1 X]})_n
\\
\xleftarrow{\cong}
(\Omega_{\geq 0}^{\Z[\pi_1 X]})_n
=
L^n(\Z[\pi_1 X])
\end{multline*}

Recall that the first map in this composite is induced by the forgetful
functor, so the image of $[X\xrightarrow{\mathrm{id}} X]$ under this map is the
class of any object of the form
\[
(\mathrm{id}:X\to X,\xi,\mathbb C,\lambda_1,\mathbb E,\lambda_2).
\]
By Definition \ref{d5} it suffices to show


\begin{lemma}
\label{aa40}
The image of the class of $(\mathrm{id}:X\to X,\xi,\mathbb 
C,\lambda_1,\mathbb E,\lambda_2)$ under the composite
\[
(\Omega_{\ip,X,\Fun,\sch})_n
\xrightarrow{\sig}
(\Omega_\Relz^{\Z[\pi_1 X]})_n
\to
(\Omega_{\mathrm{Rel},\geq 0}^{\Z[\pi_1 X]})_n
\xrightarrow{\Lambda}
(\Omega_{\mathrm{rel},\geq 0}^{\Z[\pi_1 X]})_n
\]
is the class of the object 
$(C_X,D_X,\beta_X,\varphi)$
given by Definition \ref{fd1}.
\end{lemma}

\begin{proof}
The image of $(\mathrm{id}:X\to X,\xi,\mathbb C,\lambda_1,\mathbb
E,\lambda_2)$ under the composite is represented by the following $*$-ad 
(where $\beta$ is given by Definition \ref{aa9} and Remark \ref{nn18}, and 
$\phi$ is given by Definition \ref{aa9})
\[
F=(\Z[\pi_1 X]\langle \mathbb C\rangle,
\ZZ\otimes_{\Z[\pi_1 X]}
(\Z[\pi_1 X]^\mathrm{op}\otimes \Z[\pi_1 X])\langle \mathbb E\rangle,
\beta,\phi).
\]
The following maps give a morphism of ads from this to
$(C_X,D_X,\beta_X,\varphi)$:
\[
\Z[\pi_1 X]\langle \mathbb C\rangle
\xrightarrow{\lambda_1}
C_{\mathrm{id}:X\to X}
\to 
C_X
\]
and
\begin{multline*}
\ZZ\otimes_{\Z[\pi_1 X]}
(\Z[\pi_1 X]^\mathrm{op}\otimes \Z[\pi_1 X])\langle \mathbb E\rangle
\xrightarrow{1\otimes \lambda_2}
\ZZ\otimes_{\Z[\pi_1 X]}
E_{\mathrm{id}:X\to X}
\\
\to
\ZZ\otimes_{\Z[\pi_1 X]} E_X.
\end{multline*}
The result now follows from \cite[Lemma 13.5]{LM2}.
\end{proof}

\section{The Stratified Novikov Conjecture}
\label{s9}

Let $G$ be a discrete group and $BG$ its classifying space.
Recall that the \emph{strong Novikov conjecture} for $G$ asserts that the
assembly map
\[ \alpha: \syml_n (BG) \longrightarrow L^n (\intg [G]) \]
is rationally injective. 
The $\rat$-localization of any spectrum is a graded Eilenberg-MacLane spectrum,
see e.g. \cite[Theorem 7.11]{rudyak}. Thus there is a splitting
\begin{equation} \label{equ.qsplit}
{\mathbb L}^\bullet(\Z) \otimes \rat \simeq \prod_{j\geq 0} K(\rat, 4j). 
\end{equation}
A morphism $\phi:E\to F$ between two ring $\rat$-local spectra $E,F$ is a ring morphism
if and only if $\phi_*: \pi_* (E)\to \pi_* (F)$ is a ring homomorphism (\emph{loc. cit.}).
The product on $\pi_* ({\mathbb L}^\bullet)$ is
\[ L^p (\intg)\otimes L^q (\intg) \longrightarrow  L^{p+q} (\intg), \]
given by sending chain complexes $C,D$ to their tensor product $C\otimes D$.
If $p$ and $q$ are divisible by $4$ (which is the only nontrivial case after tensoring with $\rat$),
then $L^p (\intg) \cong \intg$, a generator is given by the symmetric signature
$\sigma^* (\cplx P^{p/2})\in L^p (\intg)$, and the product sends
$\sigma^* (\cplx P^{p/2})\otimes \sigma^* (\cplx P^{q/2})$ to
$\sigma^* (\cplx P^{p/2} \times \cplx P^{q/2})$.
Under the isomorphism induced by the splitting (\ref{equ.qsplit}) on $\pi_*$, this product induces a
product on the homotopy groups of the graded Eilenberg-MacLane spectrum, which underlies a
spectrum level ring structure. By the above remark,
the equivalence (\ref{equ.qsplit}) is then a morphism of ring spectra.
It induces natural isomorphisms of homology theories
\[
S_X:\syml_n (X)\otimes \rat \stackrel{\cong}{\longrightarrow}
 \bigoplus_{j\geq 0} H_{n-4j} (X;\rat). 
\]
and cohomology theories
\[
S^X:(\syml)^n (X)\otimes \rat \stackrel{\cong}{\longrightarrow}
 \prod_{j\geq 0} H^{n+4j} (X;\rat). 
\]
Since $S^X$ is induced by a morphism of ring spectra, it maps the unit
$1\in (\syml)^0 (\pt)\otimes \rat$ to the unit $1\in H^0 (\pt;\rat)$ and preserves products.\\

An $n$-dimensional compact IP-space $X$ possesses characteristic classes
$L_j (X)\in H_j (X;\rat),$ which are the Poincar\'e duals of the Hirzebruch
$L$-classes when $X$ is a smooth manifold. These classes have been
introduced by Goresky and MacPherson in \cite{gm1}
(at least for spaces without odd codimensional strata, but the method works
whenever one has a self-dual intersection chain sheaf, see \cite{banagl-tiss}).
Goresky and MacPherson adapt a method of Thom, which exploits the bordism invariance of the
signatures of transverse inverse images of maps to spheres.
For singular $X$, these classes need not lift to the cohomology of $X$
under capping with the (ordinary) fundamental class. We shall denote the total
$L$-class by $L(X)$. An inclusion $Y\hookrightarrow X$ of PL spaces
is called \emph{normally nonsingular} if $Y$ possesses a regular neighborhood in $X$
which can be given the structure of a disk-block-bundle over $Y$.
\begin{lemma} \label{lem.lfundhitslclass}
Let $X$ be an $n$-dimensional compact IP-space.
The isomorphism $S_X$ maps
the rational $L$-theory fundamental class $[X]_{\mathbb{L}}$ to the
Goresky-MacPherson $L$-class $L(X)$. 
\end{lemma}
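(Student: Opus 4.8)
The plan is to reduce the statement to the well-known corresponding fact for PL (or topological) manifolds, via the Thom–Pontrjagin construction that defines the Goresky–MacPherson $L$-classes, together with the compatibility results already established in this paper. Recall that $L(X)\in H_\ast(X;\rat)$ is characterized by the property that for any map $g\colon X\to S^m$ (with $m$ large) which is ``transverse'' in the sense of Goresky–MacPherson to a point $p\in S^m$, the preimage $g^{-1}(p)$ is a closed oriented IP-space whose signature equals $\langle g^\ast(u), L(X)\rangle$, where $u\in H^m(S^m)$ is a generator; equivalently, $S_X([X]_{\mathbb L})$ is the unique class pairing correctly against all such $g$. So the strategy is: (1) express both sides as elements of $\bigoplus_j H_{n-4j}(X;\rat)$ and show they agree after pairing with an arbitrary cohomology class pulled back from a sphere; (2) identify the pairing of $S_X([X]_{\mathbb L})$ against $g^\ast(u)$ with the signature of the transverse preimage; (3) recognize that the signature of a closed oriented IP-space is exactly the image of $\sigma^\ast_\ip$ under $L^{4k}(\Z)\to\Z$, which by Theorem \ref{t3}(ii) and the definition of $\Sig$ is computed by the composite \eqref{e13}.

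First I would set up the transversality machinery: given $a\in H^{n-4j}(X;\rat)$ which (after scaling) is pulled back via a map $g\colon X\to S^{4j}$ from the generator, one shows that $g$ can be made stratified-transverse to a generic point, and the preimage $W=g^{-1}(p)$ is a compact oriented IP-space of dimension $n-4j$ whose IP-bordism class, pushed into $X$, represents the slant product of $[X]_\ip$ with $g^\ast(u)$; this is the IP analogue of the classical ``inverse image of a regular value'' computation and uses the bordism-invariance and naturality already built into $\Omega_\ip$. Next, applying $\Sig$ and the isomorphism \eqref{e10} to $[W]_\ip$ and then the splitting $S_{\text{pt}}$ over a point, one gets exactly the signature $\sigma(W)\in\Z$ by Theorem \ref{t3}(ii) and the compatibility of $\sigma^\ast_\ip$ with the ordinary signature under $L^{4j}(\Z)\to\Z$ (this compatibility is part of the ``usual properties'' in Definition \ref{d5}, inherited from \cite[Section 5.5]{friedmanmcclure}). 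By the Goresky–MacPherson definition, $\sigma(W)=\langle a, L_{n-4j}(X)\rangle$; on the other hand, by naturality of $S$ and of the slant product, $\langle a, S_X([X]_{\mathbb L})\rangle$ equals $S_{\text{pt}}$ applied to the image of $[W]_{\mathbb L}$ under pushforward, which is $\sigma(W)$. Since cohomology classes of $X$ are detected (after tensoring with $\rat$) by such sphere-pulled-back classes in each degree — or more precisely, since $H^\ast(X;\rat)$ in degree $n-4j$ pairs faithfully with $H_{n-4j}(X;\rat)$ and every homology class is a sum of images of fundamental classes of such transverse preimages — the two classes $S_X([X]_{\mathbb L})$ and $L(X)$ agree.

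The main obstacle I expect is the transversality step: Goresky–MacPherson transversality for maps out of a stratified pseudomanifold is more delicate than smooth transversality, and one must check that the transverse preimage $g^{-1}(p)$ is genuinely an IP-space (not merely a Witt space or a general pseudomanifold) and that its class is the correct slant product in $(\Omega_\ip)_\ast(X)$. This should follow from the fact that IP-ness is a condition on links, which is preserved under taking transverse preimages of PL maps to a sphere (the link of a point in $g^{-1}(p)$ is the link of the corresponding point in $X$), combined with the PL-transversality results that are implicitly available from the ad-theoretic setup (the gluing and subdivision arguments in the proof of Theorem \ref{t1} already encode the needed PL flexibility). A clean alternative, which I would use to sidestep much of this, is to invoke bordism-invariance directly: both $S_X([X]_{\mathbb L})$ and $L(X)$ are additive and bordism-invariant as functions of $(X, \text{classifying data})$, both agree for PL manifolds by Theorem \ref{t3}(iii) together with the classical identity $[M]_{\mathbb L}\otimes 1 = L(M)$ recalled in the introduction, and both vanish on boundaries; so it suffices to know that $\Omega_\ip^\ast\otimes\rat$ is detected by signatures of transverse preimages (Pardon's computation of $\Omega_\ip^\ast$ plus the Thom-style argument), reducing everything to the manifold case already handled by Ranicki.
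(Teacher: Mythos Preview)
Your main approach is essentially the paper's: both verify that $S_X[X]_{\mathbb L}$ satisfies the Thom--Goresky--MacPherson axiomatic characterization of the $L$-class. The paper, however, phrases the characterizing property not via transverse maps to spheres but via its dual form --- naturality under the Gysin map $j^!$ for normally nonsingular inclusions $j:Y\hookrightarrow X$ with trivial normal bundle, together with $\epsilon_* L_0(X)=\sigma(X)$ --- citing \cite[Proposition 8.2.11]{banagl-tiss}. This neatly sidesteps the stratified transversality problem you flag: one takes $Y$ as given, builds $j^!$ by cap product with an explicit Thom class $u=e^{n-m}\times 1$ in each of the three theories $H_*$, $\syml_*$, and $\Omega_\ip$, and checks via a large commutative diagram that the Gysin maps match up; the key computation is $j^!_\ip[X]_\ip=[Y]_\ip$, done by hand using $e_\ip\cap\mu_1 = -[\{y_+\}\hookrightarrow D^1]$. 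The second axiom (the signature identification in degree $0$) is then established via Theorem~\ref{t3}(ii) and naturality of assembly over the trivial group, exactly as you outline.

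Your alternative route --- reduce to manifolds via Pardon's calculation of $\Omega_\ip\otimes\rat$ --- is a genuinely different and potentially shorter argument, but it hides a real obligation: you need $[f:W\to Z]\mapsto f_*L(W)$ to be well-defined on IP bordism classes, i.e., bordism invariance of the Goresky--MacPherson $L$-class over a target. For singular spaces this is not immediate from the definition and is usually proved using the same Gysin/transversality machinery you are trying to avoid, so the saving is less than it first appears.
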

\begin{proof}
Let $Y^m$ be a compact IP-space and
$j: Y^m \hookrightarrow X^n$ a normally nonsingular inclusion with trivial
normal bundle $\nu$. Let $D\nu = D^{n-m} \times Y$ be
the total space of its disk bundle, $S\nu = S^{n-m-1} \times Y$
the total space of its sphere bundle. Note that $D\nu$ is a $\partial$-IP space.
Let $u\in H^{n-m} (D\nu,S\nu;\rat)$ be the Thom class of the normal bundle.
The composition
\[ H_k (X;\rat) \rightarrow H_k (X,X-Y;\rat)
  \stackrel{\cong}{\leftarrow} H_k (D\nu,S\nu;\rat) \hspace{2cm}\]
\[  \hspace{1cm} \overset{u\cap -}{\underset{\cong}{\rightarrow}} H_{k-n+m} (D\nu;\rat)
  \overset{\pi_\ast}{\underset{\cong}{\rightarrow}} H_{k-n+m} (Y;\rat)
  \stackrel{(-1)^s}{\to} H_{k-n+m} (Y;\rat), \]
where $s = \smlhf (n-m+1)(n-m)$ 
and $\pi: D\nu \to Y$ is the bundle projection,
defines a map
\[ j^!: H_k (X;\rat) \longrightarrow H_{k-n+m} (Y;\rat). \]
If $Z$ is any topological space and $R$ any coefficient ring, let $\epsilon_* :H_0 (Z;R)
\to R$ be the augmentation map.
By the Thom-Goresky-MacPherson construction, the $L$-classes are
uniquely characterized by the following two properties
(\cite[Proposition 8.2.11]{banagl-tiss}):
\begin{itemize}
\item If $j:Y^m \hookrightarrow X^n$ is a normally nonsingular inclusion with trivial
normal bundle, then
\[ L_{k-n+m} (Y) = j^! L_k (X). \]
\item $\epsilon_* L_0 (X)=\sigma (X),$ the signature of $X$.
\end{itemize}
Thus the lemma is proven if we show
\begin{enumerate}
\item If $j:Y^m \hookrightarrow X^n$ is a normally nonsingular inclusion with trivial
normal bundle, then
\[ (S_Y [Y]_{\mathbb{L}})_{k-n+m} = j^! (S_X [X]_{\mathbb{L}})_k; \] and
\item $\epsilon_* (S_X [X]_{\mathbb{L}})_0 =\sigma (X).$ 
\end{enumerate}
We turn to (1). Let $\nu$ be the normal bundle of $j$.
We write $S^0$ as $S^0 = \{ y_-, y_+ \}$.
Let $e\in H^1 (D^1, S^0;\rat)$ be the element obtained as the image of
the unit $1\in H^0 (y_+;\rat)$ under the composition
\[ H^0 (y_+;\rat) \stackrel{\cong}{\longleftarrow} H^0 (S^0, y_-;\rat) 
 \stackrel{\delta}{\longrightarrow} H^1 (D^1, S^0;\rat), \]
where the left arrow is an excision isomorphism and $\delta$ is the connecting
homomorphism of the triple $(D^1, S^0, y_-)$. The $(n-m)$-fold cross-product
$e\times \cdots \times e$ yields an element $e^{n-m} \in H^{n-m} (D^{n-m}, S^{n-m-1};\rat)$.
The Thom class $u$ arising in the definition of the map $j^!$ is then given by
$u = e^{n-m} \times 1 \in H^{n-m} (D\nu, S\nu;\rat),$ where $1\in H^0 (Y;\rat)$.
Analogous classes in $\mathbb{L}^\bullet$-homology and $\Omega_{\ip}$ can be
constructed in a similar fashion:
If $A$ is any abelian group, we shall briefly write $A_\rat$ for $A\otimes_{\intg} \rat$.
Let $e_\BL \in (\syml)^1 (D^1, S^0)_\rat$ be the element obtained as the image of
the unit $1\in (\syml)^0 (y_+)_\rat$ under the composition
\[ (\syml)^0 (y_+)_\rat \stackrel{\cong}{\longleftarrow} (\syml)^0 (S^0, y_-)_\rat 
 \stackrel{\delta}{\longrightarrow} (\syml)^1 (D^1, S^0)_\rat. \]
The $(n-m)$-fold cross-product
$e_\BL \times \cdots \times e_\BL$ yields an element $e^{n-m}_\BL \in (\syml)^{n-m} (D^{n-m}, S^{n-m-1})_\rat$.
Set
$u_\BL = e^{n-m}_\BL \times 1 \in (\syml)^{n-m} (D\nu, S\nu)_\rat,$ where 
$1\in (\syml)^0 (Y)_\rat$ is the unit. \\

We claim that
\begin{equation} \label{equ.selise}
S^{(D^1, S^0)} (e_\BL)=e.
\end{equation}
To see this, we observe that as $S$ is a natural transformation of cohomology theories, the diagram
\[ \xymatrix{
(\syml)^0 (y_+)_\rat \ar[d]_S^{\cong} & (\syml)^0 (S^0, y_-)_\rat 
  \ar[l]_{\cong}^{\operatorname{exc}} \ar[d]_S^{\cong} \ar[r]^{\delta} &
   (\syml)^1 (D^1, S^0)_\rat \ar[d]_S^{\cong} \\
\prod_{j\geq 0} H^{4j} (y_+;\rat) & 
  \prod_{j\geq 0} H^{4j} (S^0, y_-;\rat) \ar[l]_{\cong}^{\operatorname{exc}} \ar[r]^{\delta} & 
  \prod_{j\geq 0} H^{4j+1} (D^1, S^0;\rat)
} \]
commutes and hence
\[ S(e_\BL) = S \delta \operatorname{exc}^{-1} (1) = \delta \operatorname{exc}^{-1} S(1) =
    \delta \operatorname{exc}^{-1} (1) = e. \]

We show next that
\begin{equation} \label{equ.sulisu}
S^{(D\nu, S\nu)} (u_\BL) =u. 
\end{equation}
Since $S^{(D\nu, S\nu)}$ is induced by a morphism of ring spectra, it preserves products on cohomology.
Thus, using (\ref{equ.selise}),
\begin{align*}
S (u_\BL) &= S(e_\BL \times \cdots \times e_\BL \times 1) \\
&= S(e_\BL)\times \cdots \times S(e_\BL) \times S(1) \\
&= e\times \cdots \times e \times 1 \\
& = u.
\end{align*}

Let $e_\ip \in (\Omega_\ip)^1 (D^1, S^0)_\rat$ be the element obtained as the image of
the unit $1\in (\Omega_\ip)^0 (y_+)_\rat$ under the composition
\[ (\Omega_\ip)^0 (y_+)_\rat \stackrel{\cong}{\longleftarrow} (\Omega_\ip)^0 (S^0, y_-)_\rat 
 \stackrel{\delta}{\longrightarrow} (\Omega_\ip)^1 (D^1, S^0)_\rat. \]
The $(n-m)$-fold cross-product
$e_\ip \times \cdots \times e_\ip$ yields an element $e^{n-m}_\ip \in (\Omega_\ip)^{n-m} (D^{n-m}, S^{n-m-1})_\rat$.
Set
$u_\ip = e^{n-m}_\ip \times 1 \in (\Omega_\ip)^{n-m} (D\nu, S\nu)_\rat,$ 
where $1\in (\Omega_\ip)^0 (Y)_\rat$ is the unit.
The cap-product of $\mu :=[\id_{(D^{n-m}, S^{n-m-1})}]\in 
(\Omega_\ip)_{n-m} (D^{n-m}, S^{n-m})_\rat$
with $e^{n-m}_\ip$ is given by
\begin{equation} \label{enmcapmu}
e^{n-m}_\ip \cap \mu = (-1)^s [\pt \hookrightarrow D^{n-m}] \in 
    (\Omega_\ip)_0 (D^{n-m})_\rat, 
\end{equation}
as we shall now verify.
Set $\mu_1 = [\id_{(D^1, S^0)}]\in (\Omega_\ip)_1 (D^1, S^0)_\rat$. Then $\mu$ is the $(n-m)$-fold
cross product
\[ \mu = \mu_1 \times \cdots \times \mu_1 \]
and thus
\begin{eqnarray*} 
e^{n-m}_\ip \cap \mu &= &(e_\ip \times \cdots \times e_\ip)\cap (\mu_1 \times \cdots \times \mu_1) \\
&=& (-1)^{s_1} (e_\ip \cap \mu_1)\times \cdots \times (e_\ip \cap \mu_1), 
\end{eqnarray*}
where $s_1=\frac{1}{2} (n-m)(n-m-1)$. Let $i: (S^0,\varnothing)\hookrightarrow (S^0, y_-)$ be the canonical
inclusion.
To compute $e_\ip \cap \mu_1,$ we consider the diagram
\[ \xymatrix{
\Omega_\ip^1 (D^1, S^0)_\rat  & \otimes & 
   (\Omega_\ip)_1 (D^1,S^0)_\rat \ar[d]_{\partial} \ar[r]^{\cap} & (\Omega_\ip)_0 (D^1)_\rat \\
\Omega_\ip^0 (S^0)_\rat \ar[u]^{\delta'}   & \otimes & (\Omega_\ip)_0 (S^0)_\rat \ar[d]_{i_*} \ar[r]^{\cap}
      & 
    (\Omega_\ip)_0 (S^0)_\rat \ar[u]^{\iota} \\
\Omega_\ip^0 (S^0,y_-)_\rat \ar[d]_{\cong}^{\operatorname{exc}} \ar[u]^{i^*}  & \otimes & (\Omega_\ip)_0 (S^0,y_-)_\rat 
   \ar[r]^{\cap} 
     & (\Omega_\ip)_0 (S^0)_\rat \ar@{=}[u] \\
\Omega_\ip^0 (y_+)_\rat  
  & \otimes & (\Omega_\ip)_0 (y_+)_\rat \ar[u]^{\cong}_{\operatorname{exc}} \ar[r]^{\cap} & 
  (\Omega_\ip)_0 (y_+)_\rat, \ar[u]^{\iota_+}
} \]
whose middle and bottom portion commute, while the top portion anti-commutes, since
\[ (\delta' a)\cap \alpha = (-1)^{\deg (\delta' a)} a\cap \partial \alpha = -a\cap \partial \alpha \]
for an IP-cobordism class $a$ of degree $0$.
The image of $\mu_1$ under $i_* \partial$ is $[i:(S^0,\varnothing)\hookrightarrow (S^0, y_-)]$, while
the image of $[\id_{y_+}]\in (\Omega_\ip)_0 (y_+)$ under the excision isomorphism is
$[(y_+, \varnothing)\hookrightarrow (S^0,y_-)]$. Now
\[ [(y_+, \varnothing)\hookrightarrow (S^0,y_-)] = [i] \in (\Omega_\ip)_0 (S^0,y_-)_\rat \]
via the bordism
$W = I\sqcup I$ (disjoint union of two intervals) and $F:W\to S^0$ defined by
mapping the first copy of $I$ by the constant map to $y_+$ and mapping the second
copy of $I$ to $y_-$. Then the disjoint union $\{ y_+ \} \sqcup S^0$ is contained in $\partial W$,
$F$ restricted to $\{ y_+ \} \sqcup S^0$ agrees with the disjoint union of the 
inclusion $y_+ \hookrightarrow S^0$ and the identity map $S^0 \to S^0$, while
$F$ maps $\partial W - (\{ y_+ \} \sqcup S^0)$ to $y_-$. Hence $(W,F)$ is a valid bordism.
Consequently,
\begin{eqnarray*}
 e_\ip \cap \mu_1 & = & \delta' i^* \operatorname{exc}^{-1} (1) \cap \mu_1 \\
 & = & -\iota \iota_+ (1\cap \operatorname{exc}^{-1}  i_* \partial (\mu_1)) \\
 & = & -\iota \iota_+ (\operatorname{exc}^{-1}  i_* \partial (\mu_1)) \\
 & = & -\iota \iota_+ [\id_{y_+}] \\
& = & -[\{ y_+ \} \hookrightarrow D^1]
\end{eqnarray*}
and so, with $s_2 = n-m,$
\begin{eqnarray*} 
 e^{n-m}_\ip \cap \mu 
& = & (-1)^{s_1}   (-1)^{s_2}[\{ y_+ \} \hookrightarrow D^1] \times \cdots \times  [\{ y_+ \} \hookrightarrow D^1] \\
& = & (-1)^s [\pt \hookrightarrow D^{n-m}].
\end{eqnarray*}
For any pair $(W,V)$ of IP-spaces, $n=\dim W,$ let $\si: (\Omega_\ip)_n (W,V)_\rat \to 
\syml_n (W,V)_\rat$ be the composition
\[ (\Omega_\ip)_n(W,V)
\xleftarrow[\cong]{A}
(\bQ_\ip)_n(W,V)
\xrightarrow{\Sig}
(\bQ^\Z_{\mathrm{rel},\geq 0})_n(W,V)
\xleftarrow{\cong}
\syml_n(W,V) \]
(which is just (\ref{e13}) in the absolute case), tensored with $\id_\rat$.
By Section \ref{sfm}, we also have a corresponding multiplicative transformation of cohomology theories
$\si: \Omega^*_\ip (W,V)\to (\syml)^* (W,V)$.
Thus the arguments used above for the transformation $S$ can be applied to $\si$ to show that
\begin{equation} \label{equ.siuipisu}
\si (u_\ip) =u_\BL.
\end{equation}
If $\phi:E\to F$ is any morphism of ring spectra, then the diagram
\[ \xymatrix{
E^p (Z)\otimes E_q (Z) \ar[r]^{\phi_* \otimes \phi_*} \ar[d]_{\cap_E} &
  F^p (Z)\otimes F_q (Z) \ar[d]^{\cap_F} \\
E_{q-p} (Z) \ar[r]_{\phi_*} & F_{q-p} (Z)
} \]
commutes (see e.g. \cite[Prop. III.9.1 (v)]{adams}.)
Therefore, in view of (\ref{equ.sulisu}) and (\ref{equ.siuipisu}),
the following diagram commutes:
\[ \xymatrix{
(\Omega_\ip)_n (X)_\rat \ar[r]^{\si} \ar[d] & \syml_n (X)_\rat \ar[r]^{S_{X}}_{\cong} \ar[d] &
   \bigoplus_q H_{n-4q} (X;\rat) \ar[d] \\
(\Omega_\ip)_n (X,X-Y)_\rat \ar[r]^{\si}  & \syml_n (X,X-Y)_\rat \ar[r]^>>>>{S_{(X,X-Y)}}_>>>>{\cong} &
   \bigoplus_q H_{n-4q} (X,X-Y;\rat)  \\
(\Omega_\ip)_n (D\nu,S\nu)_\rat \ar[r]^{\si} \ar[d]_{u_\ip \cap -} \ar[u]^{\cong}_{\operatorname{exc}} & 
     \syml_n (D\nu, S\nu)_\rat 
    \ar[r]^>>>>>{S_{(D\nu,S\nu)}}_>>>>>{\cong} \ar[d]_{u_{\mathbb{L}} \cap -} \ar[u]^{\cong}_{\operatorname{exc}} &
   \bigoplus_q H_{n-4q} (D\nu,S\nu;\rat) \ar[d]_{u\cap -} \ar[u]^{\cong}_{\operatorname{exc}} \\
(\Omega_\ip)_m (Y\times D^{n-m})_\rat \ar[r]^{\si} \ar[d]^{\pi_*}_{\cong} & 
    \syml_m (Y\times D^{n-m})_\rat 
  \ar[r]^>>>>>{S_{Y\times D^{n-m}}}_>>>>>{\cong} \ar[d]^{\pi_*}_{\cong} &
   \bigoplus_q H_{m-4q} (Y\times D^{n-m};\rat) \ar[d]^{\pi_*}_{\cong} \\
(\Omega_\ip)_m (Y)_\rat \ar[r]^{\si} & \syml_m (Y)_\rat \ar[r]^{S_{Y}}_{\cong}  &
   \bigoplus_q H_{m-4q} (Y;\rat) \\
} \]
The left column, multiplied by $(-1)^s,$
defines a map $j^!_\ip: (\Omega_\ip)_n (X)_\rat \to (\Omega_\ip)_m (Y)_\rat$.
The image of $[X]_\ip$ in $(\Omega_\ip)_n (X,X-Y)_\rat$ equals the image of 
$[\id_{(D\nu, S\nu)}] \in (\Omega_\ip)_n (D\nu, S\nu)_\rat$ under the excision isomorphism;
the required bordism is given by the $\partial$-IP-space $W$ obtained
from gluing the cylinder $X\times I$ to the cylinder $D\nu \times I$ along the canonical
inclusion $D\nu \times \{ 1 \} \hookrightarrow X\times \{ 0 \}$. 
The map $F:W\to X$ is defined by $F(x,t)=x$ for $(x,t)\in X\times I$ and
$(x,t)\in D\nu \times I$. Note that 
$F$ maps $\partial W - (X\times \{ 1 \} \sqcup D\nu \times \{ 0 \})$ to $X-Y$, 
whence $(W,F)$ is indeed a viable bordism. 
Using the cross product on IP-bordism
\[ (\Omega_\ip)_{n-m} (D^{n-m}, S^{n-m-1})_\rat \otimes (\Omega_\ip)_m (Y)_\rat
  \stackrel{\times}{\longrightarrow} (\Omega_\ip)_n (D\nu, S\nu)_\rat, \]
we may express the element $[\id_{(D\nu, S\nu)}]$ as
\[  [\id_{(D\nu, S\nu)}] = \mu \times [Y]_\ip. \]
Thus, using (\ref{enmcapmu}), we find that
\begin{eqnarray*}
\pi_* (u_\ip \cap [\id_{(D\nu, S\nu)}])
& = & \pi_* ((e^{n-m}_\ip \times 1)\cap (\mu \times [Y]_\ip)) \\
& = & \pi_* ((-1)^{\deg (1)\deg (\mu)} (e^{n-m}_\ip \cap \mu)\times (1 \cap [Y]_\ip)) \\
& = & \pi_* ((e^{n-m}_\ip \cap \mu)\times (1 \cap [Y]_\ip)) \\
& = & (-1)^s \pi_* ([\pt \hookrightarrow D^{n-m}] \times [Y]_\ip) \\
& = & (-1)^s [Y]_\ip.
\end{eqnarray*}
This proves that
\[ j^!_\ip [X]_\ip = [Y]_\ip. \]
By the commutativity of the above diagram,
\[ j^! S_X [X]_\BL = j^! S_X \si [X]_\ip = S_Y \si j^!_\ip [X]_\ip = S_Y \si [Y]_\ip = S_Y [Y]_\BL, \]
which proves property (1).\\

It remains to establish property (2).
The map $f:X\to \pt$ from $X$ to a point induces a homomorphism
\[ f_*: \bigoplus_q H_{n-4q} (X;\rat) \longrightarrow
    \bigoplus_q H_{n-4q} (\pt;\rat) \]
such that $(S_X [X]_{\mathbb{L}})_0 =f_* S_X [X]_{\mathbb{L}}.$
If the dimension $n$ is not divisible by $4$, then $\bigoplus_q H_{n-4q} (\pt;\rat)=0$
and thus $\epsilon_* f_* S_X [X]_{\mathbb{L}} =0=\sigma (X)$, that is, (2) holds.
Assume that $n$ is divisible by $4$, so that
$\bigoplus_q H_{n-4q} (\pt;\rat)=H_0 (\pt;\rat)$.
Using the commutative diagram
\[ \xymatrix{
\syml_n (X)_\rat \ar[r]^>>>>{S_{X}}_>>>>{\cong} \ar[d]_{f_*} &
   \bigoplus_q H_{n-4q} (X;\rat) \ar[d]^{f_*} \\
\syml_n (\pt)_\rat \ar[r]^>>>>{S_{\pt}}_>>>>{\cong}  &
   \bigoplus_q H_{n-4q} (\pt;\rat) 
} \]
we can write 
\begin{equation} \label{equ.l1}
 f_* S_X [X]_{\mathbb{L}} =
  S_\pt f_* [X]_{\mathbb{L}}. 
\end{equation}
Let $\{ 1 \} = \pi_1 (\pt)$ denote the trivial fundamental group of the point.
The associated assembly map
\[ \syml_n (\pt) = \syml_n (B\{ 1 \}) \stackrel{\alpha_{\{ 1 \}}}{\longrightarrow}
L^n (\intg [\{ 1 \}]) \]
is an isomorphism.
Recall (\cite[Prop. 7.2]{ranickialt1}) that when $n$ is divisible by $4$, there is an isomorphism
$\sigma: L^n (\intg [\{ 1 \}])\cong \intg$ given by the signature $\sigma$
of a symmetric algebraic Poincar\'e complex.
The diagram
\[ \xymatrix{
\syml_n (\pt)_\rat \ar[r]^>>>>{S_{\pt}}_>>>>{\cong} \ar@{=}[d] &
   \bigoplus_q H_{n-4q} (\pt;\rat) \ar@{=}[d] \\
\syml_n (B\{ 1 \})_\rat \ar[d]_{\alpha_{\{ 1 \}}}^{\cong} & 
    H_{0} (\pt;\rat) \ar[d]^{\epsilon_*}_{\cong} \\ 
L^n (\intg [\{ 1 \}])_\rat \ar[r]^{\sigma}_{\cong} & \rat
} \]
commutes, as the calculation
\begin{eqnarray*}
\epsilon_* S_{\pt} [\pt]_{\mathbb{L}}
& = & \epsilon_* (L^* (\pt)\cap [\pt]_\rat ) =
   \epsilon_* (1\cap [\pt]_\rat) = \epsilon_* [\pt]_\rat = 1 \\
& = & \sigma (\sigma^* (\pt)) = \sigma (\alpha_{\{ 1 \}} [\pt]_{\mathbb{L}}),
\end{eqnarray*}
using e.g. \cite[Remark 16.17(i)]{ranicki}, shows.
(The formula $S_M [M]_{\mathbb{L}} = L^* (M)\cap [M]_\rat$ holds for
any closed smooth oriented $n$-manifold, where $L^* (M)\in H^* (M;\rat)$ is the
Hirzebruch $L$-class and $[M]_\rat \in H_n (M;\rat)$ the rational fundamental class.
Note that $\sigma (\sigma^* (\pt))=1,$ since the composition
\[ \Omega^{\SO}_0 (\pt) \stackrel{\sigma^*}{\longrightarrow} 
  \syml_0 (\pt) = L^0 (\intg)\stackrel{\sigma}{\longrightarrow} \intg \]
is the signature homomorphism
$\sigma: \Omega^{\SO}_0 (\pt) \to \intg,$ which sends the generator $[\id:\pt \to \pt]$
to the signature of a point, which equals $1$.)
Using this diagram, we obtain
\begin{equation} \label{equ.l2}
\epsilon_* S_\pt f_* [X]_{\mathbb{L}} 
  = \sigma \alpha_{\{ 1 \}}  f_* [X]_{\mathbb{L}}. 
\end{equation}
Let $G=\pi_1 (X)$ be the fundamental group of $X$ and $r:X\to BG$ a classifying map for the
universal cover of $X$. 
The commutative diagram of spaces
\[ \xymatrix{
X \ar[r]^f \ar[d]_r & \pt \ar@{=}[d] \\
BG \ar[r] & B\{ 1 \}
} \]
induces a commutative diagram
\[ \xymatrix{
\syml_n (X)_\rat \ar[r]^{f_*} \ar[d]_{r_*} &
   \syml_n (\pt)_\rat \ar@{=}[d] \\
\syml_n (BG)_\rat \ar[r] & \syml_n (B\{ 1 \})_\rat.  
} \]
Since the assembly map, obtained e.g. by applying \cite[Thm. 1.1]{wwa} to
the homotopy functor $Z\mapsto \mathbb{L}^\bullet (\intg \pi_1 (Z))$, is a natural transformation,
the diagram
\[ \xymatrix{
\syml_n (BG)_\rat \ar[r] \ar[d]_{\alpha_G} & \syml_n (B\{ 1 \})_\rat 
     \ar[d]^{\alpha_{\{ 1 \}}} \\ 
L^n (\intg [G])_\rat \ar[r]^{f_*} & L^n (\intg [\{ 1 \}])_\rat
} \]
commutes. Using these two squares, we get
\begin{equation} \label{equ.l3}
 f_* \alpha_G r_* [X]_{\mathbb{L}} = \alpha_{\{ 1 \}} f_* [X]_{\mathbb{L}}. 
\end{equation}
The ordinary signature information is contained in the symmetric signature
by 
\begin{equation} \label{equ.l4}
\sigma f_* \sigma^*_\ip (X)=\sigma (X),
\end{equation}
as follows from Remark \ref{rem.ipsymsiganalogwittsymsig}.
Putting equations (\ref{equ.l1}), (\ref{equ.l2}), (\ref{equ.l3}) and 
(\ref{equ.l4}) together, we compute
\begin{eqnarray*}
\epsilon_* (S_X [X]_{\mathbb{L}})_0
& = & \epsilon_* f_* S_X [X]_{\mathbb{L}} =
  \epsilon_* S_\pt f_* [X]_{\mathbb{L}} =
    \sigma \alpha_{\{ 1 \}} f_* [X]_{\mathbb{L}} \\
& = & \sigma f_* \alpha_G r_* [X]_{\mathbb{L}} =
    \sigma f_* \sigma^*_\ip (X)=\sigma (X),
\end{eqnarray*}
as was to be shown.
\end{proof}
Let $G=\pi_1 (X)$ be the fundamental group and $r:X\to BG$ a classifying map for the
universal cover of $X$. The map $r$ induces a homomorphism
\[ H_* (X;\rat) \longrightarrow H_* (BG;\rat) \]
on homology.
The \emph{higher signatures of $X$} are the rational numbers
\[ \langle a, r_* L(X) \rangle,~ a\in H^* (BG;\rat). \]
\begin{thm} \label{thm.novikov}
Let $X$ be an $n$-dimensional compact IP-space whose fundamental group
$G=\pi_1 (X)$ satisfies the strong Novikov conjecture.
Then the higher signatures of $X$ are stratified homotopy invariants.
\end{thm}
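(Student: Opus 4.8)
The plan is to combine three facts established above: the existence of the $\syml$-homology fundamental class $[X]_{\mathbb{L}}$ together with its assembly property (Theorem~\ref{t3}(ii)), the identification of its image under $S_X$ with the Goresky--MacPherson $L$-class (Lemma~\ref{lem.lfundhitslclass}), and the stratified homotopy invariance of the symmetric signature $\sigma^*_\ip$ (recalled in the remark after Definition~\ref{d5}, with the proof of \cite[Section~5.5]{friedmanmcclure} carrying over). Concretely, let $h\colon X'\to X$ be an orientation-preserving stratified homotopy equivalence of $n$-dimensional compact oriented IP-spaces, and use $h$ to identify $\pi_1 X'$ with $G=\pi_1 X$ (read as fundamental groupoids if no basepoint is chosen). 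If $r\colon X\to BG$ classifies the universal cover of $X$, then $r'=r\circ h\colon X'\to BG$ classifies the universal cover of $X'$, and since the higher signatures of $X$ and of $X'$ are the numbers $\langle a,r_*L(X)\rangle$ and $\langle a,r'_*L(X')\rangle$ for $a\in H^*(BG;\rat)$, it suffices to prove the single identity
\[
  r'_*\,L(X')=r_*\,L(X)\in H_*(BG;\rat).
\]

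First I would rewrite both sides through the $\syml$-homology class. By Lemma~\ref{lem.lfundhitslclass} and naturality of the splitting isomorphism $S$, we have $r_*L(X)=S_{BG}\bigl(r_*[X]_{\mathbb{L}}\bigr)$ and $r'_*L(X')=S_{BG}\bigl(r'_*[X']_{\mathbb{L}}\bigr)$; since $S_{BG}$ is an isomorphism, it is enough to show that $r_*[X]_{\mathbb{L}}$ and $r'_*[X']_{\mathbb{L}}$ agree in $\syml_n(BG)\otimes\rat$. Because $G$ satisfies the strong Novikov conjecture, the assembly map $\alpha_G\colon\syml_n(BG)\to L^n(\intg[G])$ is rationally injective, so it suffices to show
\[
  \alpha_G\bigl(r_*[X]_{\mathbb{L}}\bigr)=\alpha_G\bigl(r'_*[X']_{\mathbb{L}}\bigr)\quad\text{in }L^n(\intg[G])\otimes\rat.
\]

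Next I would apply naturality of the assembly map (\cite[Theorem~1.1]{wwa}) together with Theorem~\ref{t3}(ii). Naturality gives $\alpha_G\circ r_*=r_*\circ\alpha_X$ and $\alpha_G\circ r'_*=r_*\circ h_*\circ\alpha_{X'}$, and Theorem~\ref{t3}(ii) identifies $\alpha_X([X]_{\mathbb{L}})=\sigma^*_\ip(X)$ and $\alpha_{X'}([X']_{\mathbb{L}})=\sigma^*_\ip(X')$. Since the map induced by $r$ on $\pi_1$ is the identity, the displayed equation reduces to
\[
  h_*\bigl(\sigma^*_\ip(X')\bigr)=\sigma^*_\ip(X)\quad\text{in }L^n(\intg[G]),
\]
which is exactly the stratified homotopy invariance of $\sigma^*_\ip$ inherited from \cite[Section~5.5]{friedmanmcclure}. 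Assembling these reductions proves the theorem.

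The substantive difficulty has been front-loaded into the earlier sections: the real content lies in Theorem~\ref{t3}, Lemma~\ref{lem.lfundhitslclass}, and the stratified homotopy invariance of $\sigma^*_\ip$. Within this section the main point to handle with care is the absence of basepoints: all of the groups, the assembly maps, and the naturality squares above must be read in the fundamental-groupoid/ringoid-with-involution formalism of Section~\ref{symsig}, so that the maps induced by $r$ and $r'$ on the relevant $L$-groups are isomorphisms and the naturality of both $\alpha$ and $S$ applies verbatim. Given that, the argument is the standard Mi\v{s}\v{c}enko-style descent from the symmetric signature to the $L$-class, here made possible by the $\syml$-homology fundamental class $[X]_{\mathbb{L}}$.
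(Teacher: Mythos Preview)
Your proposal is correct and follows essentially the same argument as the paper: reduce the equality $r_*L(X)=r'_*L(X')$ via Lemma~\ref{lem.lfundhitslclass} and naturality of $S$ to $r_*[X]_{\mathbb{L}}=r'_*[X']_{\mathbb{L}}$ in $\syml_n(BG)\otimes\rat$, then use rational injectivity of the assembly $\alpha_G$ together with Theorem~\ref{t3}(ii) and the stratified homotopy invariance of $\sigma^*_\ip$. The only difference is the order of presentation---the paper first pushes $[X]_{\mathbb{L}}$ forward and applies assembly to obtain $\sigma^*_\ip$, then invokes injectivity of $\alpha_G$, whereas you run the reductions in the reverse direction---but the logical content is identical.
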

\begin{proof}
Let $X$ and $X'$ be $n$-dimensional compact IP-spaces with fundamental group $G$ and 
$f:X'\to X$ an orientation preserving stratified homotopy equivalence. If $r:X\to BG$ is a classifying map for the
universal cover of $X$, then $r' = r\circ f: X'\to BG$ is a classifying map for the
universal cover of $X'$. We must prove that
\[ r'_* L(X') = r_* L(X) \in H_* (BG;\rat). \]
By Theorem \ref{t3}(ii), the assembly map
\[ \syml_n (X)\longrightarrow L^n (\intg [G]) \]
maps $[X]_{\mathbb{L}}$ to $\sigma_\ip^*(X)$. 
By naturality, the assembly map for the space $X$ factors through $BG$
(see also \cite[p. 216]{davis}):
\[ \xymatrix{
\syml_n (X) \ar[r] \ar[d]_{r_*} & L^n (\intg [G]) \\
\syml_n (BG) \ar[ru]_{\alpha} &
} \]
(similarly for $X'$). Using this factorization, we may write 
\[ \alpha r_* [X]_{\mathbb{L}} = \sigma_\ip^*(X),~ \alpha r'_* [X']_{\mathbb{L}} = \sigma_\ip^*(X'). \]
By the stratified homotopy invariance (\ref{equ.htpyinvipsymsig}) of the IP symmetric signature,
\[ \sigma_\ip^* (X)= \sigma_\ip^* (r) = \sigma_\ip^* (rf)=\sigma_\ip^* (r')=\sigma_\ip^* (X'). \]
As $\alpha$ is by assumption rationally injective, it follows that
\[ r_* [X]_{\mathbb{L}} = r'_* [X']_{\mathbb{L}} \in \syml_n (BG)\otimes \rat. \]
Using the commutative diagram
\[ \xymatrix{
\syml_n (X) \otimes \rat \ar[r]^{r_*} \ar[d]_{S_X}^{\cong} & \syml_n (BG)\otimes \rat \ar[d]^{S_{BG}}_{\cong} \\
\bigoplus_j H_{n-4j} (X;\rat) \ar[r]^{r_*} & \bigoplus_j H_{n-4j} (BG;\rat)
} \]
(and the analogous diagram for $X'$),
together with Lemma \ref{lem.lfundhitslclass}, we deduce 
\begin{eqnarray*}
r_* L(X) & = & r_* S_X [X]_{\mathbb{L}} =S_{BG} r_* [X]_{\mathbb{L}} \\
 & = & S_{BG} r'_* [X']_{\mathbb{L}}
 = r'_* S_{X'} [X']_{\mathbb{L}} = r'_* L(X').
\end{eqnarray*}
\end{proof}
An analytic version of Theorem \ref{thm.novikov} has been proven by
Albin-Leichtnam-Mazzeo-Piazza in \cite{ALMPnovikov}. 
The scope of their theorem is in fact larger, as it applies even to those
non-Witt spaces, for which a so-called analytic self-dual mezzoperversity exists.
It was shown in \cite{ablmp} that such perversity data corresponds topologically
to the Lagrangian structures of Banagl as introduced in \cite{banagl-mem}.
A comparison of the analytic argument to our argument
shows that the role of our $\syml_n (X)$ is played in the analytic context
by $K_* (X)$. The role of the isomorphisms $S_X$ is played by the Chern character.
The group $L^n (\intg [G])$ corresponds to $K_* (C^*_r G),$ while our assembly map
$\alpha$ corresponds to the assembly map
$K_* (BG)\to K_* (C^*_r G)$ used in the analytic argument.

\section{Multiplicativity and commutativity}
\label{mc}

Recall from Definition \ref{d4} that
the symmetric signature map
\[
\Sig:\bM_\ip \to \bM^\Z_{\mathrm{rel},\geq 0}
\]
is the following composite in the homotopy category of spectra:
\[
\bM_\ip \xleftarrow{\simeq}
\bM_\ipFun
\xrightarrow{\sig}
\bM^\Z_{\mathrm{rel},\geq 0}.
\]
In this section we show that this composite is weakly equivalent to a composite
of ring maps between commutative ring spectra.
Specifically, we show the following. Recall Remarks \ref{r2} and \ref{r7}.

\begin{thm}
\label{t4}
There are
symmetric ring spectra
$\mathbf A$, $\mathbf B$ and $\mathbf C$, a commutative symmetric ring 
spectrum $\mathbf D$,
and a strictly commutative diagram
\begin{equation}
\label{e16}
\xymatrix{
\bM_\ip
&
{\mathbf A}
\ar[l]_-\simeq
\ar[r]^-\simeq
&
\bM^{\mathrm{comm}}_\ip
\\
\bM_\ipFun
\ar[u]_\simeq
\ar[d]_\sig
&
{\mathbf B}
\ar[l]_-\simeq
\ar[r]^-\simeq
\ar[u]_\simeq
\ar[d]
&
\bM^{\mathrm{comm}}_\ipFun
\ar[u]_\simeq
\ar[d]
\\
\bM_{{\mathrm{rel},\geq 0}}^\Z
&
\mathbf C
\ar[l]_-\simeq
\ar[r]^-\simeq
&
\mathbf D
}
\end{equation}
in which the horizontal arrows, the upper vertical arrows and the lower right
vertical arrow are ring maps. 
\end{thm}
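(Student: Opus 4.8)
The plan is to assemble \eqref{e16} from three pieces, the first two of which are already in hand. The upper square --- the rows through $\bM_\ip$ and $\bM_\ipFun$, together with $\mathbf A$, $\mathbf B$, $\bM^{\mathrm{comm}}_\ip$, $\bM^{\mathrm{comm}}_\ipFun$ and all the arrows among them --- is exactly the diagram produced in Remark \ref{r7}: the ad theories $\ad_\ip$ and $\ad_\ipFun$ are commutative (Remarks \ref{r2} and \ref{r7}), the forgetful morphism $\ad_\ipFun\to\ad_\ip$ is strictly multiplicative, and the proof of \cite[Theorem 1.1]{LM2} converts this into the displayed square of ring maps and weak equivalences; one takes $\mathbf A$ and $\mathbf B$ to be the symmetric ring spectra produced there. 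For the bottom row one first notes that $\ad^\Z_{\geq 0,\mathrm{rel}}$ is a commutative ad theory: $\ad^\Z_\mathrm{rel}$ is commutative by Remark \ref{r3}(ii), and the passage to connective versions (Subsection ``Connective versions'') preserves multiplicativity and commutativity of ad theories. Applying \cite[Theorem 1.1]{LM2} to $\ad^\Z_{\geq 0,\mathrm{rel}}$ then yields the commutative symmetric ring spectrum $\mathbf D:=(\bM^\Z_{\geq 0,\mathrm{rel}})^{\mathrm{comm}}$, an auxiliary symmetric ring spectrum $\mathbf C$, and the bottom row $\bM^\Z_{\geq 0,\mathrm{rel}}\xleftarrow{\simeq}\mathbf C\xrightarrow{\simeq}\mathbf D$ of ring maps.

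What remains is to produce the two lower vertical maps $\mathbf B\to\mathbf C$ and the \emph{ring} map $\bM^{\mathrm{comm}}_\ipFun\to\mathbf D$, fitting into a strictly commutative diagram with $\sig\colon\bM_\ipFun\to\bM^\Z_{\geq 0,\mathrm{rel}}$. This is the IP-space analogue of the construction of the multiplicative Sullivan--Ranicki orientation carried out for topological manifolds in \cite[Section 13]{LM2}, and the plan is to run that argument in the present setting. The essential point is that the morphism of ad theories $\sig$ of Subsection \ref{ss2} is \emph{not} strictly multiplicative --- already on underlying chain complexes, the value of $\sig$ on a product $X\times Y$ involves $IS^{\bar n}_*(X\times Y)$, whereas the tensor-product pairing on $\ad^\Z_{\geq 0,\mathrm{rel}}$ applied to $\sig(X)$ and $\sig(Y)$ produces $IS^{\bar n}_*(X)\otimes IS^{\bar n}_*(Y)$ --- but it is multiplicative up to coherent natural homotopy: the relevant intersection-chain cross products are natural quasi-isomorphisms, coherently associative and graded-commutative by acyclic-model arguments as in \cite{friedmanmcclure,Fr,FM}, and they are compatible with the generalized perversities $Q_{\bar n,\bar n}$ and the diagonal map \eqref{eq3} appearing in the definition of $\sig$. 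This coherent-multiplicativity datum is precisely what the resolution step in the proof of \cite[Theorem 1.1]{LM2} is built to absorb; applying that construction to $\sig$ (in place of the analogous construction for the topological-manifold signature in \cite[Section 13]{LM2}) yields the map $\mathbf B\to\mathbf C$, the ring map $\bM^{\mathrm{comm}}_\ipFun\to\mathbf D$ on the commutative models, and, by the naturality built into the construction, the strict commutativity of the two lower squares of \eqref{e16}.

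The main obstacle is the verification that intersection chains, with the several perversities that occur here, actually carry all the structure demanded by the argument of \cite[Section 13]{LM2}. Concretely one must check: that the cross products of intersection chains for the perversities $\bar 0$, $\bar m$, $\bar n$ and $Q_{\bar n,\bar n}$ are natural and coherently associative and commutative, and that these perversities and the diagonal maps \eqref{eq3} fit together compatibly with the formation of Cartesian products; that the homotopy-finiteness statement of Lemma \ref{l1}(i) is compatible with products of $\partial$-IP-spaces; that the Eilenberg--Zilber signs match the conventions of \cite{LM} (cf.\ the corrections in Appendix \ref{aa1}) as well as those of the pairing on $\ad^\Z_\mathrm{rel}$; and that neither the connective truncation nor the ``rel'' decoration (in particular the behaviour of the structure quasi-isomorphisms $\beta$) interferes with the product data at any stage. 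None of this is conceptually new beyond \cite{friedmanmcclure,FM,Fr,LM,LM2}, but it is where the substance of the proof lies.
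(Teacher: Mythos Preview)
Your overall strategy is right: the top square is Remark \ref{r7}, the bottom row comes from \cite[Theorem 1.1]{LM2} applied to $\ad^\Z_{\geq 0,\mathrm{rel}}$, and the lower half should be built by adapting the multiplicative-orientation machinery of \cite{LM2} (the paper cites \cite[Theorem 1.3]{LM2} rather than Section 13, but this is a minor point). You also correctly isolate the crux: $\sig$ is not strictly multiplicative and one needs a coherent system of quasi-isomorphisms to feed into the \cite{LM2} construction.

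The gap is in your claim that the cross products are ``compatible with the generalized perversities $Q_{\bar n,\bar n}$'', and that acyclic-model arguments supply all the coherence needed. They do not. The $D$-component of $\sig(X)$ is $IS^{Q_{\bar n,\bar n}}_*(X\times X)$, and the cross product
\[
IS^{Q_{\bar n,\bar n}}_*(X\times X)\otimes IS^{Q_{\bar n,\bar n}}_*(Y\times Y)\longrightarrow\ ?
\]
does \emph{not} land in $IS^{Q_{\bar n,\bar n}}_*\bigl((X\times Y)\times(X\times Y)\bigr)$; there is simply no such map of intersection chain complexes (this is the content of Remark \ref{r8}, and ultimately of \cite[Lemma 6.43]{Fr}). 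So you do not get a natural transformation from $\otimes\circ(\sig\times\sig)$ to $\sig\circ\times$ on the nose, and there is nothing for acyclic models to rectify. The fix the paper carries out is to introduce, for each $k$, a perversity $Q_k$ on $k$-fold products (with $Q_2=Q_{\bar n,\bar n}$) and, more generally, for each ordered partition $\rho$ of $\{1,\ldots,l\}$ a complex $IS^\rho_*(X_1\times\cdots\times X_l)$; one then proves that cross products give quasi-isomorphisms $IS^\rho\otimes IS^{\rho'}\to IS^{\rho\cup\rho'}$ and that refinement of partitions induces quasi-isomorphic inclusions (Lemmas \ref{l4} and \ref{l5}). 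This partition-indexed family yields, for each $l$ and each partition $\rho$, a functor $\sig_\rho:(\Aa_\ipFun)^{\times l}\to\Aa^\Z_{\mathrm{rel}}$ (Lemma \ref{l6}), and the required coherence is Proposition \ref{p4}. Finally one has to rewrite the data-and-order machinery of \cite[Definitions 14.4--14.5]{LM2} so that a datum now carries, in addition to the surjection $h$ and the permutation $\eta$, a partition $\rho_i$ of each fibre $h^{-1}(i)$ (Definitions \ref{m67} and \ref{m63}); only with this enrichment does the proof of \cite[Theorem 1.3]{LM2} go through. None of the items in your final checklist (homotopy finiteness, signs, connective truncation) is the real issue; the missing idea is this partition-indexed family of perversities replacing the single $Q_{\bar n,\bar n}$.
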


\begin{remark}
\label{r9}
$\mathbf D$ is weakly equivalent to 
$(\bM_{{\mathrm{rel},\geq 0}}^\Z)^\mathrm{comm}$
by \cite[Remark 18.3]{LM2}.
\end{remark}

The rest of this section is devoted to the proof of Theorem \ref{t4}.
The top half of the diagram has already been constructed in Remark \ref{r7}.
For the lower half we will use the method of the proof of \cite[Theorem
1.3]{LM2} (it will be straightforward to check that the maps 
$\bM_\ipFun\leftarrow {\mathbf B}\to \bM^{\mathrm{comm}}_\ipFun$ given by the 
proof in this section are the same as those given by Remark \ref{r7}).

\begin{remark}
\label{r8}
In order to apply the proof of \cite[Theorem 1.3]{LM2} without change we would
need to know (by analogy with the paragraph before \cite[Definition 15.5]{LM2})
that the cross product gave a natural quasi-isomorphism
from the functor
\[
(\Aa_\ipFun)^{\times l}
\xrightarrow{\sig^{\times l}}
(\Aa^\Z_{{ \mathrm{rel}}})^{\times l}
\xrightarrow{\otimes}
\Aa^\Z_{{ \mathrm{rel}}}
\]
to the functor 
\[
(\Aa_\ipFun)^{\times l}
\to
\Aa_\ipFun
\xrightarrow{\sig}
\Aa^\Z_{{ \mathrm{rel}}}
\]
(where the unmarked arrow is the product in $\Aa_\ipFun$).
But this is not the case, for the simple reason that the cross product does not
give a map 
\[
IS_*^{Q_{\bar n, \bar n}}(X\times X;\Z)^{\otimes l}
\to
IS_*^{Q_{\bar n, \bar n}}(X^{\times l}\times X^{\times l};\Z)
\]
(cf.\ \cite[Lemma 6.4.1]{Fr}).  Our first task is to provide a
suitable substitute, which will be given in Proposition \ref{p4}.
\end{remark}

\begin{definition}
Let $Y_1,\ldots,Y_k$ be stratified PL $\partial$-pseudomanifolds and give 
$Y_1\times\cdots\times Y_k$ the product stratification.  Define
a perversity $Q_k$ on $Y_1\times\cdots\times Y_k$ by 
\[
Q_k(S_1\times\cdots\times S_k)=
\begin{cases}
0 \text{ if all $S_i$ are regular},
\\
2s-2+\sum \bar{n}(S_i)\text{ otherwise},
\end{cases}
\]
where the $S_i$ are strata and $s$ is the number of $S_i$ that are singular. 
\end{definition}

In particular, $Q_1=\bar n$ and $Q_2=Q_{\bar n, \bar n}$.

\begin{lemma}
\label{cross}
The cross product induces a quasi-isomorphism 
\[
IS_*^{Q_j}(Y_1\times\cdots\times Y_j;\Z)
\otimes
IS_*^{Q_k}(Y_{j+1}\times\cdots\times Y_{j+k};\Z)
\to
IS_*^{Q_{j+k}}(Y_1\times\cdots\times Y_{j+k};\Z).
\]
\end{lemma}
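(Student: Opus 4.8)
The plan is to deduce the lemma directly from the Künneth theorem for intersection homology with general perversities (\cite[Lemma 6.43 and Remark 6.46]{Fr}), applied not to the individual factors but to the two-block splitting
\[
Y_1\times\cdots\times Y_{j+k}=A\times B,\qquad A=Y_1\times\cdots\times Y_j,\quad B=Y_{j+1}\times\cdots\times Y_{j+k},
\]
with $A$ carrying the perversity $Q_j$ and $B$ the perversity $Q_k$. That theorem says the cross product is defined as a map $IS^{Q_j}_*(A;\Z)\otimes IS^{Q_k}_*(B;\Z)\to IS^{Q}_*(A\times B;\Z)$ and is a quasi-isomorphism, where $Q$ is the product perversity of $Q_j$ and $Q_k$: for a stratum $R$ of $A$ and a stratum $T$ of $B$, $Q(R\times T)$ is $Q_j(R)+Q_k(T)+2$ if $R,T$ are both singular, $Q_j(R)$ if $T$ is regular and $R$ singular, $Q_k(T)$ if $R$ is regular and $T$ singular, and $0$ if both are regular. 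So, beyond citing Friedman, the entire content of the lemma is the bookkeeping identity $Q=Q_{j+k}$ under the evident identification of the product stratification of $A\times B$ with that of $Y_1\times\cdots\times Y_{j+k}$; granting this, the map in the statement \emph{is} Friedman's map and we are done. In particular no induction on the number of factors is required.

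To verify $Q=Q_{j+k}$, observe that a stratum of $A\times B$ has the form $(S_1\times\cdots\times S_j)\times(S_{j+1}\times\cdots\times S_{j+k})$ with $S_i$ a stratum of $Y_i$, corresponding to the stratum $S_1\times\cdots\times S_{j+k}$ of the full product. Let $s_1$ (resp.\ $s_2$) be the number of singular $S_i$ with $i\le j$ (resp.\ with $i>j$) and put $s=s_1+s_2$, so that $A$'s block is a singular stratum exactly when $s_1\ge1$ and $B$'s block is singular exactly when $s_2\ge1$. Recall that the defining formula $2s'-2+\sum\bar n(S_i)$ is to be applied only on singular strata (where $s'\ge1$), $Q_j$ being a generalized perversity, hence $0$, on the regular stratum. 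If $s_1\ge1$ and $s_2\ge1$ then
\begin{multline*}
Q(R\times T)=\Big(2s_1-2+\sum_{i\le j}\bar n(S_i)\Big)+\Big(2s_2-2+\sum_{i>j}\bar n(S_i)\Big)+2\\
=2s-2+\sum_{i=1}^{j+k}\bar n(S_i)=Q_{j+k}(S_1\times\cdots\times S_{j+k}).
\end{multline*}
If $s_1\ge1$ and $s_2=0$ then every $S_i$ with $i>j$ is regular, so those $\bar n(S_i)$ vanish and $s=s_1$; hence $Q(R\times T)=Q_j(R)=2s_1-2+\sum_{i\le j}\bar n(S_i)=2s-2+\sum_{i=1}^{j+k}\bar n(S_i)=Q_{j+k}(S_1\times\cdots\times S_{j+k})$. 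The case $s_1=0$, $s_2\ge1$ is symmetric, and when $s_1=s_2=0$ both sides are $0$. This establishes the identity.

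A few routine points remain. The $Y_i$ are $\partial$-pseudomanifolds rather than pseudomanifolds, but the cross product and the Künneth theorem apply in this generality; alternatively one may first delete open collars of the boundaries, which changes no intersection homology group by \cite[Proposition 2.1]{fr}. That the cross product of a $Q_j$-allowable chain with a $Q_k$-allowable chain is $Q$-allowable --- so that the map in the statement is defined --- is the content of \cite[Lemma 6.43]{Fr} for this product perversity (one triangulates a prism $\Delta^p\times\Delta^q$ and checks allowability one product stratum at a time, using the four-case formula for $Q$), and there the value of $Q_k$ on the regular stratum plays no role, so there is no tension with the convention that perversities vanish on regular strata.

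The only genuinely nontrivial ingredient is Friedman's Künneth theorem over $\Z$ for general perversities; this is exactly the statement already used for $X\times X$ in Section~\ref{s6}, and I expect no difficulty beyond invoking it. It does not seem possible to route around it: an inductive packaging that splits off one factor at a time would still need the cross-product quasi-isomorphism for the perversity $Q_{j+k-1}$, which is genuinely general rather than of the form $\bar n$. With that theorem in hand, the lemma is precisely the assertion, verified in the preceding paragraph, that the two-factor product perversities iterate to $Q_{j+k}$.
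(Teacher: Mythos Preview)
Your proposal is correct and follows essentially the same approach as the paper, which simply states that the lemma is immediate from \cite[Lemma 6.43 and Theorem 6.45]{Fr}. You have supplied the explicit bookkeeping verification that the product perversity of $Q_j$ and $Q_k$ agrees with $Q_{j+k}$, which the paper leaves implicit.
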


This is immediate from \cite[Theorem 6.4.6 and Remark 6.4.7]{Fr}. 
We need a more general version of this.

\begin{definition}
(i) Let $A$ be a finite totally ordered set.  A {\it partition} $\rho$ of $A$ is
a collection $B_1,\ldots,B_k$ of disjoint subsets of $A$ such that $\cup
B_i=A$ and $a<a'$ whenever
$a\in B_i$, $a'\in B_{i'}$ with $i<i'$.

(ii)
Let $X_1,\ldots,X_l$ be stratified PL $\partial$-pseudomanifolds and let 
\[
\rho=\{B_1,\ldots,B_k\}
\]
be a partition of $\{1,\ldots,l\}$.  Let 
$
Y_i=\prod_{j\in B_i} X_j
$
and give $Y_i$ the product stratification.
Define
\[
IS_*^{\rho}(X_1\times\ldots\times X_l;\Z)
\]
to be
\[
IS_*^{Q_k}(Y_1\times\ldots\times Y_k;\Z).
\]
\end{definition}

\begin{lemma}
\label{l4}
The cross product induces a quasi-isomorphism
\[
IS_*^{\rho}(X_1\times\cdots\times X_l;\Z)
\otimes
IS_*^{\rho'}(X_{l+1}\times\cdots\times X_{l+m};\Z)
\to
IS_*^{\rho\cup\rho'}(X_1\times\cdots\times X_{l+m};\Z).
\]
\end{lemma}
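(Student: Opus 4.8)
The plan is to reduce Lemma \ref{l4} to Lemma \ref{cross} by unwinding the definition of $IS_*^\rho$. Write $\rho = \{B_1,\ldots,B_k\}$ as a partition of $\{1,\ldots,l\}$ and $\rho' = \{B'_1,\ldots,B'_{k'}\}$ as a partition of $\{l+1,\ldots,l+m\}$, so that $\rho\cup\rho' = \{B_1,\ldots,B_k,B'_1,\ldots,B'_{k'}\}$ is a partition of $\{1,\ldots,l+m\}$. Set $Y_i = \prod_{j\in B_i} X_j$ for $1\le i\le k$ and $Y'_i = \prod_{j\in B'_i} X_j$ for $1\le i\le k'$. By definition, the source of the map in question is
\[
IS_*^{Q_k}(Y_1\times\cdots\times Y_k;\Z)
\otimes
IS_*^{Q_{k'}}(Y'_1\times\cdots\times Y'_{k'};\Z),
\]
and the target is $IS_*^{Q_{k+k'}}(Y_1\times\cdots\times Y_k\times Y'_1\times\cdots\times Y'_{k'};\Z)$.

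The first step is to observe that the number-of-singular-strata bookkeeping behaves correctly. Strata of a product $\prod Y_i$ are products $\prod S_i$ of strata, and $\prod S_i$ is singular exactly when at least one $S_i$ is singular; moreover $\bar n$ of such a product stratum is the sum of the $\bar n(S_i)$. Combined with the fact that the strata of each $Y_i = \prod_{j\in B_i}X_j$ are themselves products of strata of the $X_j$, one checks directly from the defining formula that the perversity $Q_k$ on $Y_1\times\cdots\times Y_k$, pulled back along the identification with $\prod_{j\le l}X_j$, and likewise $Q_{k'}$ on the $Y'$-side, and $Q_{k+k'}$ on the combined product, are all the \emph{same} perversities one would get by applying the relevant $Q$-formula directly to the $X_j$; this is exactly the statement that $IS_*^\rho$ is well-defined and that the formulas match up under the product identifications. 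So the map in Lemma \ref{l4} is literally the cross product map
\[
IS_*^{Q_k}(Y_1\times\cdots\times Y_k;\Z)
\otimes
IS_*^{Q_{k'}}(Y'_1\times\cdots\times Y'_{k'};\Z)
\to
IS_*^{Q_{k+k'}}(Y_1\times\cdots\times Y_k\times Y'_1\times\cdots\times Y'_{k'};\Z),
\]
i.e.\ exactly an instance of Lemma \ref{cross} with $j=k$, with the roles of the $j+k$ pseudomanifolds played by $Y_1,\ldots,Y_k,Y'_1,\ldots,Y'_{k'}$. Lemma \ref{cross} then gives that it is a quasi-isomorphism, which is the assertion.

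I expect the only real content is the verification in the middle paragraph that all the perversity formulas are compatible under regrouping the factors — i.e.\ that writing a product stratum of $\prod Y_i$ as a product stratum of $\prod X_j$ and applying the $Q$-formula gives the same perversity either way. This is a routine but slightly fiddly computation with the definition of $Q_k$ (keeping careful track of the ``$2s-2$'' term when $s$ counts singular \emph{grouped} factors versus singular \emph{original} factors: if a grouped factor $Y_i$ has at least one singular $X_j$ among its members then $Y_i$ is singular, and the counts are arranged precisely so that the constants work out). Once that bookkeeping is in place, everything else is a direct appeal to Lemma \ref{cross}, so there is no genuine obstacle.
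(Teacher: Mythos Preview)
Your approach is correct and is exactly the paper's: the paper's entire proof is the sentence ``This is immediate from Lemma \ref{cross}.''  The one comment worth making is that the ``fiddly computation'' you anticipate in your last paragraph is not actually there.  By definition, $IS_*^{\rho}(X_1\times\cdots\times X_l;\Z)$ \emph{is} $IS_*^{Q_k}(Y_1\times\cdots\times Y_k;\Z)$; the symbol $\rho$ is a partition, not a perversity, and there is no second description of $IS_*^{\rho}$ as intersection chains on $\prod X_j$ with some $X$-level perversity that needs to be reconciled with the $Y$-level one.  So once you have written down the three identifications in your first display (which are definitional), the map in Lemma \ref{l4} is literally the map in Lemma \ref{cross} with the $Y_i,Y'_i$ as the input pseudomanifolds, and you are done --- no $2s-2$ bookkeeping required.
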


This is immediate from Lemma \ref{cross}.

\begin{definition}
Let $\rho=\{B_1,\ldots,B_j\}$ and $\rho'=\{C_1,\ldots,C_k\}$ be two partitions
of a set $A$.  Then $\rho'$ is a {\it refinement} of $\rho$ if each $C_i$ is
contained in some $B_i$.
\end{definition}

\begin{lemma}
\label{l5}
Let $\rho$ and $\rho'$ be partitions of $\{1,\ldots,l\}$.  If $\rho'$ is a
refinement of $\rho$ then 
\[
IS_*^{\rho}(X_1\times\cdots\times X_l;\Z)
\subset
IS_*^{\rho'}(X_1\times\cdots\times X_l;\Z)
\]
and the inclusion is a quasi-isomorphism.
\end{lemma}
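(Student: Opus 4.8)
The statement has two parts, which I would treat separately. For the containment, the key observation is that the product stratification is associative, so that $IS_*^\rho(X_1\times\cdots\times X_l;\Z)$ and $IS_*^{\rho'}(X_1\times\cdots\times X_l;\Z)$ are both subcomplexes of the singular chain complex $S_*(X_1\times\cdots\times X_l;\Z)$ of the \emph{same} stratified pseudomanifold; only the perversity differs, being $Q_\rho$ in one case and $Q_{\rho'}$ in the other. Since a $\bar{p}$-allowable singular chain is automatically $\bar{q}$-allowable whenever $\bar p\le\bar q$, it is enough to check that $Q_\rho\le Q_{\rho'}$ pointwise. Comparing the defining formulas at a stratum $S_1\times\cdots\times S_l$, passing from $\rho$ to its refinement replaces, for each block $B$ of $\rho$ that contains a singular index, the single contribution $2+\bar n(d)$ (where $d$ is the sum of the codimensions of the $S_m$ with $m\in B$ singular) by $2r+\sum_{t=1}^{r}\bar n(d_t)$, where $d_1,\dots,d_r$ are the analogous codimension sums of the $r\ge 1$ sub-blocks of $B$ that still contain a singular index, so that $\sum_t d_t=d$. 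The inequality $\bar n(c_1+\cdots+c_r)\le 2(r-1)+\sum_i\bar n(c_i)$---an iteration of $\lceil(a+b)/2\rceil\le\lceil a/2\rceil+\lceil b/2\rceil$---then gives $Q_\rho\le Q_{\rho'}$, hence the containment.

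For the statement that the inclusion is a quasi-isomorphism, a general refinement is a composite of elementary ones (splitting a single block of $\rho$ into two consecutive pieces), and both the containments and the property of being a quasi-isomorphism are preserved under composition, so I would reduce to the elementary case. There, writing $Y_1,\dots,Y_j$ for the block-spaces of $\rho$ and $Y_1=A\times B$ for the split block, iterating the cross-product quasi-isomorphisms of Lemma \ref{cross} produces equivalences $\bigotimes_i IS_*^{\bar n}(Y_i)\xrightarrow{\simeq}IS_*^{\rho}$ and $IS_*^{Q_2}(A\times B)\otimes\bigotimes_{i\ge 2}IS_*^{\bar n}(Y_i)\xrightarrow{\simeq}IS_*^{\rho'}$ which sit in a commuting square whose remaining side is the tensor product of the identity maps with the perversity-raising inclusion $IS_*^{\bar n}(A\times B;\Z)\hookrightarrow IS_*^{Q_2}(A\times B;\Z)$ (commutativity being associativity of the cross product). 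Since intersection chains over $\Z$ are free, tensoring over $\Z$ preserves and reflects quasi-isomorphisms, so the whole lemma comes down to showing that this last inclusion is a quasi-isomorphism.

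That final reduction is where the real work lies, and it is the step I expect to be the main obstacle. Here I would argue locally: a point of a product stratum $S\times T$ of $A\times B$ has a distinguished neighborhood of the form $\R^N\times c^\circ(L_S * L_T)$, where $L_S$ and $L_T$ are the links of $S$ in $A$ and of $T$ in $B$, and the two sides can be evaluated there by the cone formula, the perversities $\bar n$ and $Q_2$ assigning to the cone vertex the values $\bar n(\operatorname{codim}(S\times T))$ and $\bar n(\operatorname{codim}S)+\bar n(\operatorname{codim}T)+2$ respectively. Feeding in the Künneth theorem for the join $L_S*L_T$ shows that, although $Q_2$ prescribes a lower truncation degree than $\bar n$, the intersection homology discarded in passing from $\bar n$ to $Q_2$ vanishes in precisely the relevant range; hence the natural map is a stalkwise quasi-isomorphism and, by induction on the depth of the stratification, a quasi-isomorphism. (Alternatively, one can invoke the perversity-comparison results of \cite{Fr} directly.) The point to emphasize is that, unlike Lemmas \ref{cross} and \ref{l4}, which are the formal Künneth statements for the ``correct'' perversity $Q_k$, this step genuinely requires unwinding the cone-and-join mechanics in order to see that raising the perversity from $\bar n$ to $Q_2$ on a product changes nothing.
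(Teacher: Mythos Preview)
Your argument is correct, but you have made the final step harder than necessary and taken a different route from the paper.

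For the containment, you give a careful verification of $Q_\rho\le Q_{\rho'}$; the paper simply asserts the perversity inequality without details, so your version is more explicit.

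For the quasi-isomorphism, the paper does not reduce to elementary refinements. Instead it reduces to showing that $IS_*^{\bar n}(X_1\times\cdots\times X_l)\hookrightarrow IS_*^{\rho}$ is a quasi-isomorphism for \emph{every} $\rho$, and proves this by induction on the number of blocks of $\rho$, peeling off the first block $B_1$ at each step. The inductive square has rows
\[
IS_*^{\bar n}\Bigl(\textstyle\prod_{i\in B_1}X_i\Bigr)\otimes IS_*^{\bar n}\Bigl(\textstyle\prod_{i\notin B_1}X_i\Bigr)
\xrightarrow{\ \times\ }
IS_*^{\bar n}(X_1\times\cdots\times X_l)
\]
and
\[
IS_*^{\bar n}\Bigl(\textstyle\prod_{i\in B_1}X_i\Bigr)\otimes IS_*^{\rho_1}\Bigl(\textstyle\prod_{i\notin B_1}X_i\Bigr)
\xrightarrow{\ \times\ }
IS_*^{\rho}(X_1\times\cdots\times X_l),
\]
and \emph{both} horizontal arrows are quasi-isomorphisms by \cite[Theorem 6.45]{Fr}. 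The result then follows by two-out-of-three.

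The point you missed is that Friedman's K\"unneth theorem is not special to the $Q_k$ family: it also gives that $IS_*^{\bar n}(A)\otimes IS_*^{\bar n}(B)\xrightarrow{\times} IS_*^{\bar n}(A\times B)$ is a quasi-isomorphism. Combined with the case $j=k=1$ of Lemma~\ref{cross}, this immediately yields your ``main obstacle'' $IS_*^{\bar n}(A\times B)\hookrightarrow IS_*^{Q_2}(A\times B)$ by two-out-of-three---no cone formula, no join K\"unneth, no induction on depth. Your local argument would work, but it amounts to reproving a special case of the very theorem you already have available; the parenthetical ``alternatively, invoke \cite{Fr}'' is in fact the entire argument.
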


\begin{proof}
First we claim
that the perversity that gives $IS_*^{\rho}$
is $\leq$ the perversity that gives $IS_*^{\rho'}$; the inclusion follows from
this.  It suffices to check the
case where one piece in $\rho$ is divided into two pieces in $\rho'$; this
means that one of the factors $Y_i$ is replaced by two factors $Y'$ and
$Y''$.  If $Y'$ and $Y''$ are not both singular it's easy to verify the
claim.  Otherwise we need to show 
\[
\bar n(Y'\times Y'')
\leq \bar n(Y')+\bar n(Y'') +2,
\]
and this follows by checking the cases where $\codim Y'$ and $\codim Y''$ are
both even, or both odd, or one is even.

To show the
quasi-isomorphism it suffices to show that
\[
IS_*^{\bar n}(X_1\times\cdots\times X_l;\Z)
\hookrightarrow
IS_*^{\rho}(X_1\times\cdots\times X_l;\Z)
\]
is a quasi-isomorphism for every $\rho$.  This in turn follows by induction 
from the following commutative diagram, where we let $\rho=\{B_1,\ldots,B_k\}$
and $\rho_1=\{B_2,\ldots,B_k\}$.
\[
\xymatrix{
IS_*^{\bar n}(\prod_{i\in B_1} X_i;\Z)
\otimes
IS_*^{\bar n}(\prod_{i\notin B_1} X_i;\Z)
\ar[r]^-\times
\ar[d]
&
IS_*^{\bar n}(X_1\times\cdots\times X_l;\Z)
\ar[d]
\\
IS_*^{\bar n}(\prod_{i\in B_1} X_i;\Z)
\otimes
IS_*^{\rho_1}(\prod_{i\notin B_1} X_i;\Z)
\ar[r]^-\times
&
IS_*^{\rho}(X_1\times\cdots\times X_l;\Z)
}
\]
Here the horizontal arrows are quasi-isomorphisms by \cite[Theorem
6.4.6 and Remark 6.4.7]{Fr} and the left vertical arrow is a 
quasi-isomorphism by the inductive
hypothesis, so the right vertical arrow is a quasi-isomorphism as required.
\end{proof}

Next is the analogue of Lemma \ref{l1} for this situation.  Recall
\cite[Definition 15.3(i)]{LM2}.

\begin{lemma}
\label{l6}
{\rm (i)}
Let $l\geq 1$ and let $\rho=\{B_1,\ldots,B_k\}$ be a partition of 
$\{1,\ldots,l\}$.  Let $\hat{\rho}$ be the partition
$\{B_1,\ldots,B_k,B_1,\ldots,B_k\}$ of $\{1,\ldots,l\}\coprod\{1.\ldots,l\}$.
Let $(X_i,\xi_i)$ for $1\leq i\leq l$ be objects of $\Aa_\ipFun$.  Give each 
$X_i$ the stratification of
Proposition \ref{p2}
and give $X_1\times\cdots\times X_l$ and 
$(X_1\times\cdots\times X_l)\times (X_1\times\cdots\times X_l)$
the product stratifications.  
Let $ \iota$ be the inclusion map  
$$IS^{Q_{\bar{n},\bar{n}}}_*((X_1\times\cdots\times X_l)\times 
(X_1\times\cdots\times X_l);\Z)
\subset
IS^{\hat{\rho}}_*((X_1\times\cdots\times X_l)\times (X_1\times\cdots\times
X_l);\Z)$$
given by Lemma \ref{l5}. Then
$(C,D,\beta,\varphi)$ is an object of $\Aa^\Z_{ \mathrm{rel}}$,
where

\quad $C=IS_*^{\rho}(X_1\times\cdots\times X_l;\Z)$,

\quad $D=IS^{\hat{\rho}}_*((X_1\times\cdots\times X_l)\times (X_1\times\cdots\times
X_l);\Z)$, 

\quad $\beta$ is the cross product followed by the inclusion $\iota$, and

\quad $\varphi$ is the image of
$\xi_1\times\cdots\times \xi_l$
under the map induced by the diagonal
\begin{multline*}
IS_*^{\bar{0}}(X_1\times\cdots\times X_l;\Z)
\to
IS^{Q_{\bar{n},\bar{n}}}_*((X_1\times\cdots\times X_l)\times 
(X_1\times\cdots\times X_l);\Z)
\end{multline*}
followed by  $\iota$.

{\rm (ii)} For $1\leq i\leq l$, let $f_i:(X_i,\xi_i)\to (X_i',\xi_i')$ be a 
morphism in $\Aa_\ipFun$. Let $(C,D,\beta,\varphi)$ and 
$(C',D',\beta',\varphi')$ be the objects of $\Aa^\Z_{ \mathrm{rel}}$ 
corresponding to the $l$-tuples $\{(X_i,\xi_i)\}$ and $\{(X'_i,\xi_i')\}$.  
Then the $f_i$
induce a morphism $(C,D,\beta,\varphi)\to (C',D',\beta',\varphi')$.
\end{lemma}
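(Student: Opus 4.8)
The plan is to follow the pattern of Lemma \ref{l1}, using the multi-factor machinery (Lemmas \ref{l4} and \ref{l5}) in place of the single cross-product statement. This is essentially a bookkeeping result: the content is that the four data $(C,D,\beta,\varphi)$ satisfy the three conditions defining an object of $\Aa^\Z_{\text{rel}}$ (namely: $C$ is homotopy finite, $D$ has a $\Z/2$-action, $\beta$ is a $\Z/2$-equivariant quasi-isomorphism, and $\varphi\in D_n^{\Z/2}$), and that morphisms are respected.

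First I would address part (i). Homotopy finiteness of $C=IS_*^{\bar n}(X_1\times\cdots\times X_l;\Z)$ follows exactly as in Lemma \ref{l1}(i): it is free as a subcomplex of the singular chains, and quasi-isomorphic via \cite[Corollary 5.49]{Fr} to the finite complex of simplicial intersection chains with respect to a triangulation compatible with the product stratification (such a triangulation exists since each $X_i$ is triangulable compatibly with its stratification and one may take a common refinement of the product). The $\Z/2$-action on $D=IS_*^{\bar\rho}((X_1\times\cdots\times X_l)^{\times 2};\Z)$ is the one induced by the swap of the two copies of $X_1\times\cdots\times X_l$; the key point is that the partition $\bar\rho=\{B_1,\ldots,B_k,B_1,\ldots,B_k\}$ is invariant (up to the obvious reindexing) under swapping the two halves, so the swap map is filtration-preserving and hence acts on $D$. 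That $\beta$ (the cross product) is a $\Z/2$-equivariant quasi-isomorphism $C\otimes C\to D$: it is a quasi-isomorphism by Lemma \ref{l4} applied with $\rho=\rho'=\rho$ and $\rho\cup\rho'=\bar\rho$, noting that $IS_*^\rho(X_1\times\cdots\times X_l;\Z)\hookrightarrow IS_*^{\bar n}(X_1\times\cdots\times X_l;\Z)^{\leftarrow}$... more precisely one combines Lemma \ref{l4} with Lemma \ref{l5} to compare $IS_*^{\bar n}\otimes IS_*^{\bar n}$ with $IS_*^\rho\otimes IS_*^\rho\to IS_*^{\bar\rho}$. Equivariance of the cross product under the swap is standard up to the Koszul sign, which is absorbed exactly as in the definition of the involution on $\Aa^\Z_{\text{rel}}$. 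Finally $\varphi$ lies in $D_n^{\Z/2}$ because it is the image of $\xi_1\times\cdots\times\xi_l$ (a representative of the fundamental class in $IS_*^{\bar 0}$, which has degree $n=\sum\dim X_i$) under the diagonal map followed by the inclusion of Lemma \ref{l5}, and the diagonal lands in the symmetric part: the image of a chain under $X\to X\times X$ is $\Z/2$-invariant for exactly the same reason as in Lemma \ref{l1}(i) and Example \ref{e2} — one must check the diagonal $IS_*^{\bar 0}(X_1\times\cdots\times X_l;\Z)\to IS^{Q_{\bar n,\bar n}}_*((X_1\times\cdots\times X_l)^{\times 2};\Z)$ is defined, which is \cite[Subsection 4.1]{FM} applied to the product $\partial$-IP-space $X_1\times\cdots\times X_l$ with the generalized perversity $Q_{\bar n,\bar n}$ for its product stratification, and that the further inclusion into $IS_*^{\bar\rho}$ exists by $Q_{\bar n,\bar n}\leq Q_{\bar\rho\text{-perversity}}$, which is Lemma \ref{l5}.

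For part (ii), a morphism $f_i:(X_i,\xi_i)\to(X_i',\xi_i')$ is either a PL homeomorphism (when $\dim X_i=\dim X_i'$) or a strong embedding into the boundary. In the first case $f_1\times\cdots\times f_l$ and its square are PL homeomorphisms of the relevant product spaces, which preserve the product stratifications by Propositions \ref{p1}(iii) and \ref{p2}, hence induce maps of intersection chains for all the perversities involved (these are intrinsically defined, so homeomorphisms act); the naturality of the cross product and the diagonal then gives that $(f\otimes f,g)$ with $f=(f_1\times\cdots\times f_l)_*$ and $g$ the induced map on $D$ is a morphism in $\Aa^\Z_{\text{rel}}$, i.e. $g\beta=\beta'(f\otimes f)$ and $g_*\varphi=\varphi'$ (the latter since the $f_i$ take $\xi_i$ to $\xi_i'$, so $f$ takes $\xi_1\times\cdots\times\xi_l$ to $\xi_1'\times\cdots\times\xi_l'$, and $g$ commutes with diagonal and inclusion by naturality). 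If some $f_i$ is a strong embedding, then as in Lemma \ref{l1}(ii) the product $f_1\times\cdots\times f_l$ is a strong embedding of product $\partial$-pseudomanifolds (a product of strong embeddings with PL homeomorphisms is a strong embedding, by the definition of the product stratification in Proposition \ref{p2}), and strong embeddings induce maps of intersection chains for any perversity, so again we get the required morphism; when dimensions differ there is no $\varphi$-compatibility condition to check.

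The main obstacle I expect is not any single hard argument but rather making the bookkeeping around the perversities $Q_k$, $\rho$, $\bar\rho$ and the various inclusions completely airtight — in particular verifying carefully that the swap automorphism of $(X_1\times\cdots\times X_l)^{\times 2}$ carries the $\bar\rho$-perversity to itself (so that it does act on $D$), and that the composite defining $\varphi$ genuinely lands in the $\Z/2$-fixed subgroup with the correct sign conventions. Once those points are pinned down, everything else is a routine transcription of the proof of Lemma \ref{l1} with the single cross-product quasi-isomorphism replaced by Lemmas \ref{l4} and \ref{l5}, together with the homotopy finiteness argument from Lemma \ref{l1}(i) applied to the product.
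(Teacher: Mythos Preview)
Your proposal is correct and follows essentially the same approach as the paper: the paper's own proof is extremely terse, saying only that part (i) follows from Lemma \ref{l5} together with the homotopy-finiteness argument in the proof of Lemma \ref{l1}(i), and that part (ii) follows from the proof of Lemma \ref{l1}(ii). Your write-up is a more detailed unpacking of exactly these references, and your additional care about the $\Z/2$-action on $D$ and the landing of $\varphi$ in the fixed part is precisely the bookkeeping the paper leaves implicit.
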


\begin{proof}
Part (i) follows from Lemma \ref{l5} and the fact (shown in the proof of 
Lemma \ref{l1}(i)) that $IS_*^{\bar{n}}(X_1\times\cdots\times X_l;\Z)$ is 
homotopy finite.
Part (ii) follows from the proof of Lemma \ref{l1}(ii).
\end{proof}

Now we can give the statement promised in Remark \ref{r8}.
Let 
\[
\sig_\rho:(\Aa_\ipFun)^{\times l}\to \Aa^\Z_{{ \mathrm{rel}}}
\]
be the functor given by Lemma \ref{l6}, and recall \cite[Definition
12.14]{LM2}.

\begin{prop}
\label{p4}
Let $\{B_1,\ldots,B_k\}$ be a partition of $\{1,\ldots,l\}$, let
$\rho_i$ be a partition of $B_i$ for $1\leq i\leq k$, let $\rho$ denote
$\rho_1\cup\cdots\cup\rho_k$, and let $\rho'$ be a
refinement of $\rho$.
There is a natural quasi-isomorphism from
\[
(\Aa_\ipFun)^{\times l}
\xrightarrow{\prod \sig_{\rho_i}}
(\Aa^\Z_{{\mathrm{rel}}})^{\times k}
\xrightarrow{\otimes}
\Aa^\Z_{{\mathrm{rel}}}
\]
to 
\[
(\Aa_\ipFun)^{\times l}
\xrightarrow{\sig_{\rho'}}
\Aa^\Z_{{ \mathrm{rel}}}
\]
given by the maps
\begin{multline*}
IS_*^{\rho_1}(\prod_{i\in B_1} X_i;\Z)
\otimes\cdots\otimes
IS_*^{\rho_k}(\prod_{i\in B_k} X_i;\Z)
\\
\xrightarrow{\times}
IS_*^{\rho}(X_1\times\cdots \times X_l;\Z)
\to
IS_*^{\rho'}(X_1\times\cdots \times X_l;\Z)
\end{multline*}
and
\begin{multline*}
IS^{\hat{\rho_1}}_*((\prod_{i\in B_1} X_i)\times (\prod_{i\in B_1} X_i);\Z)
\otimes\cdots\otimes
IS^{\hat{\rho_k}}_*((\prod_{i\in B_k} X_i)\times (\prod_{i\in B_k} X_i);\Z)
\\
\xrightarrow{\times}
IS^{\widehat{\rho}}_*(\prod_{j=1}^k((\prod_{i\in B_f} X_i)\times (\prod_{i\in
B_f} X_i));\Z)
\cong
IS^{\widehat{\rho}}_*((\prod_{i=1}^l X_i)\times (\prod_{i=1}^l X_i);\Z)
\\
\to
IS^{\widehat{\rho'}}_*((\prod_{i=1}^l X_i)\times (\prod_{i=1}^l X_i);\Z).
\end{multline*}
\end{prop}

This is immediate from Lemmas \ref{l4} and \ref{l5}.
Next we need the analogue of \cite[Definition
15.4]{LM2}.  Recall \cite[Definition
15.3(ii)]{LM2}.  

\begin{definition}
\label{m67}
Let $j\geq 0$ and let $r:\{1,\ldots,j\}\to \{u,v\}$.  Let $\Aa_i$ denote $\Ag$
if $r(i)=u$ and $\Ar$ if $r(i)=v$.

(i) Let $1\leq m\leq j$. A surjection 
\[
h:\{1,\ldots,j\}\to\{1,\ldots,m\}
\]
is {\it adapted} to $r$ if 
$r$ is constant on 
each set $h^{-1}(i)$ and  
$h$ is  injective on $r^{-1}(v)$.

(ii) Given a surjection 
\[
h:\{1,\ldots,j\}\to\{1,\ldots,m\}
\]
which is adapted to $r$, 
and a partition $\rho_i$ of $h^{-1}(i)$ for $1\leq i\leq m$,
define
\[
(h,\rho_1,\ldots,\rho_m)_\blacklozenge: 
\Aa_1\times\cdots\times\Aa_j
\to
(\Ar)^{\times m}
\]
by
\[
(h,\rho_1,\ldots,\rho_m)_\blacklozenge(x_1,\ldots,x_j)
=(i^{\epsilon}y_1,\ldots,y_m),
\]
where
$i^\epsilon$ is the sign that arises from putting the objects
$x_1,\ldots,x_j$ into the order $x_{\theta(h)^{-1}(1)},\ldots,
x_{\theta(h)^{-1}(j)}$
and 
\[
y_i=
\begin{cases}
\sig_{\rho_i}(\{x_p\}_{p\in h^{-1}(i)}) & \text{if $h^{-1}(i)\subset 
r^{-1}(u)$},
\\
x_{h^{-1}(i)} & \text{if $h^{-1}(i)\in r^{-1}(v)$}.
\end{cases}
\]

(iii)
A {\it datum} of type $r$ is a tuple
\[
(h,\rho_1,\ldots,\rho_m,\eta),
\]
where $h$ is a surjection which is 
adapted to $r$, $\rho_i$ is a partition of $h^{-1}(i)$, and $\eta$ is an 
element of $\Sigma_j$ with the property
that $h\circ\eta=h$.

(iv) Given a datum 
\[
\bfd=(h,\rho_1,\ldots,\rho_m,\eta),
\]
of type $r$,
define 
\[
\bfd_\bs :
\Aa_1\times\cdots\times\Aa_j
\to
\Ar
\]
to be the composite
\begin{multline*}
\Aa_1\times\cdots\times\Aa_j
\xrightarrow{\eta}
\Aa_{\eta^{-1}(1)}\times\cdots\times\Aa_{\eta^{-1}(j)}
=
\Aa_1\times\cdots\times\Aa_j
\\
\xrightarrow{(h,\rho_1,\ldots,\rho_m)_\blacklozenge}
(\Ar)^{\times m}
\xrightarrow{\otimes}
\Ar,
\end{multline*}
where $\eta$ permutes the factors with the usual sign.
\end{definition}

Finally, we have the analogue of \cite[Definition 15.5]{LM2}.

\begin{definition}
\label{m63}
For data of type $r$, define
\[
(h,\rho_1,\ldots,\rho_m,\eta)
\leq
(h',\rho'_1,\ldots,\rho'_{m'},\eta')
\]
if for each $i\in\{1,\ldots,m\}$ there is a $p\in\{1,\ldots,m'\}$ such that 
$\eta^{-1}(h^{-1}(i))$ is 
contained in 
${\eta'}^{-1}({h'}^{-1}(p))$ and $\eta'\eta^{-1}$ takes each piece of the
partition $\rho_i$ to a union of pieces of the partition $\rho'_p$.
\end{definition}

With these changes, the proof of \cite[Theorem 1.3]{LM2} goes through to 
construct the lower half of Diagram \eqref{e16}.  This completes the proof of
Theorem \ref{t4}.

\section{Multiplicativity of the $L$-theory fundamental class}
\label{sfm}

In this section we prove

\begin{thm}
\label{t5}
\label{fundmult}
Let $X$ and $Y$ be compact oriented IP spaces. Then
\[
[X\times Y]_{\mathbb{L}}
=
[X]_{\mathbb{L}}
\times
[Y]_{\mathbb{L}}.
\]
\end{thm}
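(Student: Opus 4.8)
The plan is to deduce the multiplicativity of $[X\times Y]_\BL$ from the ring-map property of $\Sig$ established in Theorem \ref{t4}, together with the naturality of assembly and the multiplicativity of the cross product in all the theories involved. First I would observe that the fundamental class $[X]_\BL$ is by Definition \ref{def.xipxl} the image of the IP-fundamental class $[X]_\ip \in (\Omega_\ip)_n(X)$ under the composite \eqref{e13}, namely the inverse assembly isomorphism $(\bQ_\ip)_n(X)\xleftarrow{\cong}(\Omega_\ip)_n(X)$ followed by $\Sig$ and the isomorphism \eqref{e10}. So it suffices to check that each of these three maps is multiplicative with respect to the external products, and that $[X\times Y]_\ip = [X]_\ip\times [Y]_\ip$ in IP-bordism.

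The last point, $[X\times Y]_\ip=[X]_\ip\times[Y]_\ip$, is essentially formal: $[X]_\ip$ is represented by $\id_X\colon X\to X$, and the cross product on IP-bordism (which exists by Remark \ref{r2}, since $\ad_\ip$ is multiplicative) sends $[\id_X]\otimes[\id_Y]$ to $[\id_{X\times Y}]=[X\times Y]_\ip$, because the product of the identity ads is the identity ad on $X\times Y$. Next, the assembly map $\alpha\colon Z_+\wedge\bQ_\ip\to \bQ_{\ip,Z}$ is multiplicative: this is the content of Appendix \ref{am} (the multiplicative property of the assembly map), so the isomorphism $(\Omega_\ip)_*(Z)\cong \pi_*\bQ_{\ip,Z}$ commutes with cross products. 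Finally, the map $\Sig$ is a ring map up to weak equivalence by Theorem \ref{t4}; chasing through Diagram \eqref{e16}, the induced map on homology groups $(\bQ_\ip)_*(-)\to(\bQ^\Z_{\geq 0,\text{rel}})_*(-)$ commutes with the external products, and the identification \eqref{e10} with $\syml_*(-)$ is also multiplicative since $\bM^\Z_{\geq 0,\text{rel}}$ is a ring spectrum and the weak equivalence to $\BL^\bullet(\Z)$ respects the ring structure.

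Assembling these, I would write down the commutative diagram whose rows are the composite \eqref{e13} for $X$, for $Y$, and for $X\times Y$, with vertical cross-product maps between them; commutativity of each square is one of the three ingredients above. Starting from $[X]_\ip\otimes[Y]_\ip\mapsto[X\times Y]_\ip$ in the upper-left corner and chasing to the lower-right corner gives $[X]_\BL\times[Y]_\BL=[X\times Y]_\BL$ in $\syml_n(X\times Y)$, as desired.

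The main obstacle I anticipate is bookkeeping rather than conceptual: one must be careful that the various "external product" maps really do match up across the three theories $\Omega_\ip$, $\bQ_\ip$ (and $\bM_\ip$), and $\bQ^\Z_{\geq 0,\text{rel}}$, since Theorem \ref{t4} only gives a ring map \emph{up to weak equivalence} through a zig-zag $\bM_\ip\xleftarrow{\simeq}\mathbf A\to\cdots$, and one has to know that the induced pairings on homotopy groups are the geometrically defined cross products. This is handled by the fact (already used implicitly in \cite{LM2}) that the weak equivalences in \eqref{e16} are maps of ring spectra, so they induce the identity on the multiplicative structure of homotopy groups; and by the observation in Appendix \ref{am} identifying the assembly-map product with the geometric one. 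Once these compatibilities are recorded, the diagram chase is immediate.
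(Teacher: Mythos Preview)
Your overall strategy matches the paper's: rewrite $[X]_\BL$ via the composite \eqref{e13} (in its symmetric-spectrum form \eqref{e17}) and verify that each arrow respects external products, invoking Proposition~\ref{p7} for the assembly map and Theorem~\ref{t4} for $\Sig$.

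However, you conflate two distinct steps when you write that Appendix~\ref{am} shows ``the isomorphism $(\Omega_\ip)_*(Z)\cong \pi_*\bQ_{\ip,Z}$ commutes with cross products.''  Because the ring structure lives on the symmetric spectra $\bM$ rather than on $\bQ$, the paper first factors the left-hand portion of \eqref{e13} as
\[
(\Omega_\ip)_l(Z)\ \xrightarrow{\ \chi\ }\ \pi_l\,\bM_{\ip,Z}\ \xleftarrow[\ \cong\ ]{\ \alpha\ }\ \pi_l(Z_+\wedge\bM_\ip),
\]
where $\chi$ is the passage from a geometric bordism class to an element of $\pi_*\bM_{\ip,Z}$ described in \eqref{e18}.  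Proposition~\ref{p7} handles the multiplicativity of $\alpha$, but it says nothing about $\chi$.  The product on $\pi_*\bM_{\ip,Z}$ is induced by the symmetric ring spectrum pairing $\bM_{\ip,X}\wedge\bM_{\ip,Y}\to\bM_{\ip,X\times Y}$, built from bi-semisimplicial ads; it is not automatic that this agrees with the naive Cartesian product on $(\Omega_\ip)_*$.  The paper isolates this as Lemma~\ref{L2} and spends the bulk of Section~\ref{sfm} on it: one writes down explicit representatives $\underline{f}^{[l+1]}$ and $\underline{f}^{[p,l+2-p]}$ for $\chi([f])$ in various bidegrees, identifies $\chi([g])\chi([h])$ with $-\underline{(g\times h)}^{[m+1,n+1]}$, and then exhibits an explicit $(\Delta^{m+1-l}\times\Delta^{n+2+l})$-ad establishing \eqref{e19}, with careful attention to the sign corrections of Appendix~\ref{aa1}.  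Your proposal treats this as absorbed by the assembly argument or by ``bookkeeping,'' but it is a separate computation without which the diagram chase does not close.
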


\begin{remark}
We will use the results of Section \ref{mc}, but one could give a simpler
proof of Theorem \ref{t5} using only part of the machinery of Section
\ref{mc}.
\end{remark}

The first step in the proof of Theorem \ref{t5} is to observe that we can 
replace the 
spectra $\bQ$ in Definition \ref{def.xipxl} by the equivalent symmetric 
spectra $\bM$.  Each of the symmetric spectra $\bM_{\ip, Z}$, $Z_+\wedge
\bM_\ip$, $Z_+\wedge \bM^\Z_{\mathrm{rel},\geq 0}$ and $Z_+\wedge \bM^\Z_{\geq 0}$ is semistable 
(\cite[Definition 5.6.1]{hss}) by \cite[Corollary 17.9(i)]{LM} and 
\cite[Examples 4.2 and 4.7]{schwede}, and hence their ``true'' (i.e., derived)
homotopy groups agree with their homotopy groups by \cite[Example
5.5]{schwede}.
Thus 
for a compact oriented IP space $Z$ of dimension $l$ the class
$[Z]_{\mathbb{L}}$ is 
the image of $[Z]_\ip$ under the composite
\begin{equation}
\label{e17}
(\Omega_\ip)_l(Z)
\xrightarrow{\cong}
\pi_l \bM_{\ip, Z}
\xleftarrow[\cong]{\alpha}
\pi_l(Z_+\wedge \bM_\ip)
\xrightarrow{\Sig}
\pi_l(Z_+\wedge \bM^\Z_{\mathrm{rel},\geq 0})
\xleftarrow{\cong}
\pi_l(Z_+\wedge \bM^\Z_{\geq 0})
\end{equation}
(where the first map is given by Remark \ref{aa41} and \cite[Proposition
17.7]{LM}).

Next we observe that the functors in \eqref{e17} have product operations.  For
the first functor (and for any spaces $X$ and $Y$), Cartesian product induces 
a map
\[
(\Omega_\ip)_m(X)
\otimes 
(\Omega_\ip)_n(Y)
\to
(\Omega_\ip)_{m+n}(X\times Y)
\]
by \cite[Lemma 2.11.7]{Fr}.
For the second functor,
Cartesian product induces 
\[
\Aa_{\ip,X}
\times
\Aa_{\ip,Y}
\to
\Aa_{\ip,X\times Y}
\]
and this induces a map
\[
\bM_{\ip,X}
\wedge
\bM_{\ip,Y}
\to
\bM_{\ip,X\times Y}
\]
which gives the desired product. 
The third, 
fourth and fifth functors in
\eqref{e17} have products because $\bM_\ip$, $\bM^\Z_{\mathrm{rel},\geq 0}$ and $\bM^\Z_{\geq 0}$
are ring spectra.

It therefore suffices to show that the maps in the composite \eqref{e17} 
preserve products.  For the second map this follows from Proposition 
\ref{p7}, for the third map from Theorem \ref{t4}, and for the fourth map from
\cite[Remark 13.2]{LM2}.  We will denote the first map by $\chi$, so it remains
to show

\begin{lemma}
\label{L2}
The map
\[
\chi:
(\Omega_\ip)_*(Z)
\xrightarrow{\cong}
\pi_* \bM_{\ip, Z}
\]
preserves products.
\end{lemma}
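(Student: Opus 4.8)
The map $\chi$ is the isomorphism of \cite[Proposition 16.4(i), Remark 14.2(i), and Proposition 17.7]{LM}, which for any ad theory identifies its bordism groups with the homotopy groups of the associated symmetric spectrum; concretely it factors through the weak equivalence between $\bM$ and the Quinn spectrum $\bQ$ from \cite[Section 17]{LM} and the identification of $\pi_*\bQ$ with bordism from \cite[Sections 15--16]{LM}. The plan is to exhibit both products appearing in Lemma \ref{L2} as the two pairings that \cite[Section 18]{LM} associates to a single structure, namely the pairing of ad theories
\[
\ad_{\ip,X}\times\ad_{\ip,Y}\longrightarrow\ad_{\ip,X\times Y}
\]
induced by Cartesian product. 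First I would check that this is indeed a pairing of ad theories: the Cartesian product of a $K$-ad for $\ad_{\ip,X}$ with an $L$-ad for $\ad_{\ip,Y}$ is a $(K\times L)$-ad for $\ad_{\ip,X\times Y}$, since by Remark \ref{r2} the Cartesian product of an element of $\ad_\ip^k(K)$ with an element of $\ad_\ip^l(L)$ lies in $\ad_\ip^{k+l}(K\times L)$, and maps $M\to X$ and $N\to Y$ evidently determine a map $M\times N\to X\times Y$. One then identifies the pairing $\bM_{\ip,X}\wedge\bM_{\ip,Y}\to\bM_{\ip,X\times Y}$ of Section \ref{sfm}, and the Cartesian-product pairing on $(\Omega_\ip)_*$, with the spectrum-level and bordism-level pairings produced from this pairing of ad theories.

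It then remains to see that $\chi$ carries the bordism pairing to the spectrum-level pairing. For a \emph{multiplicative} ad theory --- i.e.\ the pairing of a single ad theory with itself --- this compatibility is contained in the construction and proof of \cite[Theorem 18.5]{LM}, where the bordism product and the smash product of symmetric spectra are matched step by step through the intermediate constructions of \cite[Sections 15--17]{LM}. The plan is to observe that this argument is purely formal in the ad theories involved: at no point does it use that the source and target ad theories coincide, only that they form a compatible pairing over products of ball complexes. Since $\ad_{\ip,Z}$ is an ad theory in the sense of \cite[Definition 3.10]{LM} for every space $Z$ (as noted in Section \ref{s8}), the argument goes through verbatim with the three occurrences of $\ad$ replaced by $\ad_{\ip,X}$, $\ad_{\ip,Y}$, $\ad_{\ip,X\times Y}$, and Lemma \ref{L2} follows.

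The main obstacle is exactly this last point. The multiplicativity results of \cite[Section 18]{LM} are stated for a single multiplicative ad theory, whereas here one genuinely needs the ``heteromultiplicative'' case of a pairing of three distinct ad theories (note that $\ad_{\ip,Z}$ is not multiplicative on its own, since $Z\times Z\neq Z$ in general). Formulating that generalization and confirming that the proof in \cite[Section 18]{LM} adapts without change --- in particular that the sign conventions, which are delicate (cf.\ Appendix \ref{aa1}), remain matched --- is where the real work lies; the rest is unwinding definitions.
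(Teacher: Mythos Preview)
Your plan is in the right spirit but defers the actual content of the lemma to a black box that you yourself flag as unverified. The paper does not invoke an abstract ``heteromultiplicative'' version of \cite[Theorem~18.5]{LM}; instead it computes directly with explicit representatives, and the computation reveals exactly where the work lies.

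Concretely, the paper represents $\chi([f])$ for a map $f:W^l\to Z$ by the $\Delta^{l+1}$-ad $f^{[l+1]}$ (at level~1) and by $-f^{[0,l+2]}$ (at level~2), where $f^{[p,q]}$ is the $(\Delta^p\times\Delta^q)$-ad that is $f$ on the top cell and empty elsewhere. Using \cite[Section~18]{LM} and the description of the product on $\pi_*$ of a symmetric ring spectrum, one finds that $\chi([g])\chi([h])$ is represented by $-\underline{(g\times h)}^{[m+1,n+1]}\in\pi_{m+n+2}(M_{\ip,X\times Y})_2$, whereas $\chi([g\times h])$ is represented by $-\underline{(g\times h)}^{[0,m+n+2]}$. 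The whole lemma therefore reduces to showing that these two elements of $\pi_{m+n+2}(M_{\ip,X\times Y})_2$ agree, i.e.\ that one can slide the bisimplex shape from $[m+1,n+1]$ to $[0,m+n+2]$. The paper does this one step at a time by writing down, for each $0\le l\le m$, an explicit $(\Delta^{m+1-l}\times\Delta^{n+2+l})$-ad built from the projection $U\times V\times I\to U\times V\xrightarrow{g\times h}X\times Y$, whose boundary realises the difference $\underline{(g\times h)}^{[m-l,n+2+l]}-\underline{(g\times h)}^{[m+1-l,n+1+l]}$. This concrete nullbordism is the content; the sign bookkeeping uses the corrections in Appendix~\ref{aa1}.

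Your proposed route would, if carried out, have to reproduce exactly this sliding argument inside a general proof that $\chi$ intertwines the bordism pairing with the spectrum pairing---note that \cite[Theorem~18.5]{LM} only asserts that $\bM$ is a ring spectrum, not that $\chi$ is multiplicative, so even in the single-ad-theory case the statement you want is not simply a citation. So your ``main obstacle'' is not a technicality to be checked but the substance of the lemma. Either carry out that generalization in full (tracking the corrected signs), or follow the paper and construct the explicit homotopy.
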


The rest of this section gives the proof of this lemma.

Recall that for a spectrum $\bQ$ or a symmetric spectrum $\mathbf M$ we write
$Q_k$ and 
$M_k$ for the $k$-th space.
The map $\chi$ can be written as the composite
\begin{equation}
\label{e18}
(\Omega_\ip)_l(Z) 
\xrightarrow{\cong}
\pi_l (Q_{\ip,Z})_0
\to
\pi_{l+k} (Q_{\ip,Z})_{k}
\to
\pi_{l+k}(M_{\ip,Z})_k
\to
\pi_l \bM_{\ip,Z}
\end{equation}
for $k\geq 1$, where the first arrow is given by Remark \ref{aa41}, the second
is the suspension map (\cite[page 44]{LM} and Appendix \ref{aa1}), the
third is described in \cite[top of page 53]{LM}, and the fourth is given by the
definition of the homotopy groups of a spectrum.

If $f:W\to Z$ is a map from an $l$-dimensional compact oriented IP space to 
$Z$, and if 
$p\leq l+2$,
let us write $f^{[l+1]}$ (resp.,
$f^{[p,l+2-p]}$) for the $\Delta^{l+1}$-ad (resp.,
$\Delta^p\times\Delta^{l+2-p}$-ad) 
which 
takes the top cell
with its canonical orientation to 
$f$ and 
all other cells to $\emptyset\to Z$.  Then $f^{[l+1]}$ is a simplex in
$(M_{\ip,Z})_1$ with all faces at the basepoint, so it
determines an element of $\pi_{l+1}(M_{\ip,Z})_1$ which we will denote by
$\underline{f}^{[l+1]}$.  Similarly, $f^{[p,l+2-p]}$ determines an element of 
$\pi_{l+2}(M_{\ip,Z})_2$ which we will denote by $\underline{f}^{[p,l+2-p]}$.
From 
\cite[Sections 15 and 17]{LM} (but using the signs in Appendix \ref{aa1})
we see that
\begin{equation}
\label{ee1}
\chi([f]) \text{\ is represented by\ } \underline{f}^{[l+1]}\in
\pi_{l+1}(M_{\ip,Z})_1 
\end{equation}
and 
\begin{equation}
\label{ee2}
\chi([f]) \text{\ is represented by\ } -\underline{f}^{[0,l+2]}\in
\pi_{l+2}(M_{\ip,Z})_2. 
\end{equation}

Now let $g:U\to X$ and $h:V\to Y$ be maps from compact oriented IP spaces of
dimensions $m$ and $n$ respectively; we need to show that
\begin{equation}
\label{ee3}
\chi([g])\chi([h])=\chi([g\times h]).
\end{equation}
By \eqref{ee1} and the proof of \cite[Theorem I.4.54]{sbook}, 
$\chi([g])\chi([h])$ is
represented by the composite
\begin{multline*}
S^1\wedge S^1 \wedge S^m\wedge S^n
\to
S^1\wedge S^m \wedge S^1\wedge S^n
\xrightarrow{\underline{g}^{[m+1]}\wedge \underline{h}^{[n+1]}}
(M_{\ip,X})_1
\wedge
(M_{\ip,Y})_1
\\
\to
(M_{\ip,X\times Y})_2
\end{multline*}
(cf.\ \cite[I.4.55]{sbook}).
By \cite[Section
18]{LM} this composite is equal to $-\underline{(g\times h)}^{[m+1,n+1]}$ (for
the sign, note that the
first map has degree $(-1)^m$ and the last map includes a sign of 
$(-1)^{m+1}$ by \cite[Remark 18.3]{LM})
so by \eqref{ee2} the proof of\eqref{ee3}  reduces 
to showing
that $\underline{(g\times h)}^{[m+1,n+1]}=\underline{(g\times h)}^{[0,m+n+2]}$,
and for this in turn it suffices to show for $0\leq l\leq m$ that
\begin{equation}
\label{e19}
\underline{(g\times h)}^{[m+1-l,n+1+l]}
=
\underline{(g\times h)}^{[m-l,n+2+l]}.
\end{equation}
To prove \eqref{e19},
let $F$ be the $(\Delta^{m+1-l}\times \Delta^{n+2+l})$-ad which takes
\begin{itemize}
\item
the top cell (with its canonical orientation) to the composite
\[
U\times V\times I\to U\times V\xrightarrow{g\times h} X\times Y
\]
(where the first map is the projection), 
\item
the cell
$\partial_0\Delta^{m+1-l}\times \Delta^{n+2+l}$ (with its canonical
orientation) to $(-1)^{m+n}$ times 
\[
U\times V\times \{1\}\to U\times V\xrightarrow{g\times h} X\times Y,
\]
\item
the cell $\Delta^{m+1-l}\times \partial_0\Delta^{n+2+l}$ (with its
canonical orientation) to $(-1)^{n-l}$ times
\[
U\times V\times \{0\}\to U\times V\xrightarrow{g\times h} X\times Y,
\]
\item
and 
all other cells to the map $\emptyset\to X\times Y$.
\end{itemize}
Then $F$ determines a map
\[
\Phi:\Delta^{m+1-l}\times \Delta^{n+1+l+1}
\to 
(M_{\ip,X\times Y})_2,
\]
and the restriction of $\Phi$ to the boundary of $\Delta^{m+1-l}\times
\Delta^{n+2+l}$ (which is nullhomotopic) is easily seen (using the signs in
Appendix F below) to be $(-1)^{m+n}\underline{(g\times h)}^{[m-l,n+2+l]}
+(-1)^{m+1+n}\underline{(g\times h)}^{[m+1-l,n+1+l]}$; this proves \eqref{e19}
and completes the proof of Lemma \ref{L2}
\qed

\section{Proof of Theorem \ref{t2}}
\label{pt2}

By \cite[Observation 1.3]{wwa} it suffices to show that $\Phi$ is strongly
excisive.  The fact that $(\Omega_\ip)_*$ is a strongly additive  homology 
theory implies that, for any collection of spaces $\{X_\alpha\}$, the 
canonical map 
\begin{multline*}
  \pi_*(\bigvee \Phi (X_\alpha ))
\cong \bigoplus \pi_*(\Phi (X_\alpha))
\cong \bigoplus {\Omega_\ip}_*(X_\alpha) \lra 
{\Omega_{\ip}}_*(\bigvee X_\alpha)
\\
\cong \pi_*(\Phi(\bigvee X_\alpha )) 
\end{multline*}
is an equivalence. Hence, 
$\Phi$ preserves arbitrary coproducts and it suffices to show that $\Phi$
preserves homotopy cocartesian squares.  
First we observe that $\Phi$ takes monomorphisms to cofibrations in the level
model structure given by \cite[Theorem 6.5]{mmss} (because a monomorphism $Z\to
Z'$ gives a map from the $k$-th space of $\bQ_{\ip,Z}$ to the $k$-th space of
$\bQ_{\ip,Z'}$ which is the inclusion of a sub-CW-complex).
For a based space $W$, let
\[
\bar{\Phi}(W)=\Phi(W)/\Phi(*).
\]
Then the natural map
\[
\Phi(W)\to \bar{\Phi}(W_+)
\]
(where $+$ denotes a disjoint basepoint)
is a weak equivalence because 
$(\Omega_\ip)_*$ is a homology theory and thus
$$ \pi_*\Phi(W)\cong {\Omega_\ip}_*(W)\cong
{\Omega_\ip}_*(W_+)/{\Omega_\ip}_*(*)\cong \pi_* \bar{\Phi}(W_+)
.$$  
It
therefore suffices to show that $\bar{\Phi}$ takes homotopy cocartesian squares
of based spaces to homotopy cocartesian squares of spectra.  

As a first step we give a relationship between $\Sigma\bar{\Phi}(W)$ and
$\bar{\Phi}(\Sigma W)$.  Let $CW$ be the cone $I\wedge W$, where $1$ is the
basepoint of $I$, and 
let $S(W)$
denote the pushout of the diagram
\[
\bar{\Phi}(CW) \leftarrow \bar{\Phi}(W) \rightarrow \bar{\Phi}(CW).
\]
Since $\bar{\Phi}(CZ)$ is contractible, and since $\bar{\Phi}$ takes
monomorphisms to cofibrations in the level model structure, $S(W)$ is weakly 
equivalent to 
$\Sigma\bar{\Phi}(W)$.
Since $\Sigma W$ is the pushout of 
\[
CW \leftarrow W \rightarrow CW,
\]
there is an evident map
\[
\mathfrak{S}:S(W)\to \bar{\Phi}(\Sigma W).
\]

\begin{lemma}
\label{l2}
$\mathfrak{S}$ is a weak equivalence for all $W$.
\end{lemma}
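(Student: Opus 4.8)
The plan is to reduce the statement to a property of the homology theory $(\Omega_\ip)_*$ and of the functor $\bar\Phi$ that has already been established, namely that $\bar\Phi$ carries a monomorphism of based spaces to a cofibration and that the natural map $\Phi(W)\to\bar\Phi(W_+)$ is a weak equivalence. First I would set up, for a based space $W$, the commutative ladder comparing the Barratt--Puppe-type cofiber sequence coming from $W\hookrightarrow CW$ on the space level with its image under $\bar\Phi$. Concretely: $\Sigma W$ is the pushout of $CW\leftarrow W\rightarrow CW$ and $CW$ is contractible, so on the one hand $\Sigma\bar\Phi(W)\simeq S(W)$ (this is already observed in the text, using that $\bar\Phi$ sends monos to cofibrations so the pushout $S(W)$ computes the homotopy pushout, and $\bar\Phi(CW)$ is contractible), and on the other hand the map $\mathfrak S\colon S(W)\to\bar\Phi(\Sigma W)$ is the comparison between this homotopy pushout of $\bar\Phi$-values and the $\bar\Phi$-value of the homotopy pushout.

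The key step is therefore to show that $\bar\Phi$ takes the particular homotopy cocartesian square
\[
\xymatrix{
W \ar[r] \ar[d] & CW \ar[d] \\
CW \ar[r] & \Sigma W
}
\]
to a homotopy cocartesian square of spectra, since that is exactly the assertion that $\mathfrak S$ is a weak equivalence. I would prove this by passing to homotopy groups and using that $\pi_*\bar\Phi(W_+)\cong(\Omega_\ip)_*(W)$ reduced over the basepoint, i.e. $\pi_*\bar\Phi(W)$ is (up to the shift built into $\bar\Phi$) the reduced IP-bordism of $W$. Then the claim becomes the statement that reduced IP-bordism takes this square to a long exact Mayer--Vietoris sequence which, because $CW$ is contractible, degenerates to the suspension isomorphism $\widetilde{(\Omega_\ip)}_*(\Sigma W)\cong\widetilde{(\Omega_\ip)}_{*-1}(W)$; and one checks that under these identifications the map induced by $\mathfrak S$ on homotopy groups is precisely this suspension isomorphism. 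This last compatibility is a diagram chase comparing the connecting map of the cofiber sequence $W\to CW\to\Sigma W$ with the boundary map of the homology theory, both of which are, by construction, induced by the same collapse map $\Sigma W\to\Sigma W$; I would invoke the fact that $(\Omega_\ip)_*$ is a homology theory (Pardon, as used earlier in the section) to get the suspension isomorphism and the exactness, and the fact that $\bar\Phi$ preserves monomorphisms-to-cofibrations to identify the homotopy pushout $S(W)$.

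The main obstacle I anticipate is the bookkeeping needed to identify the map on homotopy groups induced by $\mathfrak S$ with the suspension isomorphism of the homology theory, rather than merely with \emph{some} isomorphism: one must be careful that the homotopy pushout $S(W)$, the equivalence $S(W)\simeq\Sigma\bar\Phi(W)$, and the boundary map of $(\Omega_\ip)_*$ are all normalized compatibly (signs and orientations of the suspension coordinate). Everything else is formal: the ``mono $\Rightarrow$ cofibration'' property and the identification of $\pi_*\bar\Phi$ with reduced IP-bordism are already in hand, and the contractibility of the cone makes the Mayer--Vietoris sequence collapse, so once the comparison of boundary maps is pinned down the lemma follows.
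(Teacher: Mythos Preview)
Your overall strategy is the same as the paper's: reduce to showing that the composite
\[
\pi_i\bar\Phi(W)\xrightarrow{\ \cong\ }\pi_{i+1}S(W)\xrightarrow{\ \mathfrak S_*\ }\pi_{i+1}\bar\Phi(\Sigma W)
\]
coincides, under the identification $\pi_*\bar\Phi\cong(\Omega_\ip)_*(-,*)$, with the suspension isomorphism of the homology theory $(\Omega_\ip)_*$. You correctly isolate this compatibility as the crux and correctly note that everything else (contractibility of $\bar\Phi(CW)$, the identification $S(W)\simeq\Sigma\bar\Phi(W)$) is already available.

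Where you are too optimistic is in the sentence ``both of which are, by construction, induced by the same collapse map $\Sigma W\to\Sigma W$.'' The boundary map of the homology theory $(\Omega_\ip)_*$ is defined \emph{geometrically} by Pardon (via $\partial$ of a bordism representative), and the identification $\pi_*\bQ_{\ip,W}\cong(\Omega_\ip)_*(W)$ is an identification of groups coming from the ad-theory machinery; there is no prior reason these two packages match on connecting maps. The paper does not get this by a formal diagram chase: it writes down an explicit suspension map $s:\pi_i\bar\Phi(W)\to\pi_{i+1}S(W)$ on the level of simplices of $\Phi_0$, checks it agrees with spectrum-level suspension, and then verifies by constructing two explicit bordisms (a mapping cylinder and a projection bordism) that $\mathfrak S\circ s$ carries the class of $f:M\to W$ to the class of the geometric suspension $h:Q\to\Sigma W$ representing $s'(b)$. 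That hands-on computation is the content of the lemma; your ``diagram chase'' gloss does not supply it.
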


We defer the proof for a moment. Let 
\[
B \xleftarrow{i} A \xrightarrow{j} C
\]
be a diagram of based spaces. The homotopy pushout of this diagram is the 
pushout of the diagram
\[
Mi \hookleftarrow A \hookrightarrow Mj,
\]
where $Mi$ and $Mj$ are the mapping cylinders; we will denote this pushout by
$D$.
The homotopy pushout of 
\[
\bar{\Phi}(B) \xleftarrow{\bar{\Phi}(i)} \bar{\Phi}(A)
\xrightarrow{\bar{\Phi}(j)} \bar{\Phi}(C)
\]
is (up to weak equivalence) the pushout, which we will denote by $E$, of 
\[
\bar{\Phi}(Mi) \hookleftarrow \bar{\Phi}(A) \hookrightarrow \bar{\Phi}(Mj).
\]
It therefore suffices to show that the map $E\to \bar{\Phi}(D)$ is a weak
equivalence.
Consider the diagram
\[
\xymatrix@C=10pt{
\pi_i\bar{\Phi}(A)
\ar[d]_=
\ar[r]
&
\pi_i(\bar{\Phi}(B)
\vee
\bar{\Phi}(C))
\ar[d]
\ar[r]
&
\pi_i E
\ar[d]
\ar[r]
&
\pi_i S(A)
\ar[d]_{\mathfrak{S}}
\ar[r]
&
\pi_i(S(B)\vee S(C))
\ar[d]
\\
\pi_i\bar{\Phi}(A)
\ar[r]
&
\pi_i\bar{\Phi}(B\vee C)
\ar[r]
&
\pi_i\bar{\Phi}(D)
\ar[r]
&
\pi_i\bar{\Phi}(\Sigma A)
\ar[r]
&
\pi_i\bar{\Phi}(\Sigma B\vee \Sigma C),
}
\]
where the rightmost vertical arrow is induced by the maps 
\[
S(B)\xrightarrow{\mathfrak{S}} \bar{\Phi}(\Sigma B)
\to \bar{\Phi}(\Sigma B\vee \Sigma C)
\]
and 
\[
S(C)\xrightarrow{\mathfrak{S}} \bar{\Phi}(\Sigma C)
\to \bar{\Phi}(\Sigma B\vee \Sigma C).
\]
The top row of the diagram is exact because it is $\pi_i$ of a cofiber
sequence. The fact that $(\Omega_\ip)_*$ is a homology
theory implies that the second row of the diagram is exact (because it is
$\pi_i\bar\Phi$ of a cofiber sequence), and also that the
second vertical arrow is an isomorphism.
The fourth and fifth vertical arrows are isomorphisms by Lemma \ref{l2}, and
hence the middle vertical arrow is an isomorphism as required.

It remains to prove Lemma \ref{l2}.
We begin by describing a suspension map
\[
s:\pi_i \bar{\Phi}(W)\to \pi_{i+1} S(W).
\]
Let $a\in \pi_i \bar{\Phi}(W)$.
Let $\kappa_i(W)$ denote
the kernel of the map $\pi_i\Phi(W) \to \pi_i\Phi(*)$; then
$\kappa_i(W)\to\pi_i\bar\Phi(W)$ is an 
isomorphism (because the short exact sequence
$\pi_i\Phi(*)\to\pi_i\Phi(W)\to\pi_i\bar\Phi(W)$
is split by the map $W\to *$), so $a$ comes from an element $\tilde{a}\in 
\kappa_i(W)$.
Let $\Phi_0(W)$ denote the $0$-th space of the spectrum $\Phi(W)$
(\cite[Definitions 15.8 and 15.4]{LM}).  Since $\Phi(W)$ is an $\Omega$
spectrum (\cite[Proposition 15.9]{LM}) and 
$\Phi_0(W)$ is a Kan complex (\cite[Lemma 15.12]{LM}), $\tilde{a}$ is
represented by an $i$-simplex $\sigma$ of $\Phi_0(W)$ with all faces at the
basepoint.  Let $\sigma'$ be the image of $\sigma$ in $\Phi_0(CW)$; then
$\sigma'$ represents an element of $\kappa_i(CW)$. Since 
$\kappa_i(CW)=0$, there is an $(i+1)$-simplex $\tau$ of
$\Phi_0(CW)$ with $\partial_0(\tau)=\sigma$ and all other faces at the
basepoint.
Let $\tau_1$ and $\tau_2$ be the images of $\tau$ under the two inclusions of
$\bar\Phi_0(CW)$ into the $0$-th space of $S(W)$ and let 
\begin{equation}
\label{e5}
D=\Delta^i\cup_{d^0 \Delta^i} \Delta^i;
\end{equation}
then $\tau_1$ and $\tau_2$ give a map
\[
\mathbf a: D/\partial D\to S(W)
\]
which represents $s(a)$.

Combining the map $s$ with the
weak equivalence between $S(W)$ and $\Sigma\bar\Phi(W)$ gives the usual
suspension map $\pi_i\bar{\Phi}(W)\to \pi_{i+1}\Sigma\bar\Phi(W)$, as the 
reader can verify, so $s$ is an isomorphism.

It therefore suffices to show that the composite
\begin{multline}
\label{e3}
(\Omega_\ip)_i(W,*)
\cong
\mathrm{ker}((\Omega_\ip)_i(W)\to (\Omega_\ip)_i(*))
\cong
\kappa_i(W)
\cong
\pi_i\bar\Phi(W)
\\
\xrightarrow{s}
\pi_{i+1} S(W)
\xrightarrow{\mathfrak{S}}
\pi_{i+1} \bar\Phi(\Sigma W)
\cong
(\Omega_\ip)_{i+1}(\Sigma W,*)
\end{multline}
is an isomorphism.  We will show that this composite is equal to the
suspension isomorphism
\begin{equation}
\label{e4}
s':(\Omega_\ip)_i(W,*)\to (\Omega_\ip)_{i+1}(\Sigma W,*)
\end{equation}
of the homology theory $(\Omega_\ip)_*$.
First we give an explicit description of the composite \eqref{e3}.  
Let $b\in (\Omega_\ip)_i(W,*)$ and let $\tilde{b}$ be the corresponding
element of $\mathrm{ker}((\Omega_\ip)_i(W)\to (\Omega_\ip)_i(*))$.  Then 
$\tilde b$ is represented by a map $f:M\to W$ 
where $M$ is an $i$-dimensional IP-space which is a boundary; say $M=\partial 
N$.  Recall that $k$-simplices of $\Phi_0(W)$ are the same thing as elements 
of $\ad^0_{\ip,W}(\Delta^k)$.
Let $\sigma$ be the $i$-simplex of $\Phi_0(W)$ 
which takes $\Delta^i$ to $f$ and all faces
of $\Delta^i$ to $\emptyset \to W$, and let $\sigma'$ be the image of $\sigma$
in $\Phi_0(CW)$.
We can construct an $(i+1)$-simplex $\tau$ of $\Phi_0(CW)$ 
with $\partial_0(\tau)=\sigma'$ and all other faces at the
basepoint as follows.
Let $P$ be 
\[
(I\times M)\cup_{1\times M}\, N,
\]
let $g:P\to CW$ take $(t,x)\in I\times M$ to $[t,f(x)]\in CW$ and $N$ to 
$[1,*]$, and finally let $\tau$ 
take $\Delta^{i+1}$ to $g$, $d^0 \Delta^{i+1}$ to $f$, and the remaining
faces to $\emptyset\to W$.  
Let $\tau_1$ and $\tau_2$ be the images of $\tau$ under the
two maps $\Phi_0(CW)\to
\bar\Phi_0(\Sigma W)$; then (with the notation of Equation \eqref{e5}) 
$\tau_1$ and $\tau_2$ give a map
\[
{\mathbf b}: D/\partial D\to \bar\Phi_0{\Sigma W}
\]
which represents an element of 
$\pi_{i+1} \bar\Phi(\Sigma W)$, and the image of this element in
$(\Omega_\ip)_{i+1}(\Sigma W,*)$ is the image of $b$ under the composite
\eqref{e3}.

Next we show that this description of the image of $b$ in $\pi_{i+1} 
\bar\Phi(\Sigma W)$ can be simplified.
Let the two copies of 
$CW$ in $\Sigma W$ be $[-1,0]\wedge W$ and $[0,1]\wedge W$, where the 
basepoints of $[-1,0]$ and $[0,1]$ are $-1$ and $1$.
Let 
\[
Q=N\cup_{-1\times M} ([-1,1]\times M)\cup_{1\times M} N,
\]
and let $h:Q\to \Sigma W$ take $(t,x)$ to $[t,f(x)]$ and both copies of $N$ to
the basepoint.  Let $\tau_3$ be the $(i+1)$-simplex of $\Phi_0(\Sigma W)$
which takes $\Delta^{i+1}$ to $h$ and all faces of $\Delta^{i+1}$ to
$\emptyset\to \Sigma W$; then $\tau_3$ gives a map 
\[
{\mathbf b}':\Delta^{i+1}/\partial \Delta^{i+1}\to \bar\Phi(\Sigma W),
\]
and we claim that $\mathbf b$ and ${\mathbf b}'$ represent the same element of
$\pi_{i+1} 
\bar\Phi(\Sigma W)$.  To see this,  
let $\upsilon$ be the $(i+2)$-simplex of $\Phi_0(\Sigma W)$ 
such that
\begin{itemize}
\item
$\upsilon(\Delta^{i+2})$ is the composite
\[
I\times Q\xrightarrow{p} Q\xrightarrow{h} \Sigma W
\]
(where $p$ is the projection),
\item
$\upsilon(d^0 \Delta^{i+2})=h\circ p|_{0\times Q}$, 
\item
$\upsilon(d^1\Delta^{i+2})=h\circ p|_{1\times (N\cup_{-1\times M} 
([-1,0]\times M))}$,
\item
$\upsilon(d^2\Delta^{i+2})=h\circ p|_{1\times(([0,1]\times M)\cup_{1\times 
M} N)}$,
\item
$\upsilon$ takes all other faces to $\emptyset\to\Sigma W$.
\end{itemize}
Then $\upsilon$ gives a homotopy between $\mathbf b$ and ${\mathbf b}'$ which
verifies the claim.

The element of $(\Omega_\ip)_{i+1}(\Sigma W,*)$ corresponding to ${\mathbf b}'$
is represented by the map $h:Q\to \Sigma W$; this completes our calculation of
the image of $b$ under the composite \eqref{e3}. 

Next we claim that the image of $b$ under the map \eqref{e4} is represented by
$h|_{Q'}$, where 
\[
Q'=([-1,1]\times M)\cup_{1\times M} N.
\]
To see this, recall that the suspension map $s'$ is defined to be the inverse
of the composite
\[
(\Omega_\ip)_{i+1}(\Sigma W,*)
\xleftarrow[\cong]{q}
(\Omega_\ip)_{i+1}(C'W,W)
\xrightarrow{\partial}
(\Omega_\ip)_i(W,*),
\]
where $C'W=[-1,1]\wedge W$ (with the basepoint of $[-1,1]$ at 1), $q$ is the
quotient map, and 
$\partial$ is the boundary map of the homology theory $(\Omega_\ip)_*$.  
Now consider the map 
\[
k:Q'\to C'W
\]
which takes $(t,x)$ to $[t,f(x)]$ and $N$ to the basepoint. 
Recall from \cite[Section 5]{pardon} that the boundary map $\partial$ is
defined as in \cite[Section 4]{cf}; thus $\partial[k]=[f]=b$.
We also have $q\circ
k=h|_{Q'}$, so $s'(b)$ is represented by $h|_{Q'}$ as claimed. 

To complete the proof of Lemma \ref{l2} we observe that $h$ and $h|_{Q'}$
represent the same element of $(\Omega_\ip)_{i+1}(\Sigma W,*)$ because the 
composite
\[
I\times Q\xrightarrow{p} Q\xrightarrow{h} \Sigma W
\]
(where $p$ is the projection) is a bordism, in the sense of \cite[Section
4]{cf}, between $h$ and $h|_{Q'}$.

\appendix

\section{The intrinsic filtration of a finite-dimensional PL space}
\label{a1}

Let $X$ be a PL space.  Say that two points $x_1,x_2\in X$ are {\it equivalent}
if there are neighborhoods $U_1$ of $x_1$ and $U_2$ of $x_2$ with a PL
homeomorphism of pairs $(U_1,x_1)\approx (U_2,x_2)$.
Let $X$ be finite dimensional. Choose a triangulation of $X$, and
let $X(i)$ be the $i$-skeleton of this triangulation.  The {\it intrinsic
filtration} of $X$ is the filtration $X^i$ for which $x\in X^i$ if and only if
all points equivalent to $x$ are in $X(i)$.  This filtration is independent of
the chosen triangulation because it is the coarsest PL CS
stratification\footnote{The definition of CS stratification is a weaker version
of Definition \ref{d1}.} 
of $X$ (cf.\ \cite[Remark 2.10.7]{Fr}).
We will use the notation $X^i$ throughout this appendix to denote the intrinsic
filtration of $X$.

\begin{prop}
\label{p1}
Let $X$ be a finite-dimensional PL space.

{\rm (i)}  If $U$ is an open subset of $X$ then $U^i=X^i\cap U$.

{\rm (ii)} If $M$ is a PL manifold of dimension $m$ then 
\[
(X\times M)^i=
\begin{cases}
X^{i-m}\times M & m\leq i\leq \dim X + m, \\
\emptyset & \text{otherwise.}
\end{cases}
\]

{\rm (iii)} If $f:X\to Y$ is a PL homeomorphism then $f(X^i)=Y^i$.

{\rm (iv)} If $X$ is a PL pseudomanifold then the intrinsic filtration on $X$ is
a stratification in the sense of Definition \ref{d1}.
\end{prop}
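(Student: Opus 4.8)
The plan is to establish (i), (ii) and (iii) first --- these are either formal or can be quoted from \cite{Fr} --- and then to feed them into the proof of (iv), which is the substantive part. Throughout I would use the characterization of the intrinsic filtration $X^*$ as the coarsest PL CS stratification of $X$ (\cite[Remark 2.97]{Fr}), and write $d_X(x)$ for the intrinsic dimension of a point, i.e.\ the unique $i$ with $x\in X^i-X^{i-1}$.

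Part (iii) is formal: a PL homeomorphism $f\colon X\to Y$ carries a triangulation $T$ of $X$ to a triangulation $f(T)$ of $Y$, hence the $i$-skeleton of $T$ to that of $f(T)$, and it carries the equivalence relation of Appendix \ref{a1} to the corresponding relation on $Y$, so the definition gives $f(X^i)=Y^i$. Part (i) is similar: the equivalence relation, and hence $d_X$, is detected in arbitrarily small neighborhoods of a point, so $d_U(x)=d_X(x)$ whenever $x$ lies in an open set $U$, which is exactly the assertion $U^i=X^i\cap U$ (equivalently, $X^*\cap U$ is a PL CS stratification of $U$, and locality shows it is the coarsest). For part (ii), using (i) and a PL chart of $M$ about a point it suffices to treat $M=\R^m$ and to prove $d_{X\times\R^m}(x,y)=d_X(x)+m$ for all $(x,y)$; I would prove this by choosing a neighborhood of $x$ PL homeomorphic to $\R^{d_X(x)}\times c^\circ L$ realizing the intrinsic dimension, multiplying by $\R^m$, and noting that the resulting neighborhood $\R^{d_X(x)+m}\times c^\circ L$ of $(x,y)$ has the same $L$ and hence still realizes the intrinsic dimension --- or else simply cite this product formula from \cite{Fr}. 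This is the one point where genuine PL topology, rather than formal manipulation, enters.

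For (iv) I would argue by induction on $n=\dim X$, the case $n=0$ being immediate. By \cite[Remark 2.97]{Fr} the $X^i$ are closed subpolyhedra forming a filtration $X=X^n\supseteq\cdots\supseteq X^{-1}=\emptyset$ such that every $x\in X^i-X^{i-1}$ has a PL homeomorphism $\phi\colon\R^i\times c^\circ L\to U$ onto an open neighborhood, with $L$ a compact PL CS set of dimension $n-i-1$ and $\phi(\R^i\times c^\circ(L^{j-1}))=X^{i+j}\cap U$. It remains to check: (A) $X-X^{n-1}$ is dense and $X^{n-1}=X^{n-2}$ (the identity $X^n=X$ being automatic); and (B) each link $L$ occurring here is a stratified PL pseudomanifold in the sense of Definition \ref{d1}. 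For (A): since $X^i\subseteq X(i)$ one has $\dim X^{n-1}\le n-1$, so $X-X^{n-1}\supseteq X-X(n-1)$ is dense by Definition \ref{pseud}; and if some $x$ had $d_X(x)=n-1$ then $L$ would be a nonempty finite set of $k$ points, and since an open subset of a PL pseudomanifold is a PL pseudomanifold, $\R^{n-1}\times c^\circ L$ would be an $n$-dimensional PL pseudomanifold, which forces $k=2$ (any other value of $k$ violates Definition \ref{pseud}); but then $\R^{n-1}\times c^\circ L\cong\R^n$ and $d_X(x)=n$, a contradiction. For (B): $U$ is open in the pseudomanifold $X$, hence an $n$-dimensional PL pseudomanifold, and peeling off the manifold factor $\R^i$ and then the open cone shows $L$ is an $(n-i-1)$-dimensional PL pseudomanifold (using that $A\times\R^i$, resp.\ $c^\circ A$, is a PL pseudomanifold iff $A$ is). Finally, applying (i), (iii) and (ii) in turn, the filtration that $L$ inherits from $X^*$ via $\phi$ agrees, on $c^\circ L\setminus\{v\}\cong(0,1)\times L$, with $(0,1)$ times the intrinsic filtration of $L$; hence the inherited filtration of $L$ is its own intrinsic filtration, which by the inductive hypothesis is a stratification in the sense of Definition \ref{d1}. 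Thus $L$ is a stratified PL pseudomanifold, which closes the induction.

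I expect the main obstacle to be step (B) of part (iv): correctly propagating the pseudomanifold condition from $X$ down to the links, together with the bookkeeping --- via parts (i)--(iii) --- that identifies the filtration a link inherits from $X^*$ with its intrinsic filtration, since this is exactly what makes the induction close. The other delicate point, the behaviour of the intrinsic filtration under products with manifolds used in (ii) and again in (B), I would either argue as sketched above or quote directly from \cite{Fr}.
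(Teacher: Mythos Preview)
Your proposal is correct, but it does considerably more work than the paper actually does. The paper's proof of this proposition is simply a list of citations: parts (i), (ii) and (iv) are identified with specific results in \cite{Fr} (Lemmas~2.107, 2.108 and Proposition~2.104 respectively), and part (iii) is declared immediate from the definition. No argument is given in the paper beyond this.

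What you have written is, in effect, a sketch of how those cited results are proved (or could be proved). Your treatments of (i)--(iii) are fine and essentially match what the definitions give. Your inductive argument for (iv)---starting from the CS characterization of the intrinsic filtration, verifying density and the codimension-one condition, showing links are themselves PL pseudomanifolds, and then using (i)--(iii) to identify the inherited filtration on a link with its intrinsic filtration so the induction closes---is a correct outline and is precisely the kind of argument that underlies \cite[Proposition~2.104]{Fr}. The points you flag as delicate (the product formula in (ii) and the propagation of the pseudomanifold condition to links in (iv)) are indeed the nontrivial ingredients, and your suggestion to cite \cite{Fr} for them is exactly what the paper does, only wholesale rather than piecemeal.
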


\begin{proof}
Part (i) is \cite[Lemma 2.10.16]{Fr},
part (ii) is \cite[Lemma 2.10.17]{Fr},
part (iii) is immediate from the definition of the intrinsic filtration,
and part (iv) is \cite[Proposition 2.10.18]{Fr}.
\end{proof}

The following fact is \cite[Proposition 2.10.23]{Fr}.

\begin{prop}
\label{p2}
Let $X$ be a PL $\partial$-pseudomanifold, and define subsets $X[i]$ by letting
$X[i]\cap (X-\partial X)=(X-\partial X)^i$ and 
\[
X[i]\cap \partial X=
\begin{cases}
(\partial
X)^{i-1} & 1\leq i\leq \dim X,\\
\emptyset & \text{otherwise.}
\end{cases}
\]
Then the filtration $X[i]$ gives $X$ the structure of a stratified PL
$\partial$-pseudomanifold.
\qed
\end{prop}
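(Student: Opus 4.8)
The plan is to check directly that the filtration $X[*]$ satisfies the three conditions (a), (b), (c) of Definition \ref{d2}, with Proposition \ref{p1} supplying everything needed; this is essentially the content of \cite[Corollary 2.111 and Proposition 2.110]{Fr}. For condition (a) I must show that $X-\partial X$, with the restricted filtration $X[*]\cap(X-\partial X)=(X-\partial X)^*$, is a stratified PL pseudomanifold in the sense of Definition \ref{d1}. Since $X-\partial X$ need not be compact, the cleanest route is to pass to the double $DX=X\cup_{\partial X}X$, which is a closed PL $n$-pseudomanifold (each $(n-1)$-simplex now lies in exactly two $n$-simplices, the two copies of a boundary simplex fitting together). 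By Proposition \ref{p1}(iv) the intrinsic filtration $(DX)^*$ is a Definition \ref{d1} stratification, and since $X-\partial X$ is open in $DX$, Proposition \ref{p1}(i) gives $(X-\partial X)^*=(DX)^*\cap(X-\partial X)$. A routine check — restrict the distinguished cone neighborhoods, using Proposition \ref{p1}(i) once more for the induced filtration — shows that a Definition \ref{d1} stratification restricts to one on any open subset; this gives (a).

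For condition (b), recall that $\partial X$ is an $(n-1)$-dimensional PL pseudomanifold by Definition \ref{bpseud}(c), so by Proposition \ref{p1}(iv) its intrinsic filtration $(\partial X)^*$ is a Definition \ref{d1} stratification. The defining formula $X[i]\cap\partial X=(\partial X)^{i-1}$ says precisely that the filtration $X[*]$ induces on the codimension-one subspace $\partial X$ is this intrinsic filtration, up to the customary index shift $i\mapsto i-1$; hence (b). One should also observe that each $X[i]$ really is a closed polyhedron: $(\partial X)^{i-1}$ is closed in $\partial X$, which is closed in $X$, and the closure in $X$ of $(X-\partial X)^i$ meets $\partial X$ in exactly $(\partial X)^{i-1}$, which follows from the collar together with Proposition \ref{p1}(i), (ii).

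For condition (c), take the PL collar $c:\partial X\times[0,1)\to X$ of Definition \ref{bpseud}(d); it is a PL homeomorphism onto its image $N$, and $N$ is open in $X$ and contains $\partial X$. I must check that $c$ is a homeomorphism of filtered spaces, with $[0,1)$ trivially filtered and $\partial X$ carrying the filtration from (b). On the open part $N-\partial X=c(\partial X\times(0,1))$ the filtration $X[*]$ agrees with the intrinsic filtration of the PL space $N$ (both are $(X-\partial X)^*\cap N$ by Proposition \ref{p1}(i)), and Proposition \ref{p1}(iii) together with Proposition \ref{p1}(ii) identifies this, via $c$, with the product filtration $(\partial X)^{i-1}\times(0,1)$ — the shift $i\mapsto i-1$ coming from the $1$-manifold factor $(0,1)$ in Proposition \ref{p1}(ii). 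On $\partial X$ itself $c$ is the identity and $X[i]$ restricts to $(\partial X)^{i-1}$ by definition. Hence $c$ carries the filtration $X[*]$ on $N$ to the product filtration, giving (c).

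The only genuine difficulty is bookkeeping: one must keep straight the codimension-one index shift that appears both in the formula defining $X[*]$ and — through Proposition \ref{p1}(ii) — when one crosses with the collar direction, reconcile it with the convention for the ``induced filtration'' on $\partial X$, and remember that $X[*]$ agrees with the intrinsic filtration only away from $\partial X$. There is also a minor, routine lemma to the effect that Definition \ref{d1} stratifications are preserved under restriction to open subsets and under products with manifolds (with the dimension shift dictated by Proposition \ref{p1}(ii)). No input beyond Proposition \ref{p1} is used, which is why the statement can simply be cited from \cite{Fr}.
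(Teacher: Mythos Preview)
The paper does not give its own proof of this proposition: it simply records that the statement is contained in \cite[Corollary 2.111 and Proposition 2.110]{Fr} and places a \qed. Your proposal is a correct explication of what that citation covers, and the organization---checking (a), (b), (c) of Definition \ref{d2} using only Proposition \ref{p1}---is exactly the intended one.

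One small simplification: for condition (a) you do not need the double $DX$. The interior $X-\partial X$ is already a PL pseudomanifold in the sense of Definition \ref{pseud} (every simplex lies in an $n$-simplex, and every $(n-1)$-simplex not in $\partial X$ lies in exactly two $n$-simplices, neither definition requiring compactness), so Proposition \ref{p1}(iv) applies to it directly. Your route through $DX$ is not wrong, just slightly longer. The remaining points---the index shift in (b), the collar argument for (c) via Proposition \ref{p1}(i)--(iii), and the closedness of each $X[i]$---are handled correctly.
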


\section{Modules over additive categories.}
\label{add}

See \cite[Section 9A]{luck} for an introduction to this topic.

An {\it additive category} is a small category in which the morphism sets are 
abelian
groups and the composition is bilinear.  A functor between additive categories
is {\it additive} if it is a homomorphism on each morphism set.

\begin{example}
A ring is the same thing as an additive category with a single object, and a
ring homomorphism is an additive functor.
\end{example}

By a left (resp., right) {\it module} over an additive category $\CC$ we mean
a covariant (resp., contravariant) additive functor $\M$ from $\CC$ to the 
category of abelian groups; we write $\M_c$ for the value of $\M$ at an object
$c$ of $\CC$.

If $\M$ and $\N$ are left modules over $\CC$  we define
$\Hom_\CC(\M,\N)$ to be the abelian group of natural transformations.

If $\M$ is a right module over $\CC$ and $\N$ is a left module, the {\it
tensor product} $\M\otimes_\CC \N$ is the abelian group 
\[
\bigoplus_{c\in \ \mathrm{Ob}(\CC)}
(\M_c\otimes \N_c)/Q;
\]
here $Q$ is generated by the elements
$\M(f)(m)\otimes n-m\otimes \N(f)(n)$, where $f$ runs through the morphisms in
$\CC$, $m\in \M_{\mathrm{target}(f)}$, and $n\in 
\N_{\mathrm{source}(f)}$.

The {\it tensor product} of two additive categories $\CC$ and $\CC'$ is the
additive category whose object set is $\mathrm{Ob}(\CC)\times \mathrm{Ob}(\CC')$,
and whose morphism set from $(c_1,c'_1)$ to $(c_2,c'_2)$ is
$\mathrm{Mor}_\CC(c_1,c_2)\otimes \mathrm{Mor}_\CC'(c_1',c_2')$. 

A $(\CC,\CC')$ {\it bimodule} is a left module over $\CC^\mathrm{op}\otimes
\CC'$.

Let $F:\CC\to \CC'$ be an additive functor.  The {\it canonical bimodule}
determined by $F$ is the $(\CC,\CC')$ bimodule 
$\P_F$ which takes $(c,c')$ to
$\mathrm{Mor}_{\CC'}(F(c),c')$, with the evident action on morphisms.
If $\M$ is a left $\CC$ module, the {\it Kan extension} of $\M$ along $F$,
denoted $\mathrm{Kan}_F\M$, is 
the left $\CC'$ module that takes $c'$ to $\P_F(-,c')\otimes_\CC \M$, with the
evident action on morphisms.  

\begin{example}
\label{aa29}
If $h:R\to S$ is a ring homomorphism and $M$ is an $R$-module then
$\mathrm{Kan}_h M$ is isomorphic to $S\otimes_R M$.
\end{example}

There is a natural map $\iota:\M\to \mathrm{Kan}_F\M$ which takes $m\in \M_c$ 
to
$\mathrm{id}_{F(c)}\otimes m$.
There is a natural bijection between
$\Hom_{\CC'}(\mathrm{Kan}_F\M,\N)$ (where $\N$ is a $\CC'$ module) and 
$\Hom_\CC(\M,\N)$ which takes $h$ to $h\circ \iota$.


If $G:\CC'\to\CC''$ is an additive functor there is an isomorphism
\[
\P_G\otimes_{\CC'}\P_F\cong
\P_{G\circ F}
\]
of $(\CC,\CC'')$ bimodules which on objects is induced by the composition map
$\mathrm{Mor}_{\CC''}(G(c'),c'')\otimes \mathrm{Mor}_{\CC'}(F(c),c')
\to 
\mathrm{Mor}_{\CC''}(G(F(c)),c'')$; thus there is an isomorphism of $\CC''$
modules 
\begin{equation}
\label{aa28}
\mathrm{Kan}_G(\mathrm{Kan}_F(\M))\cong \mathrm{Kan}_{G\circ F}\M.
\end{equation}

For use in Section \ref{gs6} we record

\begin{remark}
\label{aa33}
Recall the definition of the additive category $\Z[\pi_1 Z]$ from Section
\ref{ringoids} (and also recall that we denote composition of path homotopy
classes by letting
$\delta\gamma$ be ``first $\gamma$, then $\delta$'', analogously to
composition of functions).
Let $g:Z\to Z'$ be a map of connected spaces, and let $g_*:\Z[\pi_1 Z]\to
\Z[\pi_1 Z']$ be the induced functor.  Let $\M$ be a $\Z[\pi_1 Z]$ module 
and
let $z\in Z$.  We can give an explicit description of 
$\mathrm{Kan}_{g_*}(\M)_{g(z)}$,
as follows.  By definition, $\mathrm{Kan}_{g_*}(\M)_{g(z)}$ is
$\P_{g_*}(-,g(z))\otimes_{\Z[\pi_1 Z]} \M$, and $\P_{g_*}(w,g(z))$ is the 
free abelian group generated by the path homotopy classes $\delta$ from 
$g(w)$ to $g(z)$.  There is a map
\[
\Z[\pi_1(Z',g(z)]\otimes_{\Z[\pi_1(Z,z)]} \M_z\to 
\mathrm{Kan}_{g_*}(\M)_{g(z)}
\]
which takes $\gamma\otimes m$ to the class of $\gamma\otimes m$.  This map is
an isomorphism (it's straightforward to check that its inverse takes
$\delta\otimes n$, where $\delta$ is a path homotopy class from $g(w)$ to
$g(z)$ and $n\in \M_{w}$, 
to $\delta(g\circ \epsilon)^{-1}\otimes \epsilon n$,
where
$\epsilon$ is any path homotopy class from $w$ to $z$).
The diagram
\[
\xymatrix{
\M_z
\ar[rd]_\iota
\ar[r]
&
\Z[\pi_1(Z',g(z)]\otimes_{\Z[\pi_1(Z,z)]} \M_z
\ar[d]_\cong
\\
&
\mathrm{Kan}_{g_*}(\M)_{g(z)}
}
\]
commutes.

Similarly, for $z_1,z_2\in Z$ and modules $\M_1,\M_2$ over $\Z[\pi_1 Z]$, there
is an isomorphism
\begin{multline*}
(\Z[\pi_1(Z',g(z_1)]\otimes_{\Z[\pi_1(Z,z_1)]} (\M_1)_{z_1})
\otimes
(\Z[\pi_1(Z',g(z_2)]\otimes_{\Z[\pi_1(Z,z_2)]} (\M_2)_{z_2})
\\
\to 
\mathrm{Kan}_{g_*\otimes g_*}(\M_1\otimes \M_2)_{(g(z_1),g(z_2))}.
\end{multline*}
\end{remark}

\section{Subdivision of singular simplices}
\label{subdiv}

In this section we record some facts related to subdivision which will be
needed in the next section.

Recall that intersection chains can be defined for any filtered space
(\cite[Section 2]{fr}).  For a chain $\eta$, write $\supp(\eta)$ for the 
support of $\eta$, that is, the union of the images of the singular simplices 
that have nonzero coefficient in $\eta$.

\begin{prop}
\label{aa17}
Let $Z$ be a filtered space, let $U\subset Z$ be open, and 
let $\bar p$ be a perversity.
Then there is an operation which takes each singular
simplex $s$ to a chain $\bar{s}$, with the following properties.

{\rm{(i)}} If $\supp(s)\subset U$ then $\bar{s}=s$.

{\rm{(ii)}} $\supp(\bar{s})\subset U\cap \supp(s)$.

{\rm{(iii)}} If $s$ is allowable then so are all singular simplices that 
belong to
$\bar{s}$.

{\rm{(iv)}} If $\xi=\sum a_i  s_i$ is an element of $IS^{\bar p}_*(Z;\Z)$ with $a_i \in \Z$, 
write 
\[
\bar{\xi}=
\sum a_i\bar{s}_i.
\]
Then $\bar{\xi}\in IS^{\bar p}_*(U;\Z)$.
\end{prop}

\begin{proof}
In the construction of \cite[page 155 line $-7$ to page 156 line 
19]{friedmanmcclure}, ignore $\xi_i$, replace $\mathfrak B$ 
by the set of all singular simplices, replace $\mathfrak B^j$ by the set of
singular simplices of dimension $j$, and replace $U_1$ by $U$.  Define
$\bar{s}$ by the equation on line 15 of page 156 of
\cite{friedmanmcclure}.  Then (i) and (ii) are immediate, and (iii) follows
from \cite[Lemma 2.6]{FTrans}.  To see (iv),
first note that
Equation (9)
on page 156 of \cite{friedmanmcclure} remains valid with the same proof.  
Let $k$ be the dimension of $\xi$.
Then
\[
\partial \bar{\xi}\equiv
\sum_{ t\in \mathfrak B^{k-1}\mathrm{\ not allowable}} 
\left[
\sum_{s\in {\mathfrak B}^k} 
c_{\xi}(s)
c_{\partial s}( t)
\right]
\bar{ t}
\]
modulo allowable singular simplices.
The expression in brackets is equal to the coefficient of $ t$ in
$\partial\xi$, and this is zero because $\xi$ is an intersection chain.  Hence
$\bar{\xi}$ is allowable.
\end{proof}

\begin{lemma}
\label{fix}
Let $Z$ be a filtered space, let $\{W_1,\ldots,W_n\}$ be an
open cover of $Z$,
let $\bar p$ be a perversity 
and let $i\in \Z$.  Then 

{\rm{(i)}} the intersection of 
$IS^{\bar p}_i (Z;\Z)$ with $\sum_j S_i (W_j;\Z)$ (considered as
subgroups of $S_i (Z;\Z)$) is $\sum_j IS^{\bar p}_i
(W_j;\Z)$, and

{\rm{(ii)}} $IS^{\bar p}_i (Z;\Z)/\sum_j IS^{\bar p}_i
(W_j;\Z)$ is free over $\Z$.
\end{lemma}


\begin{proof}[Proof of Lemma \ref{fix}]
For part (i),
the proof is by induction on $n$.
Let 
\[
\xi\in IS^{\bar p}_i (Z;\Z) \cap \sum_{j=1}^n S_i (W_j;\Z).
\]
Write
\[
\xi=\sum  a_m s_m;
\]
then every $\supp( s_m)$ is contained in some $W_j$.
Apply Proposition \ref{aa17} with $U=W_1$ to get 
$\bar{\xi}\in IS^{\bar p}_* (W_1;\Z)$. Parts (i) and (ii) of Proposition 
\ref{aa17}
show that every singular simplex that belongs to $\xi-\bar{\xi}$ has support
in some $W_j$ with $j\geq 2$. Let $Z'=\cup_{j=2}^n W_j$. Then 
$\xi-\bar{\xi}$ is
an element of 
\[
IS^{\bar p}_i (Z';\Z) \cap \sum_{j=2}^n S_i (W_j;\Z),
\]
so by the inductive hypothesis 
\[
\xi-\bar{\xi}\in \sum_{j=2}^n  IS^{\bar p}_i (W_j;\Z),
\]
and therefore 
\[
\xi\in \sum_{j=1}^n IS^{\bar p}_i (W_j;\Z)
\]
as required.

For part (ii), note that by part (i) the map
\[
IS^{\bar p}_i (Z;\Z)/\sum_j IS^{\bar p}_i
(W_j;\Z)
\to
S_i(Z,\Z)/\sum_j S_i (W_j;\Z)
\]
is 1-1 and that $S_i(Z,\Z)/\sum_j S_i (W_j;\Z)$ is freely generated by the
singular simplices that do not land in any $W_j$.
\end{proof}

\begin{remark}
Part (i) of this result gives a simple proof of \cite[Lemma 
6.11]{friedmanmcclure}, as
follows.  With the notation of that Lemma, 
\[
\xi_1=-\xi_2-\cdots -\xi_m,
\]
so every singular simplex that belongs to $\xi_1$ has support in some $U_i$
with $i\geq 2$.  Thus
\[
\xi_1\in IS^{\bar p}_*(X;\Z) \cap \sum_{i=2}^m S_*(U_1\cap U_i;\Z),
\]
and by Lemma \ref{fix}(i) there exist $\eta_i\in IS^{\bar p}_*(U_i\cap U_1;\Z)$
for $2\leq i\leq m$ with
\[
\xi_1=-\eta_2-\cdots -\eta_m
\]
as required.
\end{remark}

\section{Extensions of some results from 
\cite{friedmanmcclure}}
\label{tech}

In this appendix we show that the results of
\cite[Subsections 6.2 and 6.3]{friedmanmcclure} remain valid with 
the field $F$ 
replaced by $\Z$.  We will use the notation of those subsections, 
except that we denote singular chains by $S_*$ instead of $C_*$ and we write 
$IS^{\bar p}_*$ instead of $I^{\bar p}C_*$ and $IS^{\bar p,\uu}_*$ instead of 
$I_\uu^{\bar p}C_*$.  

First we have an analog of \cite[Lemma 6.6]{friedmanmcclure} (the extra
generality in the statement is used in Subsections \ref{ss2} and \ref{aa3}).

We need some definitions and notation.
Let $Z$ be a filtered space and let $\bar p$ be a perversity. 
Let
$p:\tilde{Z}\to Z$ be a regular cover with automorphism group $\pi$, and for a subset 
$A\subset Z$ write $\tilde A$ for $p^{-1}(A)$.  Recall that $A$ is said to be
{\it evenly covered} if the cover $\tilde{A}\to A$ is trivial. Let
$\S$ be a finite collection of subsets of $Z$, and let
$\CC(\S)$ be the category whose objects 
are the elements of $\S$ and whose morphisms are inclusions.  
Let 
\[
\Phi:\CC(\S)\to \Z[\pi]{\mathrm{-mod}}
\]
be the functor that takes $A$ to $IS^{\bar p}_* (\tilde{A};\Z)$.
For each 
element $A$ of $S$ let $\CC(A)$ be the full subcategory of $\CC(\S)$ whose
objects are the elements of $\S$
which are proper subsets of $A$, and let ${\mathbf C}(A)$ be the cokernel of
the map
\[
\colim_{B\in \CC(A)} IS^{\bar p}_*(B;\Z)
\to
IS^{\bar p}_*(A;\Z).
\]
Let 
\[
\Psi:\CC(\S)\to \Z[\pi]{\mathrm{-mod}}
\]
be the functor which takes $A$ to 
\[
\bigoplus_{B\in \S \ \mathrm{and} B\subset A} \Z[\pi]\otimes{\mathbf C}(B)
\]
and has the obvious definition on morphisms.

\begin{lemma}
\label{aa15}
Suppose that, for every $A\in\S$,

{\rm{(a)}} $A$ is evenly covered,

{\rm{(b)}} the map
\[
\colim_{B\in \CC(A)} IS^{\bar p}_* (B;\Z)
\to
IS^{\bar p}_* (A;\Z)
\]
is a monomorphism, and

{\rm{(c)}}
$\mathbf{C}(A)$ is free over $\Z$.  

Then 

{\rm{(i)}} $\Phi$ is naturally isomorphic to $\Psi$, and

{\rm{(ii)}} $\colim_{A\in \CC(\S)} IS^{\bar p}_* (\tilde{A};\Z)
\cong
\bigoplus_{A\in \S} \Z[\pi]\otimes{\mathbf{C}(A)}$, and

{\rm{(iii)}} for each $\S'\subset \S$, the map
\[
\colim_{A\in \CC(\S')} IS^{\bar p}_* (\tilde{A};\Z)
\to
\colim_{A\in \CC(\S)} IS^{\bar p}_* (\tilde{A};\Z)
\]
has a left inverse over $\Z[\pi]$.

\end{lemma}

\begin{proof}[Proof of Lemma \ref{aa15}]
Part (ii) follows from part (i), since $\bigoplus_{A\in \S}
\Z[\pi]\otimes{\mathbf{C}(A)}$ satisfies the universal property to be 
$\colim_{A\in \CC(\S)} \Psi(A)$,
and (iii) is immediate from part (ii).

Proof of (i).
For each $A\in \S$, choose an isomorphism $\iota_A$ from the trivial cover
$\pi\times  A\to A$ to $p|_A$
and a splitting $s_A$ of the map $IS^{\bar p}_*(A;\Z)\to \mathbf{C}(A)$.
Let 
\[
\nu_A:\Psi(A)\to \Phi(A)
\]
be the map whose restriction to $\Z[\pi]\otimes{\mathbf C}(B)$
is the composite
\[
\Z[\pi]\otimes{\mathbf C}(B)
\xrightarrow{1\otimes s_B}
\Z[\pi]\otimes IS^{\bar p}_*(B;\Z)
\cong
IS^{\bar p}_*(\pi\times B;\Z)
\xrightarrow{\iota_B}
IS^{\bar p}_*(\tilde{B};\Z)
\to
IS^{\bar p}_*(\tilde{A};\Z).
\]
This is a natural transformation $\Psi\to \Phi$.  To see that $\nu_A$ is an
isomorphism, let us assume inductively that this has been shown for all
$B\subset A$.  Consider the diagram
\[
\xymatrix{
\colim_{B\in \CC(A)} \Psi(B)
\ar[d]_{\colim \nu_B}
\ar[r]
&
\Psi(A)
\ar[d]_{\nu_A}
\ar[r]
&
\Z[\pi]\otimes{\mathbf C}(A)
\ar[d]_=
\\
\colim_{B\in \CC(A)} \Phi(B)
\ar[r]
&
\Phi(A)
\ar[r]
&
\Z[\pi]\otimes{\mathbf C}(A),
}
\]
where the upper right arrow is the projection and the lower right arrow is the
composite
\[
\Phi(A)
\xrightarrow{\iota_A^{-1}}
IS^{\bar p}_*(\pi\times A;\Z)
\cong
\Z[\pi]\otimes IS^{\bar p}_*(A;\Z)
\to
\Z[\pi]\otimes {\mathbf{C}}(A).
\]
The diagram
commutes, and the left vertical arrow is an isomorphism, so it suffices to
show that 
the 
rows are short exact.  For the upper row this is clear, and for
the lower row this follows from hypothesis (b) and the definition of 
${\mathbf{C}}(A)$.
\end{proof}

\begin{remark}
\label{aa18}
%
If 
all $A\in\S$ are open and $\S$ is closed under intersection then 
hypotheses (a) and (b) of Lemma \ref{aa15} are satisfied: (a) follows from 
\cite[Proposition 6.3]{friedmanmcclure} and (b) from \cite[Proposition 
6.3]{friedmanmcclure} and Lemma \ref{fix}(ii).
\end{remark}

Next we have the analog of 
\cite[Proposition 6.4]{friedmanmcclure}. 

\begin{prop}
\label{f1}
Let $Z$ be a filtered space and let $A$ be an open subset of $Z$.  Let $\bar 
p$ be a perversity. 

{\rm (i)}
If $Z$ has a finite covering by evenly covered open sets (in particular, if
$Z$ is compact) then $IS^{\bar p}_*(\td{Z},\td{A};\Z)$ is chain
homotopy equivalent over $\Z[\pi]$ to a nonnegatively-graded chain complex of
free $\Z[\pi]$-modules.

{\rm (ii)}
For all $Z$, $IS^{\bar p}_*(\td{Z},\td{A};\Z)$ is chain homotopy
equivalent over $\Z[\pi]$ to a nonnegatively-graded chain complex of flat
$\Z[\pi]$-modules.
\end{prop}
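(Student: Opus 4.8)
The plan is to transcribe the proof of \cite[Proposition 6.4]{friedmanmcclure}, with the coefficient field $F$ there replaced by the PID $R$.  The only way the field hypothesis enters that argument is to guarantee that certain $R$-modules of chains are $R$-free, and this survives over a PID: the complex $S_*(\td X;R)$ of singular chains is $R$-free in each degree, and $I^{\bar p}S_*(\td X,\td A;R)$ is a subcomplex of it, so each group $I^{\bar p}S_n(\td X,\td A;R)$ is a submodule of a free $R$-module and hence is itself $R$-free (over a PID every submodule of a free module is free).  Moreover $\pi$ acts freely on the singular simplices of $\td X$, so $S_*(\td X;R)$ is a complex of free $R[\pi]$-modules.

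For part (i), cover $X$ by finitely many evenly covered open sets $U_1,\dots,U_m$ (as in the hypothesis; when $X$ is compact such a cover exists).  Every nonempty finite intersection $U_S=\bigcap_{i\in S}U_i$ is again evenly covered, so the universal cover restricts over $U_S$ to the trivial $\pi$-cover, and therefore $I^{\bar p}S_*\bigl(\widetilde{U_S},\td A\cap\widetilde{U_S};R\bigr)\cong R[\pi]\otimes_R I^{\bar p}S_*\bigl(U_S,A\cap U_S;R\bigr)$, which by the previous paragraph is a nonnegatively graded complex of free $R[\pi]$-modules.  Now apply the $\pi$-equivariant small-chains theorem for intersection chains (\cite{Fr}, whose proof is valid over an arbitrary PID) and the resulting generalized Mayer--Vietoris comparison: it identifies $I^{\bar p}S_*(\td X,\td A;R)$, up to chain homotopy equivalence over $R[\pi]$, with the total complex of the double complex $\bigl(\bigoplus_{|S|=p+1}I^{\bar p}S_*(\widetilde{U_S},\td A\cap\widetilde{U_S};R)\bigr)_p$.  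Since the cover is finite this total complex is a finite direct sum of complexes of free $R[\pi]$-modules, hence is itself a nonnegatively graded complex of free $R[\pi]$-modules.  This proves (i).

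For part (ii), $X$ need not admit a finite evenly covered cover, but it is the filtered union of the open sets $W\subseteq X$ that do, since every singular intersection chain has compact support and hence lies in $I^{\bar p}S_*(\widetilde W,\td A\cap\widetilde W;R)$ for some such $W$.  By (i), each $I^{\bar p}S_*(\widetilde W,\td A\cap\widetilde W;R)$ is chain homotopy equivalent over $R[\pi]$ to a complex of free $R[\pi]$-modules; arranging these equivalences compatibly along the directed system, one obtains a chain homotopy equivalence over $R[\pi]$ from a filtered colimit of complexes of free $R[\pi]$-modules to $\colim_W I^{\bar p}S_*(\widetilde W,\td A\cap\widetilde W;R)=I^{\bar p}S_*(\td X,\td A;R)$.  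A filtered colimit of free $R[\pi]$-modules is flat by Lazard's theorem, so this colimit is a nonnegatively graded complex of flat $R[\pi]$-modules, proving (ii).

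I expect that no step requires real ring-theoretic input; the genuine work, as in \cite[\S\S 6.2--6.3]{friedmanmcclure}, is the homotopy-theoretic bookkeeping: one must check that the Mayer--Vietoris comparison and the colimit comparison are $\pi$-equivariant \emph{chain homotopy equivalences} and not merely quasi-isomorphisms.  (This is not automatic: $I^{\bar p}S_*(\td X,\td A;R)$ is in general not degreewise projective over $R[\pi]$, so a free resolution of it would give only a quasi-isomorphism.)  The needed chain homotopies come from the explicit subdivision operators of the small-chains theorem and from stratified homotopy invariance of intersection chains under the relevant open inclusions --- exactly the inputs assembled in \cite{friedmanmcclure} --- and one checks routinely that the proofs in \cite[\S\S 6.2--6.3]{friedmanmcclure} go through over a PID, the sole change being ``$R$-free'' in place of ``$F$-vector space'' wherever a basis is chosen.
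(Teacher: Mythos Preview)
Your outline differs from the paper's proof, and it has a genuine gap.

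The paper does not redo the argument; it simply observes that the proof of \cite[Proposition~6.4]{friedmanmcclure} uses the field hypothesis at exactly one point, namely in \cite[Lemma~6.6]{friedmanmcclure}, where a certain \emph{quotient} module $B(V)$ (essentially the cokernel of $\sum_{W\in\mathcal D} I^{\bar p}S_*(W;R)\hookrightarrow I^{\bar p}S_*(V;R)$ tensored up to $R[\pi]$) is asserted to be free.  Over a field this is automatic; over a PID it is not, and ``submodule of free is free'' does not help, because the object in question is a cokernel, not a submodule.  The paper supplies the missing step by proving a new lemma (Lemma~\ref{fix}): the intersection of $I^{\bar p}S_i(Z;R)$ with $\sum_{W\in\mathcal U} S_i(W;R)$ is exactly $\sum_{W\in\mathcal U} I^{\bar p}S_i(W;R)$.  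This lets the cokernel embed in $S_*(V;R)/\sum_W S_*(W;R)$, which is visibly $R$-free on the simplices not supported in any $W$; hence $B(V)$ is free over $R[\pi]$ and the rest of \cite[\S6.2--6.3]{friedmanmcclure} goes through unchanged.

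Your \v Cech/Mayer--Vietoris approach runs into the same issue in disguise.  To pass from the \v Cech total complex to $I^{\bar p}S_*(\td X,\td A;R)$ by a \emph{chain homotopy equivalence} over $R[\pi]$ you need the augmented \v Cech rows to be split exact.  For ordinary singular chains the standard contracting homotopy works simplex by simplex, but for intersection chains allowability is a condition on $(\xi,\partial\xi)$, and discarding simplices of an allowable chain according to which $U_i$ carries them typically destroys allowability of the boundary.  So the contracting homotopy does not obviously land in intersection chains, and you cannot conclude more than a quasi-isomorphism.  Since $I^{\bar p}S_*(\td X,\td A;R)$ is not known to be degreewise projective over $R[\pi]$ (indeed, that is essentially what you are trying to prove), a quasi-isomorphism does not upgrade for free.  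Filling this gap amounts to proving (a strengthened form of) Lemma~\ref{fix}.  Similarly, in your argument for (ii), ``arranging these equivalences compatibly along the directed system'' is not innocent: chain homotopy equivalences are not functorial on the nose, and without a coherent model you only get a levelwise quasi-isomorphism of diagrams, which again does not yield a chain homotopy equivalence after taking the colimit.
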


This follows from the proof 
of \cite[Proposition 6.4]{friedmanmcclure}, using Lemma \ref{aa15} and Remark
\ref{aa18}(ii) instead of
\cite[Lemma 6.6]{friedmanmcclure}. 

Our next goal is to prove an analogue of \cite[Proposition
6.5]{friedmanmcclure}.  First recall that with $\Z$ coefficients the
K\"unneth theorem that was used in the proof of \cite[Proposition
6.5]{friedmanmcclure} requires a hypothesis about torsion; see \cite[Definition
6.4.5 and Theorem 6.4.7]{Fr} for the precise statement.  It will be useful to
have the following terminology from
\cite{gorsie}.

\begin{definition}
\label{x1}
Let $\bar{p}$ be any perversity.
A pseudomanifold is called \emph{locally $\bar{p}$-torsion free}
if $IH^{\bar{p}}_{c-2-\bar{p}(c)} (L')$ is torsion free for every link
$L'$, where $\dim L' =c-1$.
\end{definition}

\begin{remark}
\label{x2}
IP-spaces are locally $\bar{m}$-torsion free, since if $c-1$
is even, $c-2-\bar{m}(c)=\frac{1}{2}(c-1)$ and $IH_{(c-1)/2} (L')=0$,
while if $c-1$ is odd, 
$c-2-\bar{m}(c)=\frac{c}{2} -1$ and $IH_{c/2-1} (L')$ is torsion free.
\end{remark}

The proof of \cite[Proposition 6.5]{friedmanmcclure}
goes through without change 
to show

\begin{prop}
\label{P: tensor flat}
Let $X$ be a 
stratified PL $\partial$-pseudomanifold which is locally $\bar p$-torsion 
free or locally $\bar q$-torsion
free and let $A$ be an open subset of $X$.
Then the cross product
\[
IS^{\bar p}_*(\td X,\td A;\Z)\otimes_\Z IS^{\bar q}_*(\td X,\td A;\Z)
\to
IS^{Q_{\bar p,\bar q}}_*(\td X\times \td X,\td A\times \td X\cup \td X\times
\td A;\Z)
\]
induces a quasi-isomorphism
\begin{multline*}
\Z\otimes_{\Z[\pi]}
(
IS^{\bar p}_*(\td X,\td A;\Z)\otimes_\Z IS^{\bar q}_*(\td X,\td A;\Z)
)
\\
\to
\Z\otimes_{\Z[\pi]} IS^{Q_{\bar p,\bar q}}_*(\td X\times \td X,\td A\times \td
X\cup \td X\times
\td A;\Z).
\end{multline*}
\end{prop}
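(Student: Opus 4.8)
The plan is to reduce the statement to a corresponding quasi-isomorphism about chain complexes of flat $R[\pi]$-modules, following the template of the proof of \cite[Proposition 6.5]{friedmanmcclure}. First I would invoke Proposition \ref{f1}(ii) to replace $I^{\bar p}S_*(\td X,\td A;R)$ and $I^{\bar q}S_*(\td X,\td A;R)$, up to $R[\pi]$-chain homotopy equivalence, by nonnegatively-graded chain complexes of flat $R[\pi]$-modules; since chain homotopy equivalences are preserved by $\otimes_R$ and by $R\otimes_{R[\pi]}(-)$, it suffices to prove the statement with these flat replacements. The key point is then that for the diagonal $\pi$-action the cross product
\[
I^{\bar p}S_*(\td X,\td A;R)\otimes_R I^{\bar q}S_*(\td X,\td A;R)
\longrightarrow
I^{Q_{\bar p,\bar q}}S_*(\td X\times \td X,\td A\times \td X\cup \td X\times \td A;R)
\]
is a $\pi$-equivariant quasi-isomorphism; this follows from \cite[Lemma 6.43 and Remark 6.46]{Fr} applied to $\td X$ (which is a $\partial$-IP space, being a covering space of one, so the stratification hypotheses are met), exactly as in the nonequivariant case recorded in the Background subsection above.

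Next I would handle the passage from an equivariant quasi-isomorphism to a quasi-isomorphism after applying $R\otimes_{R[\pi]}(-)$. This is the step where flatness is used: for a $\pi$-equivariant map of chain complexes of flat $R[\pi]$-modules which is a quasi-isomorphism of the underlying $R$-complexes, the induced map after $R\otimes_{R[\pi]}(-)$ is again a quasi-isomorphism, by the usual spectral-sequence (or filtration) argument — one filters by skeleta in the chain direction and uses that $\operatorname{Tor}^{R[\pi]}_{>0}(R,F)=0$ for $F$ flat, so the derived tensor product agrees with the ordinary one and is homotopy invariant. The source of our map, $I^{\bar p}S_*(\td X,\td A;R)\otimes_R I^{\bar q}S_*(\td X,\td A;R)$, is a complex of flat $R[\pi]$-modules for the diagonal action because a tensor product over $R$ of flat $R[\pi]$-modules (with diagonal action) is flat over $R[\pi]$ — this is the standard fact that $M\otimes_R N$ with diagonal $\pi$-action is flat when $M,N$ are, since $R[\pi]\otimes_R R[\pi]\cong R[\pi\times\pi]$ is free over $R[\pi]$ via the diagonal. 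The target is a complex of flat $R[\pi]$-modules because $\td X\times\td X$ is a free $\pi$-space (via the diagonal) on which intersection chains are free over $R$, hence free over $R[\pi]$ in each degree.

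Putting these together gives the conclusion. I expect the main obstacle to be bookkeeping rather than anything deep: one must check carefully that the flat replacements supplied by Proposition \ref{f1}(ii) can be chosen $\pi$-equivariantly compatibly with the cross product, so that the comparison map between the two sides really is defined on the level of flat complexes, and that the relative version (with $\td A\times\td X\cup\td X\times\td A$) causes no trouble — but the relative case reduces to the absolute one exactly as in \cite{friedmanmcclure} by the five lemma applied to the relevant long exact sequences, since all the relevant subcomplexes are again flat over $R[\pi]$. No new ideas beyond those already in \cite[Subsection 6.3]{friedmanmcclure} and \cite[Section 6]{Fr} are needed.
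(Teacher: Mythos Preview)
Your proposal is essentially correct and follows exactly the approach of the paper, which simply records that the proof of \cite[Proposition 6.5]{friedmanmcclure} goes through without change; you have just written out what that means.

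One small imprecision: your argument that the target $I^{Q_{\bar p,\bar q}}S_*(\td X\times \td X,\td A\times \td X\cup \td X\times\td A;R)$ is free over $R[\pi]$ in each degree is not quite right as stated.  Intersection chains are not simply the $R$-span of a $\pi$-set of simplices (a chain must have allowable boundary as well), so ``free over $R$'' together with ``$\pi$ acts freely on $\td X\times\td X$'' does not immediately give freeness over $R[\pi]$.  You do not need this, however: since the diagonal $\pi$-action on $\td X\times\td X$ is free, $\td X\times\td X\to(\td X\times\td X)/\pi$ is a regular covering, and Proposition \ref{f1}(ii) applied to this covering (with perversity $Q_{\bar p,\bar q}$ and the evident relative subspace) shows that the target is chain homotopy equivalent over $R[\pi]$ to a bounded-below complex of flat $R[\pi]$-modules.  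That is all the derived-tensor-product argument requires.
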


\begin{remark}
\label{aaa1}
Let us say that a perversity is {\it classical} if it satisfies the original
definition given by Goresky and MacPherson in \cite{gm1}.   By \cite[Theorem
5.5.1]{Fr}, if $X$ is a stratified PL pseudomanifold, $\mathfrak X$ is the
same PL pseudomanifold with the intrinsic stratification, and $\bar p$ is a
classical perversity, then the canonical map
\[
IS_*^{\bar p}(X;\Z)\to IS_*^{\bar p}(\mathfrak X;\Z)
\]
is a quasi-isomorphism.
\end{remark}

Next we have the analogue of \cite[Proposition 5.15]{friedmanmcclure}, except
that we require the perversity to be classical.

\begin{prop}
\label{finite}
Let $X$ be a compact stratified PL $\partial$-pseudomanifold. 
Let $\bar{p}$ be a classical perversity.  Then
$IS^{\bar p}_*(\td X;\Z)$ is
quasi-isomorphic over $\Z[\pi]$ to a finite $\Z[\pi]$ chain complex.
\end{prop}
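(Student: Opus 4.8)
The plan is to follow the proof of \cite[Proposition 5.15]{friedmanmcclure}, replacing the ground field by the PID $R$, and to use the hypothesis that $\bar{p}$ is classical at the two places where that matters: to have a PL simplicial model for intersection chains, and to know that $I^{\bar p}H_i(\td X;R)$ vanishes outside the range $0\le i\le n$, $n=\dim X$. Throughout I take $X$ to be compact, which is all that Proposition \ref{f1}(i) requires. The idea is to play off two models of $I^{\bar p}S_*(\td X;R)$ against each other: a ``free but a priori infinitely generated'' one, and a ``finitely generated but a priori non-projective'' one.

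First I would invoke Proposition \ref{f1}(i): $I^{\bar p}S_*(\td X;R)$ is chain homotopy equivalent over $R[\pi]$ to a nonnegatively graded complex $D_*$ of free $R[\pi]$-modules. For the second model, choose a triangulation $T$ of $X$ compatible with the stratification (available since $X$ is a compact PL pseudomanifold and $\bar p$ is classical) and let $\td T$ be its lift to a $\pi$-equivariant triangulation of $\td X$. The complex $I^{\bar p}C^{\td T}_*(\td X;R)$ of intersection chains that are simplicial with respect to $\td T$ is supported in degrees $0,\dots,n$ and is finitely generated over $R[\pi]$ in each degree: $T$ has finitely many simplices of $X$ in each dimension, $\pi$ acts freely on the simplices of $\td T$, and allowability of a simplex of $\td T$ depends only on its image in $X$ (the covering projection being a stratified PL homeomorphism near each simplex). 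By \cite[Corollary 5.49]{Fr}, which applies because $\bar p$ is classical, the inclusion $I^{\bar p}C^{\td T}_*(\td X;R)\hookrightarrow I^{\bar p}S_*(\td X;R)$ is a quasi-isomorphism; being $\pi$-equivariant it is a quasi-isomorphism of $R[\pi]$-complexes. Hence $D_*$ and the finitely generated bounded complex $I^{\bar p}C^{\td T}_*(\td X;R)$ are quasi-isomorphic over $R[\pi]$, and in particular $H_i(D_*)=I^{\bar p}H_i(\td X;R)$ vanishes for $i<0$ and for $i>n$ (the latter again using that $\bar p$ is classical).

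The last step, and the place I expect to be the main obstacle, is to pass from this data to a genuine finite $R[\pi]$ chain complex chain homotopy equivalent over $R[\pi]$ to $I^{\bar p}S_*(\td X;R)$. Since $D_*$ is a bounded-below complex of projective $R[\pi]$-modules with $H_i(D_*)=0$ for $i>n$, the subcomplex $D_{>n}$ augmented by $B_n:=\operatorname{im}(\partial_{n+1}\colon D_{n+1}\to D_n)$ is a (possibly infinite) free resolution of $B_n$; truncating $D_*$ above degree $n$ and absorbing the non-projective top term $D_n/B_n$ then reduces the problem to producing a finite free resolution in the relevant range. Over the possibly non-Noetherian ring $R[\pi]$ this finiteness does not come for free — one genuinely needs the existence of the finitely generated bounded model $I^{\bar p}C^{\td T}_*(\td X;R)$ quasi-isomorphic to $D_*$ — and the argument that stitches the two models together is exactly the finiteness argument in the proof of \cite[Proposition 5.15]{friedmanmcclure}, now run over $R$ instead of a field. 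Everything else (the compatible triangulation, the equivariant PL-versus-singular comparison \cite[Corollary 5.49]{Fr}, and Proposition \ref{f1}(i)) is quoted directly, and it is precisely these two ingredients — the PL comparison and the vanishing of $I^{\bar p}H_{>n}$ — that force the classical-perversity hypothesis.
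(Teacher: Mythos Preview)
Your global approach is different from the paper's and has a genuine gap in the last step.

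The paper does \emph{not} argue by playing a global free model off against a global finitely generated simplicial model. Instead it reruns the local/inductive argument of \cite[Lemmas 6.7--6.9]{friedmanmcclure}: one covers $X$ by open stars $\St(s)$ and the only place the field hypothesis enters is in showing that $I^{\bar p}S_*(\St(s);R)$ is homotopy finite over $R$. Over a field that is immediate from finite-dimensional homology; over a PID the paper replaces the (noncompact) open star by a compact stratified $\partial$-pseudomanifold $A'$ stratified homotopy equivalent to it, and then invokes \cite[Corollary~5.49]{Fr} on $A'$ to compare with the genuinely finite simplicial model $I^{\bar p}C_*^T(A';R)$. The classical-perversity hypothesis is used here to pass to the intrinsic filtration so that $A'$ is available.

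Your outline runs into two related problems. First, the claim that $I^{\bar p}C^{\td T}_i(\td X;R)$ is finitely generated over $R[\pi]$ is not established by your argument: what you show is that the module $A_i$ of $\bar p$-allowable $i$-chains is finitely generated free over $R[\pi]$, but $I^{\bar p}C^{\td T}_i(\td X;R)=\{c\in A_i:\partial c\in A_{i-1}\}$ is only a submodule, and over the (typically non-Noetherian) ring $R[\pi]$ that need not be finitely generated. Second, and more seriously, even granting a bounded, degreewise finitely generated (non-projective) model quasi-isomorphic to your bounded-below free model $D_*$, it does \emph{not} follow formally that $D_*$ is chain homotopy equivalent to a finite free $R[\pi]$-complex; this is exactly the territory of Wall's finiteness obstruction. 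Your appeal to ``exactly the finiteness argument in the proof of \cite[Proposition~5.15]{friedmanmcclure}'' is circular: that proof is the local argument via Lemmas~6.7--6.9, not a global truncation, and the paper identifies precisely the spot in that chain of lemmas where the field hypothesis is used and supplies the missing piece. Your global stitching sketch (truncate $D_*$ above degree $n$, absorb $D_n/B_n$) does not, on its own, force the resulting modules to be finitely generated over $R[\pi]$.
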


\begin{proof}
It suffices to show that \cite[Lemma 6.7]{friedmanmcclure} remains valid with
$F$ replaced by $\Z$.  The only place in the proof of that result
where the hypothesis that $F$ is a field
is used is in the second paragraph of the proof of Lemma 6.9, as part of the 
verification that 
$IS^{\bar p}_*(\St(s);F)$ is homotopy finite over $F$, so we need to prove 
that 
$IS^{\bar p}_*(\St(s);\Z)$ is homotopy finite over $\Z$, where $s$ is a
simplex with no vertices in $\partial X$.

For the remainder of the proof, for a PL space $Y$, we will write $Y^*$ for $Y$
with its intrinsic filtration. 

It suffices to show that 
$IS^{\bar p}_*(\St(s)^*;\Z)$ is homotopy finite over $\Z$, because this is
quasi-isomorphic to $IS^{\bar p}_*(\St(s);\Z)$ by Remark \ref{aaa1}, and hence
it is chain homotopy equivalent because both chain complexes are free over $\Z$.

It is shown in the proof of \cite[Lemma 6.9]{friedmanmcclure} that $\St(s)$ 
is homeomorphic as a PL space to the open cone $c^\circ A$, where $A=(\partial
s)*\mathrm{Lk}(s)$, so it suffices to show that 
$IS^{\bar p}_*((c^\circ A)^*;\Z)$
is homotopy finite over $\Z$.  Now by Proposition
\ref{p1}  the restriction of the filtration of $(c^\circ A)^*$ to 
$A\times (0,1)$ is the same as the filtration of $A^*\times (0,1)$.
Hence if we let let $B$ be
\[
([0,\frac{1}{2}]\times A)/(0\times x\sim 0\times y),
\]
with the filtration inherited from $(c^\circ A)^*$,
then
the evident homotopy equivalence from $(c^\circ A)^*$ to
$B$ is a stratified homotopy
equivalence (see \cite[Appendix A]{FM} for the definition of stratified
homotopy equivalence).
Thus (by the last paragraph of \cite[Appendix A]{FM}) it suffices to show 
that the intersection chain complex of
$B$ with this filtration is homotopy finite over $\Z$.  

Next we need a lemma:

\begin{lemma}
\label{aab}
$B$, with the filtration inherited from $(c^\circ A)^*$, is a stratified
PL $\partial$-pseumanifold.
\end{lemma}

Before proving this, we note that it implies,
by
\cite[Corollary 5.4.6]{Fr}, that the intersection chain complex of $B$ is 
quasi-isomorphic (and hence chain homotopy equivalent) to the simplicial 
intersection chain complex $IC^{\bar p,T}_*(B;\Z)$ for any suitable 
triangulation $T$.  Since $B$ is compact, $IC^{\bar p,T}_*(B;\Z)$ is a 
finite chain complex, which completes the proof of Proposition \ref{finite}.
\end{proof}

\begin{proof}[Proof of Lemma \ref{aab}]
First we show that $\mathrm{Lk}(s)$ is a PL pseudomanifold.  The fact that it 
satisfies part (a) of Definition \ref{pseud} is immediate from the 
corresponding condition for $X$ and the definition of $\mathrm{Lk}(s)$.
The fact that $\mathrm{Lk}(s)$ satisfies part (b) of Definition \ref{pseud}
follows easily from the corresponding condition for $X$, the definition of
$\mathrm{Lk}(s)$, and the fact that $s$ is not contained in $\partial X$.

Now it follows that $A=(\partial s)*\mathrm{Lk}(s)$ is a PL pseudomanifold, and
hence $A^*$ is a stratified PL pseudomanifold by Proposition \ref{p1}(iv).

Next we observe that $B$ is the union of two open sets $U_1$ and $U_2$, each 
of which (with its inherited filtration) is a $\partial$-stratified  PL
pseudomanifold: let $U_1=([0,\frac{1}{2})\times A)/(0\times x\sim 0\times y)$,
which is a stratified PL pseudomanifold because it is homeomorphic as a filtered
space to $(c^\circ A)^*$ and hence to $\St(s)^*$, and let $U_2$ be 
$((0,\frac{1}{2}]\times A)$, which is a 
stratified PL $\partial$-pseudomanifold because 
(by Proposition \ref{p1})
it is homeomorphic as a 
filtered space to $((0,\frac{1}{2}]\times A^*)$.
\end{proof}

\section{Universal Poincar\'e and Lefschetz duality}
\label{univ}

In this appendix we give the analogues of \cite[Theorems 4.1 and
4.5]{friedmanmcclure} for $\Z$ coefficients.

First we construct a suitable cap product, following the method of
\cite[Section 3]{friedmanmcclure}. 

Let $X$ be a stratified PL $\partial$-pseudomanifold.

Let $p:\td X\to X$ be a regular cover with group $\pi$.  For any subset $A$ of
$X$ we write $\td A$ for $p^{-1}(A)$. We assume that $\td X$ is stratified
by the preimages of the strata of $X$. Note that $IS^{\bar p}_*(\td X;\Z)$
possesses a left $\Z[\pi]$-module structure induced by the geometric action of
$\pi$ on $\td X$.

\begin{notation}
\label{x3}
(i) 
Given a perversity $\bar p$ on $X$, the perversity on $\td X$ which takes a
stratum $S$ to ${\bar p}(p(S))$ will also be denoted by $\bar p$.

(ii)
We will write
$I\bar{S}_{\bar p}^*(\td X;\Z)$ for $\Hom_{\Z[\pi]}( IS^{\bar p}_*(\td X;\Z), 
\Z[\pi])$ and
$I\bar{H}_{\bar p}^*(\td X;\Z)$ for the cohomology groups of this complex.
\end{notation}

Now recall Definition \ref{x1} and suppose that $\bar p$, $\bar q$ and $\bar 
r$ are perversities with $D\bar r\geq D\bar p+D\bar q$ and that $X$ is either 
locally $\bar p$-torsion free or locally $\bar q$-torsion free. 

Let  $$\td d:IH^{\bar r}_*(X;\Z)\to  H_*( IS^{\bar p}_*(\td 
X;\Z)^t\otimes_{\Z[\pi]} IS^{\bar q}_*(\td X;\Z))$$
be the composition
\begin{align*}
IH^{\bar r}_*(X;\Z) &\xleftarrow{\cong} H_*(\Z\otimes_{\Z[\pi]} 
IS^{\bar r}_*(\td X;\Z))\\
&\xrightarrow{1\otimes d} H_*(\Z\otimes_{\Z[\pi]} IS^{Q_{\bar p,\bar 
q}}_*(\td X\times \td X;\Z))\\
&\xleftarrow{\cong}  H_*(\Z\otimes_{\Z[\pi]} (IS^{\bar p}_*(\td X;\Z)\otimes_\Z 
IS^{\bar q}_*(\td X;\Z)))\\
&\cong  H_*( IS^{\bar p}_*(\td X;\Z)^t\otimes_{\Z[\pi]} IS^{\bar q}_*(\td 
X;\Z)).
\end{align*}
Here $d$ is the diagonal map given by \cite[Proposition 4.2.1]{FM},
the first  isomorphism is given by
Proposition 
\cite[Proposition 6.1.3]{friedmanmcclure}, and 
the second isomorphism is given by 
Proposition \ref{P: tensor flat}  (this is why the torsion free assumption
is needed).  The third isomorphism is elementary.

Now we can define the cap product
\begin{equation}
\label{e21}
I\bar{H}^i_{\bar q}(\td X;\Z)\otimes IH^{\bar r}_j( X;\Z)\to
IH^{\bar p}_{j-i}(\td X,;\Z)
\end{equation}
by
\[
\alpha \smallfrown x=(1\otimes \alpha)\td d(x)
\]
(using the fact that $H_*( IS^{\bar p}_*(\td X;\Z)^t)$ is the same
$\Z$-module as $IH^{\bar p}_*(\td X;\Z)$).

Similarly, we get a
cap product 
\[
I\bar{H}^i_{\bar q}(\td X,\td A ;\Z)\otimes IH^{\bar r}_j( X,A\cup B;\Z)\to
IH^{\bar p}_{j-i}(\td X, \td B;\Z)
\]
when $A$ and $B$ are open subsets of $X$.

\begin{remark}
\label{x4}
If the covering $p:\td X\to X$ is trivial then $I\bar{S}_{\bar p}^*(\td
X;\Z)$ is canonically isomorphic to $\Hom(IS^{\bar p}_*(X;\Z),\Z[\pi])$
and hence $I\bar{H}_{\bar p}^*(\td
X;\Z)$ is canonically isomorphic to $IH^{\bar p}_*(X;\Z[\pi])$.
Under this isomorphism the cap product \eqref{e21} corresponds to the cap 
product
\[
I\bar{H}^i_{\bar q}( X;\Z[\pi])\otimes IH^{\bar r}_j( X;\Z)\to
IH^{\bar p}_{j-i}(X,;\Z[\pi]).
\]
\end{remark}

Let us write 
\[
(I\bar{H}_{\bar p}^i)_c(\td X;\Z)
\]
for
\[
\colim_{K} I\bar{H}_{\bar p}^i(\td X, \td X-\td K;\Z)
\]
where $K$ runs through the compact subsets of $X$
(the subscript $c$ stands for ``compact supports''; cf.\ \cite[Section
7.4]{Fr}).

As in \cite[Section 4]{friedmanmcclure}, we obtain a map
\[
{\mathcal{D}}: 
(I\bar{H}_{\bar p}^i)_c(\td X;\Z)
\to
IH^{D\bar p}_{n-i}(\td X;\Z).
\]

Now we can state the analogue of \cite[Theorem 4.1]{friedmanmcclure} for $\Z$ 
coefficients.

\begin{thm}[Universal Poincar\'e duality]\label{T: universal duality}
Let $X$ be a $\Z$-oriented PL pseudomanifold and let $p:\td X\to X$ be a 
regular covering of $X$.
Suppose that $X$ is locally
$\bar p$-torsion free (in particular this is the case if $X$ is an IP space and
$\bar p=\bar m$, by Remark \ref{x2}).
Then $\mathcal{D}$ is an isomorphism.
\end{thm}

For the proof of Theorem \ref{T: universal duality} we use \cite[Theorem 
5.1.4]{Fr}, with $F_*(U)=(I\bar{H}_{\bar p}^*)_c(\td U;\Z)$, 
$G_*(U)=IH^{D\bar p}_{n-i}(\td U;\Z)$, and $\Phi=\mathcal D$.
We need to verify the four conditions in \cite[Theorem 5.1.4]{Fr}. 

For condition 1, we first observe that the Mayer-Vietoris sequence for $F_*$ 
exists because if $U\subset V$ are open subsets of $X$ then the inclusion 
$IS^{\bar p}_*(\td U;F)\hookrightarrow IS^{\bar p}_*(\td V;F)$ is split as a 
map of $\Z[\pi]$-modules  (this follows from the proof of \cite[Proposition 
2.9]{FTrans}: use the construction in that proof with $X$ taken to be $V$ and 
the ordered open cover taken to be $(U,V)$). The rest of the verification of
condition 1 is the same as the proof of \cite[Lemma 7.4.8]{Fr}, except that 
we use the following instead of \cite[Proposition 7.3.59]{Fr}:

\begin{lemma}
\label{x5}
There exist chains
\[
\beta_{U-L}\in IS^{\bar p}_*(U-L)\otimes_{\Z[\pi]} IS^{\bar q}_*(U-L, U-K\cup L),
\]
\[
\beta_{U\cap V}\in IS^{\bar p}_*(U\cap V)\otimes_{\Z[\pi]} IS^{\bar q}_*(U\cap V, U\cap
V-K\cup L)
\]
and
\[
\beta_{V-K}\in IS^{\bar p}_*(V-K)\otimes_{\Z[\pi]} IS^{\bar q}_*(V-K,V-K\cup L)
\]
such that $\beta_{U-L}+\beta_{U\cap V}+\beta_{V-K}$ represents
${\td d}(\Gamma_{K\cup L})\in H_*(IS^{\bar p}_*(X)\otimes_{\Z[\pi]} IS^{\bar q}_*(X,X-K\cup
L)) $.
\end{lemma}

The proof of Lemma \ref{x5} is entirely parallel to the proof of \cite[Lemma
6.9]{FM}, except that we use \cite[Proposition 6.1.2]{friedmanmcclure} to 
show that the map
\begin{multline*}
\lambda:
H_*(\Z\otimes_{\Z[\pi]} \colim_{W\in \mathcal C}\, IS^{Q_{\bar p,\bar q}}_*(\td W\times 
\td W,\td W\times (\td W-\td K\cup \td L)))
\\
\to
H_*(\Z\otimes_{\Z[\pi]}IS^{Q_{\bar p,\bar q}}_*(\td Y,\td Y-(\td X\times (\td 
K\cup \td L))))
\end{multline*}
is an isomorphism.

Returning to the proof of Theorem \ref{T: universal duality},
the verification for condition 2 is the same as the corresponding part of the
proof of \cite[Theorem 8.2.4]{Fr}; the excision needed for part of the argument
is given by \cite[Proposition 6.1.2]{friedmanmcclure}.

As background for conditions 3 and 4, we note that \cite[Theorem 8.2.4]{Fr}
remains valid with $\Z[\pi]$ coefficients instead of $\Z$ coefficients, with
the same proof.

Now for condition 3, we observe that the covering $p$ pulls back trivially to
$\R^i \times cL$, so condition 3 follows from Remark \ref{x4} and the 
$\Z[\pi]$ version of \cite[Theorem 8.2.4]{Fr} (the hypothesis of condition 3
isn't needed for this purpose).

The verification of condtion 4 is the same as for condition 3.  This concludes
the proof of Theorem \ref{T: universal duality}.

Finally, we have the analog of 
\cite[Theorem 4.5]{friedmanmcclure}.

\begin{thm}[Universal Lefschetz Duality]\label{T: univ lef}
Let $X$ be an $n$-dimensional compact PL $\partial$-pseudomanifold
with a
$\Z$-orientation of
$X-\partial X$ and let $p:\td X\to X$ be a regular
covering of $X$. Suppose that $X$ is locally $\bar p$-torsion free (in
particular this is the case if $X$ is an IP space and $\bar p=\bar m$, by
Remark \ref{x2}).
Then
the cap product with $\Gamma_X$ gives isomorphisms
\[
I\bar{H}_{\bar p}^i(\td X,p^{-1}(\partial X);F)\to
IH^{D\bar p}_{n-i}(\td X;F)
\]
and
\[
I\bar{H}_{\bar p}^i(\td X;F)\to
IH^{D\bar p}_{n-i}(\td X,p^{-1}(\partial X);F).
\]
\end{thm}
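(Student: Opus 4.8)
The plan is to deduce both isomorphisms from Universal Poincar\'e duality (Theorem \ref{T: universal duality}), applied once to the open IP-space $X^\circ := X-\partial X$ and once to the closed $(n-1)$-dimensional IP-space $\partial X$, which is an IP-space by Proposition \ref{p3}. Throughout I would use the collar $c:\partial X\times[0,1)\to X$ of Definition \ref{bpseud}(d); pulling it back along $p$ gives a collar $\td c:p^{-1}(\partial X)\times[0,1)\to\td X$. Note that the restricted covering $p^{-1}(\partial X)\to\partial X$ is again regular, since the deck group of $p$ acts transitively on its fibres, which are fibres of $p$; that the $R$-orientation of $X^\circ$ restricts via $c$ to an $R$-orientation of $\partial X\times(0,1)$, hence of $\partial X$; and that this orientation class is $\partial\Gamma_X=\pm\Gamma_{\partial X}$. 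Give $\partial X$ and $X^\circ$ their intrinsic stratifications (Proposition \ref{p1}(iv)).

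First I would prove the isomorphism $I_{\bar p}\bar H^i(\td X,p^{-1}(\partial X);F)\to I^{\bar q}H_{n-i}(\td X;F)$. Apply Theorem \ref{T: universal duality} to $X^\circ$ and the restricted regular covering $\td X^\circ\to X^\circ$, with $\td X^\circ:=\td X-p^{-1}(\partial X)$: this says that $\mathcal D:\colim_K I_{\bar p}\bar H^i(\td X^\circ,\td X^\circ-\td K;F)\to I^{\bar q}H_{n-i}(\td X^\circ;F)$ is an isomorphism, $K$ ranging over compact subsets of $X^\circ$. On the target, pushing in the collar shows $\td X^\circ\hookrightarrow\td X$ is a stratified homotopy equivalence, so it induces an isomorphism $I^{\bar q}H_{n-i}(\td X^\circ;F)\cong I^{\bar q}H_{n-i}(\td X;F)$ (cf.\ the use of \cite[Proposition 2.1]{fr} in the proof of the Proposition in Section \ref{ss2}). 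On the source, the compacta $K_\varepsilon:=X-c(\partial X\times[0,\varepsilon))$ are contained in $X^\circ$, homeomorphic to $X$, and cofinal among compact subsets of $X^\circ$; a collar-plus-excision argument in intersection cohomology identifies $\colim_\varepsilon I_{\bar p}\bar H^i(\td X^\circ,\td X^\circ-\td K_\varepsilon;F)$ with $I_{\bar p}\bar H^i(\td X,p^{-1}(\partial X);F)$ (the relevant relative term is an open collar neighbourhood of $p^{-1}(\partial X)$ in $\td X$, which deformation retracts stratifiedly onto $p^{-1}(\partial X)$). Transporting $\mathcal D$ through these two identifications and unwinding the cap-product construction of Appendix \ref{tech}, one checks it becomes cap product with $\Gamma_X$, which gives the first isomorphism.

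For the second isomorphism $I_{\bar p}\bar H^i(\td X;F)\to I^{\bar q}H_{n-i}(\td X,p^{-1}(\partial X);F)$ I would run a five-lemma argument. Cap product with $\Gamma_X$ (and, on the subspace, with $\partial\Gamma_X=\pm\Gamma_{\partial X}$) gives a map from the long exact cohomology sequence of the pair $(\td X,p^{-1}(\partial X))$ to the long exact homology sequence of that pair; the ladder commutes up to sign by naturality of the cap product together with the boundary formula $\partial\Gamma_X=\pm\Gamma_{\partial X}$. On the relative-cohomology-to-absolute-homology rungs this map is the isomorphism just proved, and on the $p^{-1}(\partial X)$ rungs it is $\cap\,\Gamma_{\partial X}:I_{\bar p}\bar H^i(p^{-1}(\partial X);F)\to I^{\bar q}H_{n-1-i}(p^{-1}(\partial X);F)$, which is an isomorphism by Theorem \ref{T: universal duality} applied to the closed IP-space $\partial X$ with the regular covering $p^{-1}(\partial X)\to\partial X$ and the complementary perversities $\bar p,\bar q$. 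Since the degrees line up, the five lemma yields the second isomorphism.

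I expect the main obstacle to be the colimit identification in the second paragraph: verifying carefully, via the pulled-back collar and excision in intersection cohomology (in the ``universal'' $\bar H$ version over $\Z[\pi]$), that $\colim_K I_{\bar p}\bar H^i(\td X^\circ,\td X^\circ-\td K;F)$ is the compactly supported version of $I_{\bar p}\bar H^i$ and coincides with $I_{\bar p}\bar H^i(\td X,p^{-1}(\partial X);F)$, and that this identification intertwines $\mathcal D$ with cap product with $\Gamma_X$ — the last point requiring one to track how $\Gamma_X\in I^{\bar 0}H_n(X,\partial X;F)$ behaves over the collar. The remaining work — the sign bookkeeping in the ladder of long exact sequences, and the observation that the restricted covering is regular — is routine. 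An alternative to the collar reduction would be to double $X$ along $\partial X$, apply Universal Poincar\'e duality to the closed IP-space $DX$ and the doubled covering $D\td X\to DX$, and use a Mayer--Vietoris comparison; I would only fall back on this if the compactly supported cohomology identification proves awkward.
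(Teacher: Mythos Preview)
Your proposal is correct and follows the standard route: reduce the first isomorphism to Universal Poincar\'e duality on the interior $X^\circ$ via a collar/excision identification of compactly supported cohomology with relative cohomology, then obtain the second isomorphism by a five-lemma comparison of the long exact sequences of the pair $(\td X,p^{-1}(\partial X))$, using Universal Poincar\'e duality for the closed IP-space $\partial X$ on the boundary rungs. The paper does not give its own argument here---it simply asserts that the proof of \cite[Theorem~4.5]{friedmanmcclure} carries over verbatim---and that proof is precisely this collar-plus-five-lemma deduction, so your approach coincides with the intended one.
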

The proof is the same as for \cite[Theorem 4.5]{friedmanmcclure}.

%
%

\section{Multiplicativity of the assembly map}
\label{am}

This appendix gives the proof of

\begin{prop}
\label{p7}
Let $\mathbf F$ be a homotopy invariant functor from spaces to spectra, and
suppose that there is a natural transformation 
\[
\mu: \bF(X)\wedge \bF(Y)\to \bF(X\times Y).
\]
Then the diagram
\[
\xymatrix{
(X_+\wedge\bF(*))\wedge
(Y_+\wedge\bF(*))
\ar[d]_{\alpha\wedge \alpha}
\ar[r]^-\mu
&
(X\times Y)_+\wedge \bF(*)
\ar[d]_\alpha
\\
\bF(X)\wedge\bF(Y)
\ar[r]^-\mu
&
\bF(X\times Y)
}
\]
commutes up to homotopy, where $\alpha$ denotes the assembly map.
\end{prop}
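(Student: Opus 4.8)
The plan is to deduce the commutativity of the square from the naturality of the Weiss--Williams assembly map (\cite{wwa}) together with one genuinely geometric fact about how assembly interacts with Cartesian products. Recall that for a homotopy invariant functor $\bF$ the assembly map is realized by the percent construction of \cite[Section~1]{wwa}, and that this construction has two formal properties I will use: it commutes with smashing a functor by a fixed spectrum, $(\bF\wedge E)^\%\simeq\bF^\%\wedge E$ with assembly $\alpha_\bF\wedge\mathrm{id}_E$; and a natural transformation $\eta\colon\bF_1\to\bF_2$ of homotopy invariant functors induces, under the equivalences $\bF_i^\%(Z)\simeq Z_+\wedge\bF_i(*)$ (natural in $\bF_i$), the map $\mathrm{id}_{Z_+}\wedge\eta_*$ covering the assemblies. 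I take the top horizontal map of the Proposition's square to be, by definition, $\mathrm{id}_{(X\times Y)_+}\wedge\mu_{*,*}$ composed with the evident shuffle isomorphism $(X_+\wedge\bF(*))\wedge(Y_+\wedge\bF(*))\cong(X\times Y)_+\wedge\bF(*)\wedge\bF(*)$.

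\emph{Step 1 (naturality, applied twice).} Fix $Y$ and regard $X\mapsto\bF(X)\wedge\bF(Y)$ and $X\mapsto\bF(X\times Y)$ as homotopy invariant functors of $X$; the restriction of $\mu$ is a natural transformation between them, with value at $*$ the map $\mu_{*,Y}\colon\bF(*)\wedge\bF(Y)\to\bF(*\times Y)=\bF(Y)$. Naturality of assembly (\cite[Theorem 1.1 and Observation 1.2]{wwa}) then produces a homotopy-commutative square with left edge $\alpha_X\wedge\mathrm{id}_{\bF(Y)}$, top edge $\mathrm{id}_{X_+}\wedge\mu_{*,Y}$, bottom edge $\mu$, and right edge the assembly map $\alpha'\colon X_+\wedge\bF(Y)\to\bF(X\times Y)$ of the functor $\bF(-\times Y)$. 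Applying naturality of assembly a second time, now to the natural transformation $\mu_{*,-}\colon\bF(*)\wedge\bF(-)\to\bF(-)$, identifies $\mu_{*,Y}\circ(\mathrm{id}_{\bF(*)}\wedge\alpha_Y)$ with $\alpha_Y\circ(\mathrm{id}_{Y_+}\wedge\mu_{*,*})$. Precomposing the first square with $\mathrm{id}_{X_+\wedge\bF(*)}\wedge\alpha_Y$, substituting this identity, and chasing the shuffle isomorphisms of $\wedge$, one finds that the resulting commutative square is exactly the square of the Proposition, except that its right vertical map reads $\alpha'\circ(\mathrm{id}_{X_+}\wedge\alpha_Y)$ in place of the assembly map $\alpha_{X\times Y}$ of $\bF$ on $X\times Y$.

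\emph{Step 2 (iterated assembly).} It therefore remains to prove the identity
\[
\alpha'\circ(\mathrm{id}_{X_+}\wedge\alpha_Y)=\alpha_{X\times Y}\colon\ (X\times Y)_+\wedge\bF(*)\longrightarrow\bF(X\times Y),
\]
which involves only the assembly map of $\bF$ and not $\mu$. In the percent model the left-hand side is the map $\hocolim_{(\sigma,\tau)}\bF(|\sigma|\times|\tau|)\to\bF(X\times Y)$ induced by the maps $\bF(\sigma\times\tau)$, the homotopy colimit being over the product $\mathrm{simp}(X)\times\mathrm{simp}(Y)$ of simplex categories, while $\alpha_{X\times Y}$ is the map $\hocolim_{\rho}\bF(|\rho|)\to\bF(X\times Y)$ induced by the $\bF(\rho)$, over $\mathrm{simp}(X\times Y)$. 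Using the Eilenberg--Zilber identity $\mathrm{Sing}(X\times Y)=\mathrm{Sing}(X)\times\mathrm{Sing}(Y)$ there is a functor $p\colon\mathrm{simp}(X\times Y)\to\mathrm{simp}(X)\times\mathrm{simp}(Y)$ sending a $k$-simplex $\rho=(\sigma,\tau)$ to the pair $(\sigma,\tau)$; the diagonals $|\rho|\to|\sigma|\times|\tau|$ give a natural objectwise weak equivalence from $\rho\mapsto\bF(|\rho|)$ to $p^{*}\bigl((\sigma,\tau)\mapsto\bF(|\sigma|\times|\tau|)\bigr)$, compatible with the structure maps to $\bF(X\times Y)$ since $\rho=(\sigma\times\tau)\circ\mathrm{diag}$; because $\bF$ is homotopy invariant, both homotopy colimits are canonically identified with $(X\times Y)_+\wedge\bF(*)$, and one checks that the two maps to $\bF(X\times Y)$ agree under these identifications.

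The main obstacle is Step 2: one must make precise the comparison between the homotopy colimit over the simplex category of $X\times Y$ and the one over the product of the simplex categories of $X$ and $Y$ --- the standard but slightly delicate point that a product of simplices is not a simplex, handled here via the Eilenberg--Zilber observation and the fact that subdivision commutes with geometric realization. Everything else is pure naturality of the Weiss--Williams assembly map together with routine manipulation of smash products.
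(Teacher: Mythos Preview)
Your proof is correct and the key geometric input is the same as the paper's, but the decomposition is genuinely different.  The paper does not perform your Step~1 at all: instead it unwinds the Weiss--Williams zigzag definition of $\alpha$ explicitly (using the map $\lambda:X\to\hocolim_{\CC_X}*$ and the functor $D$), proves the multiplicative property of $\lambda$ as a separate lemma (your functor $p:\mathrm{simp}(X\times Y)\to\mathrm{simp}(X)\times\mathrm{simp}(Y)$ is the paper's $d:\CC_{X\times Y}\to\CC_X\times\CC_Y$, and your diagonal $|\rho|\to|\sigma|\times|\tau|$ is its $\delta''$), and then writes down a single four-row diagram whose commutativity gives the Proposition directly.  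Your reduction via two applications of naturality is cleaner conceptually and isolates the $\mu$-independent content in Step~2; the paper's approach is more self-contained because it never needs to invoke uniqueness properties of assembly such as ``assembly for $\bF(-)\wedge E$ is $\alpha_\bF\wedge\mathrm{id}_E$'', which you use but do not prove.

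The only place your argument is thin is the first sentence of Step~2, where you assert that in the percent model $\alpha'\circ(\mathrm{id}_{X_+}\wedge\alpha_Y)$ \emph{is} the map $\hocolim_{(\sigma,\tau)}\bF(|\sigma|\times|\tau|)\to\bF(X\times Y)$.  This is a Fubini-type identification of an iterated assembly with a hocolim over a product indexing category, and it is exactly what the paper's big diagram makes explicit; your phrase ``one checks that the two maps to $\bF(X\times Y)$ agree under these identifications'' is doing the same work as the paper's Lemma on the multiplicativity of $\lambda$ together with the middle two rows of its diagram.  So there is no gap, but if you want the argument to stand on its own you should spell out that identification---for instance by observing that $\alpha'$ is modelled by $\hocolim_{\sigma}\bF(|\sigma|\times Y)\to\bF(X\times Y)$, that for each fixed $\sigma$ the map $\mathrm{id}\wedge\alpha_Y$ is modelled by $\hocolim_\tau\bF(|\sigma|\times|\tau|)\to\bF(|\sigma|\times Y)$, and then invoking the standard Fubini equivalence $\hocolim_\sigma\hocolim_\tau\simeq\hocolim_{(\sigma,\tau)}$.
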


Let us recall the definition of the assembly map from \cite[page 334]{wwa}.
The construction uses homotopy colimits (see \cite[Section 1]{wwa} for a brief
description of the homotopy colimit construction).
For a space $X$, let $\CC_X$ (which is denoted simp$(X)$ in \cite{wwa}) be the
category whose objects are maps $\Delta^n\to X$, and whose morphisms are
commutative triangles 
\[
\xymatrix{
\Delta^m
\ar[rr]^-{f_*}
\ar[rd]
&&
\Delta^n
\ar[ld]
\\
&X&
}
\]
where $f_*$ is the map induced by a monotone injection\footnote{It is not clear
why \cite{wwa} does not use all monotone maps in this definition.  The reader 
can check that
using all monotone maps would give the same assembly map, and would allow us
to work with simplicial rather than semisimplicial sets in what follows.}
from $\{0,\ldots,m\}$ to $\{0,\ldots,n\}$.  
There is a natural equivalence in the homotopy category
\[
\lambda: X\to \hocolim_{\CC_X}\, *
\]
(see below)
where $*$ denotes the functor which takes all objects to a point.
Let $D$ be the functor from $\CC_X$
to spaces which takes $\Delta^n\to X$ to $\Delta^n$.
The assembly map is the
following composite in the homotopy category of spectra (where $\wedge$ is 
the derived smash product)
\begin{multline*}
\label{e20}
X_+\wedge \bF(*)
\xrightarrow{\lambda\wedge 1}
(\hocolim_{\CC_X}\, *)_+\wedge \bF(*)
\cong
\hocolim_{\CC_X}\, \bF(*)
\\
\xleftarrow{\simeq}
\hocolim_{\CC_X}\, \bF\circ D
\to
F(X).
\end{multline*}

Our first task is to give an explicit description of $\lambda$
(this was left as an exercise for the reader in \cite{wwa}).  
We need a lemma.

\begin{lemma}
The map 
\[
\hocolim_{\CC_X}\, D
\to
\colim_{\CC_X}\, D
\]
is a weak equivalence.
\end{lemma}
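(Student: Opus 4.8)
The plan is to prove the lemma by showing that the diagram $D\colon \CC_X\to\Top$ is cofibrant in the projective model structure on the functor category $\Top^{\CC_X}$; once this is established, the lemma is formal, since the canonical map from the Bousfield--Kan homotopy colimit of a projectively cofibrant diagram to its ordinary colimit is always a weak equivalence. (Note $\CC_X$ is small: for each $n$ the singular simplices $\Delta^n\to X$ form a set, so the projective model structure on $\Top^{\CC_X}$ is available, with fibrations and weak equivalences defined objectwise.)

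The first step is to set up the skeletal filtration of $D$. For $k\geq 0$ let $D^{(k)}$ be the subfunctor of $D$ with $D^{(k)}(\sigma)=\mathrm{sk}_k\Delta^{|\sigma|}$; this is functorial because a face inclusion $\Delta^{|\sigma|}\to\Delta^{|\tau|}$ carries $j$-faces to $j$-faces, and $D=\colim_k D^{(k)}$ because the filtration of $\Delta^{|\sigma|}$ stabilizes at $k=|\sigma|$. The main computation is to identify the layers. For each singular $k$-simplex $\rho$ of $X$ and each face inclusion $g\colon\rho\to\sigma$ in $\CC_X$, the monotone injection underlying $g$ picks out a genuine $k$-dimensional face of $\Delta^{|\sigma|}$, and as $\rho$ ranges over all $k$-simplices these account for exactly the $k$-faces of $\Delta^{|\sigma|}$; hence $D^{(k)}$ is the pushout of $D^{(k-1)}\leftarrow \coprod_{\rho\colon|\rho|=k}\CC_X(\rho,-)\times S^{k-1}\to\coprod_{\rho\colon|\rho|=k}\CC_X(\rho,-)\times D^{k}$, where $\CC_X(\rho,-)$ is the discrete representable functor and the attaching map sends $g\in\CC_X(\rho,\sigma)$ to the boundary sphere of the corresponding $k$-face of $\Delta^{|\sigma|}$. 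Each $\CC_X(\rho,-)\times(S^{k-1}\hookrightarrow D^{k})$ is a generating projective cofibration (it is $\mathrm{ev}_\rho^{!}$ of a generating cofibration of $\Top$), and likewise $\emptyset\to D^{(0)}=\coprod_{v\colon|v|=0}\CC_X(v,-)$ is built from generating projective cofibrations; therefore $\emptyset\to D$ is a relative cell complex and $D$ is projectively cofibrant.

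Finally, since $\colim\colon\Top^{\CC_X}\to\Top$ is left Quillen (it is left adjoint to the constant-diagram functor, which preserves fibrations and weak equivalences for the projective structure) and the Bousfield--Kan homotopy colimit computes the left derived functor of $\colim$, the canonical map $\hocolim_{\CC_X} D\to\colim_{\CC_X} D$ is a weak equivalence because $D$ is cofibrant. Alternatively, one can avoid model-category language and argue by induction over the skeletal filtration via the gluing lemma, after checking the generator case directly: for $D=\CC_X(\rho,-)\times A$ one has $\colim_{\CC_X}D=A$ and $\hocolim_{\CC_X}D=|N(\rho/\CC_X)|\times A$, and the comparison map is a weak equivalence because the coslice category $\rho/\CC_X$ has an initial object, so $|N(\rho/\CC_X)|$ is contractible.

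The one point requiring genuine care — the main obstacle — is the layer identification: one must check carefully that the $k$-cells attached in passing from $D^{(k-1)}$ to $D^{(k)}$, which on the object $\sigma$ are the $k$-faces of $\Delta^{|\sigma|}$, reorganize into a coproduct over singular $k$-simplices $\rho$ of copies of the free diagram $\CC_X(\rho,-)$, with the attaching spheres assembling into a single morphism of diagrams into $D^{(k-1)}$. Everything else is routine.
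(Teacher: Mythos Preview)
Your proof is correct. The paper's proof takes a different but closely related route: it observes that $\CC_X$ is a Reedy category with fibrant constants, asserts that $D$ is Reedy cofibrant, and then invokes \cite[Theorem 19.9.1(1)]{H} directly. Because the morphisms in $\CC_X$ are only the face inclusions (monotone injections), $\CC_X$ is in fact a \emph{direct} category, so the Reedy and projective model structures on $\Top^{\CC_X}$ coincide; your explicit cell decomposition of $D$ is precisely the verification that the latching maps $\partial\Delta^{|\sigma|}\hookrightarrow\Delta^{|\sigma|}$ are cofibrations, i.e.\ that $D$ is Reedy cofibrant. So the two arguments rest on the same underlying fact. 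The paper's version is terser (it offloads everything to Hirschhorn), while yours is more self-contained and actually carries out the layer identification that the paper leaves implicit; your alternative endgame via coslice contractibility is also a nice touch that avoids citing the model-categorical machinery.
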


\begin{proof}
The category $\CC_X$ is a Reedy category (\cite[Def 15.1.2]{H}; the subcategory
$\stackrel{\longrightarrow}{\CC_X}$ is equal to $\CC_X$ and the subcategory
$\stackrel{\longleftarrow}{\CC_X}$ has only identity morphisms) which has
fibrant constants (\cite[Definition 15.10.1]{H}) by the proof of
\cite[Proposition 15.10.4(1)]{H}.  The functor $D$ is Reedy cofibrant
(\cite[Definition 15.3.3(2)]{H}) and the result follows by \cite[Theorem
19.9.1(1)]{H}.
\end{proof}

Now let $SX$ be the semisimplicial set whose $n$-simplices are the maps 
$\Delta^n\to X$.  Then $\lambda$ can be chosen\footnote{To know that this is a
correct choice, the paragraph after Observation 1.3 in \cite{wwa} says that 
one simply has to show that 
it gives an assembly map with the
properties in \cite[Theorem 1.1]{wwa}, and this follows from the fact that this
choice of $\lambda$ is natural
with respect to $X$.} 
to be the composite
\[
X
\xleftarrow{\simeq}
|SX|
=
\colim_{\CC_X}\, D
\xleftarrow{\simeq}
\hocolim_{\CC_X}\, D
\xrightarrow{\simeq}
\hocolim_{\CC_X}\, *.
\]

Our next lemma
gives a multiplicative property of $\lambda$.
Let 
\[
d:\CC_{X\times Y}\to
\CC_X\times\CC_Y
\]
be the functor which takes $f:\Delta^n\to X\times Y$ to the pair $(p_1\circ
f,p_2\circ f)$, where $p_1$ and $p_2$ are the projections.  

\begin{lemma}
\label{l7}
{\rm (i)}
The diagram
\[
\xymatrix{
X\times Y
\ar[r]^-{\lambda\times\lambda}
\ar[d]_\lambda
&
({\displaystyle\hocolim_{\CC_X}}\, *)
\times
{\displaystyle(\hocolim_{\CC_Y}}\, *)
\\
{\displaystyle\hocolim_{\CC_{X\times Y}}}\, *
\ar[r]^-\delta
&
{\displaystyle\hocolim_{\CC_X\times\CC_Y}}\, *
\ar[u]^\cong
}
\]
commutes, where $\delta$ is induced by $d$ and the verical arrow is induced by
the projections $\CC_{X\times Y}\to\CC_X$ and $\CC_{X\times Y}\to \CC_Y$.

{\rm (ii)}
The map $\delta$ in part (i) is a weak equivalence.
\end{lemma}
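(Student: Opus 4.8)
The plan is to prove part~(i) by refining the square to a ladder of honestly commuting diagrams of spaces built from the explicit singular-complex model of $\lambda$, and then to deduce part~(ii) formally from~(i). Throughout, $B\CC$ is shorthand for $\hocolim_\CC *$ (the realization of the nerve of $\CC$), and all spaces are understood to be compactly generated.

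Recall that for any space $Z$ the map $\lambda_Z$ is the zig-zag
\[
Z \xleftarrow{\ \simeq\ } |SZ| = \colim_{\CC_Z} D \xleftarrow{\ \simeq\ } \hocolim_{\CC_Z} D \longrightarrow \hocolim_{\CC_Z} * = B\CC_Z,
\]
where $D$ sends $(\Delta^n \to Z)$ to $\Delta^n$. Two elementary observations feed the argument. First, an $n$-simplex of $S(X \times Y)$ is literally a pair consisting of an $n$-simplex of $SX$ and one of $SY$, so $S(X \times Y) \cong SX \times SY$ as semisimplicial sets, and under this identification the counit $|S(X \times Y)| \to X \times Y$ factors as $|S(X\times Y)| \cong |SX| \times |SY| \xrightarrow{\ \epsilon_X \times \epsilon_Y\ } X \times Y$. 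Second, the diagonal maps $\Delta^n \to \Delta^n \times \Delta^n$ assemble into a natural transformation from the functor $D$ on $\CC_{X\times Y}$ to the composite $\CC_{X \times Y} \xrightarrow{\ d\ } \CC_X \times \CC_Y \xrightarrow{\ E\ } \Top$, where $E$ sends a pair $(g \colon \Delta^m \to X,\, h \colon \Delta^n \to Y)$ to $\Delta^m \times \Delta^n$.

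Using $d$ together with the diagonal natural transformation, one obtains a map $\hocolim_{\CC_{X\times Y}} D \to \hocolim_{\CC_X \times \CC_Y} E$; the map $\delta$ is the corresponding map $\hocolim_{\CC_{X\times Y}} * \to \hocolim_{\CC_X\times\CC_Y} *$ induced by $d$ alone. Naturality of $\hocolim \to \colim$ and of the maps $\hocolim D \to \hocolim *$ produces commuting squares relating these. On the level of colimits, distributivity of products over coproducts in compactly generated spaces gives $\colim_{\CC_X \times \CC_Y} E \cong |SX| \times |SY|$, compatibly with the first observation; the same distributivity, applied degreewise to bar constructions, gives homeomorphisms $\hocolim_{\CC_X\times\CC_Y} E \cong (\hocolim_{\CC_X} D) \times (\hocolim_{\CC_Y} D)$ and $B(\CC_X \times \CC_Y) \cong B\CC_X \times B\CC_Y$ compatible with every structure map. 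Pasting the resulting three-row ladder --- one row for $(\CC_{X\times Y}, D)$, one for $(\CC_X \times \CC_Y, E)$, and one obtained by multiplying the two one-variable rows, with identities in the left-hand column --- and comparing the two ways around from the top-left to the bottom-right corner yields exactly the assertion that the composite $X\times Y \xrightarrow{\ \lambda_{X\times Y}\ } B\CC_{X\times Y} \xrightarrow{\ \delta\ } B(\CC_X\times\CC_Y) \to B\CC_X\times B\CC_Y$ equals $\lambda_X\times\lambda_Y$ in the homotopy category, which is the commutativity asserted in~(i).

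Part~(ii) is then immediate: in the square of~(i) the maps $\lambda_{X \times Y}$ and $\lambda_X \times \lambda_Y$ are weak equivalences, because $\lambda$ is always a weak equivalence, and the right-hand arrow $B(\CC_X\times\CC_Y) \to B\CC_X \times B\CC_Y$ is a weak equivalence (indeed a homeomorphism in compactly generated spaces), so $\delta$ is a weak equivalence as well. I expect the main obstacle to be organizational rather than conceptual: one must verify that each square in the ladder commutes on the nose, not merely up to homotopy, which amounts to checking that the Fubini-type identifications of homotopy colimits over a product category, and the comparison maps $B(\CC \times \DD) \to B\CC \times B\DD$, are genuinely natural and compatible with the diagonal natural transformation. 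The point-set hypotheses involved --- essentially that one works in compactly generated spaces, so that geometric realization commutes with finite products --- are standard, but should be stated explicitly.
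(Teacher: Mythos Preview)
Your proposal is correct and matches the paper's approach. The paper leaves the proof of part~(i) entirely to the reader and derives part~(ii) exactly as you do, from~(i) together with the fact that each $\lambda$ is a weak equivalence; your argument for~(i) is a careful unpacking of what any reader would have to supply, and your part~(ii) is identical to the paper's one-line deduction.
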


The proof of part (i) is left to the reader.  Part (ii) follows from (i) and
the fact that $\lambda$ is a weak equivalence.

Next we need some notation.  For a space $Z$ let 
$\Lambda$ denote the composite
\[
Z_+\wedge \bF(*)
\xrightarrow{\lambda\wedge 1}
(\hocolim_{\CC_Z}\, *)_+ \wedge \bF(*)
\cong
\hocolim_{\CC_Z}\, \bF(*).
\]
Let 
\[
\delta':\hocolim_{\CC_{X\times Y}}\, \bF(*)
\to
\hocolim_{\CC_X\times\CC_Y}\,\bF(*)
\]
be the map induced by $d$; this is a weak equivalence by Lemma \ref{l7}(ii).
Let $E$ be the functor from
$\CC_X\times\CC_Y$ to spaces that takes $(\Delta^m\to X,\Delta^n\to Y)$ to 
$\Delta^m\times \Delta^n$, and let 
\[
\delta'': \hocolim_{\CC_{X\times Y}}\, \bF\circ D
\to
\hocolim_{\CC_X\times\CC_Y}\,\bF\circ E
\]
be the map induced by $d$ and the diagonal maps $\Delta^n\to\Delta^n\times
\Delta^n$.

To complete the proof of Proposition \ref{p7}, we need only observe that the
following diagram is commutative:
\[
\xymatrix{
X_+\wedge \bF(*)\wedge Y_+\wedge \bF(*)
\ar[rr]
\ar[d]_{\Lambda\wedge\Lambda} 
&&
(X\times Y)_+\wedge \bF(*)
\ar[d]_{\Lambda}
\\
{\displaystyle\hocolim_{\CC_X}}\, \bF(*)
\wedge
{\displaystyle\hocolim_{\CC_Y}}\, \bF(*)
\ar[r]
&
{\displaystyle\hocolim_{\CC_X\times\CC_Y}}\, \bF(*)
&
{\displaystyle\hocolim_{\CC_{X\times Y}}}\, \bF(*)
\ar[l]_{\delta'}
\\
{\displaystyle\hocolim_{\CC_X}}\, \bF\circ D
\wedge
{\displaystyle\hocolim_{\CC_Y}}\, \bF\circ D
\ar[r]
\ar[u]^\simeq
\ar[d]
&
{\displaystyle\hocolim_{\CC_X\times\CC_Y}}\, \bF\circ E
\ar[u]^\simeq
\ar[dr]
&
{\displaystyle\hocolim_{\CC_{X\times Y}}}\, \bF\circ D
\ar[l]_{\delta''}
\ar[u]^\simeq
\ar[d]
\\
\bF(X)\wedge\bF(Y)
\ar[rr]
&&
\bF(X\times Y)
}
\]

\section{Corrections to some signs in \cite{LM}}
\label{aa1}

This appendix is only relevant for the proof of Lemma  \ref{L2}.
All references are to \cite{LM}.

In Definition 15.4(i), the $i$-th face map 
$\ad^k(\Delta^n) \to \ad^k(\Delta^{n-1})$ should be $(-1)^i$ times the composite
with the map induced by the $i$-th coface map $\Delta^{n-1}\to\Delta^n$.
There should also be an analogous sign in Definition 17.2. 

In order for Proposition 17.8 to be true with this sign, the paragraph that 
comes after Lemma 15.7 needs to be replaced by the following.

``Next observe that for each $n$ there is an isomorphism of $\Z$-graded
categories
\[
\nu:
\Cell(\Delta^{n+1},\partial_0\Delta^{n+1}\cup \{0\})
\to
\Cell(\partial_0\Delta^{n+1})
\]
which lowers degrees by 1, 
defined as follows: a simplex $\sigma$ of $\Delta^{n+1}$ which is not in
$\partial_0\Delta^{n+1}\cup \{0\}$ contains the vertex $0$.  
Let
$\nu$ take $\sigma$ (with its canonical orientation) to the simplex of
$\partial_0\Delta^n$ spanned by the vertices of $\sigma$ other than $0$ (with
its canonical orientation).  
Let 
\[
\theta:\Cell(\Delta^{n+1},\partial_0\Delta^{n+1}\cup \{0\})
\to
\Cell(\Delta^n)
\]
be the composite of $\eta$ with the isomorphism induced by the face map
$\Delta^n\to \partial_0\Delta^{n+1}$.
$\theta$ is incidence-compatible,  so
by part (e) of Definition 3.10 it induces a bijection
\[
\theta^*:
\ad^k(\Delta^n)
\to
\ad^{k+1}(\Delta^{n+1},\partial_0\Delta^{n+1}\cup \{0\}).''
\]

This change leads to corresponding changes in Section 16 and Lemma 17.11, 
which we leave to the interested reader.

\providecommand{\bysame}{\leavevmode\hbox to3em{\hrulefill}\thinspace}
\providecommand{\MR}{\relax\ifhmode\unskip\space\fi MR }
\providecommand{\MRhref}[2]{%
  \href{http://www.ams.org/mathscinet-getitem?mr=#1}{#2}
}
\providecommand{\href}[2]{#2}

\end{document}